\documentclass[11pt]{article}
\usepackage[utf8]{inputenc}
\usepackage[margin=1in]{geometry}
\usepackage{amssymb}
\usepackage{parskip}
\usepackage{amsmath}
\usepackage{amsthm}
\usepackage{mathrsfs}

\usepackage{amsfonts}
\usepackage{graphicx}
\usepackage{enumitem}
\usepackage{amsthm}
\usepackage{booktabs}
\usepackage{multirow}
\usepackage{nicematrix}
\usepackage{comment}
\usepackage{tikz}
\usetikzlibrary{decorations.pathreplacing,decorations.markings,snakes,patterns}

\usepackage{hyperref}
\hypersetup{
	colorlinks=true,
	linkcolor=blue,
	citecolor=blue,
	filecolor=magenta,      
	urlcolor=cyan,
	linktoc=page
}

\newcommand{\A}{\mathcal{A}}

\renewcommand{\S}{\mathcal{S}}
\renewcommand{\L}{\mathcal{L}}
\newcommand{\R}{\mathbb{R}}
\newcommand{\Q}{\mathbb{Q}}

\newcommand{\N}{\mathbb{N}}
\newcommand{\B}{\mathcal{B}}

\newcommand{\x}{\mathbf{x}}
\newcommand{\z}{\mathbf{z}}
\newcommand{\y}{\mathbf{y}}

\newcommand{\Ov}{\textup{Ov}}

\newcommand{\ep}{\epsilon}
\newcommand{\de}{\delta}

\theoremstyle{plain}
\newtheorem{theorem}{Theorem}[section]
\newtheorem{corollary}[theorem]{Corollary}

\newtheorem{prop}[theorem]{Proposition}
\newtheorem{lemma}[theorem]{Lemma}

\theoremstyle{definition}
\newtheorem{remark}[theorem]{Remark}
\newtheorem{definition}[theorem]{Definition}

\theoremstyle{plain}

\title{Geodesic networks and the disjointness gap in the directed landscape}

\author{Duncan Dauvergne \and Oliver Scott Pankratz}

\begin{document}
\maketitle

\begin{abstract}
The directed landscape is a random directed metric on the plane that arises as the
scaling limit of metric models in the KPZ universality class. 
For a pair of points $p, q$, the disjointness gap $\mathcal G(p; q)$ measures the shortfall when we optimize length over pairs of \textit{disjoint} paths from $p$ to $q$ versus optimizing over all pairs of paths. Any spatial marginal of $\mathcal G$ is simply the gap between the top two lines in an Airy line ensemble. In this paper, we show that when the start and end time are fixed, the disjointness gap fully encodes the set of exceptional geodesic networks. The correspondence uses simple features of the disjointness gap, e.g.\ zeroes, local minima. We give a similar correspondence relating semi-infinite geodesic networks to a Busemann gap function. The proofs are deterministic given a list of soft properties related to the coalescent geometry of the directed landscape.
\end{abstract}

\setcounter{tocdepth}{1}
\tableofcontents

\newpage 
\section{Introduction} \label{sec: intro}

The Kardar, Parisi, and Zhang (KPZ) universality class is a collection of one-dimensional random growth processes and two-dimensional random metrics that are expected to exhibit the same universal behaviour under rescaling. Physical predictions for scaling exponents in the KPZ class go back to the 1980s, see \cite{kardar1985commensurate, huse1985huse, kardar1986dynamic}. Mathematically, the seminal work of Baik, Deift, and Johansson \cite{baik1999distribution} rigorously 
verified the fluctuation exponent in one random metric model, the longest increasing subsequence in a uniformly chosen permutation. This work revealed that certain KPZ models have an underlying exactly solvable structure, which opened the door for countless breakthroughs over the next twenty-five years, e.g.\, see \cite{ johansson2000shape, borodin2000asymptotics, prahofer2002scale, balazs2010order, amir2011probability,corwin2014tropical, matetski2016kpz}. See \cite{romik2015surprising, corwin2012kardar, quastel2015one, zygouras2022some, ganguly2022random} and references therein for more background and an introduction to the area.

The directed landscape is the richest scaling limit in the KPZ universality class. It was first constructed in \cite{Dauvergne_2022} as the scaling limit of Brownian last passage percolation, and is now known to be the scaling limit of a handful of models with at least some exact solvability, see \cite{dauvergne2022scalinglimitlongestincreasing,aggarwal2024scalinglimitcoloredasep, wu2025kpzequationdirectedlandscape, dauvergne2024characterization}.

 The directed landscape, $\L$, is a continuous function from $\R^4_\uparrow:=\{(x,s;y,t)\in \R^4: s<t\}$ to $\R$. Interpreting $x,y$ as spatial coordinates, and $s,t$ as time coordinates, we think of $\L(x,s;y,t)$ as assigning a distance between two points $(x, s)$ and $(y, t)$ in the space-time plane. The directed landscape $\L$ is not a metric, as it can take on negative values and is not symmetric. However, it does satisfy a (reverse) triangle inequality,
$$
    \L(x,s;y,t) \geq \L(x,s;z,r) + \L(z,r;y,t) \text{\hspace{5mm} for all $x,y,z\in \R, s<r<t$},
$$
which allows us to view it as defining distances in the plane. In particular, we can define path lengths in $\L$. For a continuous function $\pi:[s,t]\to \R$, henceforth a \textbf{path}, define its length
\begin{align*}
    \|\pi\|_\L := \inf_{k\in \N} \inf_{s=t_0 < t_1 <... < t_k = t}\sum_{i=1}^k \L(\pi(t_{i-1}), t_{i-1}; \pi(t_i),t_i).
\end{align*}
We say $\pi$ is a geodesic from $p = (\pi(s),s)$ to $q = (\pi(t), t)$, or more succinctly a $(p; q)$-geodesic, if $\|\pi\|_\L = \L(\pi(s),s; \pi(t),t)$.

One striking feature of the directed landscape is that most paths are uncompetitive. For example, if we consider any deterministic path $\pi:[s, t] \to \R$, then almost surely $\|\pi\|_\L = -\infty$. Moreover, because there are so few competitive paths, geodesics will typically coalesce. We refer the reader to \cite{basu2019coalescence, pimentel2016duality, hammond2020exponents, hammond2022brownian} for on-scale coalescence results in prelimiting models, and \cite{bates2022hausdorff, basu2019fractal, bhatia2024dualitydirectedlandscapeapplications, dauvergne202327geodesicnetworksdirected} for results in the directed landscape itself. Finally, unlike smooth geometry, small perturbations of geodesics are typically uncompetitive unless the perturbation overlaps exactly with the original geodesic on long segments. This geodesic coalescence and the sparse geometry of the directed landscape lead to interesting qualitative properties that make the directed landscape quite distinct from smooth deterministic geometry. 

First, while a typical point $(p; q) \in \R^4_\uparrow$ has a unique geodesic, there are exceptional points connected by more exotic geodesic networks. The network type of a point $(p; q) \in \R^4_\uparrow$ is the directed graph induced by the collection of all $(p; q)$-geodesics, see Section \ref{S: geo and optimizers} for a precise definition. In \cite{dauvergne202327geodesicnetworksdirected}, it is shown that up to graph isomorphism and time reversal, there are exact 27 network types that appear in the directed landscape.

Next, for $(p; q) = (x, s; y, t)\in \R^4_\uparrow$ define the \textbf{disjointness gap}
\begin{equation}
    \nonumber
    \mathcal G(p; q) = 2 \L(x, s; y, t) - \sup_{\pi_1, \pi_2:p \to q} (\|\pi_1\|_\L + \|\pi_2\|_\L).
\end{equation}
Here the supremum is over all paths $\pi_1, \pi_2$ from $p$ to $q$ which are \textit{disjoint} on their interior $(s, t)$. Observing that $2 \L(x, s; y, t)$ is equal to the supremum above without the disjointness constraint, we see that
$\mathcal G(p; q)$ measures the cost of forcing two paths from $p$ to $q$ to be disjoint. Unlike in smooth geometry where the disjointness gap would be trivially $0$, the disjointness gap in $\L$ carries behind it a rich structure. For example, for fixed $x, s, t$, the functions $y \mapsto \mathcal G(x, s; y, t), y \mapsto \mathcal G(y, s; x, t)$ are equal in law to a rescaling of the gap between the top two lines of an Airy line ensemble. In particular, these functions are both locally Brownian, and have determinantal formulas. The disjointness gap $\mathcal G$ is a marginal of the extended landscape \cite{dauvergne2022disjointoptimizersdirectedlandscape}, see Section \ref{SS:extended} for more background.

\begin{figure}   
    \centering
\begin{tikzpicture}[scale=1.5]

     \node at (0,-0.2)  {I} ;
     \node at (1,-0.2)  {IIa} ;
      \node at (2,-0.2)  {IIb};
      \node at (3,-0.2)  {III} ;
       \node at (4,-0.2)  {IV} ;
      \node at (5,-0.2)  {Va} ;
       \node at (6,-0.2)  {Vb} ;

  \draw[line width=0.5mm, decorate, decoration={snake, amplitude=0mm, segment length=26mm}] (0,0) -- (0,3);

  \draw[line width=0.5mm, decorate, decoration={snake, amplitude=0mm, segment length=20mm}] (1,0) -- (1,1.5);

  \draw[line width=0.5mm] plot [smooth, tension=1.5] coordinates {(1,1.5)  (0.6,2.25) (1,3)};
  \draw[line width=0.5mm] plot [smooth, tension=1.5] coordinates {(1,1.5) (1.4,2.25) (1,3)};

  \draw[line width=0.5mm, decorate, decoration={snake, amplitude=0mm, segment length=20mm}] (2,1.5) -- (2,3);
  \draw[line width=0.5mm] plot [smooth, tension=1.5] coordinates {(2,0) (1.6,0.75) (2,1.5)};
  \draw[line width=0.5mm] plot [smooth, tension=1.5] coordinates {(2,0) (2.4,0.75) (2,1.5)};

  \draw[line width=0.5mm] plot [smooth, tension=1] coordinates {(3,0) (2.7,0.5) (3,1)};
  \draw[line width=0.5mm] plot [smooth, tension=1] coordinates {(3,0) (3.3,0.5) (3,1)};
  \draw[line width=0.5mm] plot [smooth, tension=1] coordinates {(3,2) (2.7,2.5) (3,3)};
  \draw[line width=0.5mm] plot [smooth, tension=1] coordinates {(3,2) (3.3,2.5) (3,3)};
  \draw[line width=0.5mm] plot [smooth, tension=0.5] coordinates {(3,1)  (3,2)};

  \draw[line width=0.5mm] plot [smooth, tension=1.3] coordinates {(4,0) (3.6,1.5) (4,3)};
  \draw[line width=0.5mm] plot [smooth, tension=1.3] coordinates {(4,0) (4.4,1.5) (4,3)};

  \draw[line width=0.5mm] plot [smooth, tension=1.3] coordinates {(5,0) (4.6,1.5) (5,3)};
  \draw[line width=0.5mm] plot [smooth, tension=1.3] coordinates {(5,0) (5.4,1.5) (5,3)};
  \draw[line width=0.5mm] plot [smooth, tension=1.3] coordinates {(4.63,1) (5.37,2)};

  \draw[line width=0.5mm] plot [smooth, tension=1.3] coordinates {(6,0) (5.6,1.5) (6,3)};
  \draw[line width=0.5mm] plot [smooth, tension=1.3] coordinates {(6,0) (6.4,1.5) (6,3)};
  \draw[line width=0.5mm] plot [smooth, tension=1.3] coordinates {(5.63,2) (6.37,1)};

    \filldraw[black] (0,0) circle (1pt);
    \filldraw[black] (0,3) circle (1pt);
    \filldraw[black] (1,0) circle (1pt);
    \filldraw[black] (1,3) circle (1pt);
    \filldraw[black] (1,1.5) circle (1pt);
    \filldraw[black] (2,0) circle (1pt);
    \filldraw[black] (2,1.5) circle (1pt);
    \filldraw[black] (2,3) circle (1pt);
    \filldraw[black] (3,0) circle (1pt);
    \filldraw[black] (3,1) circle (1pt);
    \filldraw[black] (3,2) circle (1pt);
    \filldraw[black] (3,3) circle (1pt);
    \filldraw[black] (4,0) circle (1pt);
    \filldraw[black] (4,3) circle (1pt);
    \filldraw[black] (5,0) circle (1pt);
    \filldraw[black] (4.63,1) circle (1pt);
    \filldraw[black] (5.37,2) circle (1pt);
    \filldraw[black] (5,3) circle (1pt);
    \filldraw[black] (6,0) circle (1pt);
    \filldraw[black] (5.63,2) circle (1pt);
    \filldraw[black] (6.37,1) circle (1pt);
    \filldraw[black] (6,3) circle (1pt);

\end{tikzpicture}
    \caption{The 7 networks that can appear for a fixed time.}
    \label{fig:fixed-time-networks}
\end{figure}
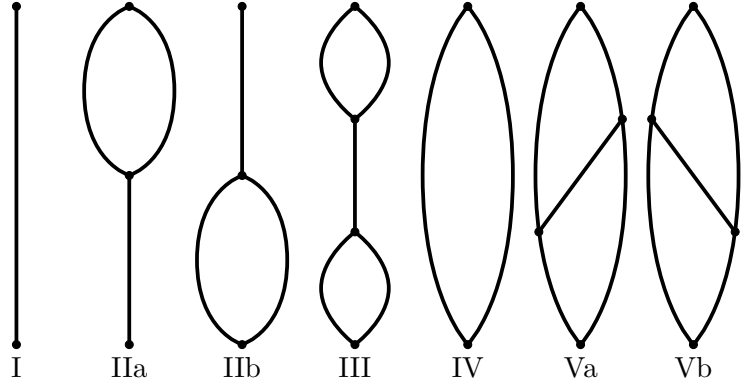 
The goal of this paper is to develop a dictionary translating features of the disjointness gap to the classification of geodesic networks. To manage the scope, we will only work with fixed start and end times, which by symmetry we may set as $s = 0, t = 1$.
In this restricted setting, Lemma \ref{L: fixed-time-possible-networks} shows that there are only five possible network types, or seven if we separate out networks which are equivalent by time reversal. These are depicted in Figure \ref{fig:fixed-time-networks}.  
The disjointness gap $\mathcal G$ also becomes more tractable when we fix the start and end times. Indeed, the \textbf{gap sheet} $G(x, y) = \mathcal G(x, 0; y, 1)$ can be written as a last passage Busemann function across the Airy line ensemble. 

Our main theorem relates features of the gap sheet to the geodesic network classification in Figure \ref{fig:fixed-time-networks}. We write, say, $(x,y) \in \textup{II}$ to mean that $(x,0;y,1)$ has network type $\textup{II}$. For $x, y \in \R$ we also write $G_x = G(x, \cdot)$ and $\hat G_y = G(\cdot, y)$.

    \begin{theorem} \label{T: fin-time-thm}
        Let $Q_{x,y}^{-,+} = (-\infty, x) \times (y, \infty)$,  $Q_{x,y}^{+,-} = (x, \infty) \times (-\infty, y)$, and let $Z:= G^{-1}(\{0\})$ be the zero set of the gap sheet. The following claims hold almost surely for all $x,y\in \R$.
        \begin{enumerate}
            \item $(x, y) \in \textup{I}$ if and only if neither $G_x$ has a local minimum at $y$ nor $\hat G_y$ has a local minimum at  $x$.
            \item $(x,y) \in \textup{IIa}$ if and only if $G_x$ has a local minimum at $y$ and $\hat G_y$ does not have a local minimum at $x$.
            \item $(x,y)\in \textup{IIb}$ if and only if $\hat G_y$ has a local minimum at $x$, and $G_x$ does not have a local minimum at $y$. 
            \item $(x,y) \in \textup{III}$ if and only if $G_x$ has a local minimum at $y$, $\hat G_y$ has a local minimum at $x$, and $(x,y) \notin Z$.
            \item $(x, y)\in\textup{IV}$ if and only if $(x,y) \in Z$ and $(x,y)$ is not an isolated point in both $Z \cap Q_{x,y}^{-,+}$ and $Z\cap Q_{x,y}^{+,-}$.
            \item $(x,y)\in \textup{Va}$ if and only if $(x,y) \in Z$, $(x,y)$ is an isolated point in $Z\cap Q_{x,y}^{-,+}$, and $(x,y)$ is not an isolated point in $Z \cap Q_{x,y}^{+,-}$.
            \item $(x,y)\in \textup{Vb}$ if and only if $(x,y) \in Z$, $(x,y)$ is not an isolated point in $Z\cap Q_{x,y}^{-,+}$, and $(x,y)$ is an isolated point in $Z \cap Q_{x,y}^{+,-}$.
        \end{enumerate}
    \end{theorem}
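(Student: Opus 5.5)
We will prove the theorem by translating each network type into a statement about the local behaviour of $G$. The main tool is a decomposition of $G$ in terms of geodesics. Fix $(x,y)$ and let $\pi^L,\pi^R$ be the leftmost and rightmost $(x,0;y,1)$-geodesics. A disjoint optimizer $(\pi_1,\pi_2)$ either follows $\pi^L,\pi^R$ when these are disjoint on $(0,1)$, or it must pay to separate them.

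The first step is the zero set. We show $G(x,y)=0$ if and only if there exist two $(x,0;y,1)$-geodesics that are disjoint on $(0,1)$. The forward direction uses compactness of disjoint optimizers, which exist by the extended landscape results. This identifies $Z$ with types IV, Va and Vb, since these are exactly the fixed-time networks of Lemma \ref{L: fixed-time-possible-networks} with two interior-disjoint geodesics.

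Next we treat local minima. We show $G_x$ has a local minimum at $y$ if and only if the geodesic network at $(x,y)$ has a bubble touching the endpoint $(y,1)$, i.e.\ $\pi^L$ and $\pi^R$ are disjoint on some interval $(1-\epsilon,1)$. The symmetric statement for $\hat G_y$ and a bubble at the start is then obtained by time reversal.

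For the "if" direction in the local-minimum claim, we perturb $y$ to $y'$. Coalescence (a soft input) shows that the $(x,0;y',1)$-geodesic near time $1$ must pass near $\pi^L$ or $\pi^R$. We then build a disjoint pair to $(y',1)$ by taking the two bubble paths up to time $1-\epsilon$ and connecting to $y'$. This should give $G(x,y')\ge G(x,y)$ by a metric composition argument.

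More concretely, write $G(x,y')$ as $2\L - \sup$ over disjoint pairs and split at time $1-\epsilon$. Near the endpoint the disjoint pair cost reduces to a two-line last-passage comparison, and a bubble makes the top-two gap locally minimized. Without a bubble, the geodesics to nearby $y'$ coalesce with the one to $y$ before time $1$. In that case $G$ is locally given by $G(x,z)$ at a coalescence point plus a Brownian-like increment, which is nonconstant with no local extremum at $y$. This last claim should come from the quadrangle-type inequality, or monotonicity of $G$ along rays, listed among the soft properties.

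Given these two dictionaries, items 1–4 follow by reading off Figure \ref{fig:fixed-time-networks}:
\begin{itemize}
\item I has no bubble at either end;
\item IIa has a bubble only at the end, and IIb only at the start;
\item III has bubbles at both ends but no disjoint pair, so $(x,y)\notin Z$.
\end{itemize}
One point needs care: for types IV and V, $G$ is $0$ and hence trivially has local minima, which is why $Z$ is excluded in item 4.

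For items 5–7 we must distinguish IV from Va and Vb using $Z$ near $(x,y)$ in the quadrants $Q^{-,+}_{x,y}$ and $Q^{+,-}_{x,y}$. The idea is that for a type IV point, the two disjoint geodesics $\pi^L,\pi^R$ can be shifted. For $(x',y')$ with $x'<x$ and $y'>y$, spreading the endpoints apart, we look for points whose leftmost and rightmost geodesics remain disjoint; these are points of $Z$ accumulating at $(x,y)$. Symmetrically, $x'>x$ and $y'<y$ pinches the endpoints.

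In type Va the crossing geodesic runs from the left path at an early time to the right path at a later time. Planarity and ordering of geodesics then force every nearby $(x',y')\in Q^{-,+}$ to have geodesics that coalesce along this crossing. Hence they are not in $Z$, and $(x,y)$ is isolated there. In $Q^{+,-}$ no such obstruction exists, and we construct nearby zeros. Vb is the mirror image.

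The main obstacle is this accumulation claim: showing that in IV (and on the unobstructed side of V) zeros of $G$ genuinely accumulate in the quadrant. A mere absence of obstruction does not suffice. I would first try a one-parameter family, moving $y'$ along a curve and using continuity of $G$ together with the fact that $G(x',\cdot)$ vanishes somewhere nearby. To find those zeros I would use an intermediate-value style argument on whether the leftmost and rightmost geodesics to $(x',y')$ are disjoint, leaning on the soft coalescence properties the paper assumes.
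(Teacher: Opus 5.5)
Your identification of $Z$ with types IV, Va, Vb and your planarity/no-bubbles argument isolating a Va point in $Q^{-,+}_{x,y}$ match the paper. However, both directions of the local-minimum dictionary have genuine gaps, and so does the accumulation step.

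\emph{Local minima (items 1--4).} In the ``if'' direction your inequality runs the wrong way. A disjoint pair ending at $(y',1)$ gives a lower bound on $\L(x^2,0;(y',y'),1)$, i.e.\ an \emph{upper} bound on $G(x,y')$, so it cannot yield $G(x,y')\ge G(x,y)$. To bound $G(x,y')$ from below you must control the actual optimizer to $y'$. The paper first shows that every $2$-optimizer to $(y,1)$ coincides with $(\pi^L,\pi^R)$ after some $t^*<1$ (Lemma \ref{L:opt=geo}, which uses the absence of $3$-stars). It then uses overlap convergence of rightmost \emph{optimizers}, not just geodesics (\ref{P: ov-rm-opt}), so that the rightmost optimizer and rightmost geodesic to $y'$ agree with those to $y$ on $[0,t^*]$. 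The difference is thereby localized to $[t^*,1]$. There the tails of $\tau_{y'}$ are bounded by geodesic lengths via concatenation, strictly because the network near $(y,1)$ contains only the two bubble paths.

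For the converse, ``a Brownian-like increment with no local extremum at $y$'' is not available: no quadrangle inequality or ray monotonicity for $G$ is among the soft inputs. Nor can any such statement hold simultaneously for all $y$, since $G_x$ genuinely has strict local minima, e.g.\ at every IIa point. You must show deterministically why a bubble-free $y$ is not one of them. The missing idea is Lemma \ref{L: seq-disj-geos}: there are $z_n\downarrow y$ whose networks contain a bubble ending at $(z_n,1)$ and rooted on $\pi_y$ at times $a_{z_n}\uparrow 1$. Add the observation that one path of any $2$-optimizer $\tau_y$ touches $\pi_y$ at times arbitrarily close to $1$, since otherwise rerouting it onto $\pi_y$ would improve $\tau_y$. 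One then splices $\tau_y$ into such a bubble to get a disjoint pair to $(z_{n_k},1)$, hence $G_x(z_{n_k})<G_x(y)$. So your splicing construction is the right tool, but for this direction, not the one where you used it.

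\emph{Accumulation of zeros (items 5--7).} You correctly flag this as the crux but do not resolve it, and ``using the fact that $G(x',\cdot)$ vanishes somewhere nearby'' presupposes the conclusion. An intermediate-value argument along a one-parameter family only produces a point whose geodesics switch from riding $\pi^L$ to riding $\pi^R$ on a middle time window, i.e.\ a point with two distinct geodesics. These may still merge near an endpoint (types IIa, IIb, III), so that point need not lie in $Z$.

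The paper resolves this in three steps:
\begin{enumerate}
\item Lemma \ref{L: uses-l-or-r-bubble} (precompactness plus no bubbles, which rules out limits along the middle geodesic of Vb) shows every nearby leftmost/rightmost geodesic overlaps $\pi^L$ or $\pi^R$ on $[1/4,3/4]$. This gives closed, monotone sets $\mathscr L,\mathscr R$ covering a box (Lemma \ref{L: L/R-geo-sets-properties}).
\item It makes a \emph{two-step} extremal choice: first $-\ep^\de$, the supremum of $\mathscr L$ on the row at height $\de$; then $\beta^\de$, the least height with $(-\ep^\de,\beta^\de)\in\mathscr L\cap\mathscr R$.
\item If the two geodesics at $(x-\ep^\de,y+\beta^\de)$ met after time $3/4$, one could lower $\beta^\de$. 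If they met before time $1/4$, a whole rectangle would lie in $\mathscr L\cap\mathscr R$, contradicting uniqueness of geodesics between rational points (\ref{P: rmlmopt-mon}).
\end{enumerate}
This final appeal to rational uniqueness is what upgrades ``non-unique geodesics'' to ``two disjoint geodesics'', and it is missing from your plan.
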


To help clarify the picture in parts $5, 6, 7$ above we note that any point $(x, y) \in Z$ is an isolated point in the set $Z \cap \{(x', y') \in \R^2 : (x', y') \le (x, y) \text{ or } (x, y) \le (x', y')\}$, where $\le$ is the coordinatewise partial order, see also Corollary \ref{Cor: dec-fns}.

\subsection{Applications} \label{sec: applications}
Theorem \ref{T: fin-time-thm} enables a translation of results about the gap sheet to results about geodesics in the directed landscape. To illustrate this, we record some results which fall out immediately from this correspondence. We prove these in Section \ref{Sec: applic-proofs}.

\begin{corollary} \label{Cor: network-density}
\quad 
\begin{enumerate} 

    \item Fix $x \in \R$. Then almost surely, every point $(x, y), y \in \R$ has network type \textup{I} or \textup{IIa}. Moreover, for any compact interval $[a, b]$ the law of the process $G_x(y) - G_x(0), y \in [a, b]$ is absolutely continuous with respect to the law of a two-sided Brownian motion on $K$, and so the set of points $(x, y), y \in (a, b)$ of type \textup{IIa} is absolutely continuous with respect to the law of the set of local minima of a Brownian motion.

    \item Almost surely, the set $Z = G^{-1}(\{0\}) = \textup{IV} \cup \textup{Va} \cup \textup{Vb}$ is nowhere dense.
     \item Almost surely, $Z$ has no isolated points. Equivalently, the gap sheet has no isolated zeroes.
    \item Almost surely, the set of points of type \textup{Va} is countable and dense in the set $Z$. Symmetrically, the same is true of the set \textup{Vb}.
\end{enumerate}

\end{corollary}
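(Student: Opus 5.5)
Each of the four parts can be read off from Theorem \ref{T: fin-time-thm}, together with known structural facts about the directed landscape and the Airy line ensemble. I take them in order.

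\textbf{Part 1.} For fixed $x$, I would first show that almost surely no $(x,y)$ has $\hat G_y$ with a local minimum at $x$. By Theorem \ref{T: fin-time-thm}, this rules out types IIb, III, IV, Va and Vb, leaving only I and IIa. The route I would take is through geodesic geometry rather than the gap sheet directly: types IIb, III, IV and V all require two disjoint geodesics leaving the fixed point $(x,0)$. For a fixed starting point, all geodesics from $(x,0)$ to time $1$ share a common initial segment; this is the standard ``geodesics from a fixed point coalesce immediately'' result, and I take it from the paper's list of soft properties. Next, $G_x(\cdot)$ equals in law a rescaled top-two gap $\mathcal A_1-\mathcal A_2$ of the Airy line ensemble. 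The Brownian Gibbs property makes $(\mathcal A_1,\mathcal A_2)$ on $[a,b]$ absolutely continuous with respect to a pair of Brownian bridges, so $\mathcal A_1-\mathcal A_2$ restricted to $[a,b]$, recentred, is absolutely continuous with respect to Brownian motion (rescaled). The statement about IIa points then follows from Theorem \ref{T: fin-time-thm}(2), since the IIa points on this line are exactly the local minima of $G_x$.

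\textbf{Part 2.} Suppose $Z$ contained an open rectangle $U$. Take a fixed rational $x$ with $\{x\}\times I\subset U$ for some interval $I$. Part 1 says every point on the line $\{x\}\times\R$ is of type I or IIa, so none lies in $Z$, which is a contradiction. Applying this to all rational $x$ simultaneously gives nowhere density.

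\textbf{Part 3.} Let $(x,y)\in Z$. By Theorem \ref{T: fin-time-thm}(5)--(7), one of $Z\cap Q^{-,+}_{x,y}$ or $Z\cap Q^{+,-}_{x,y}$ accumulates at $(x,y)$; in case IV neither is isolated, and in cases Va and Vb exactly one is not. In every case $(x,y)$ is therefore a limit of other points of $Z$, so $Z$ has no isolated points.

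\textbf{Part 4.} This is the main step. I would first show countability of Va: each Va point is an isolated point of $Z\cap Q^{-,+}_{x,y}$. I would assign to each Va point a rational rectangle $R$ in the quadrant $Q^{-,+}$ touching $(x,y)$ and disjoint from $Z$ near $(x,y)$, and show the assignment is injective. Injectivity should follow from the remark after the theorem that zeroes are isolated in the comparable directions (Corollary \ref{Cor: dec-fns}), which suggests that $Z$ is locally a union of decreasing curves. For density, take $(x,y)\in Z$ and a small neighbourhood. Since $Z$ is nowhere dense (Part 2) and closed (by continuity of $G$), there is a point of the complement of $Z$ in the NW direction. Moving from that point towards $Z$ and taking the first zero in an appropriate extremal sense, e.g.\ the zero in the neighbourhood with minimal $y-x$ along a suitable family, should produce a zero that is isolated from the NW side, hence of type Va or IV. The hard part is ensuring it is genuinely isolated in $Q^{-,+}$ rather than merely on one boundary line, and excluding type IV there. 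If this extremal construction fails, I would fall back to geodesic geometry: Va means the disjoint geodesic pair has an interior crossing-bridge of one orientation. Such configurations can be created by perturbing endpoints in the NW/SE direction from a type IV point, using the coalescence properties listed earlier. The claim for Vb follows by the symmetry $x\leftrightarrow y$ (time reversal).
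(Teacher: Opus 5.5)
Parts 1--3 are essentially fine. In Part 2 you use that $Z$ avoids every line $\{q\}\times\R$ with $q\in\Q$, whereas the paper invokes the bow-tie Lemma \ref{L: bowtie}; both work. In Part 1, the fact that a fixed point $(x,0)$ is almost surely not a $2$-star is true, but it is an external input rather than one of \ref{P: cont}--\ref{P: no-bubbles}.

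The genuine gap is the density statement in Part 4. The extremal zero you propose, the one minimizing $y-x$ in a neighbourhood, points the wrong way. By Lemma \ref{L: bowtie}, nearby zeros lie north-west or south-east of one another, and $y-x$ decreases towards the south-east. So such a minimizer is at best isolated on its south-east side, which would give Vb, not Va. Worse, nothing stops it from lying on the boundary of the neighbourhood, where no isolation in either open quadrant follows. You flag this as the hard part but do not resolve it, and the geodesic-perturbation fallback is not an argument. Note also that once a zero is isolated in $Z\cap Q^{-,+}_{x,y}$, it is automatically of type Va by parts 5 and 7 of Theorem \ref{T: fin-time-thm}; there is no type IV to exclude.

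The missing idea is one you already used in Part 2. Almost surely $Z$ meets no line $\{q\}\times\R$ with $q\in\Q$, and, by time reversal, no line $\R\times\{q\}$ with $q\in\Q$.
\begin{itemize}
\item Given $(x,y)\in Z$, take $U=[x-\mu,x+\mu]\times[c,d]$ with $c<y<d$ rational. The projection $p(Z\cap U)$ onto the first coordinate is compact, contains $x$, and avoids $\Q$.
\item Pick a rational $q\in(x-\mu,x)$ and let $x'=\min\bigl(p(Z\cap U)\cap[q,\infty)\bigr)$, so that $q<x'\le x$.
\item For any $(x',y')\in Z\cap U$, the strip $(q,x')\times[c,d]$ contains no zeros, and $y'<d$ because the top edge is rational.
\item Hence $(x',y')$ is isolated in $Z\cap Q^{-,+}_{x',y'}$ and is of type Va. Since $\mu$ and $d-c$ can be taken arbitrarily small, Va is dense in $Z$.
\end{itemize}
This projection-and-first-hit argument is the paper's.

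Your countability step is likewise only asserted. An assignment of rational rectangles in $Q^{-,+}_{x,y}$ is not injective without more care. A clean version is as follows. Isolation in $Q^{-,+}_{x,y}$ together with Lemma \ref{L: bowtie} gives $\ep>0$ with $\bigl([x-\ep,x]\times[y-\ep,y+\ep]\bigr)\cap Z=\{(x,y)\}$. Then distinct Va points with $\ep\ge 1/n$ are more than $1/n$ apart in sup-norm, so each compact set contains only finitely many of them.
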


\begin{corollary} \label{Cor: dec-fns}
Let $Z = G^{-1}(\{0\})$ be the zero set for the gap sheet. Then almost surely, we can decompose $Z$ as a countable disjoint union of closed sets $(Z_i: i \in \N)$ such that:
\begin{itemize}[nosep]
    \item For any compact set $K \subset \R^2$, $Z_i \cap K \ne \emptyset$ for at most finitely many $i$. 
    \item Each $Z_i$ is contained in the closure of the graph of a strictly decreasing function.
\end{itemize}
\end{corollary}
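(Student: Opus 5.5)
The plan is to build the decomposition from the geometry of type IV/V points, which by Theorem \ref{T: fin-time-thm} are exactly the points of $Z$, i.e.\ pairs $(x,y)$ joined by two disjoint geodesics. The key structural input is geodesic ordering and planarity. Suppose $(x,y),(x',y')\in Z$ with $x<x'$ and $y<y'$. Then the two disjoint geodesics from $(x,0)$ to $(y,1)$ and those from $(x',0)$ to $(y',1)$ must cross. This forces the rightmost $(x,y)$-geodesic and the leftmost $(x',y')$-geodesic to share a segment. I would then use geodesic ordering (the leftmost/rightmost geodesic monotonicity in endpoints) together with the rerouting argument. Swapping pieces at the crossing produces a second pair of geodesics that coincide on an interval. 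This should contradict the disjointness of one of the networks unless the two points are close enough to lie in a single coalescence structure.

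Since this cannot rule out comparable pairs altogether (nearby comparable zeros are excluded only locally, per the remark after Theorem \ref{T: fin-time-thm}), I would index the decomposition by discrete geodesic data. I would use the soft coalescence properties from the paper's list, namely the finiteness of geodesic "bubble" configurations in compact sets. By that list, for any compact $K$ there are almost surely finitely many distinct geodesic segments crossing the strip $\R\times[\delta,1-\delta]$ with endpoints in $K$. For each $(x,y)\in Z$, take its two disjoint geodesics $\pi^-<\pi^+$ and record the pair of "mid-time skeletons": the restrictions of $\pi^\pm$ to $[\delta,1-\delta]$ for a dyadic $\delta$ small enough that the restrictions are disjoint. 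Finiteness of these skeletons on compacts gives a countable labelling. I then set $Z_i$ to be the points of $Z$ sharing a given skeleton pair, with the minimal dyadic $\delta$ chosen to make the sets disjoint. For each label, local finiteness gives the first bullet, provided the smallest admissible $\delta$ is bounded below on compacts. This lower bound comes from compactness of $Z\cap K$ and continuity of disjoint geodesic pairs under limits.

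Within one $Z_i$, the two geodesics of every point pass through fixed skeletons $\sigma^-<\sigma^+$. Hence $(x,0)$ is connected to both $\sigma^-(\delta)$ and $\sigma^+(\delta)$ by geodesics, and similarly at the top. I would then show that two points $(x,y),(x',y')\in Z_i$ with $x<x'$ must have $y>y'$. If instead $y\le y'$, then the geodesic from $(x',0)$ to $\sigma^-(\delta)$ and the one from $(x,0)$ to $\sigma^+(\delta)$ cross. The crossing lets us reroute so that the leftmost geodesic configuration becomes inconsistent, because geodesics to a common point cannot cross twice. The same argument applies at the top. The case $x=x'$, $y\neq y'$ is excluded similarly. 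So $Z_i$ is the graph of a strictly decreasing function on its projection, and its closure lies in the closure of that graph.

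Closedness of each $Z_i$ is the remaining difficulty. I would handle it by refining the decomposition: replace each $Z_i$ by the closure of the appropriate subset, using that limits of points with a fixed skeleton keep that skeleton by geodesic compactness. Then I would re-disjointify, assigning each point the lexicographically first label. Disjointification may destroy closedness, however. I expect this to be the main obstacle. If it fails, my fallback is to use that each closure is locally the closure of a monotone graph, and that $Z$ has no isolated points, so the sets are locally finitely many. Boundary overlaps between them are then finite and can be peeled off into singleton pieces, which are closed and still locally finite.
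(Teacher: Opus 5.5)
Read as a double-crossing argument, your anti-ordering step works for strictly comparable pairs, and it is a valid alternative to the paper's rational-sandwich step. If $u=(x,y)$ and $v=(x',y')$ lie in one class with $x<x'$ and $y<y'$, then $\pi_{u,R}$ and $\pi_{v,L}$ meet once in $(0,\delta)$ and once in $(1-\delta,1)$, and they differ on $[\delta,1-\delta]$. That is an interior bubble, which \ref{P: no-bubbles} forbids. (The paper instead inserts a rational $(w,z)$ between the points and uses $\pi_{u,R}\le\pi_{w,z}\le\pi_{v,L}$ at time $1/2$.)

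\textbf{The gap is the tie cases.} These are $y=y'$ and $x=x'$, which you dismiss as ``excluded similarly''. In these cases one of the two meeting points is the shared endpoint $(y,1)$ or $(x,0)$, and \ref{P: no-bubbles} says nothing at endpoints. Your principle that ``geodesics to a common point cannot cross twice'' is false here. Take $y=y'$ and let $s'$ be the single interior meeting time. Both $\pi_{u,R}|_{[s',1]}$ and $\pi_{v,L}|_{[s',1]}$ are geodesics from the meeting point to $(y,1)$. Hence $\pi_{u,R}|_{[0,s']}\oplus\pi_{v,L}|_{[s',1]}$ is a geodesic for $u$. It starts along $\pi_{u,R}$ and runs along $\pi_{u,L}$ on $[\delta,1-\delta]$. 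So a tie produces a third geodesic, i.e.\ a network of type Va or Vb, not a contradiction. Your conclusion that $Z_i$ is itself the graph of a strictly decreasing function is therefore unproved, and the paper deliberately never claims it.

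\textbf{The missing idea is the paper's two-step route.} First, run the anti-ordering argument only on the type \textup{IV} points of a class, where a tie would create a forbidden third geodesic. Second, show that each class is the closure of its type \textup{IV} part. This uses three facts:
\begin{enumerate}
\item $Z$ has no isolated points (Corollary \ref{Cor: network-density}, part 3).
\item Each class inherits this. A sequence in $Z\setminus\{u\}$ converging to a point $u$ of a class has, by local finiteness, infinitely many terms in a single class. By closedness and disjointness, that class is the class of $u$.
\item Type Va/Vb points are countable (Corollary \ref{Cor: network-density}, part 4).
\end{enumerate}
By Baire category, the type \textup{IV} points are then dense in each class. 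Hence $Z_i\subset\overline{\mathrm{graph}(f)}$, where $f$ is the strictly decreasing function whose graph is $Z_i\cap\textup{IV}$.

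\textbf{Your closedness worries are self-inflicted.} For every $u\in Z$ one has $\pi_{u,L}<\pi_{u,R}$ on all of $(0,1)$, so you can fix a single $\delta$, say $1/4$. Then the label is a function of $u$ and the classes are automatically disjoint. Each class is closed: overlap limits (via \ref{P: precomp}) of $(\pi_{u_n,L},\pi_{u_n,R})$ form a disjoint geodesic pair for the limit point with the same skeleton on $[\delta,1-\delta]$. By Lemma \ref{L: fixed-time-possible-networks}, that pair must be $(\pi_{u,L},\pi_{u,R})$. The disjointification and ``peeling off'' fallback is unnecessary, and the finiteness of boundary overlaps it relies on is not justified.
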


 One explicit construction of the sets $Z_i$ is as follows. Let $A \subset \R$ be the (countable) set of points $a$ such that there is a geodesic $\pi:[0, 1] \to \R$ with $\pi(1/2) = a$. For $a_1 < a_2 \in A$, we can define the set
 $$
 Z_{a_1, a_2} = \{u \in Z : \pi_{u, L}(1/2) = a_1, \pi_{u, R}(1/2) = a_2\},
 $$
where $\pi_{u, L}, \pi_{u, R}$ are the leftmost and rightmost geodesics from $(u_1, 0)$ to $(u_2, 1)$. We check in Section \ref{Sec: applic-proofs} that the countable collection of sets $Z_{a_1, a_2}$ satisfies the corollary.

Not all aspects of the above corollaries are new. Indeed,the equality $Z = \textup{IV} \cup \textup{Va} \cup \textup{Vb}$ which follows from parts 5-7 of Theorem \ref{T: fin-time-thm} was first shown in \cite{bates2022hausdorff}. That paper also proves that $Z$ is nowhere dense and moreover, has Hausdorff dimension $1/2$. The zero set $Z$ is also the support of the \textbf{shock measure} for the Airy sheet $(x, y) \mapsto \L(x, 0; y, 1)$, can be used to construct the entire Airy sheet, and certain coordinate projections of the set $Z$ can be connected precisely to Brownian local time, see \cite{ganguly2023local, dauvergne2023last} for details.

\subsection{The semi-infinite case} 
\label{SS:semi-infinite}

A continuous function $\pi: [s,\infty)\to \R$ is a \textbf{semi-infinite geodesic} emanating from $(\pi(s), s)$ if, for any $t\geq s, \tau\big|_{[s,t]}$ is a geodesic. A semi-infinite geodesic has direction $\theta$ if $\pi(t)/t \to \infty$ as $t \to \infty$. For $p \in \R^2, \theta \in \R$, the semi-infinite geodesic network for $(p; \theta)$ is the collection of all semi-infinite geodesics from $p$ in direction $\theta$.  As in the finite setting, we can classify geodesic networks by looking at the underlying directed graph structure.

Two semi-infinite geodesics $\pi, \pi'$ in direction $\theta$ \textbf{eventually coalesce} if $\pi(r) = \pi'(r)$ for all large enough $r$. We say $\pi, \pi'$ are \textbf{eventually disjoint} if $\pi(r) \ne \pi'(r)$ for all large enough $r$.
Due to the work of \cite{rahman2023infinitegeodesicscompetitioninterfaces, busani2024stationary, busani2024nonexistencenoncoalescinginfinitegeodesics} we know that, for any $\theta \in \R$ and $p \in \R^2$, there exists at least one semi-infinite geodesic from $p$ in direction $\theta$. Moreover, for $\theta$ outside of a random countable dense set $\Xi$, all semi-infinite geodesics in direction $\theta$ eventually coalesce. For $\theta \in \Xi$, for every $p \in \R^2$ there exist leftmost and rightmost semi-infinite geodesics from $p$ in direction $\theta$ which are eventually disjoint. Thanks to results from \cite{busani2024nonexistencenoncoalescinginfinitegeodesics}, we know that even for $\theta \in \Xi$, all semi-infinite geodesics in direction $\theta$ eventually coalesce with either the rightmost or leftmost geodesic in direction $\theta$. Together with the network classification in the finite setting, this gives us $7$ possible semi-infinite network types for point-direction pairs of the form $(x, 0; \theta)$. These are recorded in Figure \ref{fig:semi-inf-networks}. 

It is natural to ask for an analogue of Theorem \ref{T: fin-time-thm} in the semi-infinite setting. While a complete analogue is possible, for brevity we have focused only on studying networks with two infinite ends, since this is where we see differences from the finite case. In other words, we are interested in a correspondence for networks in directions $\theta$ in the exceptional set $\Xi$. Our correspondence will go through a semi-infinite analogue of the gap sheet $G$. For $x \in \R, \theta \in \Xi$, let $\pi^\theta_{x, L}, \pi^\theta_{x,R}$ denote the leftmost and rightmost semi-infinite geodesics in direction $\theta$ from $(x,0)$. Then for $\theta \in \Xi$ we define 
\begin{equation}
\label{E:Gtheta}
   G^\theta(x) = \lim_{t \to \infty} \L(x, 0; \pi^\theta_{x,L}(t), t) + \L(x, 0; \pi^\theta_{x,R}(t), t) - \sup_{\pi_1, \pi_2} (\|\pi_1\|_\L + \|\pi_2\|_\L), 
\end{equation}
where the supremum is over paths $\pi_1$ from $(x, 0)$ to $(\pi^\theta_{x,L}(t), t)$ and $\pi_2$ from $(x, 0)$ to $(\pi^\theta_{x,R}(t), t)$ which are disjoint on the interval $(0, t)$. To see why this should be the correct analogue of the gap sheet, first note that if $\pi^\theta_{x, L}(t) = \pi^\theta_{x, R}(t)$, then the expression under the limit above is simply the disjointness gap $\mathcal G(x, 0; \pi^\theta_{x,L}(t), t)$. On the other hand, if we simply try to take a limit of disjointness gaps in direction $\theta$ without taking care about the terminal endpoints, then we can end up with too much local information at large times. Anchoring the endpoints of our paths on semi-infinite geodesics resolves this issue. For all $\theta \in \Xi$, the function $G^\theta$ is non-negative and continuous. We let $Z^\theta = (G^\theta)^{-1}(0)$ be the zero set of $G^\theta$.

We can now give our correspondence for semi-infinite networks. We write, say, $(x,\theta) \in \textup{IIa}^\infty$ to mean that the network from $(x,0)$ in direction $\theta$ has network type $\textup{IIa}^\infty$. 

\begin{figure}
    \centering
\begin{center}

\begin{tikzpicture}[transform shape = false, scale = 1.6] 

    \draw[->, line width = 0.5mm] plot [smooth, tension=0.3] coordinates {(-3,0) (-03.02,0.5) (-2.98,1.5) (-3, 1.9) (-3,2)};
    \filldraw[black] (-3,0) circle (1pt);
    \node at (-3,-0.25) {\Large $\text{I}^\infty$};

    \draw[->, line width = 0.5mm] plot [smooth, tension=1.1] coordinates {(-1.5, 1) (-1.15, 1.5) (-1, 2)};
    \draw[->, line width = 0.5mm] plot [smooth, tension=1.1] coordinates {(-1.5, 1) (-1.85, 1.5) (-2, 2)};
    \draw[line width = 0.5mm] plot [smooth, tension=1.1] coordinates {(-1.5, 0) (-1.45, 0.5) (-1.5, 1)};
    \filldraw[black] (-1.5, 0) circle (1pt);
    \node at (-1.5, -0.25) {\Large $\text{IIa}^\infty$};

    \draw[line width = 0.5mm] plot [smooth, tension=1.1] coordinates {(0,0) (-0.15, 0.5) (0, 1)};
    \draw[line width = 0.5mm] plot [smooth, tension=1.1] coordinates {(0,0) (0.15, 0.5) (0, 1)};
    \draw[->, line width = 0.5mm] plot [smooth, tension=0.3] coordinates {(0,1) (-0.02, 1.25) (0, 1.75) (0, 2)};
    \filldraw[black] (0, 0) circle (1pt);
    \node at (0, -0.25) {\Large $\text{IIb}^\infty$};

    \draw[->, line width = 0.5mm] plot [smooth, tension=1.1] coordinates {(1.5,1) (1.85, 1.5)(2,2)};
    \draw[->, line width = 0.5mm] plot [smooth, tension=1.1] coordinates {(1.5,1) (1.15, 1.5)(1,2)};
    \draw[line width = 0.5mm] plot [smooth, tension=1.1] coordinates {(1.5,0) (1.6, 0.25)(1.5,0.5)};
    \draw[line width = 0.5mm] plot [smooth, tension=1.1] coordinates {(1.5,0) (1.4, 0.25)(1.5,0.5)};
    \draw[line width = 0.5mm] plot [smooth, tension = 1.1] coordinates {(1.5,0.5) (1.48,0.75) (1.5,1)};
    \filldraw[black] (1.5,0) circle (1pt);
    \node at (1.5,-0.25) {\Large  $\text{III}^\infty$};

    \draw[->, line width = 0.5mm] plot [smooth, tension=1.1] coordinates {(3,0) (3.35, 1)(3.5,2)};
    \draw[->, line width = 0.5mm] plot [smooth, tension=1.1] coordinates {(3,0) (2.65, 1)(2.5,2)};
    \filldraw[black] (3,0) circle (1pt);
    \node at (3,-0.25) {\Large $\text{IV}^\infty$};

    \draw[->, line width = 0.5mm] plot [smooth, tension=1.1] coordinates {(4.5,0) (4.85, 1)(5,2)};
    \draw[->, line width = 0.5mm] plot [smooth, tension=1.1] coordinates {(4.5,0) (4.15, 1)(4,2)};
    \draw[line width=0.5mm, decoration={markings, mark=at position 0.5 with {\arrow{>}}}, postaction={decorate}] plot [smooth, tension=1.1] coordinates {(4.26,0.6) (4.4, 0.8) (4.7, 1) (4.9,1.2)};
    \filldraw[black] (4.5,0) circle (1pt);
    \node at (4.5,-0.25) {\Large $\text{Va}^\infty$};

    \draw[->, line width = 0.5mm] plot [smooth, tension=1.1] coordinates {(6,0) (6.35, 1)(6.5,2)};
    \draw[->, line width = 0.5mm] plot [smooth, tension=1.1] coordinates {(6,0) (5.65, 1)(5.5,2)};
    \draw[line width=0.5mm, decoration={markings, mark=at position 0.5 with {\arrow{>}}}, postaction={decorate}] plot [smooth, tension=1.1] coordinates {(6.24,0.6) (6.1, 0.8) (5.8, 1) (5.6,1.2)};
    \filldraw[black] (6,0) circle (1pt);
    \node at (6,-0.25) {\Large $\text{Vb}^\infty$};

\end{tikzpicture}
\end{center}
    \caption{The $7$ possible semi-infinite network types. One gets the vertex `at infinity' by connecting the paths at the top.}
    \label{fig:semi-inf-networks}
\end{figure}
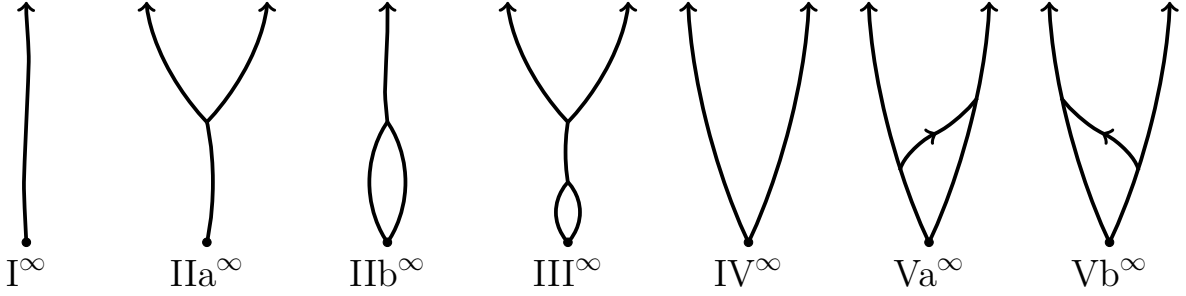

  \begin{theorem} \label{T: semi-inf-global-mins-close-by-intro}
  The following holds almost surely for all $x\in \R,\theta \in \Xi$:
        \begin{enumerate}
        \item $(x, \theta) \notin \textup{I}^\infty \cup \textup{IIb}^\infty$.
            \item $(x, \theta) \in \textup{IIa}^\infty$ if and only if $G^\theta$ does not have a local minimum at $x$.
            \item $(x, \theta) \in \textup{III}^\infty$ if and only if $G^\theta$ has a local minimum at $x$, but $x \notin Z^\theta$.
            \item $(x, \theta)\in\textup{IV}^\infty$ if and only if $x \in Z^\theta$ and $x$ is not an isolated point in both $Z^\theta \cap (-\infty, x]$ and $Z^\theta \cap [x, \infty)$. 
            \item $(x,\theta)\in \textup{Va}^\infty$ if and only if $x \in Z^\theta$, $x$ is an isolated point in $Z^\theta \cap (-\infty, x]$, but $x$ is not an isolated point in $Z^\theta \cap [x, \infty)$.
            \item $(x,\theta)\in \textup{Vb}^\infty$ if and only if $x \in Z^\theta$, $x$ is not an isolated point in $Z^\theta \cap (-\infty, x]$, but $x$ is an isolated point in $Z^\theta \cap [x, \infty)$.
        \end{enumerate}
    \end{theorem}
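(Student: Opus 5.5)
The strategy is to reduce Theorem \ref{T: semi-inf-global-mins-close-by-intro} to the finite correspondence of Theorem \ref{T: fin-time-thm}. Fix $\theta\in\Xi$ and a compact window of starting points $x\in[-M,M]$. By the coalescence results quoted above, the leftmost geodesics $\pi^\theta_{x,L}$ for $x\in[-M,M]$ all coalesce by some random time $T$. The same holds for the rightmost geodesics $\pi^\theta_{x,R}$, and these two coalesced families are eventually disjoint.

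For $t\ge T$ we therefore have common anchor points $a_t=\pi^\theta_{x,L}(t)$ and $b_t=\pi^\theta_{x,R}(t)$, with $a_t<b_t$ and neither depending on $x\in[-M,M]$. The expression under the limit in \eqref{E:Gtheta} becomes
$$F_t(x)=\L(x,0;a_t,t)+\L(x,0;b_t,t)-\sup_{\pi_1,\pi_2}(\|\pi_1\|_\L+\|\pi_2\|_\L).$$
This is a two-endpoint version of the disjointness gap, and it is a marginal of the extended landscape.

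The first step is to show that $F_t$ stabilises, i.e.\ $F_t=G^\theta$ on $[-M,M]$ for all large $t$. Take an optimal disjoint pair to $(a_{t'},t')$ and $(b_{t'},t')$. Once it passes time $t$, it can be replaced by the common tails of $\pi_L,\pi_R$ without loss, since those tails are geodesics and disjoint. So the disjoint optimiser for $t'$ restricts to one for $t$, and the same tail cancels in both terms.

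Next I transfer the deterministic argument behind Theorem \ref{T: fin-time-thm}, applied to the single row $x\mapsto F_t(x)$ with the terminal points held fixed. Zeroes of $F_t$ correspond to the two disjoint optimisers being geodesics. The leftmost geodesic to $a_t$ and the rightmost geodesic to $b_t$ are then disjoint from time $0$, which is exactly network type IV$^\infty$, Va$^\infty$ or Vb$^\infty$.

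Local minima of $F_t$ off the zero set correspond to the geodesics to $a_t$ and $b_t$ splitting and then remerging before the final split; this gives III$^\infty$, mirroring the role of $\hat G_y$ in part 4 of the finite theorem. Where there is no local minimum, the two geodesics share an initial segment, which is IIa$^\infty$.

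Part 1 is direct. Since $\theta\in\Xi$, leftmost and rightmost geodesics are eventually disjoint, so networks of type I$^\infty$ and IIb$^\infty$, which have a single infinite end, cannot occur.

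The isolation criteria in parts 4--6 follow the finite Va/Vb argument. An extra geodesic crossing from the left path to the right path in a IV-type network should produce a nearby zero on one side only. The monotonicity of leftmost and rightmost geodesics in $x$ controls which side this is: zeros accumulating from the left correspond to the left geodesic being a limit of disjoint pairs.

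The main obstacle is justifying that the soft coalescence properties used in the proof of Theorem \ref{T: fin-time-thm} hold for the two-endpoint gap $F_t$ with random, $x$-independent endpoints. This includes uniqueness and structure of disjoint optimisers, and the local-minimum versus branching correspondence. I would first try to show that the deterministic lemmas used for the finite theorem only require the endpoint pair to be fixed and joined by the given geodesic structure. If so, they apply verbatim on the event $\{t\ge T\}$, uniformly over the countable set $\Xi$ via a union over rational windows $M$.
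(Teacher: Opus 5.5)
Reducing to the finite correspondence is the right instinct, and part 1 is fine. So is the identification of zeros with two disjoint semi-infinite geodesics, once stabilization is known. But the two steps that carry the proof are not established.

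\textbf{Stabilization.} Your replacement argument presupposes that the disjoint optimizer to $((a_{t'},b_{t'}),t')$ sits at $(a_t,b_t)$ at time $t$. Concatenating with the tails only gives $F_{t'}\le F_t$ for $t'>t\ge T$. That yields existence of the limit, but not $F_t=G^\theta$ at a fixed large $t$, which is what you need in order to transfer zeros and local minima. Forcing the optimizer onto the tails is the content of Proposition \ref{P: opt-that-uses-disj-geos}, and it needs an extra input, the anti-coalescence Lemma \ref{L: coal-semi-geo-bound}. One picks $y_1<x<y_2$ whose semi-infinite geodesics trap $\tau_1$ between $\pi^\theta_{y_1,L}$ and $\pi^\theta_{x,L}$ (and $\tau_2$ symmetrically) after the last splitting time. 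No bubbles then forces the optimizer onto the tails once these have coalesced.

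\textbf{Transfer of the finite theorem.} This is the more serious gap. Theorem \ref{T: fin-time-thm} says nothing about a gap with two distinct fixed endpoints, and its proof does not go through verbatim, because every key lemma uses a common terminal point:
\begin{itemize}
\item Lemma \ref{L:opt=geo} rules out a $3$-star at $(y,1)$.
\item Lemma \ref{L: seq-disj-geos} manufactures bubbles by moving the common endpoint.
\item Proposition \ref{P:typeIV-has-close-zeroes} moves start and end points together in a box and uses uniqueness of rational geodesics.
\end{itemize}
With the terminal point held fixed, zeros of $x\mapsto G(x,y)$ are isolated (Lemma \ref{L: bowtie}). In the finite theorem, zeros accumulate only within the open quadrants $Q^{-,+}_{x,y},Q^{+,-}_{x,y}$, where both coordinates move. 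So there is no fixed-endpoint finite argument whose one-dimensional version gives parts 4--6.

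Your sketch of parts 4--6 also misplaces the difficulty. The bridge in $\textup{Va}^\infty/\textup{Vb}^\infty$ is what rules out zeros on one side, and that is a short bubble argument. Producing zeros accumulating on the other side is the hard direction, and you give no mechanism for it. As written, the ``main obstacle'' you flag is essentially the whole proof.

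\textbf{The missing idea.} Collapse the two ends into a single finite endpoint, so that the existing finite theory applies unchanged. Take rational $M$ with $N_M$ from Lemma \ref{L: excp-dir-coal-geos} beyond the relevant coalescence times. Let $y_M$ be the smallest point of $[\pi^\theta_{x,L}(M),\pi^\theta_{x,R}(M)]$ whose rightmost geodesic from $(x,0)$ follows $\pi^\theta_{x,R}$ on $[T^*,N_M]$. Then $(x,0;y_M,M)$ has type $\textup{IIa}$ or $\textup{III}$ exactly when $(x,0;\theta)$ has type $\textup{IIa}^\infty$ or $\textup{III}^\infty$. It has type $\textup{IV}$ or $\textup{Va}$ when $(x,0;\theta)$ is $\textup{IV}^\infty$ or $\textup{Va}^\infty$.

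In the positive case, every optimizer from $(z,0)$, $z$ near $x$, to $(y_M,M)$ passes through the tails at time $T^*$. Hence $G(z,y_M)=G^\theta(z)$ for the genuine gap sheet between times $0$ and $M$, and parts 2--3 follow from Theorem \ref{T: fin-time-thm} applied to the column $\hat G_{y_M}$.

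In the zero case, Proposition \ref{P:typeIV-has-close-zeroes} at rational times (Remark \ref{R:comments-for-semi-inf}) gives finite zeros at $(x+\ep_n,y_M-\de_n)$. Their disjoint geodesics overlap $\pi^\theta_{x,L},\pi^\theta_{x,R}$ at $T_L,T_R$. Concatenating with the tails gives disjoint semi-infinite geodesics from $x+\ep_n$, so $G^\theta(x+\ep_n)=0$. One-sided isolation is the separate bubble argument of Proposition \ref{P: no-close-mins-to-bridge}.

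This diagonal shift of the finite endpoint, followed by re-anchoring on the semi-infinite tails, is what replaces the two-endpoint theory your approach would otherwise have to build from scratch.
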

The major difference between Theorems \ref{T: fin-time-thm} and \ref{T: semi-inf-global-mins-close-by-intro} is that in the latter theorem, all statements refer only to the one-dimensional process $G^\theta$ for fixed $\theta$. Indeed, to view Theorem \ref{T: semi-inf-global-mins-close-by-intro} as a limit of Theorem \ref{T: fin-time-thm}, one approach would be to extend the function $(x, \theta) \mapsto G^\theta(x)$ to all of $\R^2$ by setting it equal to $\infty$ when $\theta \notin \Xi$ (this is what one obtains by naively applying definition \eqref{E:Gtheta} when $\theta \notin \Xi$). With this definition, parts $2$ and $3$ in Theorem \ref{T: semi-inf-global-mins-close-by-intro} are exact analogues of their counterparts in Theorem \ref{T: fin-time-thm}.

One advantage of working with semi-infinite geodesics rather than finite geodesics is that associated distance statistics, i.e.\, \textit{Busemann functions}, have more tractable representations. Indeed, the functions $G^\theta$ can be alternately written as a limit of multi-path Busemann functions, which can be represented in terms of reflected Brownian motions, see Proposition \ref{P: equiv-busemann-def}. For this reason we refer to $G^\theta$ as the \textbf{Busemann gap function} in direction $\theta$. We believe that for an appropriately sampled direction $\theta \in \Xi$, the gap function $G^\theta$ should also have a simple description as a variant of a two-sided Brownian motion reflected off of the line $x = 0$, started from an appropriate positive condition at $0$. While our methods do not provide such an exact description, we can show the following absolute continuity statement.

\begin{theorem}
\label{T: reflected-bm}
Let $\mu$ be any random probability measure on $\R$ such that $\mu(\Xi) = 1$, and let $\theta$ be a direction sampled from $\mu$. Then on any compact interval $I = [a, b]$, the process
$$
G^\theta|_I
$$
is absolutely continuous with respect to the law of a Brownian motion $W:[a, b] \to \R$ of diffusion coefficient $2$ and reflected off of the line $x = 0$, started from the initial condition $W(a) = G^\theta(a)$ (and independent of this initial condition).
\end{theorem}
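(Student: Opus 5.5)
We plan to derive this from the representation of $G^\theta$ as a limit of multi-path Busemann functions (Proposition \ref{P: equiv-busemann-def}), combined with the known description of the stationary horizon / multi-type Busemann processes in terms of reflected Brownian motions. We first reduce to directions that can be sampled from an explicit, tractable measure. Since $\Xi$ is countable, any random $\mu$ with $\mu(\Xi)=1$ is a countable mixture of point masses on $\Xi$. Absolute continuity is preserved under such mixtures: if each piece is absolutely continuous, so is the sum. It therefore suffices to handle $\theta$ given by a measurable selection $\theta = \theta_k$, where $\theta_1, \theta_2, \dots$ is an enumeration of $\Xi$ by some measurable rule. Concretely, we take $\theta_k$ to be the $k$-th jump direction of the Busemann process $\theta \mapsto B^\theta(a,b)$ inside a fixed window, ordered by jump size. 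We would also allow random ties in the rule.

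For such a jump direction, we identify $G^\theta$ as $W^{\theta+} - W^{\theta-}$ up to additive normalization, where $W^{\theta\pm}(x)$ are the left and right Busemann functions $B^{\theta\pm}(0,0;x,0)$. More precisely, we would use Proposition \ref{P: equiv-busemann-def} to write $G^\theta$ as a two-path Busemann difference, which is built from the pair $(B^{\theta-}, B^{\theta+})$. The joint law of $(B^{\theta_1}, B^{\theta_2})$ for fixed $\theta_1<\theta_2$ is the stationary horizon two-point marginal. There, $B^{\theta_2}$ is a Brownian motion with drift, and the difference $B^{\theta_2}-B^{\theta_1}$ is a Skorokhod-type reflection of one Brownian motion against another. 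In the limit $\theta_1\uparrow\theta$, $\theta_2\downarrow\theta$ around a jump, the two-path gap becomes a Brownian motion of diffusion coefficient $2$ (the difference of two independent rate-$2$ pieces, suitably normalized) reflected at $0$ whenever the leftmost and rightmost geodesics from $x$ coalesce before infinity.

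We then prove absolute continuity on $[a,b]$ by a Palm-type / size-biasing argument. Conditioned on the event that some jump of size in $[\epsilon, \infty)$ occurs in the $\theta$-window, the joint law of the jump location and the processes $B^{\theta\pm}|_{[a,b]}$ can be computed from the two-point stationary horizon. In that formula the difference process on $[a,b]$, given its value at $a$, has a density with respect to reflected Brownian motion; the density comes from a Girsanov factor accounting for drifts and for conditioning on a jump. Summing over $\epsilon = 2^{-n}$ and over the finitely many jumps of size at least $\epsilon$ gives the claim for every $\theta_k$.

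The main obstacle is the exchange of limits in the middle step. We must justify that the law of $B^{\theta+}-B^{\theta-}$ at a randomly selected jump is a limit of the two-point laws, and that this limit retains a density rather than degenerating under the conditioning on the jump. We would handle this first via the Markov structure of the stationary horizon in $\theta$, as a process with independent-increment-like jumps. If that fails, we would instead use the Airy line ensemble Brownian Gibbs property restricted to $[a,b]$, resampling away from the boundary values at $a$ and $b$, and bound the Radon--Nikodym derivative uniformly.
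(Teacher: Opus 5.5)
There is a genuine gap, and it starts with the identification in your second paragraph. The jump $B^\theta_R - B^\theta_L$ of the Busemann process (your $W^{\theta+}-W^{\theta-}$) is not $G^\theta$ up to normalization. By Theorem \ref{T:stat-horizon}.2--3 it is monotone in $x$. In the joint law of Lemma \ref{L: multi-theta-lem}.3, $B^{\theta_2}-B^{\theta_1}$ equals $M(\cdot)-M(0)$, the running maximum of $B_2-B_1$. That is the Skorokhod \emph{regulator}, not the reflected process. A monotone process is not absolutely continuous with respect to reflected Brownian motion, so the density you propose to compute for it does not exist. The gap function genuinely needs the two-path Busemann function: $G^\theta = B^{\theta_1}_L + B^{\theta_2}_R - \mathfrak{B}^{\theta_1,\theta_2}$, which in law is $M-(B_2-B_1)$. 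This is not a functional of the pair of one-path Busemann functions, because $B^{\theta_2}-B^{\theta_1}$ forgets $B_2-B_1$ below its running maximum. Relatedly, $G^\theta$ sits at $0$ exactly where the two semi-infinite geodesics from $x$ are disjoint, not where they meet.

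Second, the step you flag as the main obstacle is left open. That step exchanges the limit $\theta_1\uparrow\theta$, $\theta_2\downarrow\theta$ with a Palm/size-biased conditioning on jumps, and the two-point marginal of the stationary horizon does not give the law of jump locations. The missing observation is that there is no limit to exchange. Proposition \ref{P: equiv-busemann-def} is an \emph{exact} identity. On one almost sure event, simultaneously for all $\theta\in\Xi$, $G^\theta|_I = H^{\theta_1,\theta_2}|_I$ for all $\theta_1<\theta<\theta_2$ close enough to $\theta$ and outside $\Xi$, where $H^{\theta_1,\theta_2}:=B^{\theta_1}_L+B^{\theta_2}_R-\mathfrak{B}^{\theta_1,\theta_2}$. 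Each fixed direction lies outside $\Xi$ almost surely. Hence for any random $\theta$ with $\theta\in\Xi$ a.s. (however $\mu$ depends on $\L$) and any set of paths $A$,
\[
\mathbb{P}(G^\theta|_I\in A)\le\sum_{q_1<q_2\in\Q}\mathbb{P}(H^{q_1,q_2}|_I\in A).
\]
For fixed $q_1<q_2$, Lemma \ref{L: multi-theta-lem}.3 gives $H^{q_1,q_2}(x)\stackrel{d}{=}\max_{y\le x}W(y)-W(x)$ with $W=B_2-B_1$. On $[a,b]$ this is a drifted reflected Brownian motion started from $H(a)$ and independent of it, and Girsanov removes the drift. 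This makes unnecessary your enumeration of $\Xi$ by jump sizes, the conditioning on jumps, and the Brownian Gibbs fallback. (The enumeration, for a single Busemann increment, would anyway miss directions of $\Xi$ where that increment does not jump.)
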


By combining Theorem \ref{T: reflected-bm} and Theorem \ref{T: semi-inf-global-mins-close-by-intro}, we can better understand the structure of the sets $\textup{IIa}^\infty, \textup{III}^\infty$ etc. For example, for any $\theta \in \Xi$, the set $\textup{IV}^\infty$ has Hausdorff dimension $1/2$, and the set $Z^\theta = \textup{IV}^\infty \cup \textup{Va}^\infty \cup\textup{Vb}^\infty$ is locally absolutely continuous with respect to the zero set of a Brownian motion.

\subsubsection*{A note on the proofs}

The proofs of Theorem \ref{T: fin-time-thm} and \ref{T: semi-inf-global-mins-close-by-intro} are deterministic given a few basic soft geometric properties of the directed landscape which hold almost surely. These properties mainly follow from planarity and strong forms of coalescence, and should extend to other models. The properties used in the proof of Theorem \ref{T: fin-time-thm} are explicitly stated in Section \ref{sec: basic-prop}, and can be viewed as a (non-minimal) set of axioms for the sparse geometry of the directed landscape. In the semi-infinite setting, we require some additional properties for semi-infinite geodesics, stated at the end of Section \ref{sec: poss-types}. 

\subsubsection*{Future directions}
It would be interesting to see if the axiomatic approach here can be used to extend Theorem \ref{T: fin-time-thm} to the setting where the time coordinates can change, and all $27$ geodesic networks from \cite{dauvergne202327geodesicnetworksdirected} occur. At a minimum, one would need to consider a richer object than the gap sheet, which on its own cannot distinguish between a point pair supporting two disjoint geodesics versus three disjoint geodesics. We expect that the methods also have relevance in discrete random metrics in the KPZ class, e.g.\ first and last passage percolation and directed polymers.

More speculatively, one can ask if any of the results in the present paper have analogues beyond KPZ models. A natural candidate here is Kendall's Poisson roads metric \cite{kendall2017random, blanc2025geodesics}, a random fractal metric which should contain a non-trivial gap sheet. A more well-known model is Liouville quantum gravity, a family of random continuum planar metrics which exhibit qualitative similarities to the directed landscape. Much of the geodesic coalescence and network structure in Liouville quantum gravity is identical to that of the directed landscape, see \cite{gwynne2020confluence, bhatia2025strong} for results on strong coalescence, and \cite{le2010geodesics, angel2017stability, miller2025geodesics, gwynne2021geodesic} and references therein for results on geodesic networks. A major difference between Liouville quantum gravity and the directed landscape is that the latter is built from an independent background noise, whereas Liouville quantum gravity is constructed from a Gaussian free field, which is correlated. As a result, it is not clear that the gap sheet is ever non-zero in Liouville quantum gravity and a subtler object may need to be introduced.

\subsubsection*{Outline of the paper}

Section \ref{sec: background} is a preliminary section which gathers all facts about the directed landscape that we need for the proof of Theorem \ref{T: fin-time-thm}. The final part of this section, Section \ref{sec: basic-prop}, defines an almost sure set $\Omega$ on which Theorem \ref{T: fin-time-thm} holds. Beyond this section, our proof of Theorem \ref{T: fin-time-thm} is deterministic. Section \ref{Sec: I-III} establishes the equivalences in Theorem \ref{T: fin-time-thm} related to network types I-III, i.e.\, the equivalences at points where we do not have two disjoint geodesics. This includes the relationship between local minima of the gap sheet in either variable and certain geodesic networks. A small extension of the argument in Section \ref{Sec: I-III} also illuminates the geodesic structure at \textit{one-sided local minima} of the gap sheet (in either variable). We explore the structure of one-sided local minima in a short follow-up section, Section \ref{sec: one-sided-minima}. The equivalences in Theorem \ref{T: fin-time-thm} related to networks IV, V are proven in Section \ref{Sec: IV-V}, completing the proof of the theorem. A brief Section \ref{Sec: applic-proofs} proves the applications of Theorem \ref{T: fin-time-thm} recorded in Corollaries \ref{Cor: network-density} and \ref{Cor: dec-fns}.

The remainder of the paper is devoted to the semi-infinite case. Section \ref{S:prelim-semi-infinite} gathers additional preliminaries about semi-infinite paths and Busemann functions in the directed landscape. Section \ref{sec: semi-inf-cor} contains the proof of Theorem \ref{T: semi-inf-global-mins-close-by-intro}. As in the finite case, the proof is deterministic on an almost sure set $\Omega^\infty \subset \Omega$, which we define at the end of Section \ref{sec: poss-types}. The final Section \ref{sec: buse-gap-prop} contains the proof of Theorem \ref{T: reflected-bm}. An appendix proves the useful property that any weakly disjoint optimizer in the directed landscape is in fact strictly disjoint.

\subsubsection*{Common notation}

Throughout the paper, we use the notation $\pi$ for geodesics and $\tau$ for $2$-optimizers,. In particular, for $(x,s;y,t) \in \R^4_\uparrow$, we write $\pi_{(x,s;y,t),L}, \pi_{(x,s;y,t),R}$ for the leftmost and rightmost geodesics for $(x,s;y,t)$, and
$$
\tau_{(x,s;y,t),L} = (\tau_{(x,s;y,t),L,1}, \tau_{(x,s;y,t),L,2}), \qquad \tau_{(x,s;y,t),R} = (\tau_{(x,s;y,t),R,1}, \tau_{(x,s;y,t),R,2})
$$
for the leftmost and rightmost 2-optimizers. If it is clear what times $s,t$ we are going between (for example, usually $s = 0, t = 1$), we will reduce the above notation to, say, $\pi_{x,y,L}$. Similarly, if $x$ is fixed and $y$ varies, we simply write $\pi_{y,L}$. For $(x,s) \in \R^2, \theta \in \R$, we write $\pi_{(x,s),L}^\theta, \pi_{(x,s),R}^\theta$ for the leftmost and rightmost semi-infinite geodesics in direction $\theta$ starting from $(x,s)$. We use a similar $\tau$-notation for optimizers, and dropping subscripts and superscripts as discussed above when their values are clear.

Finally, for $s < t <r$, if $\pi_1: [s,t] \to \R, \pi_2:[t,r] \to \R$ are functions and $\pi_1(t) = \pi_2(t)$, then we write $\pi_1 \oplus \pi_2:[s, r] \to \R$ for the path concatenation:
$$
        (\pi_1 \oplus \pi_2) (\ell) := \begin{cases} 
      \pi_1(\ell) & \ell \leq t \\
      \pi_2(\ell) & t< \ell \leq r .
   \end{cases}
    $$

\section{Background}
\label{sec: background}

\subsection{The directed landscape}

The directed landscape $\L$ can be defined as a scaling limit of Poisson last passage percolation, which helps give some intuition for the object as a directed metric where paths are weighted by an independent background noise. Let $P$ be a homogeneous rate-$2$ Poisson point process on $\R^2$, and let, for $(p, ; q) = (x, s; y, t) \in \R^4_{\uparrow}$, 
$$
d(p; q) = \sup_{\pi} \#\{(z, r) \in P : r \in [s, t], z = \pi(r)\},
$$
where the supremum is over all $1$-Lipschitz paths $\pi:[s, t] \to \R$ with $\pi(s) = x, \pi(t) = y$.
\begin{theorem}
    \textup{(Corollary 13.12, \cite{dauvergne2022scalinglimitlongestincreasing})}. 
    We have the following convergence in distribution as $n\to \infty$, in the topology of uniform convergence on compact subsets of $\R^4_\uparrow$.
    $$
       \frac{d\left(xn^{2/3}, sn ;yn^{2/3}, tn\right) - 2n(t-s)}{n^{1/3}} \implies \L(x,s;y,t).
    $$
\end{theorem}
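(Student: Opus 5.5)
This is Corollary 13.12 of \cite{dauvergne2022scalinglimitlongestincreasing} and is only quoted here. If I had to reprove it, I would use the standard route for proving convergence to $\L$. First identify the limit of the single-time-increment process $(x,y)\mapsto d(xn^{2/3},0;yn^{2/3},n)$ as an Airy sheet. Then upgrade to the full four-parameter limit using two exact prelimiting properties of Poisson last passage percolation. The first is \emph{metric composition}: $d(p;q)=\sup_z [d(p;(z,r))+d((z,r);q)]$, up to an $O(1)$ correction from a point on the line at time $r$. The second is \emph{independence} of $P$ on disjoint time strips. These are exactly the properties that characterize $\L$ among limits: any subsequential limit with independent Airy sheet increments that composes metrically must equal the directed landscape in law.

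\textbf{Step 1 (line ensemble).} Use the RSK correspondence, via Greene's theorem in the Poisson setting. This expresses multi-path last passage values $d^{(k)}$ (the maximal number of Poisson points collected by $k$ disjoint paths) from a boundary line in terms of a Poissonized Plancherel-type line ensemble, the ``Poisson melon.'' The edge scaling limit of this ensemble is the Airy line ensemble. Finite-dimensional convergence follows from determinantal formulas. Tightness comes from a Brownian Gibbs-type resampling property together with one-point upper and lower tail bounds.

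\textbf{Step 2 (Airy sheet).} The key deterministic identity, also from RSK, is that $d(x,0;y,t)$ for general starting points $x$ equals a last passage value across the melon lines started at $x$. Equivalently, it is a Busemann-type limit of such values. Passing this identity to the limit identifies every subsequential limit of the rescaled $d(\cdot,0;\cdot,1)$ with the Airy sheet defined via last passage across the Airy line ensemble. Tightness of the two-parameter sheet in the uniform-on-compacts topology requires a two-point modulus of continuity bound, which I would get by combining the one-parameter line-ensemble bounds with monotonicity (the quadrangle inequality) in $x$.

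\textbf{Step 3 (gluing times).} For a fixed finite set of times $t_0<\dots<t_k$, the increments over $[t_{i-1},t_i]$ are independent and, after rescaling, each converges to an Airy sheet of the corresponding scale. Metric composition then reconstructs $d$ for times in this set. To pass the supremum over $z$ to the limit, I need the near-maximizers to lie in a compact window with high probability. This follows from transversal fluctuation bounds, i.e.\ parabolic decay of the rescaled $d$ in $|x-y|$. Tightness in the time variables follows from a similar modulus estimate. Uniqueness of the limit then gives convergence in law to $\L$.

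I expect Step 2 to be the main obstacle. Both the exact RSK representation of $d$ for varying starting points and the uniform two-parameter tightness needed to transfer the line-ensemble last passage representation to the limit are delicate. By comparison, Step 3 is comparatively soft once those tail bounds are in hand.
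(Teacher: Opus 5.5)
Your proposal is correct as an outline and follows the same route as the source. The paper gives no proof of its own: it imports Corollary 13.12 of \cite{dauvergne2022scalinglimitlongestincreasing}, whose argument is the chain you describe. That chain is compact convergence of the RSK melon to the Airy line ensemble, then the melon isometry together with last passage across the Airy line ensemble and quadrangle-inequality tightness to obtain Airy sheet convergence, and finally gluing independent time increments via metric composition and invoking the uniqueness-in-law characterization of $\L$.
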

This result is especially nice as the commonly used symmetries of $\L$ follow from the symmetries of Poisson last passage percolation.

Alternately, let $C(X,\R)$ be the space of continuous functions from a set $X$ to $\R$ with the topology of uniform convergence on compact sets. The directed landscape is a random function in $C(\R_\uparrow^4,\R)$ built from its two-parameter marginal $\mathcal S \in C(\R^2, \R)$, given by
$$
\mathcal S(x, y) = \mathcal L(x, 0; y, 1),
$$
known as the \textbf{Airy sheet}. For $s > 0$, we say that $\mathcal S_s \in C(\R^2, \R)$ is an Airy sheet of scale $s$ if 
$$
s \mathcal S(x s^{-2}, y s^{-2}) \stackrel{d}{=} \mathcal S_s(x, y),
$$
as random elements of $C(\R^2, \R)$.
For the purposes of this paper, we will take the definition of the Airy sheet as a black box, see \cite{Dauvergne_2022} Definition 1.2 for details.

\begin{definition} \label{D: the-DL}

    The directed landscape is the unique (in law) random function in $C(\R_\uparrow^4,\R)$ satisfying: 
    \begin{enumerate}
        \item (Airy sheet marginals) For any $t \in \R$ and $s > 0$ the increment over the time intervals $[s,s+t^3)$ 
            $$
                (x,y) \mapsto \L(x,s;y,s+t^3)
            $$
            is an Airy sheet of scale s.
        \item (Independent increments) For any disjoint intervals $\{(s_i,t_i): i\in\{1,...,\ell\}\}$, the random functions 
            $$
                \L(\cdot, s_i; \cdot, t_i), \hspace{5mm} i\in\{1,..,\ell\}
            $$
            are independent.
        \item (Metric composition law) Almost surely, for any $r < s < t$ and $x,y \in \R$, we have that
        $$
            \L(x,s;y,t) = \max_{z\in \R} \big(\L(x,s;z,r)+\L(z,r;y,t) \big).
        $$
    \end{enumerate}
\end{definition}
At first order, the directed landscape $\mathcal L(x, s; y, t)$ is close to the parabolic directed metric $-(x-y)^2/(t-s)$. The next lemma gives a precise bound.

\begin{lemma} \label{L: DL-bound}
    \textup{(\cite{Dauvergne_2022} Corollary 10.7).} There exists a random constant $C > 0$ such that for all $u = (x,s;y,t) \in \R_\uparrow^4,$ we have

    $$
        \Bigg|\L(x,s;y,t) + \frac{(x-y)^2}{t-s}\Bigg| \leq C (t-s)^{1/3}\log^{4/3} \Bigg(\frac{2\|u\|_2+2)}{t-s}\Bigg) \log^{2/3} (\|u\|_2 + 2).
    $$
\end{lemma}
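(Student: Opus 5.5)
This bound is quoted from \cite{Dauvergne_2022}, so the plan is to reconstruct the standard argument there. It has three ingredients: symmetries of $\L$, one-point tail bounds for the Airy process, and a chaining/union bound that converts tail bounds on a countable net into a uniform bound.

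The first step is to reduce to a normalized quantity using the symmetries of $\L$. These are KPZ rescaling $\L(x,s;y,t)\stackrel{d}{=} q\L(q^{-2}x,q^{-3}s;q^{-2}y,q^{-3}t)$, translation invariance in space and time, and shear invariance $\L(x+cs,s;y+ct,t)+\frac{(x-y+c(s-t))^2-(x-y)^2}{t-s}\stackrel{d}{=}\L(x,s;y,t)$ after adding back the parabola. By these, the centered quantity
$$
\frac{\L(x,s;y,t)+(x-y)^2/(t-s)}{(t-s)^{1/3}}
$$
has the law of $\mathcal S(0,0)$, a Tracy--Widom GUE variable. Both of its tails are stretched-exponential: $P(|\cdot|>m)\le Ce^{-cm^{3/2}}$. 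The second ingredient is a modulus-of-continuity estimate for the Airy sheet $\mathcal S(x,y)+(x-y)^2$ on unit boxes. This follows from Brownian-type regularity of the Airy line ensemble's top curve, and gives the same kind of tail for the supremum over a box.

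The second step is a multiscale union bound. Dyadically partition time scales, taking $t-s\in[2^{-k-1},2^{-k}]$ with $k\in\mathbb Z$. Cover the region $\|u\|_2\le 2^j$ by boxes of size adapted to that scale: spatial side $(t-s)^{2/3}$ and temporal side comparable to $t-s$. The number of boxes is polynomial in $2^j/(t-s)$. On each box, the rescaled landscape minus the parabola has supremum with tail $e^{-cm^{3/2}}$. Now choose the threshold
$$
m=A\log^{2/3}\!\big(\tfrac{2\|u\|+2}{t-s}\big)\log^{2/3}(\|u\|+2)
$$
with a large constant $A$. The probabilities then sum over all boxes, scales and $j$. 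Borel--Cantelli yields a random $C$ beyond which the bound holds everywhere.

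The stated exponent $\log^{4/3}$ on the first logarithm, rather than $\log^{2/3}$, gives extra room. That room absorbs the loss from having to control fluctuations within a box, including variations in $s,t$ as well as in $x,y$.

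The main obstacle is the within-box supremum bound when the time coordinates vary. Controlling $\L(x,s;y,t)$ uniformly as $s,t$ move requires the metric composition law, or a two-sided continuity estimate for $\L$ in time. The approach would be to bound $\L(x,s';y,t')$ from below by $\L(x,s;y,t)$ plus short-time increments, using the reverse triangle inequality. Upper bounds would come from composition through an intermediate time, with the argmax localized using the parabolic decay of the one-point bound. Carrying this out with good tails is delicate, and it is exactly the technical heart of \cite{Dauvergne_2022}, Section 10. For the purposes of this paper it is cited as stated.
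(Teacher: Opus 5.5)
Your outline is correct. It is essentially the standard argument behind the cited result: Tracy--Widom one-point tails, metric-composition control of time increments, a chaining modulus of continuity on compact pieces, and then the symmetries of $\L$ combined with a union bound over a countable cover of $\R^4_\uparrow$. The paper gives no proof and simply imports the estimate from \cite{Dauvergne_2022}, Corollary 10.7, so deferring the delicate time-increment step to that source is exactly what is done here.

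One detail needs repair if you write it out. When $|x-y|\gg(t-s)^{2/3}$, a temporal box side comparable to $t-s$ is too large for shear invariance to send the box to a unit-size one. Take that side of order $\min\{t-s,(t-s)^{5/3}/|x-y|\}$ instead. The box count then stays polynomial in $\|u\|_2+2$ and $(\|u\|_2+2)/(t-s)$. Your threshold still wins, since $\log\frac{2\|u\|_2+2}{t-s}\ge\log\sqrt2$ on $\R^4_\uparrow$.
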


\subsection{The extended landscape}
\label{SS:extended}

Let 
$$
\mathfrak{X}_\uparrow := \{(\x,s;\y,t): \x,\y \in \R^k_{\leq}, s<t\},
$$
where $\R^k_\leq = \{\x \in \R^k: x_1 \leq ...\leq x_k\}$. For $(\x,s;\y,t) \in \mathfrak{X}_\uparrow$, define:
        \begin{align} \label{eq: ext-landscape}
            \L(\x,s;\y,t) = \sup_{\gamma_1,...,\gamma_k} \sum_{i=1}^k \|\gamma_i\|_\L,
        \end{align}
 where $\gamma_i$ are mutually disjoint and $\gamma_i(s) = x_i, \gamma_i(t) = y_i$. The extension of $\L$ as a function $\mathfrak{X}_\uparrow \to \R \cup\{\infty\}$ is known as the \textbf{extended (directed) landscape}. We call a collection of ordered disjoint paths (except possibly at their endpoints) $\gamma = (\gamma_1,...\gamma_k):[s,t]\to\R^k_\leq$ a \textbf{$\mathbf{k}$-optimizer} if $\|\gamma\|_\L := \|\gamma_1\|_\L + ... + \|\gamma_k\|_\L = \L(\gamma(s),s;\gamma(t),t)$.
    In the context of this paper, we are mainly interested in the case of $k=1,2$, but some proofs will be shown for general $k$ when it is not too much effort to do so. We will use $\pi$ to refers to geodesics (1-optimizers), and $\tau = (\tau_1.\tau_2)$ for 2-optimizers. 

The \textbf{Airy line ensemble}, $\{\hat \A_i:\R \to \R, i\in \N \},$ is a stationary process of almost surely strictly ordered functions (e.g. $\hat \A_i >\hat \A_{i+1}$). It was first defined in \cite{prahofer2002scale} as the limit of the polynuclear growth (PNG) model. The \textbf{parabolic Airy line ensemble}, defined as $\{\A_i:= \hat \A_i - x^2: i\in \N\}$, was used to construct the directed landscape in \cite{Dauvergne_2022}.
Recall that the extended landscape is the extension of $\L$ to a function $\mathfrak{X}_\uparrow \to \R\cup \{\infty\}$. Defining the \textbf{extended Airy sheet} as 
    \begin{align*}
        \S(\x,\y)=\L(\x,0;\y,1)
    \end{align*}
    enables the following connection. 
   
    \begin{theorem} \label{T: greenes-thm}
       Let $\{\A_i\}_{i\in \N}$ be a parabolic Airy line ensemble. There is a coupling between $\A$ and $\L$ such that almost surely: 
        \begin{itemize}
            \item [1.] \textup{(\cite{dauvergne2022disjointoptimizersdirectedlandscape}, Corollary 1.9)}. For all $y \in \R$ and $k\in \N$, $\sum_{i=1}^k \A_i(y) = \S(0^k,y^k).$
            \item [2.] \textup{(\cite{dauvergne2022disjointoptimizersdirectedlandscape}, Proposition 5.9)}. For all $y_1 \le y_2 \in \R$, we have 
                $$
                    \S((0, 0),\y) = \A_1(y_1) + \A_1(y_2) - \inf_{z\in [y_1,y_2]} \big(\A_1(z) - \A_2(z)\big)
                $$
        \end{itemize}
    \end{theorem}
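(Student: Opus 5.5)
Both parts of Theorem \ref{T: greenes-thm} are imported from \cite{dauvergne2022disjointoptimizersdirectedlandscape}, so the plan is to reproduce that paper's route. We work in a prelimiting model with an exact RSK-type isometry, prove discrete identities there, and pass to the limit. Concretely, we take Brownian last passage percolation across $n$ independent Brownian motions $B = (B_1, \dots, B_n)$. Let $WB = (WB_1, \dots, WB_n)$ be its melon, i.e.\ the image under the Pitman/RSK transform. Two facts from the prelimit literature will be used.

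First, Greene's theorem in the form of O'Connell--Yor: the $k$-path last passage value in $B$ from the bottom-left corner to $(y, n)$ with $k$ disjoint paths equals $\sum_{i=1}^k WB_i(y)$.

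Second, the isometry property of the melon map (Dauvergne--Ortmann--Vir\'ag): multi-path last passage values from the left boundary to points $(\mathbf y, n)$ on the top line are preserved when $B$ is replaced by $WB$.

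Under KPZ scaling, $WB$ converges to the parabolic Airy line ensemble $\A$. The multi-path values converge to the extended Airy sheet $\S$; this is the convergence result of \cite{dauvergne2022disjointoptimizersdirectedlandscape}. Taking joint subsequential limits then gives a coupling of $(\A, \S)$ in which the discrete identities persist.

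Part 1 then follows by passing Greene's identity to the limit. The one technical point is that $k$ disjoint paths emanating from the single point $0$ must be interpreted as the limit of paths from distinct nearby starting points. So we need continuity of the extended landscape as starting points merge, which handles the degenerate configuration $\x = 0^k$.

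For part 2, we compute the two-path value inside the melon, using the isometry. Two disjoint paths that start at the left boundary on lines $1$ and $2$ and end on the top line at $y_1 \le y_2$ must have the following shape. The left path runs along line $1$ up to $y_1$. The right path travels on line $2$ (or lower) up to some jump location $z \in [y_1, y_2]$, then on line $1$ from $z$ to $y_2$. Its weight is $A_2(z) + A_1(y_2) - A_1(z)$ plus boundary terms. Optimizing over $z$ gives
$$
A_1(y_1) + A_1(y_2) - \inf_{z \in [y_1, y_2]} \big(A_1(z) - A_2(z)\big),
$$
where the infimum appears because $\sup_z \big(A_2(z) - A_1(z)\big) = -\inf_z \big(A_1(z) - A_2(z)\big)$. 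To obtain this formula we must show that, in the limit, excursions of the second path below line $2$ are suboptimal; for this we use strict ordering of the Airy lines and the fact that the melon's boundary behaviour concentrates the relevant starting points. Then we pass to the limit as in part 1.

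The main obstacle is this limiting step: uniform convergence of prelimiting multi-path values to $\S(\x, \y)$ jointly in $(\x, \y)$, including the boundary case of coinciding coordinates, together with control of where optimizers in the melon begin. I would first prove tightness of the multi-path values with a modulus of continuity. I would then identify the limits using the metric composition law and the independent-increment structure of Definition \ref{D: the-DL}.
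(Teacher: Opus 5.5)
Your outline is essentially the derivation behind the two results that the paper simply imports from \cite{dauvergne2022disjointoptimizersdirectedlandscape}: Greene's theorem and the RSK isometry in Brownian LPP, followed by a joint limit using that paper's convergence of multi-path LPP to the extended landscape. It is sound provided you use that convergence theorem as a black box, as you do in your third paragraph. Your closing plan to re-identify the limit via the metric composition law and independent increments would not suffice: those properties pin down $\L$, but not the fact that the limiting multi-path values equal the disjoint-path supremum \eqref{eq: ext-landscape}, which is the hard content of that paper.

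Also, excluding excursions below line $2$ in part 2 needs no limiting argument. Since $WB_i(0)=0$ and $WB_1 \ge WB_2 \ge \cdots$, a telescoping bound shows every melon path from $(0,n)$ to $(z,2)$ has weight at most $WB_2(z)$. Hence $WB_1(y_1)+WB_1(y_2)-\min_{z\in[y_1,y_2]}(WB_1-WB_2)(z)$ is already the exact prelimiting two-path value, and you simply pass it to the limit.
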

    In Theorem \ref{T: greenes-thm}, note that the $k =2$ case of $1$ is equivalent to the $y_1 = y_2$ case of $2$.
    The extended landscape enjoys the following symmetries.

    \begin{lemma} \label{L: EDL-symmetries}
        \textup{(\cite{dauvergne2022disjointoptimizersdirectedlandscape} Lemma 6.10)}. Take $q > 0$, $r,c\in \R$, and for $\x \in \R^k$ let $T_c\x$ denote the shifted vector $(x_1+c,...,x_k+c)$. We have the following equalities in distribution for the extended landscape $\L$ as functions in $C(\mathfrak{X}_\uparrow, \R)$.
        \begin{itemize}
            \item [1.] Shift stationarity: $\L(\x,s;\y,t) \overset{d}{=} \L(T_c\x, s+r; T_c\y,t + r).$ 
            \item [2.] Flip symmetry: $\L(\x,s;\y,t) \overset{d}{=} \L(-\y,-t;-\x,-s).$ 
            \item[3.] $1:2:3$ scale invariance: $\L(\x,s;\y,t) \overset{d}{=} q\L(q^{-2} \x, q^{-3}s;q^{-2}\y,q^{-3}t).$ 
            \item[4.] Skew symmetry: $\L(\x,s;\y,t) + (t-s)^{-1}\|\x-\y\|_2^2 \overset{d}{=} \L(\x,s;T_c\y,t)+(t-s)^{-1}\|\x-T_c\y\|_2^2.$
        \end{itemize}
    \end{lemma}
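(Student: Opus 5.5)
The plan is to deduce each of the four distributional identities from the corresponding symmetry of the directed landscape $\L$ itself, viewed as a random element of $C(\R^4_\uparrow,\R)$. These are standard facts from \cite{Dauvergne_2022}: shift stationarity, flip symmetry $\L(x,s;y,t)\overset{d}{=}\L(-y,-t;-x,-s)$, $1:2:3$ scaling, and the shear/skew invariance $\L(x+cs,s;y+ct,t)+(x-y+c(s-t))^2/(t-s)\overset{d}{=}\L(x,s;y,t)+(x-y)^2/(t-s)$.

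The extended landscape \eqref{eq: ext-landscape} is a deterministic functional $\Phi$ of $\L$, since path lengths $\|\cdot\|_\L$ and the supremum over disjoint $k$-tuples are defined pathwise. So it suffices to produce, for each symmetry, a transformation $T$ acting on $\L$ (with $T\L\overset{d}{=}\L$) together with a bijection $\gamma\mapsto\tilde\gamma$ on tuples of paths. This bijection should preserve disjointness, map endpoints appropriately, and transform lengths by an explicit rule. Then $\Phi(T\L)$ equals the right-hand side of the desired identity evaluated for $\L$ itself, which gives equality in law after applying $\Phi$.

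Concretely, I would construct the four bijections as follows.
\begin{itemize}
\item \textbf{Shift.} Use $\tilde\gamma_i(r)=\gamma_i(r-r_0)+c$. Each partition sum transforms termwise, so lengths are unchanged.
\item \textbf{Flip.} Use $\tilde\gamma_i(r)=-\gamma_{k+1-i}(-r)$. The reversal of indices keeps the tuple ordered, matching the reordering of $-\y$ into $\R^k_\le$. Partitions of $[s,t]$ correspond to partitions of $[-t,-s]$, so lengths are equal.
\item \textbf{Scaling.} Use $\tilde\gamma_i(r)=q^{-2}\gamma_i(q^3 r)$. Lengths scale by the factor $q$, because partition sums do.
\item \textbf{Skew.} Use the shear $\tilde\gamma_i(r)=\gamma_i(r)+c(r-s)/(t-s)$. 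This preserves disjointness and ordering, and sends endpoints $(\x,\y)$ to $(\x,T_c\y)$. For a single increment $[a,b]$, the parabolic correction $\frac{(\gamma(a)-\gamma(b))^2}{b-a}$ changes by $-2\frac{c}{t-s}(\gamma(b)-\gamma(a))+\frac{c^2(b-a)}{(t-s)^2}$. Summed over a partition this telescopes to a term depending only on $\gamma(s),\gamma(t)$, independent of the partition. Hence the infimum over partitions commutes with the correction, and summing over $i$ produces exactly $(t-s)^{-1}(\|\x-\y\|_2^2-\|\x-T_c\y\|_2^2)$.
\end{itemize}

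The step I expect to be the main obstacle is the measure-theoretic one: checking that $\Phi$ is a measurable map on (an almost sure subset of) $C(\R^4_\uparrow,\R)$, so that equality in law of $\L$ and $T\L$ transfers to equality in law of $\Phi(\L)$ and $\Phi(T\L)$. One also needs to handle the possibility of value $\infty$, and to check that the identities hold jointly over all parameters rather than pointwise. I would first try to show that on the almost sure event where Lemma \ref{L: DL-bound} holds, $\|\gamma\|_\L$ is determined by the values of $\L$ at rational points. Then the supremum in \eqref{eq: ext-landscape} can be restricted to a countable family of piecewise-geodesic disjoint paths with rational breakpoints, by approximating optimizers using the metric composition law and continuity. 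That makes $\Phi$ a countable supremum of measurable functions, hence measurable.

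Joint equality in law over all $(\x,s;\y,t)$ then follows because the pathwise identity $\Phi(T\L)=(\text{transformed }\Phi(\L))$ holds simultaneously for all parameters on that almost sure event.
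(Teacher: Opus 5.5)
The paper gives no proof of this lemma; it imports it from \cite{dauvergne2022disjointoptimizersdirectedlandscape}. Your route is a reasonable self-contained substitute: you read \eqref{eq: ext-landscape} as a deterministic functional of $\L$ and push the single-path symmetries of \cite{Dauvergne_2022} through explicit bijections on path tuples. Your bijections for items 1--3 are correct and do not depend on the parameters, so the joint conclusion is fine there.

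\textbf{Item 4.} This is where your argument fails, specifically in combination with your last paragraph. Your shear $\tilde\gamma_i(r)=\gamma_i(r)+c(r-s)/(t-s)$ has rate $c/(t-s)$ and is anchored at time $s$. The transformation of $\L$ it corresponds to is skew invariance with rate $c/(t-s)$ composed with a spatial shift by $-cs/(t-s)$, and this changes with $(s,t)$. So the pathwise identity holds only at that one pair of times. What you actually prove is item 4 jointly in $(\x,\y)$ for each fixed $s<t$, not an identity in $C(\mathfrak{X}_\uparrow,\R)$.

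Nothing stronger is true. Read literally over all of $\mathfrak{X}_\uparrow$, item 4 together with the genuine skew invariance you quote would make the law of $(\L(0,0;u,1)+u^2,\,\L(0,0;v,2)+v^2/2)$ invariant under translations of $(u,v)$ in the directions $(1,1)$ and $(1,2)$. That law would then be independent of $(u,v)$. But these coordinates are positively correlated at $u=v=0$ and decorrelate as $v-2u\to\infty$.

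The correct joint form uses the fixed-rate shear $\tilde\gamma_i(r)=\gamma_i(r)+cr$. Your telescoping computation goes through unchanged for it and gives
\[
\L(\x+cs,s;\y+ct,t)+\frac{\|\x-\y+c(s-t)\|_2^2}{t-s}\overset{d}{=}\L(\x,s;\y,t)+\frac{\|\x-\y\|_2^2}{t-s}
\]
jointly over $\mathfrak{X}_\uparrow$. Item 4 for a fixed pair $s<t$ then follows by taking rate $c/(t-s)$ and composing with item 1.

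\textbf{Measurability.} This is a secondary point. Restricting the supremum to disjoint piecewise-geodesic tuples with rational breakpoints does not work when $\x$ or $\y$ has repeated coordinates, e.g.\ $\L(x^2,s;y^2,t)$, which is the case this paper actually uses. There the first pieces are geodesics out of the common point $(x,s)$. Unless $(x,s)$ is a $2$-star point, any two of them intersect, so your countable family is empty.

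Use the countable-supremum representation only for strictly ordered $\x,\y$. There no optimizers are needed:
\begin{itemize}
\item any admissible tuple is uniformly separated;
\item the piecewise geodesic through finitely many of its points has length at least its length, because $\|\cdot\|_\L$ is an infimum over partitions;
\item moving breakpoints to nearby rationals costs little, by continuity of $\L$;
\item Lemma \ref{L: DL-bound} keeps the pieces disjoint once the mesh is fine.
\end{itemize}
Then reach the boundary of $\R^k_\le$ using the almost sure continuity of the extended landscape (\ref{P: cont}). This applies equally to the transformed field, since that field has the law of $\L$.
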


\subsection{Geodesics and optimizers} \label{S: geo and optimizers}

Recall from the introduction that a $k$-tuple of paths $\gamma = (\gamma_1, \dots, \gamma_k)$ from $(\x, s)$ to $(\y, t)$ is a \textbf{(disjoint) $k$-optimizer} if $\gamma_1  < \cdots < \gamma_k$ on $(s, t)$, and if
\begin{equation}
\label{E:optimizer-eql}
   \L(\x, s; \y, t) = \|\gamma_1\|_\L + \cdots + \|\gamma_k\|_\L. 
\end{equation}

   \begin{theorem}
       [\cite{dauvergne2022disjointoptimizersdirectedlandscape}, Theorem 1.7] 
       \label{T:dj-exist}
       Almost surely, \eqref{E:optimizer-eql} is attained for every $(\x,s;\y,t) \in \mathfrak{X}_\uparrow$ by some k-optimizer. 
   \end{theorem}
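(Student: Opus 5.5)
The statement is Theorem \ref{T:dj-exist}: almost surely, for every $(\x,s;\y,t)\in\mathfrak{X}_\uparrow$, the supremum in \eqref{eq: ext-landscape} is attained by some $k$-optimizer. The plan is a compactness argument on paths, combined with upper semicontinuity of the length functional $\|\cdot\|_\L$ and a separate argument that the limiting paths are genuinely disjoint.

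First I would work on the almost sure event where $\L$ is continuous and the bound of Lemma \ref{L: DL-bound} holds. Fix $(\x,s;\y,t)$. If the supremum equals $-\infty$ there is nothing to attain, and one should rule out $+\infty$; I would assume for this plan that $\L(\x,s;\y,t)<\infty$, with finiteness following from the triangle inequality and Lemma \ref{L: DL-bound}. Take a maximizing sequence of disjoint $k$-tuples $\gamma^n$.

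The first key step is tightness: show that near-optimal paths are uniformly bounded and equicontinuous. Lemma \ref{L: DL-bound} gives this. A path whose increment over a time interval $[a,b]$ is $h$ pays roughly $-h^2/(b-a)$ up to the logarithmic error term. Since the total length stays bounded below, we get a modulus of continuity of order $(b-a)^{1/2}$ times logarithmic factors. We then also need to replace each $\gamma^n_i$ by a geodesic-like path that is no shorter but has controlled oscillation. This can be done by replacing pieces between partition points with geodesics, keeping the ordering weakly via planarity and left-most and right-most choices. Arzelà–Ascoli then gives a subsequential uniform limit $\gamma$.

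The second step is upper semicontinuity: $\limsup_n \|\gamma^n_i\|_\L\le\|\gamma_i\|_\L$. This follows since $\|\cdot\|_\L$ is an infimum over partitions of sums of continuous functions of finitely many path values. Consequently $\gamma$ attains the supremum among weakly ordered paths, $\gamma_1\le\cdots\le\gamma_k$.

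The main obstacle is the last step: showing the limit is strictly disjoint on $(s,t)$, since the limit of disjoint paths may touch. The paper's appendix addresses exactly this (weakly disjoint optimizers are strictly disjoint), but cited here I would argue as follows. If $\gamma_i$ and $\gamma_{i+1}$ coincide somewhere in the interior, perturb locally. Swapping or separating the two paths near a touching time costs arbitrarily little, because of continuity of $\L$ and the fact that the supremum over disjoint tuples is approached by the $\gamma^n$. Concretely, I would define the value via the strictly disjoint supremum and note that it equals the weakly ordered supremum. Then I would show that on touching sets one can rebuild strictly disjoint paths with length loss tending to zero, and use that the supremum is attained by the weak limit to obtain a strict optimizer.

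Making this rigorous likely requires quantitative coalescence or a rerouting estimate, which I expect to be the hardest part. A fallback is to interpret the theorem with weak disjointness and invoke the appendix result.
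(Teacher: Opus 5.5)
There is a genuine gap, and it sits exactly where the content of the theorem lies. Note that the paper itself gives no proof: it imports the result from \cite{dauvergne2022disjointoptimizersdirectedlandscape}, where it is the hardest theorem.

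Your claim that the strictly disjoint supremum equals the weakly ordered supremum is false for the functional $\sum_i\|\gamma_i\|_\L$ you are using. Take $\x=x^2$, $\y=y^2$. The weakly ordered supremum is $2\L(x,0;y,1)$, attained by $\gamma_1=\gamma_2=\pi$ for a geodesic $\pi$. The disjoint value is $\L(x^2,0;y^2,1)=2\L(x,0;y,1)-G(x,y)$, and Theorem \ref{T: greenes-thm} gives $G(0,y)=\A_1(y)-\A_2(y)>0$ for every $y$. So pulling apart touching paths does not cost ``arbitrarily little''; it costs the disjointness gap, which is the very object this paper studies.

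Hence upper semicontinuity only yields $\sum_i\|\gamma_i\|_\L\ge\L(\x,s;\y,t)$ for your limit. The inequality may be strict if the limit touches, and then the limit tells you nothing about the disjoint problem. Moreover, even a surgery with vanishing loss would only produce another maximizing sequence, which you already had, not an attained maximum. Attainment requires that some limit point is itself strictly ordered, i.e.\ that touching is incompatible with optimality, and nothing in your sketch addresses this.

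Your geodesic-replacement step for tightness has the same defect: it destroys disjointness, so the modified tuples are no longer admissible. It is also unnecessary, since Lemma \ref{L: DL-bound} already gives a uniform modulus for paths whose length is bounded below.

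The fallback is circular. The appendix proof of Theorem \ref{T:weak-is-disjoint} invokes Theorem \ref{T:dj-exist} itself, to know that the unique weak optimizers $\sigma^{(n)}$ between rational points are disjoint. Note also that the appendix measures a tuple by its extended-landscape length $\inf\sum_j\L(\gamma(r_{j-1}),r_{j-1};\gamma(r_j),r_j)$, not by $\sum_i\|\gamma_i\|_\L$. That is the functional under which your first two steps are sound:
it is upper semicontinuous, given continuity of the extended landscape; it is at most $\L(\x,s;\y,t)$, by the trivial partition; and it is at least $\sum_i\|\gamma_i\|_\L$ on disjoint tuples.
So any subsequential limit of a maximizing disjoint sequence is a weak optimizer in that sense.

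What then remains is that some weak optimizer is strictly ordered at every interior time. That is essentially the whole content of Theorem 1.7 of \cite{dauvergne2022disjointoptimizersdirectedlandscape}. It needs genuinely probabilistic input about the extended landscape, not a deterministic perturbation; fixed-time strict ordering as in Proposition \ref{P:one-time-dj} is the kind of input required. Retreating to weak ordering would prove a different, much easier statement.
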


Proving the existence of disjoint optimizers is the most difficult result in \cite{dauvergne2022disjointoptimizersdirectedlandscape}, and is established at the very end of the paper. For most of that paper, the authors instead work with \textbf{weak $k$-optimizers}. These are $k$-tuples of paths $\gamma = (\gamma_1, \dots, \gamma_k)$ from $(\x, s)$ to $(\y, t)$ satisfying \eqref{E:optimizer-eql} but with the weaker ordering constraint $\gamma_1 \le \cdots \le \gamma_k$. The space of weak optimizers is significantly easier to work with since weak ordering is a closed property in the uniform topology, and so existence and various properties of weak optimizers can be more easily established (i.e., by compactness arguments). In order to translate all the properties of weak optimizers established in \cite{dauvergne2022disjointoptimizersdirectedlandscape} to the disjoint optimizer setting we require here, in Appendix \ref{Appen: no weak opt} we prove that almost surely, all weak optimizers in the directed landscape are disjoint optimizers. Given this fact, in the remainder of the body of the paper we will only refer to disjoint optimizers. In particular, we state the upcoming Lemmas \ref{L:rlm-opt-unique-no-bubble} and \ref{L: geo-opt-rm-coal} for disjoint optimizers, even though they are proven in \cite{dauvergne2022disjointoptimizersdirectedlandscape} for weak optimizers. 

For this next lemma, inequalities are understood coordinate-wise. That is, $\gamma \leq \gamma'$ means $\gamma_1 \leq \gamma'_1, \dots, \gamma_k \leq \gamma'_k$.

    \begin{lemma} \label{L:rlm-opt-unique-no-bubble}
        The following properties hold almost surely for $\L$.
        \begin{enumerate}
            \item \textup{(\cite[Lemma 7.6]{dauvergne2022disjointoptimizersdirectedlandscape})} For every $u = (\x,s;\y,t) \in \mathfrak{X}_\uparrow$, there exist leftmost and rightmost $k$-optimizers $\gamma_{u,L}, \gamma_{u,R}$ for $u$ such that for any other $k$-optimizer $\gamma$ for $u$, we have $\gamma_{u,L} \leq \gamma \leq \gamma_{u,R}$. 
            
            \item \textup{(\cite[\textup{Lemma 7.7}]{dauvergne2022disjointoptimizersdirectedlandscape})}
            The functions $(\x,\y) \mapsto \gamma_{(\x,s;\y,t),R}$ and $(\x,\y) \mapsto \gamma_{(\x,s;\y,t),R}$  are monotone increasing in $\x$ and $\y$. 
            
            \item \textup{(\cite[\textup{Lemma 7.2}]{dauvergne2022disjointoptimizersdirectedlandscape})} In the case where $u = (\x, s; \y, t)$ is a rational point, $\gamma_{u,L}=\gamma_{u,R}$. That is,  there is always a unique $k$-optimizers between rational points. 
            \end{enumerate}
    \end{lemma}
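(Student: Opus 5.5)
The plan is to import the weak-optimizer versions of these three statements from \cite{dauvergne2022disjointoptimizersdirectedlandscape} and transfer them to disjoint optimizers using the appendix result (Appendix \ref{Appen: no weak opt}) that almost surely every weak $k$-optimizer is a disjoint $k$-optimizer. On that almost sure event the two sets of optimizers for a given $u$ coincide, so leftmost and rightmost, monotonicity and uniqueness mean the same thing in both settings. For completeness, I would also sketch why each weak statement holds, since the arguments are short apart from part 3.

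\emph{Part 1.} Fix $u=(\x,s;\y,t)$. By Lemma \ref{L: DL-bound}, any weak optimizer for $u$ stays in a compact random region. Upper semicontinuity of $\|\cdot\|_\L$ then makes the set $\mathcal O_u$ of weak optimizers for $u$ compact in the uniform topology, and weak ordering is a closed condition. The key step is a lattice property: if $\gamma,\gamma'\in\mathcal O_u$, then the coordinatewise $\gamma\wedge\gamma'$ and $\gamma\vee\gamma'$ also lie in $\mathcal O_u$. To see this, split $[s,t]$ at the crossing times of $\gamma_i$ and $\gamma'_i$ and reassemble the pieces. Additivity of $\|\cdot\|_\L$ under concatenation gives
$$
\|\gamma\wedge\gamma'\|_\L+\|\gamma\vee\gamma'\|_\L=\|\gamma\|_\L+\|\gamma'\|_\L=2\L(u).
$$
Both new tuples are weakly ordered and each has length at most $\L(u)$, so both are optimizers. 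Taking a countable dense subfamily of $\mathcal O_u$, the iterated minima decrease to a limit that lies in $\mathcal O_u$ by closedness. This limit is $\gamma_{u,L}$, and $\gamma_{u,R}$ is obtained symmetrically.

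\emph{Part 2.} Take $\x\le\x'$ with the same $\y$ (varying $\y$ is symmetric). The exchange argument above, applied to $\gamma_{(\x,s;\y,t),R}$ and $\gamma_{(\x',s;\y,t),R}$, again gives
$$
\|\gamma\wedge\gamma'\|_\L+\|\gamma\vee\gamma'\|_\L=\L(\x,s;\y,t)+\L(\x',s;\y,t).
$$
Hence the maximum is an optimizer for $(\x',s;\y,t)$. By rightmost-ness this maximum is at most $\gamma_{(\x',s;\y,t),R}$, and it dominates $\gamma_{(\x,s;\y,t),R}$, which gives monotonicity. The leftmost case is identical.

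\emph{Part 3.} This is the main obstacle, because it is genuinely probabilistic. Since there are only countably many rational $u$, it suffices to show that a fixed $u$ almost surely has a unique optimizer. Two distinct optimizers would differ at some rational time $r\in(s,t)$. Metric composition for the extended landscape then shows that
$$
\mathbf z\mapsto \L(\x,s;\mathbf z,r)+\L(\mathbf z,r;\y,t)
$$
has two distinct maximizers. The two summands are independent by the independent increments property. Each is locally absolutely continuous with respect to Brownian-type processes, via the Airy line ensemble description in Theorem \ref{T: greenes-thm} and the Brownian Gibbs property. Independence plus this absolute continuity should give almost sure uniqueness of the argmax at each rational $r$. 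Since an optimizer is determined by its values at a dense set of times, this yields uniqueness. The delicate point, which I would take directly from \cite[Lemma 7.2]{dauvergne2022disjointoptimizersdirectedlandscape}, is controlling the multi-point ($k\ge2$) Brownian comparison so that the argmax really is unique.
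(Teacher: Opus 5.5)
Your proposal takes the same approach as the paper: import the weak-optimizer versions of all three parts from \cite{dauvergne2022disjointoptimizersdirectedlandscape} and transfer them to disjoint optimizers via Theorem \ref{T:weak-is-disjoint}, which is not circular because the appendix proof of that theorem itself only uses the weak-optimizer statements. Your supplementary sketches are optional, but the Part 3 heuristic could not replace the citation: Theorem \ref{T: greenes-thm} only describes $\z \mapsto \L(\x,s;\z,r)$ for $\x$ of the form $x^k$, and a one-variable Brownian comparison does not control the argmax over $\R^k_\le$, so deferring to \cite[Lemma 7.2]{dauvergne2022disjointoptimizersdirectedlandscape} is essential there.
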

    We will use $\pi_{u,L}$ and $\pi_{u,R}$ to refer to the leftmost and rightmost geodesics, and $\tau_{u,L}$ and $\tau_{u,R}$ for 2-optimizers. If $s,t$ are fixed (for example, $0,1$), we will use $\pi_{x,y,L}$ and $\pi_{x,y,R}$, and $\pi_{y,L}$ and $\pi_{y,R}$ in the case where $x$ is also fixed. The notation for 2-optimizers when $s,t,x$ are fixed is analogous.
    
    In the geodesic case when $k = 1$, the three properties above were all shown in \cite{Dauvergne_2022}. The uniqueness property in Lemma \ref{L:rlm-opt-unique-no-bubble}.$3$ fails to hold simultaneously for all points. However, its failure is essentially limited to exceptional behaviour near the start and endpoints. In the case of geodesics, this is shown by the following \textit{no geodesic bubbles} lemma.

    \begin{lemma}\textup{(\cite[Theorem 1]{bhatia2024dualitydirectedlandscapeapplications} or \cite[Lemma 3.3]{dauvergne202327geodesicnetworksdirected}.)}
       \label{L:no-bubbles-geo}
       The following holds almost surely for $\L$. For any geodesic $\pi:[s, t] \to \R$, and any interval $[s',t'] \subset(s,t)$, the path $\pi\big|_{[s',t']}$ is the unique geodesic between its end points.
    \end{lemma}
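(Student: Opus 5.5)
This lemma is quoted from \cite{bhatia2024dualitydirectedlandscapeapplications, dauvergne202327geodesicnetworksdirected}. The plan below shows how I would derive it from the results listed so far, plus one finiteness input that I can only point to, not prove here.

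I would argue by contradiction. The exploitable feature is that $\pi|_{[s',t']}$ sits strictly inside a longer geodesic. Suppose $\pi:[s,t]\to\R$ is a geodesic, $[s',t']\subset(s,t)$, and $\pi'\ne\pi|_{[s',t']}$ is another geodesic with the same endpoints. Fix rational times $s<s''<s'$ and $t'<t''<t$, and set $a=\pi(s'')$, $b=\pi(t'')$. Splicing in $\pi'$ gives $\tilde\pi=\pi|_{[s'',s']}\oplus\pi'\oplus\pi|_{[t',t'']}$. By the triangle inequality and the geodesic property of $\pi$, both $\pi|_{[s'',t'']}$ and $\tilde\pi$ are $(a,s'';b,t'')$-geodesics. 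Hence the leftmost and rightmost geodesics $\pi_{(a,s'';b,t''),L}$ and $\pi_{(a,s'';b,t''),R}$ differ somewhere in $[s',t']$. The same holds for the corresponding leftmost/rightmost geodesics between $(\pi(s),s)$ and $(\pi(t),t)$, whose restrictions to $[s'',t'']$ are geodesics sandwiching both paths.

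Next I would approximate by rational endpoints, where Lemma \ref{L:rlm-opt-unique-no-bubble}.3 gives uniqueness. I take rational $a_n\uparrow a$, $b_n\uparrow b$ and $a_n'\downarrow a$, $b_n'\downarrow b$, with unique geodesics $\pi_n$ from $(a_n,s'')$ to $(b_n,t'')$ and $\pi_n'$ from $(a_n',s'')$ to $(b_n',t'')$. By monotonicity (Lemma \ref{L:rlm-opt-unique-no-bubble}.2) we have $\pi_n\le\pi_{(a,s'';b,t''),L}$ and $\pi_n'\ge\pi_{(a,s'';b,t''),R}$. By compactness of geodesics (using the bound in Lemma \ref{L: DL-bound} to confine them to a compact region), $\pi_n$ converges to a geodesic from $(a,s'')$ to $(b,t'')$; by the monotone limit it is the leftmost one. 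Symmetrically, $\pi_n'\to\pi_{(a,s'';b,t''),R}$.

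The key input, and the main obstacle, is a \emph{finiteness/overlap} statement. Almost surely, for every compact box of endpoints at rational times $s''<t''$ and every $[s',t']\subset(s'',t'')$, the set of restrictions to $[s',t']$ of geodesics with endpoints in the box is finite. Given this, uniform convergence upgrades to exact agreement: $\pi_n=\pi_{L}$ and $\pi'_n=\pi_R$ on $[s',t']$ for all large $n$. This by itself only restates the bubble, so I would instead apply the finiteness one level up, at the outer times $s<s''$ and $t''<t$, using that $(a,s'')$ and $(b,t'')$ lie on the interior of the longer geodesic $\pi$. Take rational points $(c_n,\sigma)$ and $(d_n,\rho)$ with $\sigma\in(s,s'')$ and $\rho\in(t'',t)$ approaching $\pi(\sigma)$ and $\pi(\rho)$ from the left, and similarly from the right. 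The unique geodesics between these points converge to the leftmost and rightmost geodesics from $(\pi(\sigma),\sigma)$ to $(\pi(\rho),\rho)$. By finiteness they eventually coincide with those limits on $[s'',t'']$, and planarity forces them to pass through the bubble region. I would then combine this with a coalescence statement: geodesics from rational points converging to an interior point of a geodesic from both sides must eventually share an overlap interval $[\sigma',s'']$ with $\pi$. The point of the overlap is that a unique rational geodesic containing $(a,s'')$ in its interior already pins down the segment on $[s'',t'']$, leaving no room for a bubble. Concretely, a left-approaching and a right-approaching rational geodesic that agree with $\pi$ near $s''$ and near $t''$, and are ordered, cannot straddle a genuine bubble: otherwise their union would produce two distinct geodesics between rational points, contradicting Lemma \ref{L:rlm-opt-unique-no-bubble}.3.

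I expect the genuine difficulty to be the finiteness/coalescence input: uniform-in-endpoints coalescence of geodesics across a time strip. This is where the probabilistic content lives, via estimates on the number of disjoint geodesics crossing a strip. The rest of the argument is deterministic planarity and ordering. If I had to prove that input myself, I would first try a union bound over a dyadic grid of rational endpoints, combined with a bound on the probability of $k$ disjoint geodesics crossing a fixed strip, as in \cite{dauvergne2022disjointoptimizersdirectedlandscape}.
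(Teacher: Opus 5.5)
Your splicing set-up and the final uniqueness step are sound, but there is a genuine gap at the point you call the ``coalescence statement''. That statement is not an auxiliary input; it is essentially the lemma itself. Suppose a single geodesic $\gamma$ between rational endpoints at times $\sigma<s''$ and $\rho>t''$ passes through $(a,s'')$ and $(b,t'')$. Then replacing $\gamma|_{[s'',t'']}$ by $\tilde\pi$ gives a second geodesic between the same rational points, contradicting Lemma \ref{L:rlm-opt-unique-no-bubble}.3. So the two-sided approximation is unnecessary. The finiteness input is also unnecessary for upgrading convergence, since monotone rational approximants already converge in overlap to the extremal geodesic by Lemma \ref{L: geo-opt-rm-coal}.

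Your proposed derivation of the coalescence statement is circular. The rational geodesics near $(\pi(\sigma),\sigma)\to(\pi(\rho),\rho)$ converge to the leftmost and rightmost geodesics between those two points. Nothing you have established forces these extremal geodesics to visit $(a,s'')$. They could leave $\pi$ immediately at $(\pi(\sigma),\sigma)$, or rejoin it only after bypassing $(a,s'')$, which is a bubble at the larger scale, the very thing being ruled out. The sentence ``planarity forces them to pass through the bubble region'' has no justification.

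The probabilistic input you suggest would not close this gap either. Finiteness of the restricted geodesic web, or bounds on the number of disjoint geodesics crossing a strip, cannot detect bubbles. A bubble adds only one extra restriction, and its two branches share both endpoints, so they never appear together in a disjoint family.

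The paper only quotes this lemma. However, its appendix describes the cited approach and adapts it in the proof of Theorem \ref{T:weak-is-disjoint}. The key input there is different: at a fixed rational time, almost surely no interior point of any geodesic is a $2$-star (Corollary \ref{C:rational-stars}, from the dimension-$1/3$ result Proposition \ref{P:2-stars} plus time-stationarity). With it the argument runs as follows.
\begin{enumerate}
\item Take rational $r_1\in(s,s')$ and $r_2\in(t',t)$, and approximate $(\pi(r_1),r_1)$ and $(\pi(r_2),r_2)$ monotonically by rational points.
\item The unique geodesics $\gamma_n$ between these rational points converge in overlap to the rightmost geodesic $\gamma$ from $(\pi(r_1),r_1)$ to $(\pi(r_2),r_2)$.
\item The no-$2$-star property forces $\gamma=\pi$ near $r_1$ and near $r_2$.
\item For small $\alpha$ and large $n$, both $\pi$ and $\hat\pi:=\pi|_{[r_1,s']}\oplus\pi'\oplus\pi|_{[t',r_2]}$ can be spliced into $\gamma_n$ on $[r_1+\alpha,r_2-\alpha]$. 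Uniqueness of $\gamma_n$ then gives $\pi=\gamma=\hat\pi$ on $[r_1+\alpha,r_2-\alpha]\supset[s',t']$.
\end{enumerate}
Step 3, pinning the extremal limit to $\pi$ at the rational times, is exactly what your proposal is missing.
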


    The directed landscape enjoys coalescence of its geodesics and optimizers. Define the \textbf{overlap} of two $k$-optimizers, $\gamma:[s_1,t_t]\to \R^k_\leq,\gamma': [s_2,t_2]\to \R^k_\leq$, denoted by $\Ov(\gamma,\gamma')$, to be the closure of the set 
    $$
        \{r \in (s_1,t_1) \cap (s_2,t_2) : \gamma(r) = \gamma'(r)\}.
    $$
    Since $\L$ has no geodesic bubbles (Lemma \ref{L:no-bubbles-geo}), we have that $\Ov(\pi,\pi')$  is a closed interval if $\pi,\pi'$ are geodesics.

We say a sequence of $k$-tuples $\gamma_{(n)}:[s_n,t_n] \to \R^k_\leq$ converges to $\gamma: [s,t] \to \R^k_\leq$ \textbf{in the overlap topology} if $\Ov(\gamma_{(n)}, \gamma)$ is an interval whose endpoints converge to $s$ and $t$ as $n\to \infty$. We denote this by
    $$
        \gamma_{(n)} \overset{overlap}{\to} \gamma.
    $$
    
    \begin{lemma} (Overlap convergence of rightmost optimizers) \label{L: geo-opt-rm-coal}
        Almost surely, the following holds. Consider $s < t$, $\x,\y \in\R^k_\le$, and sequences $\x^{(n)}, \y^{(n)} \in\R^k_\leq$ such that $\x^{(n)} \to \x^+, \y^{(n)} \to \y^+$ and $\x^{(n)} > \x, \y^{(n)} > \y$. Then 
        $$
            \gamma_{(\x^{(n)},s;\y^{(n)},t),R} \overset{overlap}{\to} \gamma_{(\x,s;\y,t),R}.
        $$
        In the case that $\x^{(n)}=\x$ for all $n\in \N$, $\Ov( \gamma_{(\x^{(n)},s;\y^{(n)},t),R},\gamma_{(\x,s;\y,t),R}) = [s,r_n]$ where $r_n \to t$.
    \end{lemma}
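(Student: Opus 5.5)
The plan is to first establish uniform convergence of the rightmost optimizers by a compactness/monotonicity argument, and then upgrade it to overlap convergence using coalescence. Write $\gamma_n := \gamma_{(\x^{(n)},s;\y^{(n)},t),R}$ and $\gamma := \gamma_{(\x,s;\y,t),R}$.

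\textbf{Step 1: ordering and uniform convergence.} Since $\x^{(n)} > \x$ and $\y^{(n)} > \y$, the monotonicity in Lemma \ref{L:rlm-opt-unique-no-bubble}.2 gives $\gamma_n \ge \gamma$ for all $n$. Lemma \ref{L: DL-bound} keeps all optimizers with endpoints in a compact set inside a compact region of space-time. The standard modulus-of-continuity estimates for near-optimal paths then make the family $\{\gamma_n\}$ precompact in the uniform topology. Let $\gamma'$ be any subsequential limit. Continuity of $\L$ together with upper semicontinuity of path length under uniform limits shows $\|\gamma'\|_\L \ge \limsup \|\gamma_n\|_\L = \L(\x,s;\y,t)$, so $\gamma'$ is a weak $k$-optimizer for $(\x,s;\y,t)$. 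By the appendix it is a genuine disjoint optimizer. Since $\gamma'\ge\gamma$ and $\gamma$ is the rightmost optimizer (Lemma \ref{L:rlm-opt-unique-no-bubble}.1), we get $\gamma'=\gamma$. Hence $\gamma_n\to\gamma$ uniformly.

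\textbf{Step 2: from uniform to overlap convergence on interior intervals.} Fix $s<a<b<t$. I would invoke a coalescence finiteness property from \cite{dauvergne2022disjointoptimizersdirectedlandscape}, in the style of the no-bubbles Lemma \ref{L:no-bubbles-geo}. The property needed is that, almost surely, for every compact set of endpoints the collection of restrictions to $[a,b]$ of $k$-optimizers with endpoints in that set is \emph{finite}. This can be derived from the uniqueness of optimizers between rational points (Lemma \ref{L:rlm-opt-unique-no-bubble}.3) together with planarity and monotonicity: sandwich each restriction between unique optimizers from finitely many rational points. Given this, the restrictions $\gamma_n|_{[a,b]}$ take finitely many values and converge uniformly to $\gamma|_{[a,b]}$, so they eventually equal $\gamma|_{[a,b]}$. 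Letting $a\downarrow s$ and $b\uparrow t$ along a diagonal sequence, the overlap $\Ov(\gamma_n,\gamma)$ contains intervals whose endpoints converge to $s$ and $t$. The overlap is an interval because the concatenation of a common piece with the respective ends preserves optimality and rightmost-ness, so there are no bubbles. I expect this finiteness/coalescence input to be the main obstacle, since it is where the genuine probabilistic geometry of $\L$ enters; everything else is soft.

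\textbf{Step 3: the case $\x^{(n)}=\x$.} By Step 2, for large $n$ we have $\gamma_n(a)=\gamma(a)$ for some $a$ close to $s$. The restriction $\gamma_n|_{[s,a]}$ is a $k$-optimizer from $(\x,s)$ to $(\gamma(a),a)$, since restrictions of optimizers are optimizers. The restriction $\gamma|_{[s,a]}$ is the rightmost such optimizer: otherwise, concatenating a further-right optimizer with $\gamma|_{[a,t]}$ would give an optimizer for $(\x,s;\y,t)$ to the right of $\gamma$. Hence $\gamma_n\le\gamma$ on $[s,a]$. Combined with $\gamma_n\ge\gamma$ from Step 1, this gives equality on $[s,a]$. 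Thus $\Ov(\gamma_n,\gamma)=[s,r_n]$, where $r_n\to t$ by Step 2.
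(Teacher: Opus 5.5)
The gap is in Step 2. Your Steps 1 and 3 are sound. Step 3 is essentially the paper's argument for the case $\x^{(n)}=\x$; the paper only needs an auxiliary sequence $\z^{(n)}\downarrow\x$ plus monotonicity because it quotes the first part, which assumes strict inequalities.

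Finiteness of the restrictions $\gamma_n|_{[a,b]}$ is not a side input. It is essentially equivalent to precompactness in the overlap topology, i.e.\ to the content of the lemma, and your sketched derivation cannot work. Uniqueness between rational points, monotonicity and planarity all hold for the deterministic parabolic metric $-(x-y)^2/(t-s)$, whose geodesics are straight lines. There the restrictions to $[a,b]$ form a continuum, and the line from $(x_n,s)$ to $(y_n,t)$ never meets the line from $(x,s)$ to $(y,t)$ when $x_n>x$ and $y_n>y$. Sandwiching $\gamma_n$ between two rational optimizers pins down $\gamma_n|_{[a,b]}$ only if the two sandwiching optimizers already coincide on $[a,b]$. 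That is precisely the coalescence you are trying to prove. Some genuinely model-specific coalescence input is needed; the paper imports it by citing Lemma 8.6 of the disjoint-optimizers paper.

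To keep your Step 1 and make Step 2 honest, reduce to geodesic coalescence at rational times.
\begin{itemize}
\item For rational $r\in(s,t)$, Proposition \ref{P:one-time-dj} gives $\gamma(r)\in\R^k_<$.
\item Since $\gamma_n\to\gamma$ uniformly, the argument of Proposition \ref{P:locally-geodesic} gives an $\ep>0$, uniform over large $n$, such that every $\gamma_{n,i}|_{[r-\ep,r+\ep]}$ is a geodesic.
\item Lemma \ref{L: haus-iff-ov} then upgrades the uniform (hence Hausdorff) convergence of these geodesic pieces to overlap convergence, so $\gamma_n(r)=\gamma(r)$ for all large $n$.
\item Do this at rational times $r_1<r_2$. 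Both $\gamma_n|_{[r_1,r_2]}$ and $\gamma|_{[r_1,r_2]}$ are rightmost optimizers between the same endpoints, so $\gamma_n=\gamma$ on $[r_1,r_2]$, which is the overlap convergence.
\end{itemize}

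Finally, appealing to the appendix in Step 1 leans on the very statement you are proving: the proof of Theorem \ref{T:weak-is-disjoint} uses this lemma in its weak-optimizer form. It is also unnecessary, since you can run the whole argument with rightmost weak optimizers.
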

    \begin{proof}
        The first part is Lemma 8.6 from \cite{dauvergne2022disjointoptimizersdirectedlandscape}. For the last part, consider a sequence $\z^{(n)} \to \x^+$ such that $\z^{(n)} > \x$. We know by the first part of the lemma that
        \begin{equation} \label{E:ovl-conv-int}
            \Ov( \gamma_{(\z^{(n)},s;\y^{(n)},t),R},\gamma_{(\x,s;\y,t),R}) = [s_n,r_n],
        \end{equation}

        where $s_n \to s, t_n \to t$. By Monotonicity (Lemma \ref{L:rlm-opt-unique-no-bubble}.2), \eqref{E:ovl-conv-int} also holds with $\z^{(n)}$ replaced by $\x$. Noticing that the rightmost optimizer restricted to a sub-interval of time must be the rightmost optimizer between its endpoints, we have that
        $$
            \gamma_{(\x^{(n)},s;\y^{(n)},t),R}\big|_{[0,s_n]} \text{ and }\gamma_{(\x,s;\y,t),R}\big|_{[0,s_n]}
        $$
        are both rightmost optimizers between their common endpoints, and so they must be equal, finishing the proof.
       
    \end{proof}
    Now, for a geodesic $\pi:[s,t]\to \R$, we write
    $$
        \mathfrak{g}\pi = \{(\pi(r),r): r\in [s,t]\}
    $$
    for the graph of $\pi$ (in space-time coordinates). 
   The next lemma shows that for geodesics, almost surely convergence in overlap is equivalent to the (a priori weaker) notion of convergence of graphs in the Hausdorff topology.

    \begin{lemma} 
    [\cite{dauvergne202327geodesicnetworksdirected} Proposition 3.5.2] \label{L: haus-iff-ov}
        Almost surely, for all sequences of geodesics $(\pi_n)_{n\in \N}$ and geodesics $\pi$, $\pi_n\overset{overlap}{\rightarrow}\pi$ if and only if $\mathfrak{g}\pi_n \to \mathfrak{g}\pi$ in the Hausdorff topology.
    \end{lemma}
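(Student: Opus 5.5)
The plan is to prove the two implications separately. The direction from Hausdorff convergence to overlap convergence is the substantive one. I will handle it by sandwiching $\pi_n$ between rightmost and leftmost geodesics whose endpoints converge monotonically, and then applying Lemma \ref{L: geo-opt-rm-coal}. I write $\pi_n:[s_n,t_n]\to\R$ and $\pi:[s,t]\to\R$.

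\emph{Overlap $\Rightarrow$ Hausdorff.} Suppose $\Ov(\pi_n,\pi)=[a_n,b_n]$ with $a_n\to s$ and $b_n\to t$. I will also use $s_n\to s$ and $t_n\to t$, which I take to be implicit in the definition of overlap convergence (the domains must match in the limit). On $[a_n,b_n]$ the graphs coincide, so it remains to control the pieces $\pi_n|_{[s_n,a_n]}$ and $\pi_n|_{[b_n,t_n]}$, which live on vanishing time intervals. Lemma \ref{L: DL-bound} gives a locally uniform modulus of continuity for geodesics over short time intervals on compact sets. Concretely, a geodesic over time length $\delta$ has spatial displacement $O(\delta^{2/3}\log^{c}(1/\delta))$, because any larger displacement would cost more than the $\L$-error allows. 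Hence $\pi_n$ on $[s_n,a_n]$ stays within $o(1)$ of $\pi_n(a_n)=\pi(a_n)\to\pi(s)$, and likewise at the top end. This gives Hausdorff convergence of the graphs.

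\emph{Hausdorff $\Rightarrow$ overlap.} Hausdorff convergence of graphs gives $s_n\to s$, $t_n\to t$, and $\pi_n(r)\to\pi(r)$ uniformly on compact subintervals of $(s,t)$.
\begin{enumerate}
\item Fix $s<s''<s'<t'<t''<t$. For large $n$, the restriction $\pi_n|_{[s'',t'']}$ is a geodesic from $(\pi_n(s''),s'')$ to $(\pi_n(t''),t'')$. By Lemma \ref{L:no-bubbles-geo}, $\pi|_{[s'',t'']}$ is the unique geodesic between its endpoints, so it equals both $\pi_{(\pi(s''),s'';\pi(t''),t''),R}$ and the corresponding leftmost geodesic.
\item Set $x^{+}_n=\sup_{m\ge n}\pi_m(s'')+1/n$ and $y^+_n=\sup_{m\ge n}\pi_m(t'')+1/n$. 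These sequences decrease, stay strictly above $\pi(s'')$ and $\pi(t'')$ respectively, and converge to them.
\item By monotonicity (Lemma \ref{L:rlm-opt-unique-no-bubble}.2) we get
\[
\pi_n|_{[s'',t'']}\le \pi_{(\pi_n(s''),s'';\pi_n(t''),t''),R}\le \pi_{(x^+_n,s'';y^+_n,t''),R}.
\]
By Lemma \ref{L: geo-opt-rm-coal}, the right-hand side converges in overlap to $\pi|_{[s'',t'']}$.
\item The symmetric version of Lemma \ref{L: geo-opt-rm-coal} for leftmost geodesics (obtained by reflection $x\mapsto -x$) gives a lower bound $\pi_{(x^-_n,s'';y^-_n,t''),L}\le \pi_n|_{[s'',t'']}$ that also converges in overlap to $\pi|_{[s'',t'']}$.
\item For large $n$, both bounds agree with $\pi$ on $[s',t']$. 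Hence $\pi_n=\pi$ on $[s',t']$.
\end{enumerate}
Since $s'$ and $t'$ were arbitrary, the overlap set $\Ov(\pi_n,\pi)$ eventually contains any compact subinterval of $(s,t)$. By no bubbles (Lemma \ref{L:no-bubbles-geo}) it is a closed interval, so its endpoints converge to $s$ and $t$.

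The main obstacle I expect is the bookkeeping in the sandwich step. Lemma \ref{L: geo-opt-rm-coal} requires strict inequalities $x^{(n)}>x$ and $y^{(n)}>y$, while the endpoints $\pi_n(s''),\pi_n(t'')$ of the actual geodesics need not be monotone in $n$. This is why I dominate them by the monotone envelopes $x_n^\pm,y_n^\pm$. I also need the modulus-of-continuity estimate from Lemma \ref{L: DL-bound} to hold uniformly over all geodesics in a compact region; I would take this from standard consequences of that bound.
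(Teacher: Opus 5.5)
The paper gives no proof of this lemma; it is quoted from \cite{dauvergne202327geodesicnetworksdirected}. So I assess your argument on its own terms.

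Your Hausdorff $\Rightarrow$ overlap direction is sound. The monotone envelopes $x_n^\pm, y_n^\pm$ make Lemma \ref{L: geo-opt-rm-coal} and its reflected leftmost version applicable. No bubbles identifies both limits with $\pi|_{[s'',t'']}$, and the sandwich then forces $\pi_n=\pi$ on $[s',t']$. A further merit is that this direction does not use \ref{P: precomp}, which the paper derives from this very lemma. You are also right to build $s_n\to s$, $t_n\to t$ into the definition of overlap convergence. Otherwise a fixed geodesic strictly extending $\pi$ beyond $[s,t]$ would converge to $\pi$ in overlap but not in Hausdorff distance.

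The gap is in the overlap $\Rightarrow$ Hausdorff direction. Your claim that a geodesic over a time interval of length $\delta$ has spatial displacement $O(\delta^{2/3}\log^{c}(1/\delta))$ is false: there is a geodesic from $(0,0)$ to $(100,\delta)$. What Lemma \ref{L: DL-bound} controls is the deviation from the straight line between the endpoints, with an error that grows (polylogarithmically) with the size of the endpoints. So you cannot conclude that $\pi_n|_{[s_n,a_n]}$ stays near $\pi(a_n)$ until you know that $x_n:=\pi_n(s_n)$ and $y_n:=\pi_n(t_n)$ are bounded. Ruling out $x_n$ drifting away is the actual content of this direction, and your argument does not address it.

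Here is one repair. Suppose a geodesic from $(x_n,s_n)$ to $(y_n,t_n)$ passes through $(z,r)$, and let $\ell_n$ be the linear interpolation between its endpoints. Use the metric composition law together with the identity
\[
\frac{(x_n-z)^2}{r-s_n}+\frac{(z-y_n)^2}{t_n-r}-\frac{(x_n-y_n)^2}{t_n-s_n}=\frac{t_n-s_n}{(r-s_n)(t_n-r)}\bigl(z-\ell_n(r)\bigr)^2 .
\]
Combined with Lemma \ref{L: DL-bound}, this gives
\[
|z-\ell_n(r)|^2\le \min(r-s_n,\,t_n-r)\cdot \mathrm{polylog}\bigl(|x_n|+|y_n|+2\bigr).
\]
Apply this at $r=a_n$ and $r=b_n$, where $\pi_n=\pi$ is bounded. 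Since $a_n-s_n\to 0$ and $t_n-b_n\to 0$ while the error is only polylogarithmic, this forces $|x_n|+|y_n|$ to stay bounded. The same inequality at intermediate $r\in[s_n,a_n]$ then gives $\sup_{[s_n,a_n]}|\pi_n-\pi(a_n)|\to 0$, and symmetrically near $t$.

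Alternatively, once the endpoints are bounded, Lemma \ref{L:geo-precompact} finishes the argument. Every subsequential Hausdorff limit of $\mathfrak{g}\pi_n$ is the graph of a geodesic on $[s,t]$ that agrees with $\pi$ on $(s,t)$, hence equals $\mathfrak{g}\pi$.
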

    This correspondence between overlap and Hausdorff convergence is made especially useful by the following precompactness result.

    \begin{lemma}
        [\cite{dauvsarvirthreehalvesvariation2022} Lemma 3.1]  \label{L:geo-precompact}
        The following holds almost surely. Let $(x_n,s_n;y_n,t_n) \to (x,s;y,t)$, and let $\pi_n$ be any sequence of geodesics for $(x_n,s_n;y_n,t_n)$. Then the sequence of graphs $\mathfrak{g}\pi_n$ is precompact in the Hausdorff topology, and any subsequential limit of $\mathfrak{g}\pi_n$ is the graph of a geodesic for $(x,s;y,t)$.
    \end{lemma}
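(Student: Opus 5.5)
We argue by contradiction, using compactness in the Hausdorff topology together with the two previous lemmas. We read the statement as the precompactness of geodesic graphs under convergence of endpoints: almost surely, if $(x_n,s_n;y_n,t_n) \to (x,s;y,t)$ and $\pi_n$ is any geodesic for $(x_n,s_n;y_n,t_n)$, then the graphs $\mathfrak g\pi_n$ are precompact in the Hausdorff topology, and every subsequential limit is the graph of a geodesic for $(x,s;y,t)$.

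\textbf{Step 1: boundedness.} Restrict to the almost sure event of Lemma \ref{L: DL-bound}. Since $\pi_n$ is a geodesic, its length is finite. For any intermediate point $(\pi_n(r),r)$, the metric composition law gives
$$
\L(x_n,s_n;y_n,t_n) = \L(x_n,s_n;\pi_n(r),r) + \L(\pi_n(r),r;y_n,t_n).
$$
The left side is bounded uniformly in $n$, because $\L$ is continuous and the endpoints converge. On the right side, Lemma \ref{L: DL-bound} bounds each term above by the parabola $-(\cdot)^2/(\cdot)$ plus a logarithmic error. If $|\pi_n(r)|$ were huge, the right side would tend to $-\infty$, a contradiction. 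Hence the graphs $\mathfrak g\pi_n$ all lie in a fixed compact set. By Blaschke's theorem, the compact subsets of a compact set form a compact space under the Hausdorff metric. So along a subsequence $\mathfrak g\pi_n \to K$ for some compact set $K$.

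\textbf{Step 2: the limit is a path graph.} We must show that $K$ is the graph of a continuous function on $[s,t]$. The time projection of $K$ is all of $[s,t]$, since $s_n\to s$ and $t_n\to t$. The main obstacle is single-valuedness, namely ruling out that $K$ contains two points $(a,r)$ and $(b,r)$ with $a\ne b$. We get equicontinuity from a modulus of continuity of geodesics. Apply the composition identity above on subintervals $[r,r']$. The parabolic term $-(\pi_n(r)-\pi_n(r'))^2/(r'-r)$ must be compensated by the error term, which is $O((r'-r)^{1/3}\log^{2}(\cdot))$. This yields
$$
|\pi_n(r)-\pi_n(r')| \le C'(r'-r)^{2/3}\log^{2}\bigl(1+ 1/(r'-r)\bigr)
$$
uniformly in $n$, for $r,r'$ away from the endpoints. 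To handle the endpoints, use the same estimate with the endpoints themselves, which converge. Once we have equicontinuity, Arzelà–Ascoli (after extending each $\pi_n$ constantly to a common interval) gives uniform convergence along a further subsequence to a continuous $\pi$ with $\pi(s)=x$ and $\pi(t)=y$, and then $K=\mathfrak g\pi$.

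\textbf{Step 3: $\pi$ is a geodesic.} Fix a partition $s=r_0<\dots<r_k=t$. Choose interior points $r_i^n\in[s_n,t_n]$ with $r_i^n\to r_i$, and set $r_0^n=s_n$, $r_k^n=t_n$. Since $\pi_n$ is a geodesic,
$$
\sum_i \L(\pi_n(r^n_{i-1}),r^n_{i-1};\pi_n(r^n_i),r^n_i)=\L(x_n,s_n;y_n,t_n).
$$
Uniform convergence and continuity of $\L$ on $\R^4_\uparrow$ pass this identity to the limit, giving
$$
\sum_i \L(\pi(r_{i-1}),r_{i-1};\pi(r_i),r_i)=\L(x,s;y,t)
$$
for every partition. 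Taking the infimum over partitions shows $\|\pi\|_\L=\L(x,s;y,t)$, so $\pi$ is a $(x,s;y,t)$-geodesic. Because any subsequence has a further subsequence with this property, the claim holds for all subsequential limits.
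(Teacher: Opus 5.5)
The paper does not prove this lemma; it imports it from \cite{dauvsarvirthreehalvesvariation2022}. Your architecture is the standard route: boundedness and equicontinuity from the shape bound of Lemma \ref{L: DL-bound}, then Arzel\`a--Ascoli, then continuity of $\L$ along partitions (i.e.\ upper semicontinuity of length) to identify the limit as a geodesic. Steps 1 and 3 are fine as written.

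\textbf{Step 2 as argued.} The reasoning you give does not yield the estimate you state. Consider the identity
\[
\L(x_n,s_n;y_n,t_n)=\L(x_n,s_n;\pi_n(r),r)+\L(\pi_n(r),r;\pi_n(r'),r')+\L(\pi_n(r'),r';y_n,t_n).
\]
The middle term can only be bounded below via the lower bound on the left side and upper bounds on the two outer terms. The errors from Lemma \ref{L: DL-bound} attached to those are of size $(r-s_n)^{1/3}$, $(t_n-r')^{1/3}$, $(t_n-s_n)^{1/3}$ up to logarithms, i.e.\ $O(1)$ rather than $O((r'-r)^{1/3})$. Moreover, the outer parabolic terms do not drop out.

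\textbf{What the identity controls.} It controls the parabolic excess. Let $d_i,h_i$ be the spatial and time increments of the three pieces. Let $E_0<\infty$ bound the error term of Lemma \ref{L: DL-bound} over pairs of points in your compact set from Step 1; this is finite because $h^{1/3}\log^{4/3}(K/h)$ stays bounded as $h\downarrow 0$. Then
\[
\frac{d_1^2}{h_1}+\frac{d_2^2}{h_2}+\frac{d_3^2}{h_3}-\frac{(d_1+d_2+d_3)^2}{h_1+h_2+h_3}\le 4E_0 .
\]
Minimizing the left side over $d_1,d_3$ with $d_1+d_3$ fixed shows it is at least $(d_2-m_nh_2)^2/h_2$, where $m_n=(y_n-x_n)/(t_n-s_n)$. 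Hence
\[
|\pi_n(r')-\pi_n(r)|\le |m_n|(r'-r)+2\sqrt{E_0(r'-r)}
\]
uniformly in $n$ and all $s_n\le r<r'\le t_n$. When $r=s_n$ or $r'=t_n$, drop the corresponding piece; no separate endpoint argument is needed.

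This is only a $1/2$-H\"older modulus, but equicontinuity is all Arzel\`a--Ascoli needs. The $(r'-r)^{2/3}$ rate you wrote is true, but it requires running this estimate on sub-geodesics at every dyadic scale and chaining, which is unnecessary here. With Step 2 replaced by this bound, your proof is complete.
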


\subsection{Geodesic networks}
\label{SS:geodesic-networks}
For every $u = (x,s;y,t) \in \R^4_{\uparrow}$, define the \textbf{geodesic network} for $u$ by 
    $$
        \Gamma(u) = \bigcup \{\mathfrak{g}\pi : \pi \text{ is a geodesic for $u$}\}.
    $$
    The geodesic network $\Gamma(u)$ gives rise to a finite planar directed graph, given by mapping $(x, s), (y, t)$ and all branch points in $\Gamma(u)$ to vertices, and connecting these points together with directed edges according to the topology of $\Gamma(u)$.
    More precisely, following \cite{dauvergne202327geodesicnetworksdirected}, we can construct a directed graph $G(u) = (V, E)$ from $\Gamma(u)$ as follows:
    \begin{enumerate}
        \item The vertex set $V$ of $G(u)$ contains $(x,s)$ and $(y,t)$, and all points $v = (z,r) \in \Gamma(u)$ such that there are at least two distinct geodesics $\pi_1,\pi_2$ for $u$ with $\pi_1(r) = \pi_2(r) = z$ and $\pi_1|_{[r-\ep, r+\ep]} \neq \pi_2|_{[r-\ep, r+\ep]}$ for all $\ep > 0$. 
        \item For vertices $v_1 = (z_1, r_1), v_2 = (z_2,r_2)\in V$ with $r_1 < r_2$, the edge set $E$ contains one copy of the directed edge $(v_1,v_2)$ for every distinct path $\pi: [r_1,r_2]\to \R$ such that $\mathfrak{g}\pi \subset \Gamma(x,s;y,t)$, $\pi$ is a geodesic for $(v_1;v_2)$, and $V \cap \mathfrak{g}\pi = \{v_1, v_2\}$. We write $[e]$ for the embedding of a directed edge $e$ in the network $\Gamma(u)$.
    \end{enumerate}
    While \cite{dauvergne202327geodesicnetworksdirected} focuses on the underlying graph structure of the network, for this paper we will also care about the way the graph is drawn. We will classify geodesic networks up to the following notion of isomorphism, which is similar to a directed graph isomorphism, while also distinguishing between networks Va and Vb (see Figure \ref{fig:fixed-time-networks}). For $u = (x, s; y, t)$ and $u' = (x', s'; y', t')$, we say that $\Gamma(u) \sim \Gamma(v)$ if there is a directed graph isomorphism $\psi:G(u) \to G(u')$ and a homeomorphism $\phi:\Gamma(u) \to  \Gamma(u')$ such that 
    \begin{itemize}
        \item $\phi(v) = \psi(v)$ for all vertices $v \in V(u)$ and $\phi[e] = [\psi(e)]$ for all edges $e \in E(u)$. 
        \item $\phi$ preserves geodesic order, in the sense that if $\pi, \tau$ are any geodesics from $(x, s)$ to $(y, t)$ with $\pi \le \tau$, then $\phi(\pi) \le \phi(\tau)$.
    \end{itemize}
    We call the equivalence class of a network $\Gamma(u)$ under $\sim$ its \textbf{network type}. Our discussion of isomorphism and network type will become much more concrete in light of the upcoming network type classification, Lemma \ref{L: fixed-time-possible-networks}.

    The main result of \cite{dauvergne202327geodesicnetworksdirected} shows that (up to a weaker notion of transpose-isomorphism) there are exactly 27 geodesic network graphs that appear in the directed landscape, and finds their Hausdorff dimensions. As we are restricting our attention to networks between fixed times, we will not need the full power of this result. Rather, the main result we appeal to from \cite{dauvergne202327geodesicnetworksdirected} is a bound on size of the set of geodesic $3$-stars.

    A point $(x,s) \in \R^2$ is a $k$\textbf{-star point} if there exists
    $$
        \mathbf{z} \in \R^k_{<}:= \{\mathbf{z} \in \R^k : z_1 < ...<z_k\}
    $$
    and a time $t \in \R$ and if there are geodesics $\pi_1,...\pi_k$ satisfying $\pi_i(t) = z_i$ and $\pi_i(s) = x$ and such that $\pi_i(r) \ne \pi_j(r)$ for all $i \ne j, r \ne s$. In the case that $t < s$, we call $\pi = (\pi_1,...\pi_k)$ a \textbf{backward $k$-geodesic star}, and if $s < t$, we call $\pi$ a  \textbf{forward $k$-geodesic star}.

    To say something about how common certain $k$-star points are, we introduce the $1:2:3$-metric:
    $$
        d_{1:2:3}((x,s),(y,t)) = |t-s|^{1/3} + |y-x|^{1/2}.
    $$
    For $S\subset \R^2$, $\text{dim}_{1:2:3}(S)$ is the Hausdorff dimension with respect to the above metric. 

    \begin{lemma}
    \label{L:3-star}
        \textup{(\cite{dauvergne202327geodesicnetworksdirected} Theorem 1.5)} Let $\operatorname{Star}_k \subset \R^2$ be the set of all $k$-star points. Then almost surely,
        $$
            \textup{dim}_{1:2:3}(\operatorname{Star}_3) = 2.
        $$
    \end{lemma}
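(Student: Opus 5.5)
This lemma is quoted from \cite{dauvergne202327geodesicnetworksdirected}, so in the body of the paper we use it as a black box. If we had to prove it ourselves, we would treat the upper bound $\dim_{1:2:3}(\operatorname{Star}_3)\le 2$ and the lower bound $\ge 2$ separately, both through the extended landscape. The key reformulation is the following. A point $(x,s)$ is a forward $3$-star with endpoints $\z\in\R^3_<$ at time $t$ exactly when
$$\L((x,x,x),s;\z,t)=\sum_{i=1}^3 \L(x,s;z_i,t),$$
with the maximizing $3$-optimizer consisting of geodesics. In other words, two "disjointness gaps" at the triple point $(x,x,x)$ must vanish simultaneously. By Lemma \ref{L: geo-opt-rm-coal} and the no-bubbles property (Lemma \ref{L:no-bubbles-geo}), we can always shrink $t$ to a rational time close to $s$ and move the $z_i$ to rational points without destroying the star. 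So it suffices to work with countably many fixed targets $(\z,t)$.

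For the upper bound we would use a first-moment covering argument in the $1:2:3$ metric. Tile a compact region of space-time by boxes $B$ of size $\ep^2$ in space and $\ep^3$ in time; there are about $\ep^{-5}$ of them. The needed estimate is
$$\mathbb P\big(B \text{ contains a 3-star point with rational target at distance } \ge 1\big)\le C\ep^{3-o(1)}.$$
Summing over the $\ep^{-5}$ boxes then gives an expected $2$-dimensional covering count, and a Borel--Cantelli argument over dyadic $\ep$ finishes the bound. To get the estimate, note that on the event, there are three geodesics leaving $B$ that are disjoint on the time range $[s+\ep^3,t]$ and start within $\ep^2$ of each other. By $1:2:3$ scaling (Lemma \ref{L: EDL-symmetries}) this becomes a statement about the Airy line ensemble via Theorem \ref{T: greenes-thm}: the top lines must stay within $O(1)$ of their "coalesced" configuration over a macroscopic window. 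We expect a Brownian Gibbs resampling estimate to give the probability bound, with the time-localization of the common starting point contributing one extra factor beyond the fixed-time disjointness exponent. Pinning down the exponent exactly, so that it is $3$ and not larger, is the step we expect to be the main obstacle. We would first try to derive it from the $k=3$ disjointness exponent for a fixed time slice and then integrate over the $\ep^{-3}$ possible time slices.

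For the lower bound we would construct a Frostman measure. Consider the process
$$(x,s)\mapsto \big(\text{gap}_{1}(x,s),\ \text{gap}_{2}(x,s)\big)$$
of the two disjointness gaps toward a fixed rational target. Each gap is locally Brownian in the spatial variable, as for the gap sheet. We would define a "local time" measure $\mu$ on the common zero set of the two gaps and verify a second-moment (energy) bound
$$\mathbb E\iint d_{1:2:3}(u,v)^{-\alpha}\,d\mu(u)\,d\mu(v)<\infty \qquad \text{for all } \alpha<2.$$
This would come from two-point estimates that mirror the one-point estimate of the upper bound, together with approximate independence of the landscape over well-separated boxes (independent increments in Definition \ref{D: the-DL}). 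Combined with positivity of $\mu$ with positive probability, and a $0$--$1$ argument via shift and scale invariance, this gives $\dim_{1:2:3}\ge 2$ almost surely.
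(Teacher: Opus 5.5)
The paper gives no proof of this statement. It is imported verbatim from \cite[Theorem 1.5]{dauvergne202327geodesicnetworksdirected}, and its only use is to derive Lemma~\ref{L: rare-3-stars}. So your opening sentence is exactly the paper's treatment, and it is all that is required here.

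The self-contained argument you then sketch is not a proof, however. The architecture (a first-moment covering for the upper bound, a Frostman/energy argument for the lower bound) is the standard one. But every estimate that carries content is missing, and some of the routes you suggest for obtaining them would not work.

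\emph{Upper bound.} Everything rests on $\mathbb P(B \text{ contains a } 3\text{-star})\le \ep^{3-o(1)}$, which you leave open. Theorem~\ref{T: greenes-thm} describes $\L(0^k,0;\cdot,1)$ for the single starting point $(0,0)$. The event that \emph{some} point of a space-time box is a star is not an event about that one line ensemble. Controlling the star point as it varies over $\ep^2$ in space and $\ep^3$ in time is precisely the difficulty. Nor is there a usable ``fixed-time $k=3$ exponent'' to integrate. At a fixed time there are no $3$-stars at all; this is Lemma~\ref{L: rare-3-stars}, which the paper deduces \emph{from} the present statement, so it is not available to you. A near-miss version at spatial resolution $\ep^2$ and temporal resolution $\ep^3$ is just the box estimate again. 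Both windows live at the same $1:2:3$ scale, so there is no separate factor coming from ``time localization'', and summing over time slices is circular.

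\emph{Lower bound.} The natural reading of your two gaps is
\[
g_{12}=\L(x,s;z_1,t)+\L(x,s;z_2,t)-\L(x^2,s;(z_1,z_2),t),\qquad g_{12,3}=\L(x^2,s;(z_1,z_2),t)+\L(x,s;z_3,t)-\L(x^3,s;\z,t).
\]
Both are nonnegative, and their common zero set is the set of $3$-stars toward $(\z,t)$. A single reflected-Brownian-type constraint vanishing somewhere in a box of spatial width $\ep^2$ costs probability of order $\ep$. If $g_{12}$ and $g_{12,3}$ behaved like two roughly independent such constraints, as your sketch suggests, the per-box cost would be $\ep^2$, giving dimension $3$, not $2$. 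The missing factor $\ep$ reflects a strong negative correlation between the two constraints, which your outline neither identifies nor quantifies. The energy bound for all $\alpha<2$ needs two-point estimates that encode exactly this, and they are asserted, not derived. Independent increments (Definition~\ref{D: the-DL}.2) do not supply the decorrelation either: star events at two points both depend on the landscape all the way up to the common target time. Finally, constructing a nontrivial ``local time'' on a space-time zero set whose time regularity is $1/3$-H\"older rather than Brownian is itself unaddressed.

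A minor point: Lemma~\ref{L:no-bubbles-geo} only controls interior segments of geodesics, so it does not justify moving the star's targets to rational points. Neither bound actually needs that reduction.
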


    The Hausdorff dimension in Lemma \ref{L:3-star} is small enough that $3$-stars do not appear at any fixed time almost surely.

    \begin{lemma} \label{L: rare-3-stars}
        Almost surely, for all $x\in\R, s\in\Q$, $(x,s)$ is not a $3$-star point.  
    \end{lemma}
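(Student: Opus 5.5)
The plan is to deduce Lemma \ref{L: rare-3-stars} from Lemma \ref{L:3-star} by a dimension-counting argument combined with stationarity of $\L$ in time. Since $\Q$ is countable, it suffices to fix a single $s \in \Q$ and show that almost surely the time-$s$ slice $\operatorname{Star}_3 \cap (\R \times \{s\})$ is empty. By shift stationarity (Lemma \ref{L: EDL-symmetries}.1), the law of this slice does not depend on $s$, so we may take $s = 0$.

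The key step is a Fubini-type argument in the time coordinate. In the $1:2:3$ metric, the set $\R \times [a,b]$ has dimension $3$: the spatial direction contributes $2$ (since $|y-x|^{1/2}$ halves distances) and the time direction contributes $1$. Under this metric, a horizontal line $\R \times \{s\}$ has dimension $2$. Suppose that with probability $p > 0$ the slice at time $0$ contains a $3$-star point. By stationarity, for every $s$ the slice at time $s$ contains one with probability $p$. Fubini then gives, with positive probability, a set $T \subset [0,1]$ of positive Lebesgue measure of times $s$ whose slice is nonempty.

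Picking one star point from each such slice yields a subset of $\operatorname{Star}_3$ that projects onto a positive-measure set of times. Under the $1:2:3$ metric, the time projection $(x,s) \mapsto s$ is Lipschitz from $d_{1:2:3}$ to the metric $|t-s|^{1/3}$ on $\R$, and with respect to that metric a positive-measure set of times has Hausdorff dimension $3$. Since Lipschitz maps do not increase Hausdorff dimension, $\dim_{1:2:3}(\operatorname{Star}_3) \ge 3$, contradicting Lemma \ref{L:3-star}. Hence $p = 0$.

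The main obstacle is measurability: we need the event that the slice at time $s$ contains a $3$-star point, and the set $\{(s,\omega)\}$ of such pairs, to be jointly measurable so that Fubini applies. I would handle this by expressing the event through countably many conditions. Using precompactness (Lemma \ref{L:geo-precompact}) and rightmost/leftmost geodesics, we can restrict attention to star endpoints $z_i$ and times $t$ that are rational, at the cost of a limiting argument; alternatively we could use a closed-set approximation of the star condition. If this becomes delicate, a fallback is to bound the dimension directly: if the slice at $0$ were nonempty with positive probability, then a positive-measure set of time slices would be nonempty, which forces $\dim_{1:2:3} \ge 1 + 2 = 3$ in the sense above.
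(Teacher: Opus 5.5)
Your proposal is correct and is essentially the paper's argument. Time-stationarity plus a Fubini step gives, with positive probability, a positive-Lebesgue-measure set of times carrying $3$-stars; the $1$-Lipschitz projection from $d_{1:2:3}$ onto the time axis (with metric $|t-s|^{1/3}$) then forces $\dim_{1:2:3}(\operatorname{Star}_3) \ge 3$, contradicting Lemma \ref{L:3-star}. The paper phrases the middle step via Kolmogorov's $0$--$1$ law, but positive probability already suffices, as in your version; your measurability concern, which the paper does not address, is settled by noting that the relevant set of pairs $(s,\omega)$ is analytic and hence measurable for the completed product measure.
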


    \begin{proof}
    Suppose that the lemma fails. Then by shift stationarity of $\L$ (Lemma \ref{L: EDL-symmetries}.1), every fixed time $t \in \R$ would have a positive probability $\delta > 0$ of containing a $3$-star. Since the directed landscape has independent time increments (Definition \ref{D: the-DL}.2), by Kolmogorov's $0-1$ law, the set 
    $$
    \{t \in \R : (x, t) \in \operatorname{Star}_3 \text{ for some } x \in \R\}
    $$
    has positive Lebesgue measure almost surely. On the other hand, letting $p_t: \R^2 \to \{0\}\times \R$ be the projection onto the line $\{0\}\times \R$, we see that 
    $$
    3\text{dim}(p_t(\text{Star}_3))  \leq \text{dim}_{1:2:3}(\text{Star}_3) = 2,
    $$
    where the Hausdorff dimension on the left-hand side is the usual Euclidean Hausdorff dimension. This uses that Hausdorff dimension decreases under projections. This is a contradiction, since any set in $\R$ of positive Lebesgue measure must have Euclidean Hausdorff dimension $1$.
    \end{proof}

    \subsection{The almost sure set for Theorem \ref{T: fin-time-thm}} \label{sec: basic-prop}

    We end the preliminary section by defining a set $\Omega$ on which Theorem \ref{T: fin-time-thm} will hold deterministically. We let $\Omega$ be the probability one set where the following properties hold.
    \begin{enumerate}
    [label=\textbf{P.\arabic*},ref=P.\arabic*]
        \item \label{P: cont} \textit{Continuity. }For $(x,s;y,t)\in \R^4_\uparrow$, $(x,s;y,t) \mapsto \L(x^k,s;y^k,t)$ is continuous for all $k\in \N$.  
        \item \label{P: rmlmopt-mon} \textit{Rightmost and leftmost optimizers exist, unique rational optimizers, and monotonicity.} As in Lemma \ref{L:rlm-opt-unique-no-bubble}
        \item \label{P: precomp} \textit{Precompactness of geodesics in the overlap topology.} By Lemmas \ref{L: haus-iff-ov} and \ref{L:geo-precompact} .
        \item \label{P: ov-rm-opt} \textit{Convergence in overlap of right/left most optimizers.} As in Lemma \ref{L: geo-opt-rm-coal}.
         \item \label{P: no-3-stars} \textit{3-stars are rare.} For all $(x,s)\in\R\times\Q$, $(x,s)$ is not a 3-star (Lemma \ref{L: rare-3-stars}.) 
           \item \label{P: no-bubbles} \textit{There are no geodesic bubbles}. As in Lemma \ref{L:no-bubbles-geo}
    \end{enumerate}

    We note that the above set of properties not at all minimal. For example, continuity and existence of leftmost and rightmost optimizers implies monotonicity. We record redundant properties above to make referencing them more clear. We work on $\Omega$ throughout the remaining sections. In particular, in Sections \ref{Sec: I-III} \ref{Sec: IV-V}, and \ref{Sec: applic-proofs}  the proofs are deterministic on $\Omega$. Section \ref{sec: one-sided-minima} on one-sided local minima invokes one extra property of the directed landscape, and Sections \ref{S:prelim-semi-infinite} and beyond use additional axiomatic properties of semi-infinite geodesics and optimizers.

\section{Finite time networks I-III} \label{Sec: I-III}

In this section, we prove all parts of Theorem \ref{T: fin-time-thm} which concern points where the gap sheet is non-zero, or equivalently, points where we do not have two disjoint geodesics. This covers network types \textup{I}, \textup{IIa}, \textup{IIb} and \textup{III}. Throughout the section we work on the almost sure set $\Omega$ where properties \ref{P: cont}-\ref{P: no-bubbles} hold.

We first identify the set of allowable network types from time $0$ to $1$.

\begin{lemma} \label{L: fixed-time-possible-networks}
    On $\Omega$, for all $x,y \in \R$, the network type of $(x,0;y,1)$ must be one of the networks pictured in figure \ref{fig:fixed-time-networks}.
\end{lemma}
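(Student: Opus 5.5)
The plan is to sandwich every geodesic for $u=(x,0;y,1)$ between the leftmost and rightmost geodesics $\pi_L=\pi_{u,L}$ and $\pi_R=\pi_{u,R}$ from \ref{P: rmlmopt-mon}, and to read the network type off the overlap structure of this pair together with the constraint that at most two geodesics can separate near either endpoint.

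\textbf{Step 1: local structure in the interior.} Let $\pi$ be any geodesic for $u$. By \ref{P: no-bubbles}, for any $[s',t']\subset(0,1)$ the restriction of $\pi$ to $[s',t']$ is the unique geodesic between its endpoints. Consequently two geodesics for $u$ that meet at some interior time $r_1$ and again at some interior time $r_2$ must agree on $[r_1,r_2]$; otherwise they would form a bubble on a compact subinterval of $(0,1)$. Hence the overlap of any two geodesics for $u$, restricted to $(0,1)$, is a single (possibly empty) interval. In particular, two geodesics can separate only in the open time intervals adjacent to $0$ and to $1$, which rules out any branching in the middle.

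\textbf{Step 2: at most two branches at each end.} Apply \ref{P: no-3-stars} at the rational times $0$ and $1$, using time reversal for the endpoint at time $1$, which is a backward star. Suppose three geodesics for $u$ are pairwise distinct immediately after time $0$. Choose a small rational time $\epsilon>0$ at which all three are pairwise separated. By Step 1 the three restrictions to $[0,\epsilon]$ meet only at time $0$, so $(x,0)$ is a forward $3$-star point, which is a contradiction. The same argument at time $1$ bounds the number of branches there. Monotonicity (\ref{P: rmlmopt-mon}) shows that any geodesic lies between $\pi_L$ and $\pi_R$. Combining this with Step 1, near time $0$ every geodesic agrees with $\pi_L$ or with $\pi_R$ on some $[0,\delta]$, and likewise near time $1$. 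The step needing the most care is converting ``three distinct geodesics'' into genuine pairwise disjointness on $(0,\epsilon]$, rather than pairwise distinctness at some single time. I would handle this by taking, for each geodesic, its first separation time and using Step 1 to see that once two geodesics separate near $0$ they cannot rejoin before the middle overlap region.

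\textbf{Step 3: case analysis.} Let $O=\Ov(\pi_L,\pi_R)$. By Step 1, $O$ is either empty or a closed interval $[a,b]$.

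If $\pi_L=\pi_R$, the network is of type I.

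If $O=[a,b]$ with $0<a$ and $b<1$, then every geodesic, being squeezed between $\pi_L$ and $\pi_R$, passes through the common segment on $[a,b]$. Before time $a$ it follows one of the two branches, and similarly after time $b$. This gives type III, or type IIa/IIb when $a=0$ or $b=1$. The degenerate case $a=b$ is handled the same way.

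If $O$ is empty, $\pi_L$ and $\pi_R$ are disjoint on $(0,1)$. Any other geodesic $\pi$ starts along one of them and ends along one of them. If it starts along $\pi_L$ and ends along $\pi_R$, it must switch from one to the other, and by Step 1 its overlap with each is an interval, so it leaves $\pi_L$ once and joins $\pi_R$ once. Two such crossing geodesics cannot coexist with different crossing points, because together they would produce either a bubble or a third branch at an endpoint. Moreover, a geodesic crossing from left to right cannot coexist with one crossing from right to left: two such paths would intersect at an interior time, and splicing at that crossing point would create a bubble. This leaves exactly types IV, Va and Vb.

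Finally, I would verify that the equivalence $\sim$, which also tracks drawing order, separates the a/b variants, and that each case above yields precisely one of the seven pictured networks.
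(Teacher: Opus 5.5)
\textbf{There is a genuine gap, in the case you dismiss as degenerate.} The overall strategy is a reasonable substitute for the paper's Euler-formula enumeration: sandwich every geodesic between $\pi_L$ and $\pi_R$, then split on $\Ov(\pi_L,\pi_R)$. But Step 3 fails when $\Ov(\pi_L,\pi_R)=\{r\}$ for a single $r\in(0,1)$.

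That network is not type III. Write $z=\pi_L(r)=\pi_R(r)$. There are four geodesics: follow either $\pi_L$ or $\pi_R$ on $[0,r]$, and either one on $[r,1]$. The graph has three vertices $(x,0),(z,r),(y,1)$ and four edges, whereas type III has four vertices and five edges. This is exactly the configuration of Figure \ref{fig:back2back-bubble-network}.

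Nothing you use excludes it:
\begin{itemize}
\item No two geodesics for $u$ agree at two distinct interior times without agreeing in between, so \ref{P: no-bubbles} is not violated.
\item The configuration shows only two branches in each direction at $(x,0)$, $(z,r)$ and $(y,1)$, so \ref{P: no-3-stars} gives no contradiction. Also, $r$ need not be rational.
\end{itemize}
Your remark in Step 1 that the interval structure of overlaps ``rules out any branching in the middle'' is where this slips through. A one-point overlap is a branch point in the middle.

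\textbf{How to close it.} You need an ingredient outside pure geodesic geometry: the existence of disjoint $2$-optimizers (\ref{P: rmlmopt-mon}). In this configuration, concatenate the disjoint pairs $(\pi_L,\pi_R)|_{[0,r]}$ and $(\pi_L,\pi_R)|_{[r,1]}$ and use the (reverse) triangle inequality for the extended landscape:
\[
\L(x^2,0;y^2,1) \ge \L(x^2,0;z^2,r)+\L(z^2,r;y^2,1) = 2\L(x,0;z,r)+2\L(z,r;y,1) = 2\L(x,0;y,1).
\]
The reverse inequality is trivial, so $\L(x^2,0;y^2,1)=2\L(x,0;y,1)$. A disjoint $2$-optimizer attaining this value therefore consists of two geodesics for $u$ that are disjoint on $(0,1)$. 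But every geodesic for $u$ passes through $(z,r)$, because it is squeezed between $\pi_L$ and $\pi_R$. This contradiction is the argument the paper uses.

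With this case added, the rest of your analysis goes through essentially as you describe. That covers type I, types IIa/IIb, type III with a nondegenerate middle segment, and the disjoint case giving IV/Va/Vb.
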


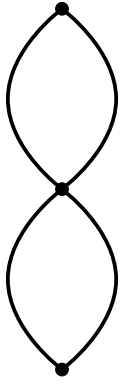
\begin{figure}
    \centering
   
\begin{tikzpicture}[scale=2.4]

  \draw[line width=0.5mm] plot [smooth, tension=1] coordinates {(3,0) (2.7,0.5) (3,1)};
  \draw[line width=0.5mm] plot [smooth, tension=1] coordinates {(3,0) (3.3,0.5) (3,1)};
  \draw[line width=0.5mm] plot [smooth, tension=1] coordinates {(3,1) (2.7,1.5) (3,2)};
  \draw[line width=0.5mm] plot [smooth, tension=1] coordinates {(3,1) (3.3,1.5) (3,2)};

    \filldraw[black] (3,0) circle (1pt);
    \filldraw[black] (3,1) circle (1pt);
    \filldraw[black] (3,2) circle (1pt);

\end{tikzpicture}
    \caption{Possible fixed-time network to be ruled out.}
    \label{fig:back2back-bubble-network}
\end{figure}

\begin{proof}
This can be deduced from the geodesic network classification in \cite{dauvergne202327geodesicnetworksdirected} and \ref{P: no-3-stars}, but it is also fairly straightforward to give an independent proof using the properties in Section \ref{sec: basic-prop}. We outline a proof, and leave some basic enumeration to the reader.

Indeed, let $G = (V, E)$ be the graph of a geodesic network from time $0$ to $1$, and let $p = (x, 0), q = (y, 1)$ be its source and sink vertices. 
First, $\deg(p) \le 2$ and $\deg(q) \le 2$ by \ref{P: no-3-stars}. Moreover, since $G$ has no interior bubbles (\ref{P: no-bubbles}), the number of paths from $p$ to $q$ in $G$ is at most $\deg(p) \cdot \deg(q) \le 4$. That is, $G$ is a finite planar graph. Now, the number of faces $|F|$ in $G$ is at most $\deg(p) + \deg(q) - 1$, since by \ref{P: no-bubbles}, every face is incident to either $p$ or $q$, and the unbounded face in $G$ is incident to both. Therefore by Euler's formula,
\begin{equation} \nonumber
|E| - |V| = |F| - 2 \le \deg(p) + \deg(q) -3.
\end{equation}
Next, any vertex in $V \setminus \{p, q\}$ has degree at least $3$ by construction, and so
$$
2|E| = \deg(p) + \deg(q) + \sum_{v \in V \setminus \{p, q\}} \deg(v) \ge \deg(p) + \deg(q) + 3(|V| - 2).
$$
Combining these two inequalities gives that $|V| \le \deg(p) + \deg(q)$. From this point, it is easy to enumerate finite planar directed graphs satisfying $\deg(p) \le 2, \deg(q) \le 2$ and with all faces incident to either $p$ or $q$. Indeed, the network graph of $(x,0;y,1)$ must be one of the seven pictured in Figure \ref{fig:fixed-time-networks}, or a graph like network III with the middle unique geodesic removed. This is pictured in Figure \ref{fig:back2back-bubble-network}. 

Suppose that this is the network from $(x, 0)$ to $(y, 1)$, and let $(z, r)$ be the interior branch point shared by all geodesics. Observe that 
\begin{align*}
\L(x^2, 0; y^2, 1) &\ge \L(x^2, 0; z^2, r) + \L(z^2, r; y^2, 1) \\
&= 2\L(x, 0; z, r) + 2\L(z, r; y, 1) \\
&=2 \L(x, 0; y, 1).
\end{align*}
Here the first bound is the triangle inequality for the extended landscape, the second equality uses that there are disjoint geodesics from $(x, 0)$ to $(z, r)$, and also from $(z, r)$ to $(y, 1)$, and the final equality uses that $(z, r)$ is on a geodesic from $(x, 0)$ to $(y, 1)$. Therefore $\L(x^2, 0; y^2, 1) = 2 \L(x, 0; y, 1)$, and so because the extended landscape value $\L(x^2, 0; y^2, 1)$ is always attained by disjoint paths by \ref{P: rmlmopt-mon}, there must be at least two disjoint geodesics in the network from $(x, 0)$ to $(y, 1)$, contradicting that our network graph is of the form in Figure \ref{fig:back2back-bubble-network}.

\end{proof}

In order to understand the relationship between geodesic network types and the gap sheet, we need more information on how $2$-optimizers behave at points with non-unique geodesics. The following lemma tells us that, if a network has two geodesics going into one endpoint, any $2$-optimizer must eventually stay on either one of these geodesics.

\begin{lemma}\label{L:opt=geo}
On $\Omega$, the following holds for all $x,y \in \R$.
Suppose that any geodesic $\pi$ for $u = (x,0;y,1)$ has the property that there exists an $r \in [0,1)$ such that there are two disjoint geodesics from $(\pi(r),r)$ to $(\pi(1),1)$. (In other words, $\Gamma(u)$ is not of type \textup{I} or \textup{IIb}). Then there exists a time $\tilde t \in [0, 1)$ such that for any disjoint 2-optimizer $\tau = (\tau_1, \tau_2)$ for $u$, we have 
$$
(\tau_1, \tau_2)\big|_{[\tilde t,1]} = (\pi_{u,L}, \pi_{u,R})\big|_{[\tilde t,1]}.
$$
\end{lemma}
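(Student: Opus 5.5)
The plan is to localize near the endpoint $q=(y,1)$. There we will show that any $2$-optimizer must run along two disjoint geodesics into $q$. Then \ref{P: no-3-stars} at the rational time $1$ forces these to be $\pi_{u,L}$ and $\pi_{u,R}$.

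\textbf{Step 1 (the split point and the two optimizers).} By hypothesis, applied to $\pi=\pi_{u,L}$ and to $\pi=\pi_{u,R}$, and by \ref{P: no-bubbles}, there is a last time $r<1$ at which $\pi_{u,L}$ and $\pi_{u,R}$ meet, at a point $(z,r)$. On $(r,1)$ the two paths are disjoint. Otherwise every geodesic would end in a single common segment into $q$, which is type I or IIb.

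For any $t'\in(r,1)$ the pair $(\pi_{u,L},\pi_{u,R})|_{[t',1]}$ is a disjoint pair of geodesics. Since the sum of two path lengths is at most the sum of the corresponding $\L$-values, this pair is a $2$-optimizer from $((\pi_{u,L}(t'),\pi_{u,R}(t')),t')$ to $(y^2,1)$.

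The same bound, together with the computation in the proof of Lemma \ref{L: fixed-time-possible-networks}, gives
$$
\L(x^2,0;y^2,1)=2\L(x,0;y,1).
$$
Consequently, for every $2$-optimizer $\tau$ for $u$, both $\tau_1$ and $\tau_2$ are geodesics for $u$. Indeed, $\|\tau_1\|_\L+\|\tau_2\|_\L=2\L(x,0;y,1)$ while each term is at most $\L(x,0;y,1)$.

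\textbf{Step 2 (identifying the components).} This step should be quick once Step 1 is in hand.
\begin{itemize}
\item Each $\tau_i$ is a geodesic for $u$, so by \ref{P: rmlmopt-mon} we have $\pi_{u,L}\le\tau_1<\tau_2\le\pi_{u,R}$.
\item By the classification in Lemma \ref{L: fixed-time-possible-networks} (the network is IIa, III, IV, Va or Vb), there are only finitely many geodesics for $u$. Near time $1$, every geodesic for $u$ coincides with either $\pi_{u,L}$ or $\pi_{u,R}$, since a third geodesic disjoint from both near $q$ would make $q$ a backward $3$-star at the rational time $1$, contradicting \ref{P: no-3-stars}.
\item Hence there is $\tilde t<1$ such that every geodesic for $u$ agrees on $[\tilde t,1]$ with $\pi_{u,L}$ or with $\pi_{u,R}$. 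Since $\tau_1<\tau_2$ on $(0,1)$, the components cannot both follow the same one. This forces $\tau_1=\pi_{u,L}$ and $\tau_2=\pi_{u,R}$ on $[\tilde t,1]$.
\end{itemize}

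Because the network has finitely many edges, $\tilde t$ can be chosen uniformly over all $\tau$, as the maximum over finitely many geodesics of their last branching times before $1$.

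\textbf{Main obstacle.} The delicate point is the claim in Step 1 that the value identity $\L(x^2,0;y^2,1)=2\L(x,0;y,1)$ holds when the only disjoint geodesics are near $q$, as in types IIa and III. In those cases the argument of Lemma \ref{L: fixed-time-possible-networks} needs disjoint geodesics on both sides of an interior point, and this is not available.

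If that identity fails, I would instead run an exchange argument near time $1$.
\begin{itemize}
\item Suppose $\tau_1$ touches or crosses $\pi_{u,L}$ at some time $s\in(r,1)$. Replace $\tau_1|_{[s,1]}$ by $\pi_{u,L}|_{[s,1]}$. This keeps the length, since $\pi_{u,L}$ is a geodesic and $\tau_1|_{[s,1]}$ cannot be longer, and it keeps disjointness by planarity.
\item Use overlap convergence \ref{P: ov-rm-opt} of the leftmost and rightmost optimizers $\tau_{u,L},\tau_{u,R}$, together with monotonicity \ref{P: rmlmopt-mon}, to sandwich every $\tau$ between these two.
\end{itemize}
I would then apply the $3$-star argument above only to $\tau_{u,L}$ and $\tau_{u,R}$, which suffices by the sandwich.
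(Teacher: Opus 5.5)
Your Step 1 identity $\L(x^2,0;y^2,1)=2\L(x,0;y,1)$ is not merely unproven for types IIa and III: it is false there. One always has $\L(x^2,0;y^2,1)\le 2\L(x,0;y,1)$. Equality would force any disjoint $2$-optimizer (which exists by \ref{P: rmlmopt-mon}) to consist of two disjoint $u$-geodesics, and IIa and III have no such pair; equivalently, $G(x,y)>0$ for these types (Theorem \ref{T:casei-iii}). So in exactly the cases where the lemma has content, $\tau_1$ and $\tau_2$ are never both $u$-geodesics, and your Steps 1--2 only cover IV, Va, Vb.

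Your fallback does not repair this. Replacing $\tau_1|_{[s,1]}$ by $\pi_{u,L}|_{[s,1]}$ moves $\tau_1$ to the \emph{right} wherever $\tau_1<\pi_{u,L}$. Nothing you have established prevents $\tau_2$ from lying at or left of $\pi_{u,L}$ there, so ``disjointness by planarity'' is unjustified. More fundamentally, an equal-length replacement only produces \emph{some} optimizer with the desired end behaviour. The lemma constrains \emph{every} optimizer, so you need strict improvements that yield contradictions. The sandwich $\tau_{u,L}\le\tau\le\tau_{u,R}$ (this is just \ref{P: rmlmopt-mon}, not overlap convergence) does correctly reduce uniformity of $\tilde t$ to the two extremal optimizers. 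But the core claim for those two optimizers is still missing.

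The missing step is this: on a window $[1-\ep,1]$ where $\Gamma(u)$ consists only of $\pi_{u,L}$ and $\pi_{u,R}$, every optimizer satisfies $\tau_1\le\pi_{u,L}$ and $\tau_2\ge\pi_{u,R}$. Suppose $\pi_{u,L}(t')<\tau_1(t')$, and take the maximal excursion $(s_1,s_2)\ni t'$ on which $\pi_{u,L}<\tau_1$.
\begin{itemize}
\item If $\tau_1$ is strictly shorter on the excursion, swap in $\pi_{u,L}$. This moves $\tau_1$ leftward, so disjointness from $\tau_2$ is automatic, and the pair strictly improves.
\item If the lengths agree, $\tau_1|_{[s_1,s_2]}$ is a geodesic, and the structure of $\Gamma(u)$ near time $1$ forces $s_2=1$. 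Then $\pi_{u,L}|_{[s_1,1]}$ and $\tau_1|_{[s_1,1]}$ are two disjoint geodesics into $(y,1)$. By \ref{P: no-3-stars}, $\tau_2|_{[s_1,1]}$ must be strictly shorter than the rightmost geodesic from $(\tau_2(s_1),s_1)$ to $(y,1)$. Rerouting $\tau_1$ onto $\pi_{u,L}$ and $\tau_2$ onto that geodesic gives a strictly better disjoint pair.
\end{itemize}
The symmetric argument gives $\tau_2\ge\pi_{u,R}$. You then have $\tau_1(1-\ep)\le\pi_{u,L}(1-\ep)<\pi_{u,R}(1-\ep)\le\tau_2(1-\ep)$, and monotonicity yields disjoint geodesics from $(\tau_i(1-\ep),1-\ep)$ to $(y,1)$. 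By the argument of Lemma \ref{L:opt-lemma}, both $\tau_i|_{[1-\ep,1]}$ are geodesics. Only at this point does your $3$-star argument apply to identify them with $\pi_{u,L}$ and $\pi_{u,R}$ near time $1$.
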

\begin{proof}
    First, by examining the network types \textup{IIa}, \textup{III}, \textup{IV}, and \textup{Va/b}, we see that  there exists an $\epsilon>0$ such that $\Gamma(u) \cap ([1-\epsilon, 1] \times \R)$ contains exactly two paths. 
Letting $\tau$ be any 2-optimizer for $u$, we show that $\tau_1\big|_{[1-\epsilon,1]} \leq \pi_{u,L}\big|_{[1-\epsilon,1]}$. Suppose this is not the case. Then there exists a $t' \in [1-\ep,1]$ such that:
    $$
        \pi_{u, L}(t') < \tau_1(t').
    $$
    Let
    \begin{align*}
        s_1 &= \max\{0 \le s < t': \pi_{u, L}(s) = \tau_1(s)\}, \text{ and}\\
        s_2 &= \min\{1 \ge s > t': \pi_{u, L}(s) = \tau_1(s)\},
    \end{align*}
    By continuity of geodesics and optimizers, the maximum and minimum above exist, $s_1 <  s_2$, and 
    \begin{align}
     \label{E: l-geo-lessthan-opt}   \pi_{u,L}\big|_{(s_1,s_2)} < \tau_1\big|_{(s_1,s_2)}.
    \end{align}
    \begin{description}
        \item [\textbf{Case I:}] $\|  \pi_{u, L}\big|_{[s_1,s_2]}\|_\L > \|\tau_1\big|_{[s_1,s_2]}  \|_\L $.
        
    In this case, we can find a better optimizer by replacing the section of $\tau_1$ between $s_1$ and $s_2$ with $\pi_{u,L}$. Explicitly,
    \begin{align}
      \label{E: new-opt-w-LM-geo}  \|(\tau_1\big|_{[0,s_1]} \oplus \pi_{u,L}\big|_{[s_1,s_2]} \oplus \tau_1\big|_{[s_2,1]}, \tau_2)\|_\L > \|\tau\|_\L.
    \end{align}
    The left hand side of \eqref{E: new-opt-w-LM-geo} still defines a disjoint 2-optimizer by \eqref{E: l-geo-lessthan-opt}.
    
        \item[\textbf{Case II:}] $\|  \pi_{u,L}\big|_{[s_1,s_2]}\|_\L = \|\tau_1\big|_{[s_1,s_2]}  \|_\L $.
        
        In this case, $\tau_1\big|_{[s_1,s_2]}$ is a geodesic. By \eqref{E: l-geo-lessthan-opt} and an analysis of the possible networks \textup{IIa}, \textup{III}, \textup{IV}, and \textup{Va/b}, we see that $s_2 = 1$, and there are exactly two disjoint geodesics for the point $v_1= (\pi_{u,L}(s_1),s_1;0,1)$.
        Further, 
        \begin{align}
          \label{E: opt-takes-rm-mid-geo}  \tau_1\big|_{[s_1,1]} = \pi_{v_1,R}.
        \end{align}
        By \eqref{E: opt-takes-rm-mid-geo} and the fact that $\tau_1 < \tau_2$ on $(0, 1)$, we have that 
        \begin{align}
          \label{E: r-opt-r-of-mid-geo}  \tau_2\big|_{[s_1,1)} > \pi_{v_1,R}.
        \end{align}
        By \eqref{E: r-opt-r-of-mid-geo} and monotonicity (\ref{P: rmlmopt-mon}), we also have
        \begin{align}
         \label{E: geo-for-rm-opt-monot} 
         \pi_{v_1,R} \le \pi_{v_2, R}, \qquad v_2 := (\tau_2(s_1),s_1;0,1).
        \end{align}
        Also by \eqref{E: r-opt-r-of-mid-geo} and the fact that $\pi_{u, L}|_{[s_1, 1]}, \tau_1|_{[s_1, 1]}$ are disjoint geodesics ending at $(y, 1)$, we must have that 
        \begin{align}
           \label{E: rm-opt-strictly-worse} \|\tau_2\big|_{[s_1,1]}\|_\L < \|\pi_{v_2,R}\|_\L,
        \end{align}
        as otherwise, $(y,1)$ would be a backwards  $3$-star, which is not possible by \ref{P: no-3-stars}.
        Putting together \eqref{E: opt-takes-rm-mid-geo}, \eqref{E: r-opt-r-of-mid-geo}, \eqref{E: geo-for-rm-opt-monot}, \eqref{E: rm-opt-strictly-worse}, we see that replacing $\tau_1\big|_{[s_1,1]}$ with $\pi_{u,L}\big|_{[s_1,1]}$ and $\tau_2 \big|_{[s_1,1]}$ with $\pi_{v_2,R} = \pi_{v_1, L}$ gives a better optimizer:
        $$
            \|\tau\|_\L < \|(\tau_1\big|_{[0,s_1]}\oplus \pi_{v_1,L}, \tau_2\big|_{[0,s_1]}\oplus \pi_{v_2,R})\|_\L
        $$
        This is a contradiction.

    \end{description}
In summary, we have shown that $\tau_1\big|_{[1-\epsilon,1]} \leq \pi_{u,L}\big|_{[1-\epsilon,1]}$. A similar argument shows that  $\tau_2\big|_{[1-\ep,1]} \geq \pi_{u,R}\big|_{[1-\ep,1]}$, and so
$$
\tau_1(1-\ep) \le \pi_{u, L}(1-\ep) < \pi_{u, R}(1-\ep) \le \tau_2(1-\ep).
$$
Combining these inequalities with the disjointness of $\pi_{u, L}, \pi_{u, R}$ on $[1-\ep, 1]$ and geodesic ordering (\ref{P: rmlmopt-mon}), we see that there are disjoint geodesics from $(\tau_1(1-\ep), 1-\ep)$ and $(\tau_2(1-\ep), 1-\ep)$ to $(y, 1)$. Therefore $\tau_1, \tau_2$ must be geodesics on $[1-\ep, 1]$. 

Therefore $(\tau_1, \tau_2)$ must equal $(\pi_{u, L}, \pi_{u, R})$ after some time $t_1 < 1$, as otherwise we would have a three-star at $(y, 1)$, which does not happen by \ref{P: no-3-stars}. We get $\tilde t$ by taking the maximum of $t_1$ when $(\tau_1,\tau_2)$ is taking to be the leftmost and rightmost optimizer for $u$ (by \ref{P: rmlmopt-mon}).

\end{proof}
Next, recall that the optimizers in last passage percolation interlace (e.g. see \cite[Lemma 2.3]{dauvergne2022disjointoptimizersdirectedlandscape}, \cite[Section 2]{basu2022interlacing}). We record a version of this fact for the directed landscape in one particular case.

\begin{lemma}  \label{L: geo-opt-ordering}
    On $\Omega$, the following holds almost surely. For all $u = (x,s;y,t) \in \R^4_\uparrow$, and intervals $[r_1,r_2] \subset [s,t]$ such that $\Gamma(u) \cap (\R \times [r_1,r_2])$ contains exactly one path, if $\tau = (\tau_1,\tau_2), \pi$ are any 2-optimizer and geodesic for $u$, we have that
    $$
        \tau_1\big|_{[r_1,r_2]} \leq \pi \big|_{[r_1,r_2]} \leq \tau_2\big|_{[r_1,r_2]}.
    $$
\end{lemma}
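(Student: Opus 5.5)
We prove the two inequalities $\tau_1 \le \pi$ and $\pi \le \tau_2$ on $[r_1,r_2]$ separately. By the symmetric argument (or reflection $x \mapsto -x$, which swaps the roles of $\tau_1,\tau_2$), it suffices to show $\tau_1|_{[r_1,r_2]} \le \pi|_{[r_1,r_2]}$. The hypothesis that $\Gamma(u)\cap(\R\times[r_1,r_2])$ contains exactly one path means every geodesic for $u$ agrees with $\pi$ on $[r_1,r_2]$; in particular $\pi_{u,L}=\pi_{u,R}=\pi$ there. The method is the same crossing/replacement argument as in the proof of Lemma \ref{L:opt=geo}.

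Suppose for contradiction that $\tau_1(t') > \pi(t')$ for some $t'\in[r_1,r_2]$. Define $s_1<t'<s_2$ as the last time before $t'$ and the first time after $t'$ where $\tau_1$ meets $\pi$, with $s_1\ge s$ and $s_2 \le t$. These exist since $\tau_1$ and $\pi$ share endpoints at $s$ and $t$. On $(s_1,s_2)$ we have $\pi<\tau_1<\tau_2$. Since $\pi$ is a geodesic, $\|\pi|_{[s_1,s_2]}\|_\L \ge \|\tau_1|_{[s_1,s_2]}\|_\L$, so there are two cases.

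\begin{itemize}
\item \textbf{Strict inequality.} Replace $\tau_1|_{[s_1,s_2]}$ by $\pi|_{[s_1,s_2]}$. The new pair is still disjoint on the interior, because $\pi<\tau_1<\tau_2$ on $(s_1,s_2)$ and nothing changes elsewhere. It has strictly larger total length, contradicting optimality of $\tau$.
\item \textbf{Equality.} Then $\tau_1|_{[s_1,s_2]}$ is a geodesic between $(\pi(s_1),s_1)$ and $(\pi(s_2),s_2)$. Since these two points lie on the geodesic $\pi$ for $u$, concatenating gives the path
\[
\pi|_{[s,s_1]}\oplus\tau_1|_{[s_1,s_2]}\oplus\pi|_{[s_2,t]},
\]
which is a geodesic for $u$. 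It differs from $\pi$ at time $t'\in[r_1,r_2]$, contradicting that $\Gamma(u)$ contains exactly one path over $[r_1,r_2]$.
\end{itemize}

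One point needs care: the concatenated geodesic differs from $\pi$ at $t'$, and since $t'\in[r_1,r_2]$ the contradiction is immediate. Here the definition of length as an infimum over partitions is additive under concatenation, so the lengths match exactly. Likewise, in the strict case the lengths of $\tau_2$ are unchanged and additivity gives the strict improvement. No use of \ref{P: no-3-stars} or overlap convergence should be needed; only the geodesic property and disjointness are used.

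The only real subtlety is the boundary situation $s_1=s$ or $s_2=t$, where the replaced pair touches at endpoints. This is still allowed, since disjointness is only required on the open interval $(s,t)$.
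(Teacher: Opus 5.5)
Your proof is correct and uses the same excursion-and-replacement argument as the paper. You treat $\tau_1 \le \pi$ where the paper treats $\pi \le \tau_2$, and you handle the equality case explicitly rather than ruling it out directly. Your choice of $s_1, s_2$ as the last and first meeting times around $t'$ is also slightly cleaner than the paper's supremum over $r < r_1$, which does not by itself guarantee the one-sided ordering on $[s_1,s_2]$.
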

\begin{proof}
    Say this were not the case, then, without loss of generality, there is an $r' \in [r_1,r_2]$ such that $\tau_2(r') < \pi(r')$. By continuity, there is an $\ep > 0$ such that $\tau_2\big|_{[r'-\ep,r'+\ep]} < \pi\big|_{[r'-\ep,r'+\ep]}$. Now take 
    \begin{align*}
        s_1 = \sup\{r < r_1: \tau_2(r) = \pi(r)\}, \quad s_2 = \inf\{r > r_2: \tau_2(r) = \pi(r)\}.
    \end{align*}
    We see that $s_1 \in [s,r_1), s_2 \in (r_2,t]$, $\tau_2(s_1) = \pi(s_1), \tau_2(s_2) = \pi(s_2)$ and $\tau_2\big|_{[s_1,s_2]} \leq \pi\big|_{[s_1,s_2]}$ by continuity. However,  $\tau_2\big|_{[s_1,s_2]}$ cannot be a geodesic because $\Gamma(u) \cap (\R \times [r_1,r_2])$ and $\tau_2\big|_{[r'-\ep,r'+\ep]} < \pi\big|_{[r'-\ep,r'+\ep]}$. This means we can replace $\tau_2$ with $\pi$ on the interval $[s_1,s_2]$ to get a path of larger length, while still maintaining continuity and disjointness with $\tau_1$ (since $\tau_1 < \tau_2 \leq \pi$ on $(s,t)$). This contradicts $\tau$ being a 2-optimizer.
\end{proof}

The last lemma we need before proving our correspondence for networks I-III allows us to find sequences of points with non-unique geodesics which coalesce in a particular way.
\begin{lemma} \label{L: seq-disj-geos}
    On $\Omega$, the following holds for all $x,y \in \R$. For any $w > y$, let 
    
    \begin{align*}
        a_w := \sup\{t\in [0,1]: \pi_{x,y,R}(t) = \pi_{x,w,R}(t)  \}
    \end{align*} Then there exist sequences $z_n \downarrow y, a_{z_n} \uparrow 1$, as well as geodesics $\pi_{x,z_n,1}, \pi_{x,z_n,2}$ for $(x,0;z_n,1)$ such that
    \begin{align*}
        \Ov(\pi_{x,y,R}, \pi_{x,z_n,1}) &= [0,a_{z_{n+1}}],\\
        \Ov(\pi_{x,y,R}, \pi_{x,z_n,2}) &= [0,a_{z_{n}}],\text{ and}\\
        \pi_{x,z_n,1}(t) &< \pi_{x,z_n,2}(t) \text{ for all $t\in(a_{z_{n}},1)$}.
    \end{align*}
\end{lemma}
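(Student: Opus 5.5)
The plan is to track the coalescence time $a_w$ as $w\downarrow y$ and to use its jumps to produce the two geodesics required at each $z_n$.

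\textbf{Step 1: basic properties of $a_w$.} By monotonicity (\ref{P: rmlmopt-mon}), for $y<w<w'$ we have $\pi_{x,y,R}\le \pi_{x,w,R}\le\pi_{x,w',R}$. Since all three start at $x$, and by no bubbles (\ref{P: no-bubbles}), each overlap is an initial interval. Hence $\Ov(\pi_{x,y,R},\pi_{x,w,R})=[0,a_w]$, and $w\mapsto a_w$ is non-increasing. The last part of Lemma \ref{L: geo-opt-rm-coal} (\ref{P: ov-rm-opt}) gives $a_w\to 1$ as $w\downarrow y$. Moreover $a_w<1$ for every $w>y$, since the endpoints differ.

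Consequently $a_w$ takes values approaching $1$ but never equal to $1$. I would then choose $z_n\downarrow y$ at which $a$ strictly jumps, with $a_{z_n}<a_{z_{n+1}}$, so that $a_{z_n}\uparrow 1$. Here "jump" should be understood in the sense of left limits at $z_n$: I want
$$
\lim_{w\uparrow z_n} a_w \;=\; a_{z_{n+1}}\quad\text{or at least}\quad >a_{z_n}.
$$

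\textbf{Step 2: constructing the two geodesics.} Given $z$ with $\lim_{w\uparrow z}a_w=b>a_z$, take $w_k\uparrow z$. By precompactness (\ref{P: precomp}), $\pi_{x,w_k,R}$ converges in overlap (along a subsequence) to some geodesic $\pi_{x,z,1}$ for $(x,0;z,1)$. The limit shares $\pi_{x,y,R}$ on $[0,b]$, because each $\pi_{x,w_k,R}$ does on $[0,a_{w_k}]$. Also $\pi_{x,z,1}\le \pi_{x,z,R}=:\pi_{x,z,2}$, and $\Ov(\pi_{x,y,R},\pi_{x,z,2})=[0,a_z]$.

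To get an exact overlap $[0,a_{z_{n+1}}]$ for $\pi_{x,z_n,1}$, I would refine the choice of sequence: pick $z_{n+1}\in(y,z_n)$ so that the left limit at $z_n$ equals $a_{z_{n+1}}$. The cleanest way to arrange this is to work with the discrete set of values of $a$. Because distinct rightmost geodesics $\pi_{x,w,R}$ branch off $\pi_{x,y,R}$ only at points after which they are disjoint from it, and overlap convergence makes $a$ piecewise constant near any $w$ from the left, I expect $a$ to be a left-continuous-in-value step function. Then $z_n$ should be taken as the left endpoints of the maximal intervals of constancy of $a$, with $z_{n+1}$ the next such endpoint to the left.

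The overlap of $\pi_{x,z_n,1}$ with $\pi_{x,y,R}$ is exactly $[0,a_{z_{n+1}}]$. Indeed, overlap convergence gives that $\pi_{x,z_n,1}$ agrees with $\pi_{x,w_k,R}$ on long intervals, and $\pi_{x,w_k,R}$ leaves $\pi_{x,y,R}$ at exactly $a_{z_{n+1}}$ for $w_k$ in that constancy interval. Right-continuity of the overlap endpoints is supplied by Lemma \ref{L: geo-opt-rm-coal}.

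\textbf{Step 3: disjointness after $a_{z_n}$.} On $(a_{z_n},1)$ we have $\pi_{x,z_n,1}\ge\pi_{x,y,R}$, and $\pi_{x,z_n,2}$ has already separated from $\pi_{x,y,R}$ at $a_{z_n}$. If $\pi_{x,z_n,1}$ and $\pi_{x,z_n,2}$ met at some time $t\in(a_{z_n},1)$, then on $[a_{z_n},t]$ these two geodesics would form a bubble, since they agree at time $a_{z_n}$ (both lie on $\pi_{x,y,R}$ there) and they differ because $a_{z_{n+1}}>a_{z_n}$. This is forbidden by \ref{P: no-bubbles}.

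\textbf{Main obstacle.} The delicate step is the structural claim in Step 2: that $a$ is a step function with infinitely many jumps accumulating only at $y$, and that left limits of $\pi_{x,w,R}$ at a jump produce a geodesic whose overlap with $\pi_{x,y,R}$ is exactly the next value of $a$, rather than some intermediate time. I would first try to prove that $a$ takes only finitely many values on each $[y+\delta,\infty)$. The argument would be that infinitely many distinct branch times in a compact region would give, via precompactness, a limiting geodesic with a bubble or a $3$-star at a rational time, contradicting \ref{P: no-3-stars} or \ref{P: no-bubbles}.
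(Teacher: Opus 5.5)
Your skeleton matches the paper's:
\begin{itemize}
\item $\Ov(\pi_{x,y,R},\pi_{x,w,R})=[0,a_w]$, with $a$ non-increasing and $a_w\to 1$ as $w\downarrow y$;
\item \ref{P: ov-rm-opt} makes $a$ constant on some $[w,w+\ep)$, which identifies $a_{z_n}$ and lets you take $\pi_{x,z_n,2}=\pi_{x,z_n,R}$;
\item a left limit via \ref{P: precomp} produces $\pi_{x,z_n,1}$;
\item your Step 3 bubble argument is exactly the paper's, once $a_{z_n}>0$ so that the bubble is interior.
\end{itemize}
However, there is a genuine gap. The load-bearing claim is that the values $\{a_w : w>y\}$ have no accumulation point in $(0,1)$, so that $a$ is a step function whose jumps accumulate only at $y$. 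You leave this as an expectation, but the construction needs it. Otherwise jumps may accumulate from the left at a candidate $z_n$, and the left limit $\lim_{w\uparrow z_n}a_w$ need not be a value of $a$. The left-limit geodesic then has no reason to branch off $\pi_{x,y,R}$ at a time of the form $a_{z_{n+1}}$.

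The justifications you offer do not supply this claim:
\begin{itemize}
\item \ref{P: ov-rm-opt} gives constancy of $a$ on right neighbourhoods $[w,w+\ep)$, not on punctured left neighbourhoods $(z-\ep,z)$. The latter is precisely what has to be proved.
\item The route through $3$-stars is a dead end. The branch points $(\pi_{x,y,R}(a_w),a_w)$ sit at arbitrary, not rational, times, so \ref{P: no-3-stars} says nothing about them.
\item ``Finitely many values on $[y+\delta,\infty)$'' is the wrong target, since precompactness is lost as $w\to\infty$. You only need it for $w$ near $y$, e.g.\ for values in $(1/2,1)$, as in the paper.
\end{itemize}

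The missing idea is a second, pairwise use of no bubbles. If $y<w_1<w_2$ and $a_{w_1}>a_{w_2}>0$, then $\pi_{x,w_1,R}$ and $\pi_{x,w_2,R}$ both pass through $(\pi_{x,y,R}(a_{w_2}),a_{w_2})$ and differ on $(a_{w_2},a_{w_1})$. By \ref{P: no-bubbles} they therefore never meet on $(a_{w_2},1]$.

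Now suppose the $a_{w_n}$ were strictly monotone with limit $b\in(1/2,1)$. Then the $w_n$ are monotone, and the $\pi_{x,w_n,R}$ are pairwise disjoint on a fixed interval $(c,1)$ with $c<1$. But by \ref{P: precomp} a subsequence converges in overlap to a geodesic for $(x,0;\lim_n w_n,1)$. So for large indices its members agree with the limit, hence with each other, on $[\delta,1-\delta]$ --- a contradiction.

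You can even bypass the disjointness. Take $\delta<1/2$ and $1-\delta>\sup_n a_{w_n}$. Two such members also agree with $\pi_{x,y,R}$ on $[0,\delta]$, hence with each other on $[0,1-\delta]$, and so have equal branch times.

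Consequently $\{a_w\}\cap(1/2,1)$ is a sequence $a_n\uparrow 1$. Set $z_n:=\sup\{v: a_v=a_{n+1}\}$. The value $a_{n+1}$ is taken on a whole interval ending at $z_n$, and $a_{z_n}=a_n$. Your Steps 2 and 3 then give geodesics with overlaps exactly $[0,a_{z_{n+1}}]$ and $[0,a_{z_n}]$ that are disjoint on $(a_{z_n},1)$.
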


This is illustrated in Figure \ref{fig:geo-bubbles-close}.

\begin{proof}
    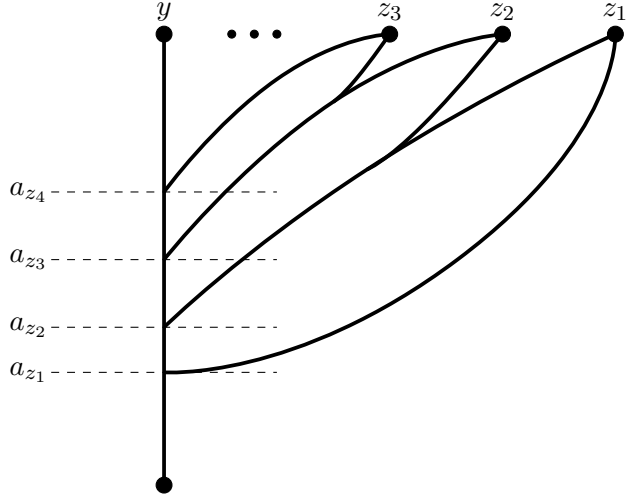
\begin{figure}
        \centering
        \begin{tikzpicture}[scale = 3]
             \draw[line width=0.5mm] plot [smooth, tension=1.3] coordinates {(0,0)  (0,2)};
             \draw[line width=0.5mm] plot [smooth, tension=1.3] coordinates {(0,0.5)  (1.3,1)(2,2)};
             \draw[line width=0.5mm] plot [smooth, tension=1] coordinates {(0,0.7)  (0.9,1.4)(2,2)};
             \draw[line width=0.5mm] plot [smooth, tension=1] coordinates { (0.9,1.4)(1.15,1.6) (1.5,2)};
             \draw[line width=0.5mm] plot [smooth, tension=1] coordinates { (0,1) (0.75,1.7)(1.5,2)};
             \draw[line width=0.5mm] plot [smooth, tension=1] coordinates { (0.75,1.7) (0.85, 1.8)(1,2)};
             \draw[line width=0.5mm] plot [smooth, tension=1] coordinates { (0,1.3) (0.5,1.8)(1,2)};


            \draw[dashed, ] plot [smooth, tension=1.3] coordinates {(-0.5,0.5) (0.5,0.5)};
            \draw[dashed, ] plot [smooth, tension=1.3] coordinates {(-0.5,0.7) (0.5,0.7)};
            \draw[dashed, ] plot [smooth, tension=1.3] coordinates {(-0.5,1) (0.5,1)};
            \draw[dashed, ] plot [smooth, tension=1.3] coordinates {(-0.5,1.3) (0.5,1.3)};

            \filldraw[black] (0,0) circle (1pt); 
             \filldraw[black] (0,2) circle (1pt);
              \filldraw[black] (1,2) circle (1pt);
               \filldraw[black] (1.5,2) circle (1pt);
                \filldraw[black] (2,2) circle (1pt);
             \filldraw[black] (0.3,2) circle (0.5pt);
              \filldraw[black] (0.4,2) circle (0.5pt);
               \filldraw[black] (0.5,2) circle (0.5pt);

            \node at (0,2.1) {$y$};
            \node at (1,2.1) {$z_3$};
            \node at (1.5,2.1) {$z_2$};
            \node at (2,2.1) {$z_1$};

            \node at (-0.6,0.5) {$a_{z_1}$};
            \node at (-0.6,0.7) {$a_{z_2}$};
            \node at (-0.6,1) {$a_{z_3}$};
            \node at (-0.6,1.3) {$a_{z_4}$};
        \end{tikzpicture}
        \caption{Sketch of Lemma \ref{L: seq-disj-geos}. }
        \label{fig:geo-bubbles-close}
    \end{figure}
    We will drop the $x$ from all notation here, since it is fixed.
    First we prove the following:
    \begin{align}
       \label{E: branch-dont-touch} \text{If $y < w_1 < w_2$ and $a_{w_1} > a_{w_2} > 0$, then $\pi_{w_1,R}(t) \neq \pi_{w_2,R}(t)$ for all $t\in(a_{w_2},1] $}.
    \end{align}
    By the definition of $a_w$, we see that
    \begin{align}
     \label{E: geo-touch-point}   \pi_{w_1,R}(a_{w_2}) &= \pi_{y,R}(a_{w_2}) = \pi_{w_2,R}(a_{w_2}), \text{ and}\\
       \label{E: begin-branch-order} \pi_{w_1,R}\big|_{(a_{w_2},a_{w_1})} &= \pi_{y,R}\big|_{(a_{w_2},a_{w_1})} < \pi_{w_2,R}\big|_{(a_{w_2},a_{w_1})}.
    \end{align}
    So if \eqref{E: branch-dont-touch} did not hold for some $t' \in (a_{w_2},1)$, by \eqref{E: geo-touch-point} and \eqref{E: begin-branch-order}, we would get a geodesic bubble between $( \pi_{w_1,R}(a_{w_2}),a_{w_2})$ and $ (\pi_{w_1,R}(t'),t')$, which does not happen by \ref{P: no-bubbles}. Clearly \eqref{E: branch-dont-touch}  holds for $t=1$ since $w_1 < w_2$.

    Next, we show that the set of numbers $\{a_w:w > y\}\cap (1/2,1)$ cannot contain a strictly increasing subsequence $(a_n)_n$ converging to any number other than $1$. Suppose that $a_n \uparrow  b < 1$. Then for each $a_n$, choose a $w_n$ such that $a_n = a_{w_n}$. By monotonicity of geodesics (\ref{P: rmlmopt-mon}), the sequence $w_n$ is decreasing, and $(\pi_{w_{n},R})_n$ is a (weakly) decreasing sequence of geodesics bounded below by $\pi_{y,R}$. Hence $(\pi_{x,w_{n},R})_n$ converges to a geodesic $\pi_{w_\infty}$ for $(x,0;w_\infty,1)$ in overlap  
  (\ref{P: precomp}), where $w_\infty = \lim_{n\to\infty} w_n$. On the other hand, by \eqref{E: branch-dont-touch},
    \begin{align*}
        \pi_{w_\infty}\big|_{(b,1)} \leq \pi_{w_\infty,R}\big|_{(b,1)} < \pi_{w_n,R}\big|_{(b,1)},
    \end{align*}
    contradicting that $(\pi_{w_{n},R})_n$ converges to $\pi_{w_\infty}$ in overlap. Therefore, $a_n$ must converge to $1$. A similar argument shows that $\{a_w: w > y\}\cap (1/2,1)$ cannot contain any decreasing convergent sequences.
    
      From this we see that the set $\{a_w: w > y\} \cap (1/2,1)$ is at most countable and has no accumulation points in $[1/2, 1)$. Moreover, by convergence in overlap of rightmost geodesics, we have that $a_w \uparrow 1$ as $w \downarrow y$.  Therefore, $\{a_w: w > y\}$ can be written as an increasing sequence $a_{n}\uparrow 1$. Now for each $a_n$, pick a $v_n$ such that $a_{v_n} = a_n$. By monotonicity of rightmost geodesics (\ref{P: rmlmopt-mon}), we see that $(v_n)_n$ is decreasing, and $v_n\to y^+$. Furthermore, for all $n$, and all $v\in (v_{n+1},v_{n})$, $a_{v} \in \{a_{v_{n+1}},a_{v_n}\}$. Therefore by convergence of rightmost geodesics in overlap (\ref{P: ov-rm-opt}), for all $n \in \N$ there exists $\ep_n > 0$ such that
    \begin{align}
      \label{E: ep-equals-vn}   a_{v_{n+1}+\ep_n} = a_{v_{n+1}}.
    \end{align}
    Now we define 
    \begin{align*}
        z_n := \sup\{v\in (v_{n+1}, v_{n}):a_{v} = a_{v_{n+1}}  \}.
    \end{align*}
        note $z_n > v_{n+1}$ by \eqref{E: ep-equals-vn}. Now we can take a sequence $t_k \to z_n^-$ such that $a_{t_k} = a_{n+1}$. Taking a convergent subsequence of $(\pi_{x,t_k,R})_k$ in the overlap topology (\ref{P: precomp}) gives a geodesic $\pi_{z_n,1}$  for $(x,0;z_n,1)$  such that 
    $$
        \Ov(\pi_{x,z_n,1},\pi_{x,y,R}) = [0,a_{n+1}].
    $$
    Now, if $z_n = v_{n}$, then taking $\pi_{z_n,2} = \pi_{v_n,R}$, and using the same argument used to show \eqref{E: branch-dont-touch} gives the required result. If $z_n < v_{n}$ then we can take a sequence $u_k \to z_n^+$ such that $a_{u_k} = a_{n}$, which has a convergence subsequence, via the same argument as before we get a geodesic $\pi_{z_n,2}$ for $(x,0;z_n,1)$ such that 
    $$
        \Ov(\pi_{z_n,2},\pi_{y,R}) = [0,a_{n}],
    $$
    and by \eqref{E: branch-dont-touch} and noticing that $a_{z_n} = a_n$, we are done.
\end{proof}

We are now ready to prove the first part of our correspondence, giving an equivalent condition for networks I,II, and III. 

\begin{prop}
\label{P:upperbubble-locmin}
On $\Omega$, the following holds for all $x, y \in \R$.
\begin{enumerate}
\item If there exists an $r \in [0,1)$ such that $\Gamma(x,0;y,1) \cap (\R\times [r,1))$ contains exactly two disjoint paths, then $G_x$ has a \textbf{strict} local minimum at $y$.
    \item If $G_x:\R \to \R$ has a (weak) local minimum at a point $y \in \R$, then there exists an $r \in [0,1)$ such that $\Gamma(x,0;y,1) \cap (\R\times [r,1))$ contains exactly two disjoint paths.
    \item All local minima of $G_x$ are strict local minima.
\end{enumerate}
\end{prop}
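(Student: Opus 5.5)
The approach is to compare $G_x(y)$ with $G_x(y')$ for $y'$ near $y$ through the two-point extended values $\L(x^2,0;(y_1,y_2),1)$, $y_1<y<y_2$. The tool is a deterministic version of Theorem \ref{T: greenes-thm}.2: on $\Omega$, for all $x$ and $y_1\le y_2$,
\begin{equation}\nonumber
\L(x^2,0;(y_1,y_2),1)=\L(x,0;y_1,1)+\L(x,0;y_2,1)-\min_{z\in[y_1,y_2]}G_x(z).
\end{equation}
I would not rely on the coupling, since we need the statement for all $x$ simultaneously.

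\textbf{Step 0: the identity.} For ``$\ge$'', take a $2$-optimizer $\tau$ for $(x,0;z,1)$ and geodesics $\pi_{z',L}$, $\pi_{z'',R}$ for $(x,0;z',1)$ and $(x,0;z'',1)$, with $z'\le z$ and $z''\ge z$ the relevant endpoints $y_1$, $y_2$. Planarity forces $\pi_{z',L}$ to cross $\tau_1$ and $\pi_{z'',R}$ to cross $\tau_2$. Swapping pieces at the last crossing times produces disjoint paths to $(y_1,y_2)$ together with a pair of paths to $(z,1)$. Using the reverse triangle inequality on both pieces, this yields the quadrangle bound
\[
\L(x^2;(y_1,y_2))+2\L(x;z)\ \ge\ \L(x;y_1)+\L(x;y_2)+\L(x^2;z^2).
\]
For ``$\le$'', take the rightmost $2$-optimizer $\tau$ to $(y_1,y_2)$. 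Its components meet the family of geodesics $\pi_{x,z,\cdot}$, $z\in[y_1,y_2]$, and by monotonicity (\ref{P: rmlmopt-mon}) and continuity (\ref{P: cont}) there is a $z$ at which a geodesic to $z$ lies between $\tau_1$ and $\tau_2$ and is touched by both. Cutting there gives the reverse inequality. I expect this step to be the main obstacle, since the crossing/rerouting must be done carefully with disjointness on $(0,1)$. If it fails, I would fall back to only the ``$\ge$'' direction plus a direct limiting argument in Step 2.

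\textbf{Step 1: part 1.} Suppose two disjoint paths $\pi_{u,L}<\pi_{u,R}$ enter $(y,1)$ on $[r,1)$. By Lemma \ref{L:opt=geo}, every $2$-optimizer for $(x;y)$ equals $(\pi_{u,L},\pi_{u,R})$ on $[\tilde t,1]$. Take $y'\downarrow y$ (the case $y'\uparrow y$ is symmetric). By \ref{P: ov-rm-opt}, $\pi_{y',R}$ overlaps $\pi_{u,R}$ up to a time $\to1$, and then separates to the right. Hence for $y'$ close to $y$, the network at $y'$ has a unique geodesic near time $1$; otherwise we would get a bubble (\ref{P: no-bubbles}) or a $3$-star at $(y,1)$ (\ref{P: no-3-stars}).

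By the interlacing Lemma \ref{L: geo-opt-ordering}, any $2$-optimizer $\tau'$ for $(x;y')$ then has $\tau'_1<\pi_{y',R}<\tau'_2$ near $1$, so $\tau'_1$ is not a geodesic there. Comparing $\tau'$ with the competitor built from $\pi_{u,L}$ and $\pi_{y',R}$ (which are disjoint near time $1$) gives $G_x(y')\ge G_x(y)$. Equality would force $\tau'_1$ to be geodesic on a terminal interval, which is excluded as just explained. Hence $G_x(y')>G_x(y)$ on a punctured neighbourhood, i.e.\ a strict local minimum.

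\textbf{Step 2: part 2, and part 3.} Suppose $G_x$ has a weak local minimum at $y$. By Step 0, for $y_1<y<y_2$ close to $y$ we get
\[
\L(x^2;(y_1,y_2))=\L(x;y_1)+\L(x;y_2)-G_x(y).
\]
Let $y_1\uparrow y$ and $y_2\downarrow y$. The leftmost and rightmost geodesics to $y_1,y_2$ converge in overlap (\ref{P: ov-rm-opt}, \ref{P: precomp}) to $\pi_{y,L}$ and $\pi_{y,R}$, and the $2$-optimizers to $(y_1,y_2)$ sandwich them by Lemma \ref{L: geo-opt-ordering}. The identity says the optimizer value loses exactly $G_x(y)$, which is the same loss as the optimizer into $(y,y)$. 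If $\Gamma(x,0;y,1)$ had a single path on some $[r,1)$, I would use Lemma \ref{L: seq-disj-geos} (and its left-hand mirror) to construct $y_2$ with a geodesic bubble near time $a_{y_2}<1$. That bubble gives a strictly better disjoint pair, i.e.\ $G_x(z)<G_x(y)$ for some $z$ arbitrarily near $y$, contradicting the minimum.

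Hence the network at $y$ contains two paths disjoint on some $[r,1)$, and by Lemma \ref{L: fixed-time-possible-networks} exactly two. Part 3 then follows immediately: a weak local minimum gives two disjoint terminal paths by part 2, and then part 1 upgrades it to a strict local minimum.
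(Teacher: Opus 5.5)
\textbf{Part 1.} Your Step 1 rests on a false claim. It is not true that for all $y'$ near $y$ the network $\Gamma(x,0;y',1)$ is a single path near time $1$. Lemma \ref{L: seq-disj-geos}, applied at $y$ itself, gives $z_n\downarrow y$ such that $(x,0;z_n,1)$ has two geodesics disjoint on $(a_{z_n},1)$. At these points Lemma \ref{L:opt=geo} forces every $2$-optimizer to finish on geodesics. So your strictness argument (``equality would force $\tau'_1$ to be geodesic on a terminal interval, which is excluded'') breaks down along a sequence accumulating at $y$.

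Even the weak inequality is not justified. $G_x(y')\ge G_x(y)$ amounts to a lower bound on $\L(x^2,0;y^2,1)$ obtained by surgery on $\tau'$. That surgery requires the components of $\tau'$ to meet $\pi_{u,L}$ and $\pi_{u,R}$, and nothing in your sketch provides this; the pair $\pi_{u,L},\pi_{y',R}$ does not even share an endpoint.

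The missing ingredient is overlap convergence of rightmost $2$-optimizers, \ref{P: ov-rm-opt} with $k=2$. Take $t^*$ from Lemma \ref{L:opt=geo}. For $z>y$ close to $y$ one has both $\pi_{z,R}=\pi_{y,R}$ and $\tau_z=\tau_y$ on $[0,t^*]$, so $\tau_z(t^*)=(\pi_{y,L}(t^*),\pi_{y,R}(t^*))$. Then $G_x(z)>G_x(y)$ reduces to an inequality on $[t^*,1]$:
\begin{itemize}
\item the $\tau_{z,2}$ piece is at most geodesic length;
\item $\pi_{y,L}|_{[0,t^*]}\oplus\tau_{z,1}|_{[t^*,1]}$ is a path to $(z,1)$ that is strictly shorter than $\L(x,0;z,1)$, since otherwise it would have to cross $\pi_{y,R}$ and yield a third path in $\Gamma(x,0;y,1)$ on $[r,1)$.
\end{itemize}
Strictness thus comes from the geometry at $y$, not at $z$.

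\textbf{Part 2.} In Step 2, the sentence ``that bubble gives a strictly better disjoint pair'' hides the main point. To splice a $2$-optimizer $\tau_y$ into the bubbles of Lemma \ref{L: seq-disj-geos}, some component of $\tau_y$ must touch $\pi_y$ inside the windows $[a_{z_n},a_{z_{n+1}}]$, which accumulate at $1$.

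The paper first shows that, when $\Gamma(x,0;y,1)$ is a single path on $[t^*,1]$, one of $\tau_{y,1},\tau_{y,2}$ meets $\pi_y$ at times accumulating at $1$. Otherwise, rerouting along $\pi_y$ the component whose last contact with $\pi_y$ is later gives a strictly longer disjoint pair. Which component it is decides the side of approximation. If it is $\tau_{y,1}$, with contact time $t_k\in[a_{z_{n_k}},a_{z_{n_k+1}}]$, one splices $\tau_{y,1}$ into $\pi_{z_{n_k},1}$ at $t_k$. One splices $\tau_{y,2}$ into $\pi_{z_{n_k},2}$ at their first meeting time, which exists by Lemma \ref{L: geo-opt-ordering}. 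The single-path property near time $1$ gives strictness.

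\textbf{Step 0.} It is not needed for this proposition, and its ``$\le$'' half fails as sketched. Swapping tails at the contact times sends both $\tau_1$ and $\tau_2$ into $(z,1)$ along the same geodesic. The resulting pair is therefore not disjoint and gives no bound on $\L(x^2,0;z^2,1)$. If you do want the identity, it follows from Theorem \ref{T: greenes-thm}.2, translation invariance and continuity in $(x,y_1,y_2)$, as in Section \ref{sec: one-sided-minima}. Simultaneity in $x$ is therefore not an obstacle.
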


\begin{proof}
\textbf{Proof of Part $1$.} \qquad By Lemma \ref{L:opt=geo},  we can find a $t^*>r$ such that the rightmost 2-optimizer $\tau_y$ for $(x,0;y,1)$ satisfies
    \begin{align}
        \label{E: opt=geos_at_end}
        \tau_y\big|_{[t^*,1]} = (\pi_{y,L}, \pi_{y,R})\big|_{[t^*,1]}.
    \end{align}
    Here, $\pi_{y,L},\pi_{y,R}$ are the leftmost and rightmost geodesics for $(x,0;y,1)$.

     Denoting the rightmost 2-optimizer for $(x,0;z,1)$ by $\tau_z = (\tau_{z,1},\tau_{z,2})$, take a point $z > y$ close enough so that, by \eqref{P: ov-rm-opt}, we have
    \begin{align}
        \label{E: geo_coal_close_enough_backwrd_dir_local_min_thm}
        \pi_{y,R} \big|_{[0,t^*]} &= \pi_{z,R} \big|_{[0,t^*]}, \qquad \text{ and }\\
        \label{E: opt_coal_close_enough} \tau_y\big|_{[0,t^*]} &= \tau_z\big|_{[0,t^*]}.
    \end{align}
We will show that $G_x(y) < G_x(z)$, implying that $y$ is a strict right-sided minimum for $G_x$. A symmetric argument gives that $y$ is a strict left-sided minimum for $G_x$, which will complete the proof of $1$. To show $G_x(y) < G_x(z)$, we start by expanding the definition of the gap function. 
     \begin{align}
       \notag G_x(y) &=  \|\pi_{y,L}\|_\L + \|\pi_{y,R}\|_\L - \|\tau_{y,1}\|_\L - \|\tau_{y,2}\|_\L \\
       \notag 
       \begin{split}
            &= \|\pi_{y,L}\big|_{[0,t^*]}\|_\L + \|\pi_{y,L}\big|_{[t^*,1]}\|_\L
           + \|\pi_{y,R}\big|_{[0,t^*]}\|_\L + \|\pi_{y,R}\big|_{[t^*,1]}\|_\L \\ &  \hspace{5mm}-\|\tau_{y,1}\big|_{[0,t^*]}\|_\L -\|\tau_{y,1}\big|_{[t^*,1]}\|_\L  - \|\tau_{y,2}\big|_{[0,t^*]}\|_\L-\|\tau_{y,2}\big|_{[t^*,1]}\|_\L \\
       \end{split}\\
       \label{E: gap_geo_bubble_at_top} &= \|\pi_{y,L}\big|_{[0,t^*]}\|_\L
           + \|\pi_{y,R}\big|_{[0,t^*]}\|_\L - \|\tau_{y,1}\big|_{[0,t^*]}\|_\L - \|\tau_{y,2}\big|_{[0,t^*]}\|_\L.
    \end{align}
    The last line follows from  (\ref{E: opt=geos_at_end}). We also expand $G_x(z)$:
    \begin{align}
       \label{E: gap_rational_close_geo} 
       \begin{split}
           G_x(z) &= 2\|\pi_{z,R}\big|_{[0,t^*]}\|_\L + 2\|\pi_{z,R}\big|_{[t^*,1]}\|_\L \\
       & \quad - \|\tau_{z,1}\big|_{[0,t^*]}\|_\L -\|\tau_{z,1}\big|_{[t^*,1]}\|_\L  - \|\tau_{z,2}\big|_{[0,t^*]}\|_\L-\|\tau_{z,2}\big|_{[t^*,1]}\|_\L
       \end{split} 
    \end{align}
    Combining \eqref{E: geo_coal_close_enough_backwrd_dir_local_min_thm}, \eqref{E: opt_coal_close_enough}, \eqref{E: gap_geo_bubble_at_top},and \eqref{E: gap_rational_close_geo}, reduces the inequality $G_x(y)< G_x(z)$ to the inequality

    \begin{align}
        \label{E: bub-imp-loc-min-int-step} \|\pi_{y,L} \big|_{[0,t^*]}\|_\L  +\|\tau_{z,1} \big|_{[t^*,1]}\|_\L+\|\tau_{z,2} \big|_{[t^*,1]}\|_\L <   \|\pi_{z,R} \big|_{[0,t^*]}\|_\L +2\|\pi_{z,R} \big|_{[t^*,1]}\|_\L.
    \end{align}
    Now, by \eqref{E: opt=geos_at_end} and \eqref{E: opt_coal_close_enough}, we have that
    \begin{align*}
        \pi_{y,L}(t^*) = \tau_{z,1}(t^*).
    \end{align*}
    Hence we have that $\pi_{y,L}\big|_{[0, t^*]}\oplus\tau_{z,1}\big|_{[t^*,1]}$ gives a path from $(x,0)$ to $(z,1)$. This tells us that
    \begin{align}
       \label{E: backwrd-dir-worse-path} \|\pi_{y,L}\big|_{[0, t^*]}\|_\L + \|\tau_{z,1}\big|_{[t^*,1]}\|_\L \leq \|\pi_{z,R}\|_\L = \|\pi_{z,R} \big|_{[0,t^*]}\|_\L +\|\pi_{z,R} \big|_{[t^*,1]}\|_\L.
    \end{align}
    In fact, the inequality in \eqref{E: backwrd-dir-worse-path} must be strict. To see why this is true, first note that if it was not the case, then $\pi_{y,L}\big|_{[0, t^*]}\oplus\tau_{z,1}\big|_{[t^*,1]}$ would be a geodesic from $(x,0)$ to $(z,1)$. However, $\tau_{z,1}(t^*) < \pi_{y,R}(t^*)$ since $\Gamma(x,0;y,1) \cap (\R \times [r,1))$ contains two disjoint paths and $r < t^* < 1$. Now, define
    \begin{align*} 
    s^{**} &:=     \sup\{s\in [t^*,1): \tau_{z,1}(s) = \pi_{y,L}(s)\},                                \\
        t^{**}&:= \inf\{t\in (t^*,1): \tau_{z,1}(t) = \pi_{y,R}(t)\}.
    \end{align*}
    Note that the two sets above are non-empty: the first contains $t^*$, and the second contains some point by continuity since $z > y$.
   By continuity, $\tau_{z,1}(s^{**}) = \pi_{y,L}(s^{**})$, $\tau_{z,1}(t^{**}) = \pi_{y,R}(t^{**}),$ and $s^{**} < t^{**}$. This means that the three paths
    $$
\pi_{y,L}\big|_{(s^{**}, t^{**})} < \tau_{z,1}\big|_{(s^{**}, t^{**})} < \pi_{y,R}\big|_{(s^{**}, t^{**})},
    $$
    are all disjoint geodesics. 
    This contradicts that $\Gamma(x,0;y,1) \cap (\R \times [r,1])$ contains exactly two paths. The strictness of the inequality in  \eqref{E: backwrd-dir-worse-path}
    reduces \eqref{E: bub-imp-loc-min-int-step} to the weak inequality
    \begin{align}
      \notag \|\tau_{z,2}\big|_{[t^*,1]}\|_\L \leq \|\pi_{z,R}\big|_{[t^*,1]}\|_\L,
    \end{align}
    which is clear since $\tau_{z,2}(t^*) = \pi_{z,R}(t^*)$ and $\pi_{z,R}\big|_{[t^*,1]}$ is a geodesic.

  \textbf{Proof of Part $2$.} \qquad  Assume $y$ is a local minimum of $G_x(\cdot)$. Suppose that the implication is false, that is there is no $r \in [0,1)$ such that there are $\Gamma(x,0;y,t) \cap (\R\times [r,1))$ contains exactly two disjoint paths.
  By examining the possible networks appearing for a fixed time (Lemma \ref{L: fixed-time-possible-networks}), we see there exists a $t^*$ such that $\Gamma(x,0;y,1) \cap (\R \times [t^*,1])$ contains precisely one path. Denote the rightmost geodesic for $(x,0;y,1)$ by $\pi_y$.      
    
    Now, let $\tau_y$ be any 2-optimizer for $(x,0;y,1)$, and define
   
    \begin{align*}
        \mathfrak{l} := \sup\{t \in [0,1): \tau_{y,1}(t) = \pi_{y}(t)\}, \qquad \mathfrak{r}  := \sup\{t \in [0,1): \tau_{y,2}(t) = \pi_{y}(t)\}.
    \end{align*}
    We claim that $\mathfrak{l}=1$ or $\mathfrak{r}=1$. Indeed, suppose without loss of generality that $\mathfrak{l}<\mathfrak{r}<1$. Then, considering the path
    $$    \tilde\tau_{y,2}:=\tau_{y,2}\big|_{[0,\mathfrak{r}]} \oplus \pi_{y}\big|_{[\mathfrak{r},1]},
    $$
    we see that $\|\tilde \tau_{y,2}\|_\L > \|\tau_{y,2}\|_\L$ since $\tau_{y,2}\big|_{[\mathfrak{r},1]}$ cannot be a geodesic due to the fact that $\mathfrak{r}<1$, $\tau_{y,2}(\mathfrak{r}) = \pi_{y}(\mathfrak{r})$ and $\Gamma(x,0; y,1) \cap (\R \times [t^*,1])$ contains once path. Therefore
     $$
        \|\tau_{y,1}\|_{\L} + \|\tilde \tau_{y,2}\|_{\L} > \|\tau_{y,1}\|_{\L} + \| \tau_{y,2}\|_{\L}.
    $$
    Moreover, since $\mathfrak{l}<\mathfrak{r}$, the definition of $\mathfrak{l}$ implies that $(\tau_{y,1}, \tilde \tau_{y,2})$ is a pair of disjoint paths, which together with the above inequality contradicts that $\tau_y$ is a 2-optimizer. 

    Now, without loss of generality, we assume that $\mathfrak{l}=1$ as the  $\mathfrak{r}=1$ case has a symmetric proof (i.e.\ using an approximation from the left with leftmost instead of rightmost geodesics). By Lemma \ref{L: seq-disj-geos}, we have sequences $z_n \downarrow {y}, a_{z_n}\uparrow 1$, and geodesics $\pi_{z_n,1}$ and $\pi_{z_n,2}$ for $(x,0;z_n,1)$  such that \begin{align}
       \label{E:bubble-params} \begin{split}
            \Ov(\pi_y, \pi_{z_n,1}) &= [0,a_{z_{n+1}}]\\
            \Ov(\pi_{y}, \pi_{z_n,2}) &= [0,a_{z_{n}}]\\
            \pi_{z_n,1}(t) &< \pi_{z_n,2}(t) \text{ for all $t\in(a_{z_{n}},1)$}.
        \end{split}
    \end{align}
    For the remainder of the proof, we denote $a_{z_n}$ as $a_n$.

    Now because $\mathfrak{l}=1$ we can find a subsequence $\{a_{n_k}\}_k$ of $\{a_n\}_n$, and a corresponding subsequence $\{z_{n_k}\}_k$ of $\{z_n\}_n$ such that for every $n_k$, there exists a $t_k \in [a_{n_k}, a_{n_k+1}]$ such that 
    \begin{align*}
        \tau_{y,1}(t_k) = \pi_{y}(t_k).
    \end{align*}
    We will show for any $k$ that 
    \begin{align}
        G_x(z_{n_k}) < G_x(y) \label{E: forward_wts},
    \end{align}
    contradicting that $y$ is a local minimum. By Lemma \ref{L: geo-opt-ordering}, we have that
    \begin{align}
       \notag \tau_{y,2}\big|_{[t^*,1]} \geq \pi_{y}\big|_{[t^*,1]}, \label{E: rightmost-opt-ordering}
    \end{align}
    so, as long as $k$ is large enough, $\tau_{y,2}\big|_{[a_{n_k},1]}$ must intersect $\pi_{z_{n_k },2}\big|_{[a_{n_k},1]}$ at least once. This is because both are continuous functions and 
    $$
        \tau_{y,2}(a_{n_k}) \geq \pi_{y}(a_{n_k}) = \pi_{z_{{n_k}},2}(a_{n_k}),
    $$
    while
    $$\tau_{y,2}(1) = y < z_{n_k} = \pi_{z_{n_k},2}(1).$$
    Now, define
    $$
        s_k = \inf\{s \in [a_{n_k},1] : \tau_{y,2}(s) = \pi_{z_{n_k},2}(s)\},
    $$
    and notice that $(\tau_{y,1}\big|_{[0,{t_k}]} \oplus \pi_{z_{n_k},1}\big|_{[t_k,1]}, \tau_{y,2}\big|_{[0,{s_k}]}\oplus \pi_{z_{n_k},2}\big|_{[s_k,1]} )$ is a 2-tuple of continuous disjoint paths from $(x,0)$ to $(z_{n_k},1)$ by (\ref{E:bubble-params}) and the definition of 
    $t_k,s_k$. Hence,
    \begin{equation}
       \L(x^2,0;z_{n_k}^2,1) \geq \|(\tau_{y,1}\big|_{[0,{t_k}]} \oplus \pi_{z_{n_k},1}\big|_{[t_k,1]}, \tau_{y,2}\big|_{[0,{s_k}]}\oplus \pi_{z_{n_k},2}\big|_{[s_k,1]} )\|_\L.
       \label{E: new-cand-2-opt} 
    \end{equation}
   Therefore, by \eqref{E: new-cand-2-opt} and the definition of $G_x$, we have that
    \begin{align}
     G_x(z_{n_k}) &\leq
        \|\pi_{z_{n_k},1}\|_\L + \|\pi_{z_{n_k},2}\|_\L - \|\tau_{y,1}\big|_{[0,t_k]}\|_\L - \|\pi_{z_{n_k},1}\big|_{[t_k,1]}\|_\L - \|\tau_{y,2}\big|_{[0,s_k]}\|_\L - \|\pi_{z_{n_k},2}\big|_{[s_k,1]} \|_\L  \notag        \\ 
     &= \|\pi_{z_{n_k},1}\big|_{[0,t_k]}\|_\L + \|\pi_{z_{n_k},2}\big|_{[0,s_k]}\|_\L - \|\tau_{y,1}\big|_{[0,t_k]}\|_\L -\|\tau_{y,2}\big|_{[0,s_k]}\|_\L .    \label{E: non_opt_length_sum} 
    \end{align}
    Meanwhile, 
    \begin{align}
        G_x(y) = 2\|\pi_{y}\|_\L -\|\tau_{y,1}\big|_{[0,t_k]}\|_\L - \|\tau_{y,1}\big|_{[t_k,1]}\|_\L -\|\tau_{y,2}\big|_{[0,s_k]}\|_\L - \|\tau_{y,2}\big|_{[s_k,1]}\|_\L,  \label{E: y_gap_sum}
    \end{align}
     and so to show \eqref{E: forward_wts}, it is enough to show \eqref{E: non_opt_length_sum} $<$ \eqref{E: y_gap_sum}. Now, by \eqref{E:bubble-params}, we have that $\pi_y|_{[0, t_k]} = \pi_{z_{n_k}, 1}|_{[0, t_k]}$. Using this, we have that the inequality \eqref{E: non_opt_length_sum} $<$ \eqref{E: y_gap_sum} is equivalent to the following:
    \begin{align}
        \|\pi_{z_{n_k},2}\big|_{[0,s_k]}\|_\L +\|\tau_{y,1}\big|_{[t_k,1]}\|_\L +\|\tau_{y,2}\big|_{[s_k,1]}\|_\L         < \|\pi_{y}\|_\L +  \|\pi_{y}\big|_{[t_k, 1]}\|_\L.    \label{E: frwrd_dir_final_wts}
    \end{align}
    Now because $\pi_{z_{n_k},2}(s_k) =  \tau_{y,2}(s_k)$, we have that $\pi_{z_{n_k},2}\big|_{[0,s_k]}\oplus  \tau_{y,2}\big|_{[s_k,1]}$ is a continuous path for $(x,0;y,1)$. Therefore, since $\pi_y$ is a geodesic for $(x,0;y,1)$, it follows that
    \begin{align}
       \|\pi_{z_{n_k},2}\big|_{[0,s_k]}\|_\L + \|\tau_{y,2}\big|_{[s_k,1]}\|_\L \leq \|\pi_{y}\|_\L. \label{E: rightmost-non-opt-path-ineq}
    \end{align}
   Moreover,  \begin{align}\|\tau_{y,1}\big|_{[t_k,1]}\|_\L \leq \|\pi_{y}\big|_{[t_k,1]}\|_\L, \label{E: leftmost-opt-non-opt}
    \end{align}
    since $\tau_{y,1}$ and $\pi_y$ are equal at the times $t_k, 1$, and $\pi_y$ is a geodesic.  Now because $\Gamma(x,0;y,1) \cap (\R \times [t^*,1])$ contains exactly one path, at least one of \eqref{E: rightmost-non-opt-path-ineq} or \eqref{E: leftmost-opt-non-opt} must be strict.
    Using this, \eqref{E: frwrd_dir_final_wts}, \eqref{E: rightmost-non-opt-path-ineq}, and \eqref{E: leftmost-opt-non-opt} implies \eqref{E: forward_wts}.

    \textbf{Proof of Part $3$.} \qquad This is immediate by combining parts $1$ and $2$.
\end{proof}

Note that by the time-reversal symmetry of the directed landscape (i.e.\ Lemma \ref{L: EDL-symmetries}.2) the same results of Proposition \ref{P:upperbubble-locmin} also hold if we shift the starting point of the geodesic. We record this in the following corollary.

\begin{corollary} \label{C: lowerbubble-locmin}
The following holds on $\Omega$ for all $y \in \R$. The function $G(\cdot, y)$ has a strict local minimum at a point $x$ if and only if there exists an $r \in (0,1]$ such that $\Gamma(x, 0; y, 1) \cap (\R \times (0, r])$ contains exactly two disjoint paths. All local minima of $G(\cdot,y)$ are strict.
\end{corollary}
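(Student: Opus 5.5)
The plan is to deduce the corollary from Proposition \ref{P:upperbubble-locmin} by a symmetry argument, rather than repeating the proof. Consider the reflected landscape
$$
\tilde\L(\x,s;\y,t) := \L(-\y,-t;-\x,-s) \qquad \text{(with vectors reversed to keep coordinates ordered).}
$$
By flip symmetry (Lemma \ref{L: EDL-symmetries}.2), $\tilde\L$ is again an extended landscape. Combining this with shift stationarity in time, $\hat\L(\x,s;\y,t) := \tilde\L(\x,s-1;\y,t-1)$ is also an extended landscape, and it maps the time interval $[0,1]$ to itself.

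The first step is to check that the map $\omega \mapsto \hat\omega$ sends the almost sure set $\Omega$ into a set of the same type. Each property \ref{P: cont}--\ref{P: no-bubbles} is either invariant under the space-time reflection $(z,r)\mapsto(-z,1-r)$ or turns into its mirror version. Rightmost and leftmost optimizers are swapped. Rational points map to rational points. Forward and backward 3-stars are swapped, and times in $\Q$ map to times in $\Q$. Geodesic bubbles are preserved. So after intersecting $\Omega$ with its image (still probability one), we may assume both $\L$ and $\hat\L$ satisfy the properties, and the deterministic Proposition \ref{P:upperbubble-locmin} applies to $\hat\L$.

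The second step is to translate the objects involved. A path $\pi$ from $(x,0)$ to $(y,1)$ corresponds to $\hat\pi(r) = -\pi(1-r)$, a path from $(-y,0)$ to $(-x,1)$. This correspondence preserves $\L$-lengths, since the partition sums match term by term. It also preserves disjointness of pairs, with the order reversed. Consequently:
\begin{itemize}
\item geodesic networks correspond, with $\Gamma_\L(x,0;y,1)\cap(\R\times(0,r])$ mapping to $\Gamma_{\hat\L}(-y,0;-x,1)\cap(\R\times[1-r,1))$;
\item the gap sheets satisfy $\hat G(-y,-x) = G(x,y)$.
\end{itemize}

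It follows that $G(\cdot,y)$ has a (strict) local minimum at $x$ if and only if $\hat G_{-y}$ has a (strict) local minimum at $-x$. By parts 1 and 2 of Proposition \ref{P:upperbubble-locmin} applied to $\hat\L$, this holds if and only if some $\hat\Gamma(-y,0;-x,1)\cap(\R\times[r',1))$ contains exactly two disjoint paths. Setting $r = 1-r' \in (0,1]$, that is the stated condition for $\Gamma(x,0;y,1)\cap(\R\times(0,r])$. Part 3 of the proposition transfers in the same way, giving strictness of all local minima of $G(\cdot,y)$.

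The main obstacle is the bookkeeping in the first step. One must confirm that each of \ref{P: cont}--\ref{P: no-bubbles} is genuinely symmetric under the reflection, or else just rerun the proof with the roles of left and right and of start and end swapped. The subtle points are \ref{P: ov-rm-opt} and Lemma \ref{L: seq-disj-geos}, which single out rightmost optimizers approached from the right. Under the reflection these become leftmost optimizers approached from the left. Their validity is guaranteed by the symmetric (left/right) versions of Lemma \ref{L: geo-opt-rm-coal}, which hold by the same argument, so I would state those explicitly as part of $\Omega$.
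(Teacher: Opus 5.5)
Your proposal is correct and follows the paper's route: the paper obtains this corollary from Proposition \ref{P:upperbubble-locmin} in one line by invoking the time-reversal (flip) symmetry of Lemma \ref{L: EDL-symmetries}.2. Your reflection $(z,r)\mapsto(-z,1-r)$, together with the identity $\hat G(-y,-x)=G(x,y)$ and the translation of $\R\times[r',1)$ to $\R\times(0,r]$, is a careful spelling-out of that same argument. Your remark that the mirrored (leftmost, fixed-endpoint) versions of \ref{P: ov-rm-opt} should be built into $\Omega$ is appropriate, and the paper itself already uses those versions implicitly.
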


We can now conclude parts $1-4$ of Theorem \ref{T: fin-time-thm}. This is the content of the following theorem.

\begin{theorem}
\label{T:casei-iii}
Almost surely, the following holds for all $x, y \in \R$. Let $u = (x, 0; y, 1)$. Then $u$ has network type \textup{I}, \textup{IIa}, \textup{IIb}, or \textup{III} if and only if $G(x, y) > 0$. Moreover, 
\begin{itemize}
    \item $u$ has network type \textup{IIa} if and only if $G_x$ has a strict local minimum at $y$, but $\hat G_y$ does not have a local minimum at $x$.
    \item $u$ has network type \textup{IIb} if and only if $\hat G_y$ has a strict local minimum at $x$, but $G_x$ does not have a local minimum at $y$. 
     \item $u$ has network type \textup{III} if and only if $G_x$ has a strict local minimum at $y$, and $\hat G_y$ has a strict local minimum at $x$.
    \item $u$ has network type \textup{I} if and only if $G_x$ does not have a local minimum at $y$, and $\hat G_y$ does not have a local minimum at $x$.
\end{itemize}
\end{theorem}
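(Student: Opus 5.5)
The plan is to combine Lemma \ref{L: fixed-time-possible-networks} with Proposition \ref{P:upperbubble-locmin} and Corollary \ref{C: lowerbubble-locmin}. Essentially all the geometric work has already been done there; what remains is to read off, for each of the seven network types in Figure \ref{fig:fixed-time-networks}, two pieces of information:
\begin{itemize}
\item whether there is a final segment $[r,1)$ on which the network consists of exactly two disjoint paths;
\item whether there is an initial segment $(0,r]$ on which the network consists of exactly two disjoint paths.
\end{itemize}

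\textbf{Step 1: the zero set.} I first show that $G(x,y)=0$ if and only if there are two geodesics from $(x,0)$ to $(y,1)$ that are disjoint on $(0,1)$. Suppose $G(x,y)=0$. Since the extended landscape value $\L(x^2,0;y^2,1)$ is attained by a disjoint $2$-optimizer (\ref{P: rmlmopt-mon}, together with the appendix result that weak optimizers are disjoint), we get $\|\tau_1\|_\L+\|\tau_2\|_\L = 2\L(x,0;y,1)$. Each $\|\tau_i\|_\L \le \L(x,0;y,1)$, so both $\tau_i$ must be geodesics, and they are disjoint. The converse is immediate from the definition of $\mathcal G$.

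Among the seven types, exactly IV, Va and Vb contain two disjoint geodesics. In I, IIa and IIb every pair of geodesics shares a segment, and in III every pair shares the middle segment. Hence I, IIa, IIb and III are precisely the types with $G(x,y)>0$. This proves the first sentence of the theorem.

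\textbf{Step 2: the case analysis.} Restrict to $G>0$. By Proposition \ref{P:upperbubble-locmin}, $G_x$ has a local minimum at $y$ (automatically strict, by part 3) if and only if some final segment $\Gamma(u)\cap(\R\times[r,1))$ consists of exactly two disjoint paths. By Corollary \ref{C: lowerbubble-locmin}, the analogous statement holds for $\hat G_y$ at $x$ with initial segments.

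Inspecting Figure \ref{fig:fixed-time-networks}:
\begin{itemize}
\item type I has neither a bubble at the top nor one at the bottom;
\item type IIa has a top bubble only;
\item type IIb has a bottom bubble only;
\item type III has both.
\end{itemize}
The orientation convention matters here. In IIa the geodesics coalesce at the start and split near $(y,1)$, which matches Figure \ref{fig:fixed-time-networks}, where the bubble sits above the shared segment.

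The two binary conditions (local minimum of $G_x$ at $y$, local minimum of $\hat G_y$ at $x$) then separate the four positive-gap types exactly as listed. Each equivalence is two-directional:
\begin{itemize}
\item for the forward direction, the network type determines the bubble configuration, which determines the local-minimum configuration;
\item for the reverse direction, since $G>0$ the type is one of the four, and the four local-minimum configurations are distinct, so the configuration determines the type.
\end{itemize}

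\textbf{Expected obstacle.} The main delicate point is Step 1's use of attainment together with the reduction ``both $\tau_i$ are geodesics.'' This needs the supremum in $\mathcal G$ to coincide with the extended landscape value, with a maximizer existing; I take this from Theorem \ref{T:dj-exist}. A second point is checking that, for types IV and V, the hypothesis ``exactly two disjoint paths'' on a segment is not what distinguishes them. This is irrelevant here, because those types have already been excluded by $G>0$.
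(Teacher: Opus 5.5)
Your proposal is correct and is essentially the paper's own proof. You identify $\{G>0\}$ with types I--III via the existence of disjoint $2$-optimizers, then read off the top and bottom bubble configurations of the four remaining types using Proposition~\ref{P:upperbubble-locmin} and Corollary~\ref{C: lowerbubble-locmin}. Reading the bullets under $G(x,y)>0$ is also right: for I, IIa and IIb the ``no local minimum'' clause already forces $G(x,y)>0$ since $G\ge 0$, while the III bullet genuinely needs it, because at a zero both $G_x$ and $\hat G_y$ have strict local minima (Lemma~\ref{L: bowtie}).
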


\begin{proof}
  We have that $G(x, y) = 0$ if and only if there are two disjoint geodesics from $(x, 0)$ to $(y, 1)$. This uses the existence of disjoint optimizers, \ref{P: rmlmopt-mon}. In other words, $G(x, y) > 0$ if and only if $u$ has network type \textup{I}, \textup{IIa}, \textup{IIb}, or \textup{III}. The remaining part of the classification is Proposition \ref{P:upperbubble-locmin} and Corollary \ref{C: lowerbubble-locmin}.  
\end{proof}

\section{One-sided local minima}
\label{sec: one-sided-minima}

    From the proof of Proposition \ref{P:upperbubble-locmin}.2, we can extract the following relationship between one-sided local minima and the behaviour of 2-optimizers. We state this corollary for right-sided local minima; a symmetric statement holds for left-sided local minima.
    
    \begin{corollary}
        \label{C: one-side-local-min}
      On $\Omega$, the following holds for all $x, y \in \R$. Let $\tau_{x, y}$ be any $2$-optimizer from $(x, 0)$ to $(y, 1)$, and let $\pi_{x, y, R}$ be the rightmost geodesic.
      
    If $y$ is a right-sided local minimum for $G_x(\cdot)$ (meaning for all small enough $\ep > 0$, $G_x(y) \leq G_x(y + \ep)$), then there exists an $r > 0$ such that $\tau_{x,y,2}\big|_{[r,1]} = \pi_{x,y,R}\big|_{[r,1]}$. In other words, at a right-sided local minimum, the rightmost path in any $2$-optimizer eventually stays on the rightmost geodesic.
      \end{corollary}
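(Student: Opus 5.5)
We follow the proof of Proposition \ref{P:upperbubble-locmin}.2, but use only the right-sided comparison $G_x(y) \le G_x(z)$ for $z$ slightly larger than $y$, since that argument only tests points $z_n \downarrow y$. Fix a $2$-optimizer $\tau = \tau_{x,y}$. We split according to the structure of $\Gamma(x,0;y,1)$ near time $1$, using Lemma \ref{L: fixed-time-possible-networks}.

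\textbf{Case A: near time $1$ the network consists of exactly two disjoint paths.} This covers types IIa, III, IV and Va/b. Lemma \ref{L:opt=geo} directly gives $\tilde t<1$ with $\tau_2|_{[\tilde t,1]} = \pi_{x,y,R}|_{[\tilde t,1]}$. This case needs no minimality assumption at all.

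\textbf{Case B: there is $t^*<1$ with $\Gamma(x,0;y,1)\cap(\R\times[t^*,1])$ a single path $\pi_y = \pi_{x,y,R}$.} As in Part 2 of the proposition, define $\mathfrak l,\mathfrak r$ as the last times $\tau_1$, respectively $\tau_2$, meet $\pi_y$. The splicing argument there shows $\mathfrak l = 1$ or $\mathfrak r = 1$; it uses only optimality of $\tau$ and uniqueness of the path near time $1$.

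Suppose $\mathfrak r = 1$. Then $\tau_2$ touches $\pi_y$ at times accumulating to $1$. Since $\pi_y$ is the unique geodesic on $[t^*,1]$, if $\tau_2$ leaves $\pi_y$ on some excursion interval in $[t^*,1]$, we can replace $\tau_2$ by $\pi_y$ there. This strictly increases the length, because the excursion piece is not a geodesic: the network is a single path there. Disjointness is preserved by Lemma \ref{L: geo-opt-ordering}, which gives $\tau_1 \le \pi_y$ on $[t^*,1]$, hence $\tau_1 < \tau_2 \le \pi_y$ in the relevant sense. This contradicts optimality, so $\tau_2 = \pi_y$ on $[r,1]$ once $r$ is past the first touching time after $t^*$. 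Near time $1$ we also have $\tau_2 \ge \pi_y$ by Lemma \ref{L: geo-opt-ordering}, which makes the excursion/replacement argument clean.

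Suppose instead $\mathfrak l = 1$. The proof of Part 2 then produces $z_{n_k}\downarrow y$ with $G_x(z_{n_k}) < G_x(y)$, via Lemma \ref{L: seq-disj-geos} and the spliced competitor \eqref{E: new-cand-2-opt}. This contradicts right-sided minimality, so this subcase cannot occur.

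The main obstacle is making sure the $\mathfrak l = 1$ contradiction in Part 2 uses only points to the right of $y$ and only rightmost geodesics. Lemma \ref{L: seq-disj-geos} is stated for $z_n \downarrow y$ with rightmost geodesics, so this checks out. The symmetric left-sided statement would use leftmost geodesics and approximation from the left.
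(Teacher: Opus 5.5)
Your structure is the paper's. Types IIa, III, IV and Va/b are handled by Lemma \ref{L:opt=geo}. For types I and IIb you reuse the $\mathfrak{l},\mathfrak{r}$ dichotomy from the proof of Proposition \ref{P:upperbubble-locmin}.2, and $\mathfrak{l}=1$ contradicts right-sided minimality via the points $z_{n_k}\downarrow y$. The paper compresses the remaining case into ``if the corollary fails then $\mathfrak{l}=1$''. You spell it out, but your $\mathfrak{r}=1$ subcase has a faulty step.

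Lemma \ref{L: geo-opt-ordering} gives $\tau_1\le \pi_y\le \tau_2$ on $[t^*,1]$, not $\tau_2\le \pi_y$. More importantly, the weak inequality $\tau_1\le \pi_y$ does not make the spliced path disjoint from $\tau_1$ when you replace $\tau_2$ by $\pi_y$ on an excursion interval. When $\mathfrak{l}=1$ as well, $\tau_1$ may touch $\pi_y$ at a time when $\tau_2$ is away from $\pi_y$, and then the modified pair collides. So the claim ``$\tau_2=\pi_y$ after the first touching time after $t^*$'' does not follow from $\mathfrak{r}=1$ alone, and your two subcases are not exclusive.

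The repair uses what you already have, in the opposite order.

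First rule out $\mathfrak{l}=1$ by the Part 2 argument. Then $\mathfrak{l}<1$, so $\mathfrak{r}=1$ by the splicing claim. On $(\max(\mathfrak{l},t^*),1)$ you have both $\tau_1\neq \pi_y$ and $\tau_1\le \pi_y$, that is, $\tau_1<\pi_y$.

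Now pick $r_0\in(\max(\mathfrak{l},t^*),1)$ with $\tau_2(r_0)=\pi_y(r_0)$. The pair $(\tau_1,\tau_2|_{[0,r_0]}\oplus\pi_y|_{[r_0,1]})$ is disjoint. Moreover, $\pi_y|_{[r_0,1]}$ is the unique geodesic between its endpoints: any other would give a second geodesic for $(x,0;y,1)$ differing from $\pi_y$ inside $[t^*,1]$. Optimality of $\tau$ therefore forces $\tau_2|_{[r_0,1]}=\pi_y|_{[r_0,1]}$.

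With this reordering your argument is complete. It is exactly the step the paper leaves implicit in its one-line proof.
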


     \begin{proof}
It suffices to prove this in case when the network type for $(x, 0; y, 1)$ is \textup{I} or \textup{IIb}, as the others are covered by Lemma \ref{L:opt=geo}. With notation as in the proof of Proposition \ref{P:upperbubble-locmin}.2, if Corollary \ref{C: one-side-local-min} fails, then $\mathfrak{l} = 1$. Following that proof, this gives rise to a sequence of points $z_{n_k} \downarrow y$ such that $G_x(z_{n_k}) < G_x(y)$, contradicting $y$ being a right-sided local minimum.
     \end{proof} 

It is natural to ask whether the two conditions in Corollary \ref{C: one-side-local-min} are equivalent. We are not sure whether this is true. However, we can show that the existence of a right-sided local minimum for $G_x$ is equivalent to a stronger geometric condition.

\begin{prop}
 The following holds almost surely for all $x,y \in \R$. Setting some notation, fix $x \in \R$, let $\pi_y$ be the rightmost geodesic for $(x, 0; y, 1)$ and $\tau_y$ the rightmost $2$-optimizer.
    The following conditions are equivalent (where condition i) means condition ia) + ib)):
    \begin{itemize}
        \item [i)] \begin{itemize}
            \item [a)] $(y,1)$ is a backwards 2-star, that is, there exists an $r\in (0,1)$ and geodesics $\pi_1,\pi_2:[r,1]\to \R$ such that $\pi_1(1) =\pi_2(1) = y$, and $\pi_1(t)<\pi_2(t)$ for all $t\in (r,1]$.
            \item [b)] $\pi_y \big|_{[r,1]} = \pi_2\big|_{[r,1]}$ and $\tau_y\big|_{[r,1]} = (\pi_1,\pi_2)$. \\
            \end{itemize}
        \item[ii)] For small enough $\ep > 0$,
            \begin{align*}
                \L(x^2,0;(y,y+\ep),1) -\L(x^2, 0;y^2, 1) = \L(x,0;y+\ep,1) - \L(x,0;y,1).
            \end{align*}
        \item[iii)] $G_x(y)$ is a right-sided local minimum.
    \end{itemize}
\end{prop}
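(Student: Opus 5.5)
The plan is to prove the cycle i) $\Rightarrow$ ii) $\Rightarrow$ iii) $\Rightarrow$ i). Both of the first two implications rest on one general ``swap'' inequality, valid for all $y<z$:
\begin{equation}
\label{E:swap-plan}
\L(x^2,0;(y,z),1)-\L(x^2,0;y^2,1)\le \L(x,0;z,1)-\L(x,0;y,1),
\end{equation}
together with its mirror image
$$
\L(x^2,0;z^2,1)-\L(x^2,0;(y,z),1)\le \L(x,0;z,1)-\L(x,0;y,1).
$$
\textbf{Proving the swap inequality.} Take a $2$-optimizer $\sigma=(\sigma_1,\sigma_2)$ for $(x^2,0;(y,z),1)$ and the geodesic $\pi_y$. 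Let $s$ be the last time at which $\sigma_2$ meets $\pi_y$; this exists since both start at $x$, and $\pi_y<\sigma_2$ on $(s,1]$. Exchanging tails at time $s$ produces a weakly ordered pair $(\sigma_1,\sigma_2|_{[0,s]}\oplus\pi_y|_{[s,1]})$ from $x^2$ to $y^2$, together with the path $\pi_y|_{[0,s]}\oplus\sigma_2|_{[s,1]}$ from $x$ to $z$. The total length is unchanged by the exchange. By the appendix result that weak optimizers are disjoint, the weak pair has length at most $\L(x^2,0;y^2,1)$, and the second path has length at most $\L(x,0;z,1)$. This gives \eqref{E:swap-plan}. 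The mirror inequality follows in the same way, exchanging the tail of the first path of a $(z,z)$-optimizer with $\pi_y$.

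\textbf{i) $\Rightarrow$ ii).} By \ref{P: ov-rm-opt}, for $z=y+\ep$ small the rightmost geodesic $\pi_z$ agrees with $\pi_y$ on $[0,t]$, where $t>r$ and $t\to1$. The pair
$$
\bigl(\tau_{y,1},\ \tau_{y,2}|_{[0,t]}\oplus\pi_z|_{[t,1]}\bigr)
$$
is admissible for $(x^2,0;(y,z),1)$. Disjointness holds because of three facts: $\tau_{y,2}=\pi_y=\pi_2$ on $[r,1]$ by i)b); on $(t,1]$ we have $\pi_z\ge\pi_2$ by monotonicity; and $\pi_2>\pi_1=\tau_{y,1}$ on $(r,1]$. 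The length of this pair is $\L(x^2,0;y^2,1)+\L(x,0;z,1)-\L(x,0;y,1)$. This is the reverse of \eqref{E:swap-plan}, so equality holds.

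\textbf{ii) $\Rightarrow$ iii).} Write $G_x(z)-G_x(y)=2\bigl(\L(x,0;z,1)-\L(x,0;y,1)\bigr)-\bigl(\L(x^2;z^2)-\L(x^2;y^2)\bigr)$. Split the last bracket through the intermediate value $\L(x^2;(y,z))$. Bound the first piece by ii) and the second piece by the mirror inequality. This gives $G_x(z)\ge G_x(y)$ for all $z$ slightly larger than $y$, which is iii).

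\textbf{iii) $\Rightarrow$ i).} This is the main obstacle. Corollary \ref{C: one-side-local-min} already gives $\tau_{y,2}|_{[r,1]}=\pi_y|_{[r,1]}$, so what remains is to show that $\tau_{y,1}$ is eventually a geodesic ending at $y$. Since $\tau_{y,1}<\tau_{y,2}=\pi_y$ on $[r,1)$, this would produce the backward $2$-star and i)b).

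The first approach I would try runs as follows. Running the computation of ii) $\Rightarrow$ iii) backwards, a right-sided minimum forces equality in both swap inequalities for all small $\ep$. In particular ii) holds. Let $\sigma^{(\ep)}$ be the rightmost optimizer for $(x^2,0;(y,y+\ep),1)$. By \ref{P: ov-rm-opt} it converges in overlap to $\tau_y$, and so agrees with $\tau_y$ on $[0,t_\ep]$ with $t_\ep\to1$. Equality in \eqref{E:swap-plan} means that the swapped pair is itself an optimizer for $y^2$. Since $\tau_y$ is the rightmost optimizer, this identifies the tails: $\sigma^{(\ep)}_2=\pi_{y+\ep}$ after the last meeting time with $\pi_y$, and $\sigma^{(\ep)}_1=\tau_{y,1}$ up to time $1$.

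Now $\pi_{y+\ep}$ leaves $\pi_y$ near time $1$ and is strictly to its right there. So on the final stretch $\sigma^{(\ep)}_1=\tau_{y,1}$ is constrained only to lie left of a path strictly separated from $\pi_y$. The step I expect to be most delicate is the following. I would argue, by a local replacement as in Lemma \ref{L:opt=geo}, Case I, that $\tau_{y,1}$ must then be a geodesic on some $[r',1]$: otherwise the geodesic between its endpoints stays left of $\pi_y$ and strictly improves the optimizer. If that geodesic is not forced to stay left of $\pi_y$, I would fall back on \ref{P: no-bubbles} and \ref{P: no-3-stars} to rule out the bad configurations.
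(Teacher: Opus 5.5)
Your proposal has a genuine gap at its foundation: the swap inequality $\L(x^2,0;(y,z),1)-\L(x^2,0;y^2,1)\le\Delta$, with $\Delta:=\L(x,0;z,1)-\L(x,0;y,1)$, is false in general.

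To see this, add it to your mirror inequality, which is correct. You would get $\L(x^2,0;z^2,1)-\L(x^2,0;y^2,1)\le 2\Delta$, i.e.\ $G_x(z)\ge G_x(y)$ for \emph{every} $y<z$, with no hypothesis at all. So $G_x$ would be nondecreasing. For $x=0$ this contradicts the fact that $G_0=\A_1-\A_2$ is stationary and non-constant.

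The proof fails at the claim that $(\sigma_1,\sigma_2|_{[0,s]}\oplus\pi_y|_{[s,1]})$ is weakly ordered. You push the right path leftward onto $\pi_y$, and nothing prevents $\sigma_1>\pi_y$ somewhere on $(s,1)$. A tail exchange preserves order only when the right path moves right or the left path moves left, which is why your mirror inequality works.

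The correct relation is the identity the paper extracts from Theorem~\ref{T: greenes-thm}, using translation invariance and continuity:
\[
\L(x^2,0;(y,z),1)=\L(x,0;y,1)+\L(x,0;z,1)-\min_{w\in[y,z]}G(x,w).
\]
It gives $\L(x^2,0;(y,z),1)-\L(x^2,0;y^2,1)-\Delta=G(x,y)-\min_{w\in[y,z]}G(x,w)\ge 0$. So your inequality holds exactly when $y$ minimizes $G_x$ on $[y,z]$; it is essentially condition iii) itself.

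Consequently, the admissible pair you build in i)$\Rightarrow$ii) only reproves the always-true lower bound. The missing upper bound comes from \ref{P: ov-rm-opt} applied to $2$-optimizers. For small $\ep$, the rightmost optimizer for $(x^2,0;(y,y+\ep),1)$ agrees with $\tau_y$ on $[0,r]$, and $\pi_{y+\ep}=\pi_y$ on $[0,r]$. On $[r,1]$ the disjoint geodesic pair $(\pi_1,\pi_{y+\ep}|_{[r,1]})$ is then optimal, which computes $\L(x^2,0;(y,y+\ep),1)$ exactly.

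Your ii)$\Rightarrow$iii) is correct as written, since it uses only ii) and the mirror inequality. It is a nice soft alternative to the paper's use of the identity.

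Your iii)$\Rightarrow$i), however, has no valid route to ii). A right-sided minimum only bounds the \emph{sum} of the two pieces by $2\Delta$. Since the first piece is $\ge\Delta$ and the second is $\le\Delta$, nothing forces equality. The paper obtains iii)$\Rightarrow$ii) directly from the identity above, which is where the Airy line ensemble input enters.

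Your fallback ``local replacement'' in the $y^2$ problem also does not close. The geodesic from $(\tau_{y,1}(r'),r')$ to $(y,1)$ may coalesce with $\pi_y=\tau_{y,2}$ before time $1$. That configuration is neither a bubble nor a $3$-star, so \ref{P: no-bubbles} and \ref{P: no-3-stars} give nothing.

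What works once ii) is known is the following.
\begin{itemize}
\item Start from Corollary~\ref{C: one-side-local-min}, as you do, to get $\tau_{y,2}=\pi_y$ on $[r',1]$.
\item Use ii) to see that $(\tau_{y,1},\tau_{y,2}|_{[0,r']}\oplus\pi_{y+\ep}|_{[r',1]})$ is optimal for $(x^2,0;(y,y+\ep),1)$.
\item Let $r$ be the last time at which $\pi_{y+\ep}$ meets $\pi_y$. The rightmost geodesic from $(\tau_{y,1}(r),r)$ to $(y,1)$ stays weakly left of $\pi_y$, so it is disjoint from $\pi_{y+\ep}|_{[r,1]}$.
\item Lemma~\ref{L:opt-lemma} then forces $\tau_{y,1}|_{[r,1]}$ to be a geodesic, which gives i).
\end{itemize}
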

\begin{proof} 
    \begin{description}
        \item[$i) \implies ii)$:] \qquad Take $\ep>0$ small enough so that, by \ref{P: ov-rm-opt},
        \begin{align*}
            \Ov(\pi_y, \pi_{y+\ep}) &\supset [0,r] \quad \text{ and } \\
            \Ov(\tau_y, \tau_{(y,y+\ep)}) &\supset [0,r].
        \end{align*}
        Here $\tau_{(y,y+\ep)}$ is the rightmost $2$-optimizer from $(x^2, 0)$ to $((y, y + \ep), 0)$.
        By the above, monotonicity, and condition $i)$ we see that:
        \begin{align*}
            \tau_y\big|_{[0,r]} \oplus (\pi_1, \pi_{y+\ep})\big|_{[r,1]}
        \end{align*}
        must also be a disjoint 2-optimizer from $(x^2, 0)$ to $((y, y + \ep), 1)$. Now using this disjoint 2-optimizer and condition $i)b)$, we must have that:
        \begin{align*}
             \L(x^2,0;(y,y+\ep),1)) -\L(x^2, 0;y^2, 1) &= \|\pi_{y+\ep}\big|_{[r,1]}\|_\L - \|\pi_y\big|_{[r,1]}\|_\L \\
            &=\L(x,0;y+\ep,1) - \L(x,0;y,1),
        \end{align*}
        where the final equality uses that  $\Ov(\pi_y, \pi_{y+\ep}) \supset [0,r]$.
           
        \item[$ii) \iff iii)$:] \qquad 
        We claim that the following is true, simultaneously for all $x, y, z \in \R$ with $y < z$:
        \begin{equation}
        \label{E:minimum-formula}
        \L(x^2,0; (y,z),1) = \L(x, 0; y, 1) + \L(x, 0; z, 1) - \min_{w\in[y, z]} G(x, w).
        \end{equation}
To see why \eqref{E:minimum-formula} holds, first observe that all terms above are continuous functions of $x, y, z$ so it suffices to prove that \eqref{E:minimum-formula} holds for any fixed choice of $x, y, z$ almost surely. Next, by the translation invariance of the directed landscape (Lemma \ref{L: EDL-symmetries}.1), we may set $x = 0$. In this case, the equation is implied by Theorem \ref{T: greenes-thm} and the definition of $G$.
            Using \eqref{E:minimum-formula}, we see the statement in $ii)$ reduces to the fact that for small enough $\ep > 0$:
            \begin{align*}
                G(x, y) = \min_{w\in[y, y + \ep]} G(x, w),
            \end{align*}
            which is exactly the condition for $G_x$ being a weak right-sided local minimum at $y$.
            
        \item[$ii) + iii) \implies i)$:] 
          \qquad By Corollary \ref{C: one-side-local-min}, there exists $r'\in (0,1)$ such that 
            \begin{align}
              \label{E: rm-opt-eq-geod} \tau_{y,2}\big|_{[r',1]} = \pi_y\big|_{[r',1]}.
            \end{align}
            Letting $\ep > 0$ be small enough so that by \ref{P: ov-rm-opt},
            \begin{align}
               \label{E: overlap-cond-rm-min-prop} \Ov(\pi_y, \pi_{y+\ep}) \supset [0,r'],
            \end{align}
            we get:
            \begin{align*}
                \|\tau_{y, 1}\|_\L + \|\tau_{y, 2}\big|_{[0,r']} \oplus \pi_{y+\ep}\big|_{[r',1]}\|_\L \leq \L(x^2, 0;(y,y+\ep),1) ,
            \end{align*}
            since the paths on the left hand side are disjoint. Hence, assuming condition $ii)$ is true, we have that:
            \begin{align*}
                \L(x,0;y+\ep,1) - \L(x,0;y,1) &= \L(x^2,0;(y,y+\ep),1) -\L(x^2, 0;y^2, 1) \\
                    &\geq \|\tau_{y, 1}\|_\L + \|\tau_{y, 2}\big|_{[0,r']} \oplus \pi_{y+\ep}\big|_{[r',1]}\|_\L - \L(x^2, 0;y^2, 1)  \\
                    & = \|\tau_{y,1}\|_\L + \|\tau_{y,2}\big|_{[0,r']} \oplus \pi_{y+\ep}\big|_{[r',1]}\|_\L - \|\tau_{y,1}\|_\L -\|\tau_{y,2}\|_\L \\
                    & = \|\pi_{y+\ep} \big|_{[r',1]}\|_\L - \|\tau_{y,2}\big|_{[r',1]}\|_\L \\
                    &=  \|\pi_{y+\ep} \big|_{[r',1]}\|_\L - \|\pi_y\big|_{[r',1]}\|_\L.
            \end{align*}
            The last line follows by \eqref{E: rm-opt-eq-geod}. Now by \eqref{E: overlap-cond-rm-min-prop}, we have that 
            $$ 
                \|\pi_{y+\ep} \big|_{[r',1]} - \|\pi_y\big|_{[r',1]}\|_\L = \L(x,0;y+\ep,1) - \L(x,0;y,1).
            $$ 
            This tells us that the above inequality is an equality, or in other words, the pair of disjoint paths $(\tau_{y, 1},\tau_{y, 2}\big|_{[0,r']} \oplus \pi_{y+\ep}|_{[r', 1]})$ is optimal. Now letting $r= \sup \{t\in [0,1]: \pi_y(t) = \pi_{y+\ep}(t)\}$, we have that $r \ge r'$, and, by monotonicity of geodesics and \eqref{E: rm-opt-eq-geod}, 
            $$
            \tau_{y,1}(s) < \pi_y(s)<\pi_{y+\ep}(s) \qquad \text{ for all } s \in (r, 1).
            $$
            Therefore by monotonicity of geodesics, the rightmost geodesic from $\tau_{y,1}(r), r)$ to $(y, 1)$ is disjoint from $\pi_{y+\ep}|_{[r, 1]}$, and so since $(\tau_{y, 1},\tau_{y, 2}\big|_{[0,r']} \oplus \pi_{y+\ep}|_{[r', 1]})$ is an optimizer, $\tau_{y,1}\big|_{[r,1]}$ must also be a geodesic. Combining this with \eqref{E: rm-opt-eq-geod} gives
        condition $i)$. \qedhere
           \end{description}

\end{proof}

\section{Finite time networks IV-V} \label{Sec: IV-V}

In this section, we are again working on the almost sure set $\Omega$. Our aim is to prove a correspondence for network types \textup{IV}, \textup{Va}, and \textup{Vb} in terms of the gap sheet. This will complete the proof of Theorem \ref{T: fin-time-thm}.

\begin{lemma} \label{L: uses-l-or-r-bubble}
    On $\Omega$, the following holds for all $x,y\in \R$. Suppose that $(x,0;y,1)$ has network type \textup{IV} or \textup{Vb}. Then there exists a compact set $K_\eta = K_{\eta(x, y)} = [-\eta,0]\times[0,\eta]\subset \R^2$ such that for all $(-\ep, \de) \in K_\eta$
    \begin{align}
        \label{E: uses-r-or-l-bubble} \begin{split}
            \Ov(\pi_{x-\ep,y+\de,L}, \pi_{x,y,L}) &\supset [1/4,3/4] \quad \text{ or } \quad\Ov(\pi_{x-\ep,y+\de,L},  \pi_{x,y,R}) \supset [1/4,3/4],\quad \text{ and} \\
         \Ov(\pi_{x-\ep,y+\de,R}, \pi_{x,y,L}) &\supset [1/4,3/4] \quad \text{ or }\quad \Ov(\pi_{x-\ep,y+\de,R}, \pi_{x,y,R}) \supset [1/4,3/4]
       \end{split}
    \end{align}
    If $(x,0;y,1)$ has network type \textup{IV or Va}, then there exists a compact set $K_\eta' = [0,\eta']\times[-\eta',0]$ such that for $(\ep, -\de)\in K_\eta'$ the analogous statement to \eqref{E: uses-r-or-l-bubble} holds. 
\end{lemma}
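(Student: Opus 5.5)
The plan is to argue by contradiction, using precompactness of geodesics (\ref{P: precomp}) and the absence of geodesic bubbles (\ref{P: no-bubbles}). I treat the case where $(x,0;y,1)$ has type \textup{IV} or \textup{Vb} and the quadrant $(-\ep,\de)$ with $\ep,\de\ge 0$. The case of \textup{IV} or \textup{Va} with the quadrant $(\ep,-\de)$ should be the mirror image, obtained from the spatial reflection $x\mapsto -x$, which swaps \textup{Va} and \textup{Vb} and swaps the roles of $L$ and $R$.

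Suppose no $\eta>0$ works. Then there are $(\ep_n,\de_n)\to(0,0)$ with $\ep_n,\de_n\ge 0$, together with a geodesic $\pi_n$ for $(x-\ep_n,0;y+\de_n,1)$. Each $\pi_n$ is one of the leftmost or rightmost geodesics, and its overlap with $\pi_{x,y,L}$ and its overlap with $\pi_{x,y,R}$ both fail to contain $[1/4,3/4]$. By \ref{P: precomp}, after passing to a subsequence, $\pi_n$ converges in overlap to some geodesic $\pi$ for $(x,0;y,1)$. Hence for large $n$ we have $\Ov(\pi_n,\pi)=[a_n,b_n]$ with $a_n\to 0$ and $b_n\to 1$.

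If $\pi\in\{\pi_{x,y,L},\pi_{x,y,R}\}$, this already contradicts the choice of $\pi_n$ once $a_n<1/4$ and $b_n>3/4$. In type \textup{IV} these are the only two geodesics, so that case is finished. In type \textup{Vb} there is exactly one further geodesic $\sigma$, the crossing geodesic from Figure \ref{fig:fixed-time-networks}. I will read off from the figure (and the definition of the isomorphism class, which fixes the drawing) that $\sigma$ runs from right to left. That is, there are times $0<t_1<t_2<1$ with
\[
\sigma|_{[0,t_1]}=\pi_{x,y,R}|_{[0,t_1]}, \qquad \sigma|_{[t_2,1]}=\pi_{x,y,L}|_{[t_2,1]},
\]
and $\sigma$ is disjoint from both $\pi_{x,y,L}$ and $\pi_{x,y,R}$ on $(t_1,t_2)$.

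It remains to rule out $\pi=\sigma$. Take $n$ large, so that $a_n<t_1$ and $b_n>t_2$. At time $0$ we have $\pi_n(0)=x-\ep_n\le x=\pi_{x,y,L}(0)$. At time $a_n$ we have $\pi_n(a_n)=\pi_{x,y,R}(a_n)>\pi_{x,y,L}(a_n)$, using that $a_n>0$ and the two geodesics are disjoint on $(0,1)$. By continuity there is a time $s_1\in[0,a_n)$ with $\pi_n(s_1)=\pi_{x,y,L}(s_1)$. On the other hand, $\pi_n=\pi_{x,y,L}$ on $[t_2,b_n]$, while on $(s_1,t_2)$ the path $\pi_n$ passes through $\sigma|_{[a_n,t_2)}$. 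That segment differs from $\pi_{x,y,L}$ on $(a_n,t_2)$.

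So $\pi_n$ and $\pi_{x,y,L}$ are two distinct geodesics between $(\pi_{x,y,L}(s_1),s_1)$ and $(\pi_{x,y,L}(t_2),t_2)$. If $s_1>0$, both endpoint times lie in the interior $(0,1)$ of $\pi_{x,y,L}$, which contradicts \ref{P: no-bubbles}.

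The main obstacle is the degenerate case $s_1=0$, which forces $\ep_n=0$. There I would instead compare $\pi_n$ with $\pi_{x,y+\de_n,L}$ and use that both start at $(x,0)$. I would also choose the first touching time with $\pi_{x,y,L}$ after time $0$, and use $\pi_n(1)=y+\de_n\ge y$ on the other side. Concretely, I would apply the bubble argument to $\pi_{x,y,L}$ restricted to an interval $[s,t_2]$ with $s>0$ small. The point to check is that for each fixed $s$, overlap convergence forces $\pi_n$ to coincide with $\sigma$, and hence with $\pi_{x,y,R}$, on $[s,t_1]$. That gives two distinct geodesics $\pi_n|_{[s',t_2]}$ and $\pi_{x,y,L}|_{[s',t_2]}$ from a common interior point, as soon as $\pi_n$ meets $\pi_{x,y,L}$ at some time $s'\in(0,a_n)$. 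If instead $\pi_n$ never meets $\pi_{x,y,L}$ on $(0,a_n)$, then $\pi_n$ lies weakly between $\pi_{x,y,L}$ and $\pi_{x,y,R}$ near time $0$. In that case I would use the no-3-star property \ref{P: no-3-stars} at the rational time $0$ to conclude that $\pi_n$ coincides with $\pi_{x,y,R}$ near $0$. This puts $\pi_n$ in the same configuration as before, and the same bubble contradiction with $\pi_{x,y,L}$ applies.
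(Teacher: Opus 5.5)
Your treatment of the case $\ep_n>0$ is the paper's argument. You pass to an overlap limit and dispose of type \textup{IV} and of the limits $\pi_{x,y,L},\pi_{x,y,R}$ directly. In type \textup{Vb}, you use that $\pi_n$ starts strictly left of $x$ but agrees with the crossing geodesic $\sigma$ (hence with $\pi_{x,y,R}$) from time $a_n$ on. So $\pi_n$ must hit $\pi_{x,y,L}$ at an interior time $s_1\in(0,a_n)$, which gives a bubble with $\pi_{x,y,L}|_{[s_1,t_2]}$.

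The gap is in the boundary case $\ep_n=0$, which you correctly flag as the obstacle but do not resolve. Your last step says that once $\pi_n$ coincides with $\pi_{x,y,R}$ near time $0$, ``the same bubble contradiction with $\pi_{x,y,L}$ applies.'' It does not. In that configuration $\pi_n$ follows $\pi_{x,y,R}$ out of $(x,0)$ and then $\sigma$ to $(\pi_{x,y,L}(t_2),t_2)$. So before time $t_2$, $\pi_n$ and $\pi_{x,y,L}$ meet only at $(x,0)$, which is an endpoint of $\pi_{x,y,L}$, and \ref{P: no-bubbles} says nothing there. This is exactly what $\sigma$ itself does. Hence no argument that uses only \ref{P: no-bubbles} and \ref{P: no-3-stars} can rule it out. (Separately, your claim that $\pi_n$ lies weakly between $\pi_{x,y,L}$ and $\pi_{x,y,R}$ near $0$ tacitly uses that $\pi_{x,y,R}$ is rightmost.)

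What is missing is that $\pi_n$ is an extremal geodesic.
\begin{itemize}
\item If $\ep_n=0$ and $\pi_n=\pi_{x,y+\de_n,R}$, monotonicity \ref{P: rmlmopt-mon} gives $\pi_n\ge\pi_{x,y,R}$. This is incompatible with $\pi_n(t_2)=\sigma(t_2)=\pi_{x,y,L}(t_2)<\pi_{x,y,R}(t_2)$.
\item If $\pi_n=\pi_{x,y+\de_n,L}$, note that $\pi_n|_{[0,t_2]}$ and $\pi_{x,y,L}|_{[0,t_2]}$ are geodesics with the same endpoints. So $\pi_{x,y,L}|_{[0,t_2]}\oplus\pi_n|_{[t_2,1]}$ is a geodesic for $(x,0;y+\de_n,1)$. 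At time $t_1$ it sits at $\pi_{x,y,L}(t_1)<\pi_{x,y,R}(t_1)=\pi_n(t_1)$, contradicting leftmostness.
\end{itemize}
With this, the sub-case analysis via first touching times and the $3$-star step is unnecessary. The paper instead handles the boundary of $K_\eta$ up front by overlap convergence of extremal geodesics along monotone sequences (\ref{P: ov-rm-opt}), and runs the bubble argument only for $\ep,\de>0$.
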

Note that in the case of network type IV, \eqref{E: uses-r-or-l-bubble} holds for all $(\ep,\de) \in [-\eta,\eta]^2$ (note here $\ep$ and $\de$ can be positive or negative).

\begin{proof}
    We will just prove this for leftmost geodesics $\pi_{x-\ep,y+\de,L}$, as the proof for rightmost geodesics follows in the same way. It suffices to prove that \eqref{E: uses-r-or-l-bubble} holds only when $\ep, \delta > 0$, since the case when at least one of these parameters equals $0$ is covered by the overlap convergence of rightmost and leftmost optimizers on monotone sequences (\ref{P: ov-rm-opt}).
    
    Now, suppose \eqref{E: uses-r-or-l-bubble} was not true, meaning we can find positive sequences $\ep_n,\de_n \to 0^+$ such that
    \begin{align}
      \label{E: no-ov-between-1/4-3/4}  \Ov(\pi_{x-\ep_n,y+\de_n,L}, \pi_{x,y,L})\not\supset [1/4,3/4] \quad \text{ and } \quad \Ov(\pi_{x-\ep_n,y+\de_n,L}, \pi_{x,y,R})\not\supset [1/4,3/4].
    \end{align}
    By precompactness of geodesics in the overlap topology (\ref{P: precomp}), we can find a subsequence $n_i$ such that 
    \begin{align}
      \notag  \pi_{x-\ep_{n_i},y+\de_{n_i},L} \overset{overlap}{\to} \pi_{x,y},
    \end{align}
    where $\pi_{x,y}$ is a geodesic for $(x,0;y,1)$.
    In the case that $(x,0;y,1)$ has network type \textup{IV}, then $\pi_{x,y} = \pi_{x,y,L}$ or $\pi_{x,y,R}$, contradicting \eqref{E: no-ov-between-1/4-3/4}.

    For the case of network \textup{Vb}, then if $\pi_{x,y}$ is equal to $\pi_{x,y,L}$ or $\pi_{x,y,R}$, we again have a contradiction. Therefore we may restrict to the case when $\pi_{x,y}$ is the middle geodesic in the network (see Figure \ref{fig:typeVa-Bubble-create}). More precisely, we can write $\pi_{x,y}$ as:
    \begin{align}
      \nonumber \pi_{x,y} = \pi_{x,y,R}\big|_{[0,r]}\oplus \pi \oplus \pi_{x,y,L}\big|_{[r', 1]},
    \end{align}
where $p = (\pi_{x, y, R}(r), r), q = (\pi_{x, y, L}(r'), r')$ are the two branch points in the networks $\Gamma(x, 0; y, 1)$, and $\pi$ is the unique geodesic from $p$ to $q$. 
    Now, for large enough enough $i$ we have that
    \begin{align}
      \nonumber  \Ov(\pi_{x-\ep_{n_i}, y+\de_{n_i},L}, \pi_{x,y}) \supset[r,r'].
    \end{align}
    However, because $x-\ep_{n_k} < x$ and $\pi_{x,y}(r) > \pi_{x,y,L}(r)$, there must exist $\tilde r \in (0,r)$ such that $\pi_{x-\ep_{n_k}, y+\de_{n_k},L}(\tilde r) = \pi_{x,y,L}(\tilde r)$. However, we see now that 
    $$
        \pi_{x,y} \big|_{[\tilde r,r']} \quad \text{ and } \quad \pi_{x,y,L}\big|_{[\tilde r,r']}
    $$
    are two different geodesics between $(\pi_{x,y}(\tilde r),\tilde r)$ and $(\pi_{x,y}(r'),r')$, giving a bubble in the network for $\pi_{x-\ep_{n_k},y+\de_{n_k}}$ (impossible by \ref{P: no-bubbles}), see the left panel in Figure \ref{fig:typeVa-Bubble-create}. The proof for network \textup{Va} is analogous. 
\end{proof}

\begin{lemma} \label{L: network-V-zero-restriction}
    On $\Omega$, the following holds for all $x,y\in \R$. Suppose $(x,0;y,1)$ has network type $\textup{Va}$. Then there exists $\eta > 0$ such that for any $\ep, \de \in [0, \eta]$, the network $\Gamma(x -\ep, 0; y + \de, 1)$ has type \textup{I}, \textup{II}, or \textup{III}.
    The corresponding statement for $\Gamma(x +\ep, 0; y - \de, 1)$ holds for network type \textup{Vb}.
\end{lemma}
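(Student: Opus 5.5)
We argue by contradiction and compactness, using only \ref{P: precomp} and \ref{P: no-bubbles}. Here $(\ep,\de)=(0,0)$ is excluded, since $(x,0;y,1)$ is itself of type \textup{Va}. Fix notation for the type \textup{Va} network at $(x,0;y,1)$. It consists of two disjoint geodesics $\pi_L=\pi_{x,y,L}<\pi_R=\pi_{x,y,R}$ on $(0,1)$, together with a middle geodesic
$$
\mu=\pi_L\big|_{[0,r]}\oplus \sigma\oplus \pi_R\big|_{[r',1]},\qquad 0<r<r'<1,
$$
where $\sigma$ is the crossing edge from $(\pi_L(r),r)$ to $(\pi_R(r'),r')$. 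By Lemma \ref{L: fixed-time-possible-networks} these are the only geodesics for $(x,0;y,1)$. Suppose the lemma fails. Then there are $(\ep_n,\de_n)\in[0,\infty)^2\setminus\{(0,0)\}$ with $(\ep_n,\de_n)\to 0$ such that $u_n=(x-\ep_n,0;y+\de_n,1)$ has network type \textup{IV} or \textup{Va/b}. Equivalently, the leftmost and rightmost geodesics $\pi_{n,L}<\pi_{n,R}$ for $u_n$ are disjoint on $(0,1)$.

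\textbf{Step 1: identify the limits.} By \ref{P: precomp}, after passing to a subsequence, $\pi_{n,L}\to\alpha$ and $\pi_{n,R}\to\beta$ in overlap, where $\alpha\le\beta$ and $\alpha,\beta\in\{\pi_L,\mu,\pi_R\}$.
\begin{itemize}
\item We cannot have $\alpha=\beta$, since then $\pi_{n,L}$ and $\pi_{n,R}$ would share an interior overlap interval for large $n$.
\item If $\alpha=\mu$, then $\beta\in\{\pi_R\}$, because $\beta\ge\mu$ and $\beta\neq\mu$. Both $\mu$ and $\pi_R$ coincide on $[r',1]$, so $\pi_{n,L}$ and $\pi_{n,R}$ would both coincide with $\pi_R$ on a common subinterval of $(r',1)$. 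This contradicts disjointness.
\item Symmetrically, if $\beta=\mu$, then $\alpha=\pi_L$, and both would coincide with $\pi_L$ on part of $(0,r)$. Again a contradiction.
\end{itemize}
Hence $\alpha=\pi_L$ and $\beta=\pi_R$.

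\textbf{Step 2: case $\ep_n>0$ (along a subsequence).} The overlap $\Ov(\pi_{n,R},\pi_R)$ is an interval $[s_n,t_n]$ with $s_n\to0$ and $t_n\to1$. Choose $n$ large so that $s_n<r$ and $t_n>r'$. Since $\pi_{n,R}(0)=x-\ep_n<x$ while $\pi_{n,R}(s_n)=\pi_R(s_n)>\pi_L(s_n)$, continuity gives a time $s_0\in(0,s_n)$ with $\pi_{n,R}(s_0)=\pi_L(s_0)=\mu(s_0)$. Now fix $t\in(r',t_n)$. Consider the two paths
$$
\mu\big|_{[s_0,t]}\qquad\text{and}\qquad \pi_{n,R}\big|_{[s_0,t]}.
$$
Both are geodesics, being restrictions of geodesics. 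They share the endpoints $(\pi_L(s_0),s_0)$ and $(\pi_R(t),t)$. They are distinct, since on $(s_n,r')$ the second equals $\pi_R$, whereas $\mu$ lies strictly left of $\pi_R$ on $(0,r')$. But $[s_0,t]\subset(0,1)$ is an interior interval of the geodesic $\mu$, so by \ref{P: no-bubbles} the path $\mu|_{[s_0,t]}$ is the unique geodesic between its endpoints. This is a contradiction.

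\textbf{Step 3: case $\ep_n=0$, so $\de_n>0$.} Here we run the mirror argument at the top endpoint using $\pi_{n,L}$, which overlaps $\pi_L$ on $[s_n',t_n']$ with $s_n'\to 0$ and $t_n'\to1$.
\begin{itemize}
\item Since $\pi_{n,L}(1)=y+\de_n>y=\pi_R(1)$ and $\pi_{n,L}(t_n')=\pi_L(t_n')<\pi_R(t_n')$, there is a time $t_0\in(t_n',1)$ with $\pi_{n,L}(t_0)=\pi_R(t_0)=\mu(t_0)$.
\item Fix $s\in(s_n',r)$. Then $\pi_{n,L}|_{[s,t_0]}$ and $\mu|_{[s,t_0]}$ are distinct geodesics with common endpoints: the first equals $\pi_L$ on $(r,t_n')$, while $\mu>\pi_L$ there.
\item This again forms a bubble inside the geodesic $\mu$, contradicting \ref{P: no-bubbles}.
\end{itemize}
The statement for type \textup{Vb} and points $(x+\ep,0;y-\de,1)$ follows by the same argument with left and right interchanged (reflecting space).

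The main point requiring care is Step 1. It pins down the limits exactly as $\pi_L$ and $\pi_R$ rather than the middle geodesic $\mu$, and this is what forces $\pi_{n,R}$ (respectively $\pi_{n,L}$) to cross from one side of the network to the other near the endpoint that was perturbed in the unfavourable direction. Once that crossing is in hand, the bubble contradiction in Steps 2 and 3 is immediate.
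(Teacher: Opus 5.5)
Your proof is correct and follows essentially the same route as the paper. You argue by contradiction, use precompactness to force the leftmost and rightmost geodesics of the perturbed points to converge in overlap to $\pi_{x,y,L}$ and $\pi_{x,y,R}$, and then use the crossing forced near the perturbed endpoint to build a bubble inside the middle geodesic, contradicting \ref{P: no-bubbles}. Your Step 1 spells out what the paper compresses into ``convergence in overlap preserves disjointness''; you also treat the case $\ep_n=0$ directly where the paper appeals to symmetry, and you correctly note that $(\ep,\de)=(0,0)$ must be excluded from the statement.
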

\begin{figure}
    \centering
        
    \begin{tikzpicture}[transform shape = false, scale = 3]

         \draw[line width = 0.5mm ] plot [smooth, tension=1.1] coordinates {(-1,0) (-0.6, 1)(-1,2)};
        \draw[line width = 0.5mm] plot [smooth, tension=1.1] coordinates {(-1,0) (-1.4, 1)(-1,2)};
        \draw[line width = 0.5mm] plot [smooth, tension=1.1] coordinates {(-0.65,0.65) (-1.34,1.4)};
        
      
         \draw[line width = 0.5mm, blue] plot [smooth, tension=1.1] coordinates {(-1.35,0) (-0.78,0.3)};
         \draw[line width = 0.5mm, blue] plot [smooth, tension=1.1] coordinates {(-0.78,0.3) (-0.7,0.475)(-0.65,0.65)};
        \draw[line width = 0.5mm, blue] plot [smooth, tension=1.1] coordinates {(-0.65,0.65) (-1.34,1.4)};
        \draw[line width = 0.5mm, blue] plot [smooth, tension=1.1] coordinates {(-1.34,1.4) (-1.29, 1.55)(-1.215,1.7)};
        
         \draw[line width = 0.5mm, blue] plot [smooth, tension=1.1] coordinates {(-0.65,2) (-1.215,1.7)};

         \draw[line width = 0.5mm, red, dashed] plot [smooth, tension=1.1] coordinates {(-1.1,0.17) (-1.35, 0.75) (-1.32,1.37)};
         \draw[line width = 0.5mm, red, dashed] plot [smooth, tension=1.1] coordinates {(-1.1,0.17)  (-0.79,0.33) };
         \draw[line width = 0.5mm, red, dashed] plot [smooth, tension=1.1] coordinates {(-0.79,0.33) (-0.67,0.65)};
         \draw[line width = 0.5mm, red, dashed] plot [smooth, tension=1.1] coordinates {(-0.68,0.63) (-1.32,1.33)};

         \filldraw[black] (-1,2) circle (1pt);
         \filldraw[black] (-1.35,0) circle (1pt);
         \filldraw[black] (-0.65,2) circle (1pt);
         \filldraw[black] (-1,0) circle (1pt);

          \node at (-1,-0.1)  {$x$};
          \node at (-1,2.1)  {$y$};
          \node at (-1.35,-0.1)  {$x-\ep_{n_k}$};
          \node at (-0.6,2.1)  {$y+\de_{n_k}$};

           \node at (-1.57,0.15)  {$\tilde r$};
           \node at (-1.57,0.66)  {$r$};
           \node at (-1.55,1.43)  {$r'$};


        \draw[line width = 0.2mm, dashed] plot [smooth, tension=1.1] coordinates { (-1.5,0.13)(-0.5,0.13)};
         \draw[line width = 0.2mm, dashed] plot [smooth, tension=1.1] coordinates { (-1.5,0.65)(-0.5,0.65)};
        \draw[line width = 0.2mm, dashed] plot [smooth, tension=1.1] coordinates { (-1.5,1.4)(-0.5,1.4)};
        

      \draw[line width = 0.5mm ] plot [smooth, tension=1.1] coordinates {(1,0) (1.4, 1)(1,2)};
      \draw[line width = 0.5mm] plot [smooth, tension=1.1] coordinates {(1,0) (0.6, 1)(1,2)};
      \draw[line width = 0.5mm] plot [smooth, tension=1.1] coordinates {(0.74,0.4) (1.37,1.3)};

      \draw[line width = 0.5mm, blue] plot [smooth, tension=1.1] coordinates {(0.65,0) (0.78,0.3)};
        \draw[line width = 0.5mm, blue] plot [smooth, tension=1.1] coordinates {(0.65,0) (1.22,0.3)};

        \draw[line width = 0.5mm, blue] plot [smooth, tension=1.1] coordinates {(1.35,2) (1.22,1.7)};
        \draw[line width = 0.5mm, blue] plot [smooth, tension=1.1] coordinates {(1.35,2) (0.78,1.7)};

        \draw[line width = 0.5mm, blue] plot [smooth, tension=1.1] coordinates {(0.78,0.3) (0.6,1) (0.78,1.7)};
        \draw[line width = 0.5mm, blue] plot [smooth, tension=1.1] coordinates {(1.22,0.3) (1.4,1) (1.22,1.7)};

        \draw[line width = 0.5mm, red, dashed] plot [smooth, tension=1.1] coordinates {(0.9,0.17)  (1.2,0.32)};
        \draw[line width = 0.5mm, red, dashed] plot [smooth, tension=1.1] coordinates {(1.2,0.32) (1.35, 0.75)(1.36, 1.23)};
        \draw[line width = 0.5mm, red, dashed] plot [smooth, tension=1.1] coordinates {(0.9,0.17)  (0.76, 0.4)};
        \draw[line width = 0.5mm, red, dashed] plot [smooth, tension=1.1] coordinates {(0.76, 0.4) (1.36, 1.23)};

        \filldraw[black] (1,2) circle (1pt);
        \filldraw[black] (0.65,0) circle (1pt);
        \filldraw[black] (1.35,2) circle (1pt);
        \filldraw[black] (1,0) circle (1pt);

        \node at (1,-0.1)  {$x$};
        \node at (1,2.1)  {$y$};
        \node at (0.65,-0.1)  {$x-\ep_{n_k}$};
        \node at (1.4,2.1)  {$y+\de_{n_k}$};

           \node at (1.57,0.15)  {$\tilde r$};
           \node at (1.57,0.4)  {$r$};
           \node at (1.55,1.33)  {$r'$};


        \draw[line width = 0.2mm, dashed] plot [smooth, tension=1.1] coordinates { (0.5,0.13)(1.5,0.13)};
         \draw[line width = 0.2mm, dashed] plot [smooth, tension=1.1] coordinates { (1.5,0.4)(0.5,0.4)};
        \draw[line width = 0.2mm, dashed] plot [smooth, tension=1.1] coordinates { (1.5,1.3)(0.5,1.3)};
        
    \end{tikzpicture}
    \caption{An illustration of the creation of two bubbles. The left pictures is from the proof of Lemma \ref{L: uses-l-or-r-bubble}, and the right from the proof of Lemma \ref{L: network-V-zero-restriction}. The blue lines are the geodesic(s) for $(x-\ep_{n_k}, y+\de_{n_k})$, and the black lines geodesics for $(x,0;y,1)$. The bubbles are indicated by the red dashed lines.}
    \label{fig:typeVa-Bubble-create}
\end{figure}
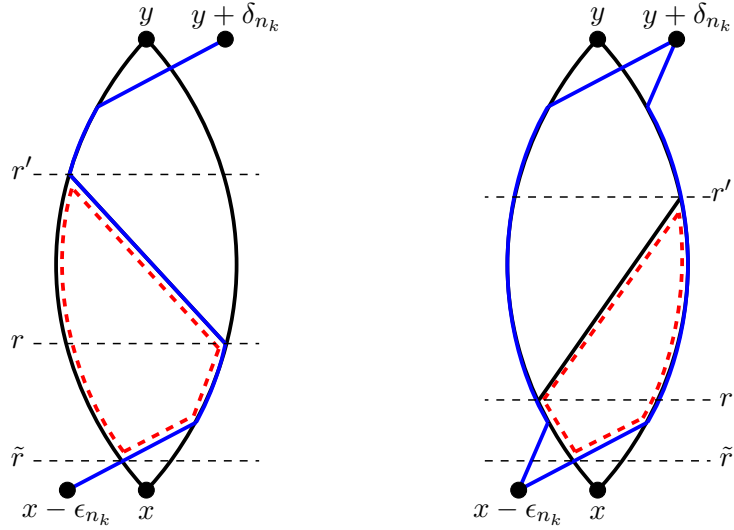
\begin{center}
\end{center}
\begin{proof}
We only prove the lemma for networks of type \textup{Va}, as the lemma for type \textup{Vb} follows by symmetry. Suppose the lemma is false. Then we can find sequences $\ep_n \downarrow 0, \delta_n \downarrow 0$ such that $\Ov(\pi_{x-\ep_n,y+\ep_n,L},\pi_{x-\ep_n,y+\ep_n,R}) =\emptyset$. By precompactness of the overlap topology (\ref{P: precomp}), by possibly passing to a subsequence we may assume that both sequences $\pi_{x-\ep_n,y+\ep_n,L}$ and $\pi_{x-\ep_n,y+\ep_n,R}$ converge in overlap to geodesics $\pi_{x,y,1} \leq \pi_{x,y,2}$ for $(x,0;y,1)$. 
These geodesics are disjoint, since convergence in overlap preserves disjointness. Looking at network type \textup{Va}, we see that $\pi_{x,y,1} = \pi_{x,y,L}$ and $\pi_{x,y,2} = \pi_{x,y,R}$. Moreover, we may also assume that either $\ep_n > 0$ for all $n$, or else $\delta_n > 0$ for all $n$. By symmetry, it suffices to consider the case when $\ep_n > 0$ for all $n$.

Let $\pi_{x,y}$ be the third geodesic for $(x,0;y,1)$, which (similarly to the previous proof) can be written as 
\begin{align}
      \nonumber  \pi_{x,y} = \pi_{x,y,L}\big|_{[0,r]}\oplus \pi \oplus \pi_{x,y,R}\big|_{[r', 1]},
    \end{align}
where $p = (\pi_{x, y, R}(r), r), q = (\pi_{x, y, L}(r'), r')$ are the two branch points in the networks $\Gamma(x, 0; y, 1)$, and $\tau$ is the unique geodesic from $p$ to $q$. 
    Now take $n$ large enough so that, for $\square \in\{L,R\}$,
    $$
        \Ov(\pi_{x-\ep_{n},y+\de_{n},\square}, \pi_{x,y,\square}) \supset [r,r'].
    $$
    Then we must have a time $\tilde r<r$ such that $\pi_{x-\ep_{n_k},y+\de_{n_k},R}(\tilde r) = \pi_{x,y,L}(\tilde r)$, because $\pi_{x-\ep_{n_k},y+\de_{n_k},R}(0) = x - \ep_n < x = \pi_{x,y,L}(0)$ and $\pi_{x-\ep_{n_k},y+\de_{n_k},R}(r) = \pi_{x,y,R}(r)> \pi_{x,y,L}(r)$. However, this means that both
    \begin{align*}
        \pi_{x,y,L}\big|_{[\tilde r,r]}\oplus\pi_{x,y}\big|_{[r,r']} \qquad \text{and} \qquad
        \pi_{x-\ep_{n_k},y+\de_{n_k},R}\big|_{[\tilde r,r]}\oplus \pi_{x,y,R}\big|_{[r,r']}
    \end{align*}
    are different geodesics starting and ending at the same points, creating a bubble. This contradicts \ref{P: no-bubbles}.
\end{proof}

Next, letting $K_\eta$ be as in Lemma \ref{L: uses-l-or-r-bubble}, define the following sets.
\begin{align*}
    \mathscr{L} &:= \{(-\ep, \de) \in K_\eta: \text{ There is a geodesic $\pi_{x-\ep,y+\de}$ where $[1/4,3/4] \subset \Ov(\pi_{x-\ep,y+\de}, \pi_{x,y,L})$}\}, \\
    \mathscr{R} &:= \{(-\ep, \de) \in  K_\eta: \text{ There is a geodesic $\pi_{x-\ep,y+\de}$ where $[1/4,3/4] \subset \Ov(\pi_{x-\ep,y+\de}, \pi_{x,y,R})$}\}.
\end{align*}
In both definitions, naturally, $\pi_{x-\ep, y+\de}$ is a geodesic for $(x-\ep,0;y+\de,1)$. 
We can similarly define $\mathscr{L'}, \mathscr{R'}$ where $(\ep, -\de) \in  K_\eta'$, as in Lemma \ref{L: uses-l-or-r-bubble}.

\begin{lemma} \label{L: L/R-geo-sets-properties} On $\Omega$, the following holds for all $x, y \in \R$. If $(x,0;y,1)$ has network type \textup{IV} or \textup{Vb}, the following hold for all $(-\ep, \de) \in K_\eta = [-\eta,0]\times [0,\eta].$
\begin{itemize}
    \item [1.]  $(-\ep,0) \in \mathscr{L} \backslash \mathscr{R}$ and $(0,\de) \in \mathscr{R}\backslash \mathscr{L}.$
    
    \item[2.] For small enough $\de$, there exists $-\ep_L^\de < -\ep_R^\de < 0$ such that $(-\ep_L^\de,\de)\in \mathscr{L}$ and $(-\ep_R^\de,\de) \in \mathscr{R}$. 

    \item[3.] Consider $-\ep_1, -\ep_2 \in [-\eta,0], \de_1, \de_2 \in [0,\eta]$ where $-\ep_1< -\ep_2, \de_1 < \de_2$. The following claims are true for $i\in \{1,2\}$:
    \begin{itemize}
        \item [i)] If $(-\ep_2, \de_i) \in \mathscr{L}$, then $(-\ep_1, \de_i) \in \mathscr{L}$.
        \item [ii)]  If $(-\ep_1, \de_i) \in \mathscr{R}$, then $(-\ep_2, \de_i) \in \mathscr{R}$.
        \item [iii)] If $(-\ep_i, \de_2) \in \mathscr{L}$, then $(-\ep_i, \de_1) \in \mathscr{L}$.
        \item [iv)]  If $(-\ep_i, \de_1) \in \mathscr{R}$, then $(-\ep_i, \de_2) \in \mathscr{R}$. 
    \end{itemize}
    \item[4.] The sets $\mathscr L$ and $\mathscr R$ are closed.
\end{itemize}
\end{lemma}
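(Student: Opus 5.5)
The plan is to prove each part using monotonicity of leftmost/rightmost geodesics (\ref{P: rmlmopt-mon}), overlap convergence along monotone sequences (\ref{P: ov-rm-opt}), precompactness (\ref{P: precomp}) and no bubbles (\ref{P: no-bubbles}), all combined with Lemma \ref{L: uses-l-or-r-bubble}. For type IV or Vb, the network has two disjoint geodesics $\pi_{x,y,L}<\pi_{x,y,R}$ on $(0,1)$; for Vb there is also a middle geodesic. Their restrictions to $[1/4,3/4]$ are disjoint, so no single geodesic can overlap both on $[1/4,3/4]$.

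\textbf{Part 1.} Take $\delta=0$ and $\epsilon>0$. Every geodesic for $(x-\epsilon,0;y,1)$ lies to the left of $\pi_{x,y,R}$ and ends at $y$. If one of them overlapped $\pi_{x,y,R}$ on $[1/4,3/4]$, it would have to cross $\pi_{x,y,L}$ on $(0,1/4)$, because it starts at $x-\epsilon<x$. Together with $\pi_{x,y,L}$ it would then bound a region between a crossing time and time $1$. I would check that this produces two distinct geodesics between common points with interior disjointness, i.e.\ a bubble, contradicting \ref{P: no-bubbles}. Hence $(-\epsilon,0)\notin\mathscr R$. By Lemma \ref{L: uses-l-or-r-bubble}, the leftmost geodesic overlaps $\pi_{x,y,L}$ or $\pi_{x,y,R}$ on $[1/4,3/4]$, so $(-\epsilon,0)\in\mathscr L$. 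The case $(0,\delta)$ is symmetric. At the corner $(0,0)$ the two geodesics $\pi_{x,y,L},\pi_{x,y,R}$ themselves witness membership in both sets. The statement presumably intends nonzero $\epsilon$ or $\delta$ there, and I would note this.

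\textbf{Part 3.} I would use monotonicity. Suppose $(-\epsilon_2,\delta_i)\in\mathscr L$ via a geodesic $\pi$. Then the leftmost geodesic for $(x-\epsilon_1,0;y+\delta_i,1)$ is at most $\pi_{x-\epsilon_2,y+\delta_i,L}\le\pi$. By Lemma \ref{L: uses-l-or-r-bubble} this leftmost geodesic overlaps either $\pi_{x,y,L}$ or $\pi_{x,y,R}$ on $[1/4,3/4]$. If it overlaps $\pi_{x,y,R}$, then it is $\ge$ the path $\pi$, which lies on $\pi_{x,y,L}<\pi_{x,y,R}$ there. That contradicts the inequality it satisfies, so it must overlap $\pi_{x,y,L}$. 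A small care point is that membership uses \emph{some} geodesic; comparing with the leftmost one for the smaller endpoint still works since it is below every geodesic for $(x-\epsilon_2,y+\delta_i)$. Items ii)–iv) are identical, with rightmost geodesics and/or the $y$ coordinate.

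\textbf{Part 4.} Take $(-\epsilon_n,\delta_n)\in\mathscr L$ converging to a point of $K_\eta$, with witnessing geodesics. By \ref{P: precomp} a subsequence converges in overlap, hence uniformly, to a geodesic for the limit point. Overlap with $\pi_{x,y,L}$ on $[1/4,3/4]$ means equality on that interval, which is a closed condition under uniform convergence. So the limit is in $\mathscr L$.

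\textbf{Part 2.} Fix small $\delta>0$. By Part 1, $(0,\delta)\in\mathscr R$ and $(-\eta,0)\in\mathscr L$. For $\mathscr L$ on the row $\delta$, I would argue that $(-\eta,\delta)\in\mathscr L$ for small $\delta$. By \ref{P: ov-rm-opt}, as $\delta\downarrow0$ the leftmost geodesic for $(x-\eta,y+\delta)$ converges in overlap to that for $(x-\eta,y)$, and the latter overlaps $\pi_{x,y,L}$ on $[1/4,3/4]$ by Part 1, possibly after shrinking $\eta$. Set $-\epsilon_L^\delta:=-\eta$ and choose $-\epsilon_R^\delta\in(-\eta,0)$ using $(0,\delta)\in\mathscr R$ together with closedness and monotonicity. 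The main obstacle is ensuring $\epsilon_R^\delta>0$ strictly. I would get it from overlap convergence of rightmost geodesics as $x'\uparrow x$ with $y+\delta$ fixed, via \ref{P: ov-rm-opt}. This makes the rightmost geodesic for $(x-\epsilon,y+\delta)$ eventually equal to the one for $(x,y+\delta)$ on $[1/4,3/4]$, so small $\epsilon>0$ stays in $\mathscr R$.
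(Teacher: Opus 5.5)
Your Parts 3 and 4 are the paper's arguments, and your caveat about the corner $(0,0)$ is correct: the paper's proof of Part 1 also only treats $\ep>0$. There are, however, two steps whose cited justification does not give what you claim.

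\textbf{Part 1.} The contradiction does not come from \ref{P: no-bubbles}. Suppose $\sigma$ is a geodesic for $(x-\ep,0;y,1)$ that meets $\pi_{x,y,L}$ at some $s\in(0,1/4)$ and then runs along $\pi_{x,y,R}$. This gives two geodesics $\sigma|_{[s,1]}$ and $\pi_{x,y,L}|_{[s,1]}$ from $(\pi_{x,y,L}(s),s)$ to $(y,1)$, but they may rejoin only at the endpoint $(y,1)$. Property \ref{P: no-bubbles} only excludes bubbles strictly inside a geodesic, and this configuration is exactly what a type \textup{Va} network contains. Your argument never uses the hypothesis that the type is \textup{IV} or \textup{Vb}, and that is the missing ingredient. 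For these types and $s\in(0,1)$, every geodesic for $(x,0;y,1)$ through $(\pi_{x,y,L}(s),s)$ agrees with $\pi_{x,y,L}$ after time $s$. Hence $\pi_{x,y,L}|_{[s,1]}$ is the unique geodesic from $(\pi_{x,y,L}(s),s)$ to $(y,1)$: prepend $\pi_{x,y,L}|_{[0,s]}$ to any competitor. The paper applies this to $\pi_{x-\ep,y,R}$. By Lemma \ref{L: uses-l-or-r-bubble} and continuity, this geodesic meets $\pi_{x,y,L}$ at some $r\in(0,1/4]$, so it equals $\pi_{x,y,L}$ on $[r,1]$. That gives $(-\ep,0)\in\mathscr L$ directly. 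It also gives $(-\ep,0)\notin\mathscr R$, since every geodesic for $(x-\ep,0;y,1)$ lies weakly left of $\pi_{x-\ep,y,R}$.

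\textbf{Part 2.} You apply \ref{P: ov-rm-opt} in the wrong direction both times. That property is one-sided: rightmost optimizers converge in overlap when the endpoints are approximated from the right/above, and leftmost ones when approximated from the left/below. So it says nothing about
\begin{itemize}
\item leftmost geodesics for $(x-\eta,0;y+\de,1)$ as $\de\downarrow 0$, or
\item rightmost geodesics for $(x-\ep,0;y+\de,1)$ as $\ep\downarrow 0$.
\end{itemize}
Such one-sided limits need not be the leftmost (resp.\ rightmost) geodesic of the limit point when that point has several geodesics.

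Membership in $\mathscr L$ or $\mathscr R$ only requires \emph{some} geodesic, so the repair is to swap sides:
\begin{itemize}
\item $\pi_{x-\eta,y+\de,R}$ agrees with $\pi_{x-\eta,y,R}$ on $[0,r_\de]$ with $r_\de\to 1$. By the corrected Part 1, $\pi_{x-\eta,y,R}=\pi_{x,y,L}$ on $[1/4,3/4]$, so $(-\eta,\de)\in\mathscr L$ for small $\de$.
\item $\pi_{x-\ep,y+\de,L}$ converges in overlap to $\pi_{x,y+\de,L}$ as $\ep\downarrow 0$. By Lemma \ref{L: uses-l-or-r-bubble} together with $(0,\de)\notin\mathscr L$, $\pi_{x,y+\de,L}=\pi_{x,y,R}$ on $[1/4,3/4]$, so $(-\ep,\de)\in\mathscr R$ for small $\ep$.
\end{itemize}
Choosing $\ep_R^\de$ small then gives $\ep_R^\de<\eta=\ep_L^\de$.

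The paper instead argues by contradiction. It uses \ref{P: precomp} to push a bad sequence to $\de=0$ (resp.\ $\ep=0$) and contradicts Part 1. Either route works once the directions are right.
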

\begin{proof}\quad

    \textit{Proof of 1.} \qquad
         If $-\ep\in [-\eta,0)$, then, by Lemma \ref{L: uses-l-or-r-bubble}, $\pi_{x-\ep,y,R}(1/4)\in\{\pi_{x,y,L}(1/4), \pi_{x,y,R}(1/4)\}$, hence there must exist an $r \in [0,1/4]$ such that $\pi_{x-\ep,y,R}(r) = \pi_{x,y,L}(r)$. Now we see that
        $$
            \pi_{x-\ep,y,R}\big|_{[0,r]}\oplus\pi_{x,y,L}\big|_{[r,1]}
        $$
        must be a geodesic, therefore, $(-\ep,0)\in \mathscr{L}$. In fact, because $(x,0;y,1)$ has network type IV or Vb, we see that $\pi_{x,y,L}\big|_{[r,1]}$ is the only geodesic for $(\pi_{x,y,L}(r),r;,y,1)$. Therefore it must also be that $(-\ep,0) \notin \mathscr{R}$. The proof that $(0, \delta) \in \mathscr R \setminus \mathscr L$ is symmetric.\\
    
    \textit{Proof of 2.} \qquad
    First fix $\ep \in (0, \eta)$. Then for small enough $\de \in (0,\eta]$ we claim that $(-\ep,\de) \in \mathscr{L}$.  Indeed, if not then we could find a sequence $\de_n\to 0^+$ where $(-\ep, \de_n) \in \mathscr{R}$ for all $n$. This means there is a sequence of geodesics $\pi_{x-\ep,y+\de_n}$, such that 
    $$
    \Ov(\pi_{x-\ep,y+\de_n}, \pi_{x,y,R}) \supset [1/4,3/4].
    $$
    By taking an overlap-convergent subsequence of $\pi_{x-\ep,y+\de_n}$, by \ref{P: precomp}, we see that we must have a geodesic $\pi_{x-\ep,y}$ for $(x-\ep,0;y,1)$ such that 
    $$\Ov(\pi_{x-\ep,y}, \pi_{x,y,R}) \supset [1/4,3/4],$$
    meaning that $(-\ep,0)\in \mathscr{R}$, contradicting \textit{1.} 
    Taking $-\ep_L^\de:=-\ep$ gives us the first part of the claim. By repeating the analogous argument, we get that $(0,\de) \in \mathscr{R}$ and that there exists small $\ep_R^\de\in (0, \ep^\delta_L]$ such that $(-\ep_R^\de,\de)\in \mathscr{R}$, proving \textit{2.}\\

    \textit{Proof of 3.} \qquad
    We will only show \textit{i)} as the other parts have analogous arguments. Assume $(x-\ep_2,y+\de_i) \in \mathscr L$. By Lemma \ref{L: uses-l-or-r-bubble} and geodesic ordering (\ref{P: rmlmopt-mon}),
    $$
        \Ov(\pi_{x-\ep_2,y+\de_i, L}, \pi_{x,y,L}) \supset [1/4,3/4],
    $$
    and so $\pi_{x-\ep_2,y+\de_i, L} < \pi_{x-\ep_2,y+\de_i, R}$ on $[1/4, 3/4]$. Again using geodesic ordering, $\pi_{x-\ep_1,y+\de, L} \le \pi_{x-\ep_2,y+\de, L}$, and so $\pi_{x-\ep_1,y+\de_i, L} < \pi_{x-\ep_2,y+\de_i, R}$ on $[1/4, 3/4]$ as well. Since $(\ep_1, \delta_i) \in K_\eta$, Lemma \ref{L: uses-l-or-r-bubble} then forces 
    $$
        \Ov(\pi_{x-\ep_1,y+\de_i, L}, \pi_{x,y,L}) \supset [1/4,3/4],
    $$
    and so $(-\ep_1, \delta_i) \in \mathscr L$.

    \textit{Proof of 4.} Suppose we have a sequence of points $(-\ep_n, \delta_n) \in \mathscr L$, converging to a limit $(-\ep, \delta)$. Then there exists a sequence of geodesics $\pi_{x-\ep_n, y + \delta_n}$ with $[1/4,3/4] \subset \Ov(\pi_{x-\ep_n,y+\de_n}, \pi_{x,y,L})$ for all $n$. By precompactness (\ref{P: precomp}), this sequence has an overlap-convergent subsequence converging to a limiting geodesics $\pi_{x-\ep, y + \de}$ for $(x-\ep, 0; y+\de, 1)$. This geodesic must also satisfy $[1/4,3/4] \subset \Ov(\pi_{x-\ep,y+\de}, \pi_{x,y,L})$, and so $(x-\ep, y + \de) \in \mathscr L$. The proof that $\mathscr R$ is closed is analogous.
    \end{proof}

We are now ready to prove the main result of this section, from which we will quickly conclude the remainder of Theorem \ref{T: fin-time-thm}.

\begin{prop} \label{P:typeIV-has-close-zeroes} On $\Omega$, the following holds for all $x, y \in \R$.
\hspace{5mm}
    \begin{itemize}
        \item [1.] If $(x,s;y,t)$ has network type \textup{IV} or \textup{Vb}, then $G(x,y) = 0$ and there exists sequences $-\ep_n, \de_n \to 0$ where $-\ep_n < 0, \de_n > 0$, such that $G(x-\ep_n, y+\de_n) = 0$. 

        \item[2.] If $(x,s;y,t)$ has network type \textup{IV} or \textup{Va}, then $G(x,y) = 0$ and there exists sequences $\ep_n, -\de_n \to 0$ where $\ep_n > 0, -\de_n < 0$, such that $G(x+\ep_n, y-\de_n) = 0$.  
    \end{itemize}
\end{prop}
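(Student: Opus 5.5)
We argue part 1; part 2 follows by the symmetric argument, using the quadrant $K_\eta'$, the sets $\mathscr L', \mathscr R'$, and the Va/IV half of Lemma \ref{L: uses-l-or-r-bubble}. The claim $G(x,y) = 0$ is immediate. In networks IV and Vb the two outer geodesics $\pi_{x,y,L}, \pi_{x,y,R}$ are disjoint on $(0,1)$, so the disjoint pair $(\pi_{x,y,L}, \pi_{x,y,R})$ has total length $2\L(x,0;y,1)$, which is the largest possible value. For the sequence of zeroes, the plan is to locate, for each small $\de > 0$, a point $(-\ep, \de)$ with $\ep > 0$ lying in $\mathscr L \cap \mathscr R$, and then to show that such a point carries two disjoint geodesics.

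\textbf{Step 1: a point of $\mathscr L\cap\mathscr R$.} Fix a small $\de > 0$. By Lemma \ref{L: L/R-geo-sets-properties}.2 there are $0 < \ep_R^\de \le \ep_L^\de$ with $(-\ep_L^\de, \de) \in \mathscr L$ and $(-\ep_R^\de, \de) \in \mathscr R$. By Lemma \ref{L: uses-l-or-r-bubble}, every point of $K_\eta$ lies in $\mathscr L \cup \mathscr R$, because its leftmost geodesic overlaps $\pi_{x,y,L}$ or $\pi_{x,y,R}$ on $[1/4,3/4]$. Both sets are closed by Lemma \ref{L: L/R-geo-sets-properties}.4. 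The segment $\{(-\ep, \de) : \ep \in [\ep_R^\de, \ep_L^\de]\}$ is connected and covered by these two closed sets, each of which meets it, so it contains a point $(-\ep_\de, \de) \in \mathscr L \cap \mathscr R$ with $\ep_\de \ge \ep_R^\de > 0$. The monotonicity statements in Lemma \ref{L: L/R-geo-sets-properties}.3 let us take $\ep_\de$ to be the boundary value $\sup\{\ep : (-\ep,\de) \in \mathscr R\}$, which is convenient below. As $\de \downarrow 0$ we have $\ep_\de \to 0$ automatically: $K_\eta$ may be shrunk, and the construction repeated inside $K_{\eta'}$ for every $\eta' < \eta$. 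This produces the sequences $-\ep_n \uparrow 0$ and $\de_n \downarrow 0$.

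\textbf{Step 2: disjoint geodesics at $(x-\ep_\de, 0; y+\de, 1)$.} Write $\gamma_1 = \pi_{x-\ep_\de, y+\de, L}$ and $\gamma_2 = \pi_{x-\ep_\de, y+\de, R}$. Lemma \ref{L: uses-l-or-r-bubble} says each of them overlaps $\pi_{x,y,L}$ or $\pi_{x,y,R}$ on $[1/4,3/4]$. Geodesic ordering (\ref{P: rmlmopt-mon}) together with membership in both $\mathscr L$ and $\mathscr R$ forces $\gamma_1$ onto $\pi_{x,y,L}$ and $\gamma_2$ onto $\pi_{x,y,R}$ on $[1/4,3/4]$. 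Since $\pi_{x,y,L} < \pi_{x,y,R}$ there, $\gamma_1 < \gamma_2$ on $[1/4,3/4]$, and it remains to exclude meeting points in $(0,1/4)$ and in $(3/4,1)$. By Lemma \ref{L: fixed-time-possible-networks} it suffices to rule out networks IIa, IIb and III at this point, since networks IV and V already contain two disjoint geodesics and hence $G = 0$ by Theorem \ref{T:casei-iii}.

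Suppose $\gamma_1, \gamma_2$ share an initial segment $[0,r]$ with $r > 0$. Since $\gamma_2(0) = x - \ep_\de < x$ and $\gamma_2$ reaches $\pi_{x,y,R}$ by time $1/4$, it must cross $\pi_{x,y,L}$ at some time $c \le 1/4$. Meanwhile $\gamma_1$ merges into $\pi_{x,y,L}$ at some time $a \le 1/4$. Comparing $\gamma_1, \gamma_2, \pi_{x,y,L}$ and $\pi_{x,y,R}$ after the common time $r$ should exhibit two distinct geodesics between two interior points, contradicting \ref{P: no-bubbles}, in the same way as the red bubbles in Figure \ref{fig:typeVa-Bubble-create}. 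The terminal-segment case, a shared segment $[r',1]$, is symmetric, using $y+\de > y$.

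\textbf{Main obstacle.} The bubble extraction in Step 2 is where I expect the work to lie: one has to make sure the two competing geodesics meet at a strictly interior pair of points. If a direct bubble argument fails, I would fall back on two other tools. The first is \ref{P: no-3-stars} at the rational times $0$ and $1$. The second is the extremal choice of $\ep_\de$ as a boundary point of $\mathscr R$: approach it by points in $\mathscr L \setminus \mathscr R$ and in $\mathscr R \setminus \mathscr L$, and take overlap limits using \ref{P: precomp} and \ref{P: ov-rm-opt}. This yields one limiting geodesic that follows $\pi_{x,y,L}$ and another that follows $\pi_{x,y,R}$. It would then remain to show these two limiting geodesics are disjoint near times $0$ and $1$, using the fact that they start at $x-\ep_\de < x$ and end at $y+\de > y$.
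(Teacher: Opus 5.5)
\textbf{The gap is in Step 2, and the claim there is false for the point you construct.} Your $\de$ is arbitrary. For Lebesgue-almost every $\de$, there is no $x'$ at all with $G(x',y+\de)=0$. To see this, fix $y'$. The function $x'\mapsto G(x',y')$ is equal in law to a rescaled gap $\mathcal A_1-\mathcal A_2$ between the top two Airy lines, so almost surely it is strictly positive everywhere. By Fubini, almost surely the set of $y'$ whose horizontal line meets $Z$ is Lebesgue-null.

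Your extremal choice of $\ep_\de$ does control the initial end. Suppose $\gamma_1,\gamma_2$ met at some $r<1/4$. Moving the start point slightly left and using overlap convergence of leftmost geodesics (\ref{P: ov-rm-opt}) gives $(-\ep,\de)\in\mathscr R$ for some $\ep>\ep_\de$, contradicting maximality. Nothing controls the terminal end, however. Typically $\gamma_1,\gamma_2$ meet at some time in $(3/4,1)$, so the point has type \textup{IIb} or \textup{III}.

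Your ``symmetric'' treatment of a shared final segment fails because $\de$ was never chosen extremally. The same objection applies to your fallback: overlap limits at the boundary of $\mathscr R$ give geodesics that are separated near time $0$ but not near time $1$.

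\textbf{The missing idea is a second extremal choice, in the endpoint variable, which is what the paper does.} Fix $\ep$ at the boundary value and lower the endpoint to
\[
\beta=\inf\{\de'\in(0,\de]:(-\ep,\de')\in\mathscr L\cap\mathscr R\}.
\]
This infimum is attained by closedness, and it is positive by Lemma \ref{L: L/R-geo-sets-properties}.1.

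A meeting time $r>3/4$ at $(-\ep,\beta)$ is then excluded. Overlap convergence of leftmost geodesics, as the endpoint increases to $y+\beta$, gives some $\beta'<\beta$ with $(-\ep,\beta')\in\mathscr L\cap\mathscr R$, contradicting minimality of $\beta$.

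The paper fixes $\ep$ at the other boundary point, the rightmost start point in $\mathscr L$ at height $\de$. It excludes $r<1/4$ by showing that the rectangle $[-\ep'',-\ep^\de]\times[\beta^\de,\de]$ would lie in $\mathscr L\cap\mathscr R$, which contradicts uniqueness of geodesics between rational points.

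With your left-end choice you can skip the rectangle. By Lemma \ref{L: L/R-geo-sets-properties}.3, $x-\ep$ is still the leftmost start point in $\mathscr R$ at height $\beta$, so your initial-end argument applies there verbatim. In either version, the zero sits at $(x-\ep,y+\beta)$, not at $(x-\ep_\de,y+\de)$.
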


\begin{proof}
    We will only prove \textit{1.} as the proof of \textit{2.} is analogous. The fact that $G(x,y) = 0$ follows since networks of type \textup{IV} or \textup{Vb} contain two disjoint geodesics. The remainder of the proof is devoted to finding the sequences $\ep_n, \de_n$, which requires more work. 
    
   Let $\eta, K_\eta$ be as in Lemma \ref{L: uses-l-or-r-bubble}, and take $\de$ small enough so that Lemma \ref{L: L/R-geo-sets-properties}.2 holds. Define
    \begin{align}
        \label{e:eq_LR_def}
        -\ep^\de := \sup\{-\ep \in [-\eta, 0): (x-\ep, y+\de) \in \mathscr{L}\}.
    \end{align}
    We have $-\ep^\de > -\eta$ by Lemma \ref{L: L/R-geo-sets-properties}.2, and so by definition the point $(-\ep^\de, \de)$ is in the closure of $\mathscr{L}$. Hence $(-\ep^\de, \de) \in \mathscr L$ by Lemma \ref{L: L/R-geo-sets-properties}.4, and by Lemma \ref{L: L/R-geo-sets-properties}.1, we have $-\ep^\de < 0$. Using this with the definition \eqref{e:eq_LR_def} and the fact that $K_\eta \subset \mathscr L \cup \mathscr R$ then implies that $(-\ep^\de, \de)$ is also in the closure of $\mathscr R$, and so $(-\ep^\de, \de) \in \mathscr R$ by Lemma \ref{L: L/R-geo-sets-properties}.4.  Next, observe that since $(-\ep^\delta, \delta) \in \mathscr L \cap \mathscr R$, we have
    \begin{align}
       \nonumber - \ep^\de &\to 0^{-} \text{ as } \de \to 0^+.
    \end{align}
    Indeed, if not, then since $\mathscr L \cap \mathscr R$ is closed (Lemma \ref{L: L/R-geo-sets-properties}.4) it would contain a point of the form $(-\ep, 0)$ for some $\ep \in [-\eta, 0)$, contradicting Lemma \ref{L: L/R-geo-sets-properties}.1. Finally, we define
    $$
    \beta^\de = \inf \{\delta' \in (0, \delta] : (-\ep^\delta, \delta') \in \mathscr L \cap \mathscr R\}.
    $$
  Using Lemma \ref{L: L/R-geo-sets-properties}.4 again, we have that $(\ep^\delta, \beta^\delta) \in \mathscr L \cap \mathscr R$ and by Lemma \ref{L: L/R-geo-sets-properties}.1, $\beta^\delta \ne 0$. To complete the proof, we will show that  $G(x-\ep^\delta,y+\beta^\delta) = 0$. 

  If this is not the case, then since $(-\ep^\de, \beta^\de) \in \mathscr L \cap \mathscr R$ then there exists an $r \in (0, 1/4) \cup (3/4, 1)$ such that $\pi_{x-\ep^\de,y+\beta^\de,L}(r) = \pi_{x-\ep^\de,y+\beta^\de,R}(r)$. First suppose that this holds for some $r > 3/4$. Then by convergence in overlap of leftmost geodesics (\ref{P: ov-rm-opt}), we can find $\beta\in (0, \beta^\delta)$ with 
  $$
        \Ov(\pi_{x-\ep^\de,y+\beta,L}, \pi_{x-\ep^\de,y+\beta^\de,L}) \supset [0, r].
    $$
    Therefore the path $\pi_{x-\ep^\de,y+\beta^\de,R}|_{[0, r]} \oplus \pi_{x-\ep^\de,y+\beta,L}|_{[r, 1]}$ is another geodesic for $(x - \ep^\de, 0; y + \beta, 1)$, and so $(-\ep^\de, \beta) \in \mathscr L \cap \mathscr R$. This contradicts the definition of $\beta^\de$.

    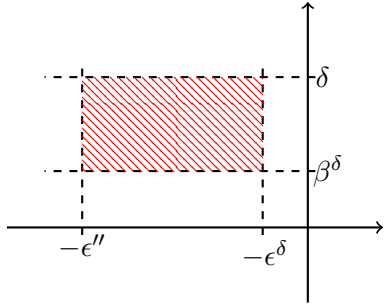
\begin{figure}
        \centering

                \begin{tikzpicture}
                      \draw[thick, ->] (-3, -1) -- (2, -1);

                      \draw[thick, ->] (1, -2) -- (1, 2);

                      \draw[thick, dashed] (1.1, 1) -- (-2.5, 1);
                      \draw[thick, dashed] (1.1, -0.25) -- (-2.5, -0.25);

                        \draw[thick, dashed] (0.4,-1.1) -- (0.4,1.2);
                        \draw[thick, dashed] (-2,-1.1) -- (-2,1.2);

                        \node at (1.2,1) {$\de$};
                        \node at (1.27,-0.25) {$\beta^\de$};
                        \node at (0.43,-1.35) {$-\ep^\de$};
                        \node at (-2,-1.25) {$-\ep''$};

                        \fill[pattern=north west lines, pattern color=red] (-2, -0.25) rectangle (0.4, 1);
                \end{tikzpicture}
        \caption{Illustration of the contradiction arrived at in the proof of proposition \ref{P:typeIV-has-close-zeroes}. The red region is entirely contained in $\mathscr{L} \cap \mathscr{R}$.}
        \label{F:de-ep-rat-point-pic}
    \end{figure}

 Therefore $\pi_{x-\ep^\de,y+\beta^\de,L}(r) = \pi_{x-\ep^\de,y+\beta^\de,R}(r)$ for some $r < 1/4$.  By convergence in overlap of leftmost geodesics (\ref{P: ov-rm-opt}), we can find an $-\ep'' < -\ep^\de$ such that 
    $$
        \Ov(\pi_{x-\ep'', y+\beta^\de,L}, \pi_{x-\ep^\de, y+\beta^\de,L}) \supset [r,1],
    $$
    implying that $(-\ep'', \beta^\de) \in \mathscr{L}\cap\mathscr{R}$ by the same reasoning as before.
    
    Now by Lemma \ref{L: L/R-geo-sets-properties}.3, for all $-\hat\ep \in [-\ep'', -\ep^\de]$, we  have that $(x-\hat\ep, y+\beta^\de) \in \mathscr{L} \cap \mathscr{R}$, Similarly, for all $\hat\de \in [\beta^\de, \de]$, $(x-\ep^\de, y+\hat\de) \in \mathscr{L}\cap\mathscr{R}$. Again by Lemma \ref{L: L/R-geo-sets-properties}.3, we can conclude that
    $$
    [-\ep'', -\ep^\de] \times [\beta^\de, \de] \subset \mathscr{L} \cap \mathscr{R},
    $$
    and so no geodesics in $[-\ep'', -\ep^\de] \times [\beta^\de, \de]$ have type \textup{I}. This contradicts uniqueness of rational geodesics, \ref{P: rmlmopt-mon}.
\end{proof}

The next theorem gives the second half of Theorem \ref{T: fin-time-thm}.
\begin{theorem} \label{T:if-close-zeroes-network-type}
    The following holds almost surely for all $(x,0;y,1)$. Suppose $G(x,0,y,1) = 0$. Then:
    \begin{itemize}
        \item [(i)] The point $(x,0,y,1)$ has network type \textup{IV} if and only if there exist sequences  $-\ep_n \to 0^{-}, \de_n \to 0^+, \ep_n' \to 0^+, -\de_n'\to 0^-$ such that $G(x-\ep_n, y+\de_n) = G(x+\ep_n', y-\de_n') = 0$.
        \item [(ii)] The point $(x,0,y,1)$ has network type \textup{Va} if and only if there exist sequences  $\ep_n \to 0^{+}, -\de_n \to 0^-$ such that $G(x+\ep_n, y-\de_n) = 0$ and there do not exist sequences $-\ep_n' \to 0^-, \de_n'\to 0^+$ such that $G(x-\ep_n, y+\de_n) = 0$.

        \item[(iii)] The point $(x,0,y,1)$ has network type \textup{Vb} if and only if there exist sequences  $-\ep_n' \to 0^{-}, \de_n' \to 0^+$ such that $G(x+\ep_n', y-\de_n') = 0$ and there do not exist sequences $\ep_n \to 0^-, -\de_n\to 0^+$ such that $G(x-\ep_n, y+\de_n) = 0$.

    \end{itemize}
\end{theorem}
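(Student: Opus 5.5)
Since $G(x,y)=0$, Theorem \ref{T:casei-iii} together with Lemma \ref{L: fixed-time-possible-networks} forces the network type of $(x,0;y,1)$ to be one of \textup{IV}, \textup{Va}, \textup{Vb}. These three cases are mutually exclusive and exhaustive. So it suffices to prove two one-directional facts and then let the trichotomy do the rest:
\begin{itemize}
\item[(A)] if the type is \textup{IV} or \textup{Vb}, there are zeroes of $G$ approaching $(x,y)$ from the upper-left quadrant $Q^{-,+}_{x,y}$;
\item[(B)] if the type is \textup{Va}, there are no such zeroes.
\end{itemize}
The symmetric statements (A') and (B') for the lower-right quadrant $Q^{+,-}_{x,y}$ come from the same argument with the roles swapped, or via the time-reversal/reflection symmetry.

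Statement (A) is exactly Proposition \ref{P:typeIV-has-close-zeroes}.1, and (A') is Proposition \ref{P:typeIV-has-close-zeroes}.2. For (B), suppose the type is \textup{Va}. Lemma \ref{L: network-V-zero-restriction} gives $\eta>0$ such that for all $\ep,\de\in[0,\eta]$ the network $\Gamma(x-\ep,0;y+\de,1)$ has type \textup{I}, \textup{II} or \textup{III}. By Theorem \ref{T:casei-iii} this means $G(x-\ep,y+\de)>0$ on all of $[0,\eta]^2\setminus\{(0,0)\}$. Hence no sequences $-\ep_n\to0^-$, $\de_n\to0^+$ with $G(x-\ep_n,y+\de_n)=0$ can exist. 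The mirror part of Lemma \ref{L: network-V-zero-restriction} gives (B') for type \textup{Vb}.

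Now assemble the three parts by cases.
\begin{itemize}
\item[(i)] If the type is \textup{IV}, then (A) and (A') give both families of sequences. Conversely, if both families exist, the type cannot be \textup{Va}, since that contradicts (B). It also cannot be \textup{Vb}, since that contradicts (B'). So it is \textup{IV}.
\item[(ii)] If the type is \textup{Va}, then (A') gives lower-right zeroes and (B) excludes upper-left ones. Conversely, if there are lower-right zeroes but no upper-left zeroes, then (B') rules out \textup{Vb} and (A) rules out \textup{IV}. So the type is \textup{Va}.
\item[(iii)] This case is symmetric to (ii).
\end{itemize}

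The sign conventions in the statement of (iii) look garbled, for example ``$\ep_n\to0^-$''. I would read them so that they match (ii) with the quadrants interchanged, and hence Theorem \ref{T: fin-time-thm}.7. The sequences in the theorem have both coordinates strictly off zero, which is exactly the form produced by Proposition \ref{P:typeIV-has-close-zeroes}. The set excluded in (B) contains the axes as well, so it is stronger than needed. I also need to tie this to the isolated-point phrasing of Theorem \ref{T: fin-time-thm}. Zeroes on the axes, such as $(x-\ep,y)$, are excluded near $(x,y)$ by Lemma \ref{L: L/R-geo-sets-properties}.1 in the \textup{IV}/\textup{Vb} case, and by Lemma \ref{L: network-V-zero-restriction} in the \textup{Va} case. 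Therefore ``not isolated in $Z\cap Q^{-,+}_{x,y}$'' is equivalent to the existence of such sequences.

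The real obstacle, producing nearby zeroes, is already handled by Proposition \ref{P:typeIV-has-close-zeroes}. The only remaining care is bookkeeping: the quadrant orientations and the symmetry used to transfer Lemma \ref{L: network-V-zero-restriction} from the \textup{Va} case to the \textup{Vb} case.
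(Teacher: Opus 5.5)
Your proposal is correct and follows the paper's proof. The forward directions come from Proposition~\ref{P:typeIV-has-close-zeroes}, with Lemma~\ref{L: network-V-zero-restriction} supplying the nonexistence parts, and the converses follow from the \textup{IV}/\textup{Va}/\textup{Vb} trichotomy forced by $G(x,y)=0$; your reading of the garbled signs in (iii) and your exclusion of the point $(0,0)$ from Lemma~\ref{L: network-V-zero-restriction} are both the right interpretations.
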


\begin{proof}
The forward implications all follow from Proposition \ref{P:typeIV-has-close-zeroes}. The reverse implications follow from the forward implications, Lemma \ref{L: network-V-zero-restriction} and the fact that if $G(x, y) = 0$, then the network type must be IV, Va or Vb.\end{proof}

\begin{remark}
\label{R:comments-for-semi-inf}
We end by making a few comments about the proofs that will be used in extending to the semi-infinite case. First, while all our results are stated on $\Omega$ for geodesics from time $0$ to $1$, all statements hold on $\Omega$ for any rational start and end times, since this is the only property we used of $0, 1$. Second, in Proposition \ref{P:typeIV-has-close-zeroes} we identified sequences of leftmost and rightmost geodesics associated to the points $(x \pm \ep_n, y \pm \delta_n)$ which overlap with $\pi_{x, y, L}, \pi_{x, y, R}$ on the interval $[1/4, 3/4]$. Of course, the interval $[1/4, 3/4]$ was arbitrary and any interval $[a, b] \subset (0,1)$ with $a < b$ will produce the same result.
\end{remark}

\section{Proofs of applications} \label{Sec: applic-proofs}

In this section, we prove the corollaries stated in Section \ref{sec: applications}. Recall that $Z:= G^{-1}(0)$. We start with a simple lemma.
\begin{lemma}  \label{L: bowtie}
   On the almost sure event $\Omega$, if $(x,y) \in Z$, then there exists an $\ep > 0$ such that the bow-tie set $$
   \left([x-\ep,x]\times[y-\ep,y]\right) \cup \left([x,x+\ep]\times [y,y+\ep]\right)
   $$
    only intersects $Z$ at $(x,y)$.
\end{lemma}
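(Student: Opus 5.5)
We argue by contradiction, using the monotonicity of leftmost and rightmost geodesics (\ref{P: rmlmopt-mon}) together with the absence of $3$-stars (\ref{P: no-3-stars}) at the rational times $0$ and $1$. By the symmetry $(x,y)\mapsto$ time reversal, which swaps the roles of the two quadrants of the bow-tie, it suffices to treat the upper-right piece $[x,x+\ep]\times[y,y+\ep]$. The lower-left piece is handled identically, using leftmost geodesics and limits from below.

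Suppose that no such $\ep$ exists. Then there is a sequence $(x_n,y_n)\in Z\setminus\{(x,y)\}$ with $x_n\ge x$, $y_n\ge y$ and $(x_n,y_n)\to(x,y)$. Since $(x,y)\in Z$, there are two disjoint geodesics from $(x,0)$ to $(y,1)$. For each $n$, since $G(x_n,y_n)=0$, there are likewise two disjoint geodesics for $(x_n,0;y_n,1)$, and hence $\pi_{x_n,y_n,L}$ and $\pi_{x_n,y_n,R}$ are disjoint on $(0,1)$. Passing to a subsequence by \ref{P: precomp}, both converge in overlap to geodesics $\sigma_1\le\sigma_2$ for $(x,0;y,1)$.

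By monotonicity, $\pi_{x_n,y_n,L}\ge\pi_{x,y,L}$. The key claim is that in fact $\sigma_1=\pi_{x,y,R}$ eventually forces a contradiction, so first we locate $\sigma_1$. The case analysis runs as follows.

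\textbf{Case A: $\sigma_1\ne\sigma_2$ are disjoint.} Convergence in overlap then implies that, for large $n$, $\pi_{x_n,y_n,L}$ shares a long middle segment with one disjoint geodesic for $(x,y)$ and $\pi_{x_n,y_n,R}$ shares a long middle segment with another. Because $(x_n,y_n)\ge(x,y)$ with at least one coordinate strict, monotonicity gives $\pi_{x_n,y_n,R}\ge\pi_{x,y,R}$.

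Suppose first that $x_n>x$. The left path $\pi_{x_n,y_n,L}$ starts to the right of $x$ and must reach the left geodesic of $(x,y)$ in the middle. It therefore crosses or touches $\pi_{x,y,R}$ near time $0$. This produces two distinct geodesics with common endpoints, namely a bubble, contradicting \ref{P: no-bubbles}. The argument is exactly that of Lemmas~\ref{L: uses-l-or-r-bubble} and \ref{L: network-V-zero-restriction}.

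If instead $x_n=x$, then $y_n>y$. In this case we use the last part of \ref{P: ov-rm-opt}, together with the fact that there is no $3$-star at $(x,0)$. Three pairwise disjoint geodesics would then emanate from $(x,0)$, namely $\pi_{x,y,L}$, $\pi_{x,y,R}$ and the branch of $\pi_{x,y_n,R}$, or alternatively a bubble appears. Either outcome is a contradiction.

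\textbf{Case B: $\sigma_1=\sigma_2$.} Then $\pi_{x_n,y_n,L}$ and $\pi_{x_n,y_n,R}$ both overlap a single geodesic of $(x,y)$ on a long middle interval. This contradicts their disjointness on $(0,1)$.

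The main obstacle is making the crossing argument in Case A rigorous across all network types IV, Va and Vb. In particular, for Va and Vb the middle geodesic may serve as a limit, and the analysis has to rule this out. For that step we would reuse the bubble-construction pictures of Figure~\ref{fig:typeVa-Bubble-create} verbatim. Alternatively, the whole lemma should follow quickly by combining Lemma~\ref{L: uses-l-or-r-bubble} with its mirror statement and the ordering in Lemma~\ref{L: L/R-geo-sets-properties}.
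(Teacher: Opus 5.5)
Your proposal has a genuine gap exactly where the contradiction is supposed to come from. In Case~A with $x_n>x$ you note that $\pi_{x_n,y_n,L}$ must meet $\pi_{x,y,R}$ near time $0$, and you conclude that ``this produces two distinct geodesics with common endpoints''. One meeting point produces no bubble; you need a second one.

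That second meeting can only come from the top: $\pi_{x_n,y_n,L}(1)=y_n\ge y$, while in the middle $\pi_{x_n,y_n,L}=\pi_{x,y,L}<\pi_{x,y,R}$. When $y_n>y$ the second meeting is at an interior time and \ref{P: no-bubbles} does apply, though you never say so. But your case $x_n>x$ includes $y_n=y$. There the second meeting is the endpoint $(y,1)$ and \ref{P: no-bubbles} says nothing. What you get instead is a new geodesic $\pi_{x,y,R}|_{[0,\tilde r]}\oplus\pi_{x_n,y,L}|_{[\tilde r,1]}$ for $(x,0;y,1)$. For type Vb this has precisely the shape of the middle geodesic. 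Ruling it out needs an extra input: $\tilde r$ and the start of the overlap with $\pi_{x,y,L}$ tend to $0$, while the Vb bridge leaves $\pi_{x,y,R}$ at a fixed positive time.

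The subcase $x_n=x$ fails as written. By the last part of \ref{P: ov-rm-opt}, $\pi_{x,y_n,R}$ coincides with $\pi_{x,y,R}$ on $[0,r_n]$, so it is not a third disjoint geodesic out of $(x,0)$ and no $3$-star appears. The relevant path is $\pi_{x,y_n,L}$. It is strictly left of $\pi_{x,y,R}$ on $(0,r_n]$ and must rejoin it at some $t\in(r_n,1)$. The two geodesics then share the endpoint $(x,0)$, so this is again not a \ref{P: no-bubbles} bubble. It is a geodesic for $(x,0;y,1)$ forbidden by the shapes of IV, Va, Vb.

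Your stated ``main obstacle'' is also misplaced. Case~A already forces $\sigma_1=\pi_{x,y,L}$ and $\sigma_2=\pi_{x,y,R}$, because overlap limits of disjoint pairs are disjoint, so the middle geodesic is never a limit. The suggested fallback does not apply either: Lemmas \ref{L: uses-l-or-r-bubble} and \ref{L: L/R-geo-sets-properties} concern the quadrants $[-\eta,0]\times[0,\eta]$ and $[0,\eta']\times[-\eta',0]$, which are precisely the ones the bow-tie excludes.

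The idea that closes every case at once is a funnelling time. Choose $r<1$ such that every geodesic from $(x,0)$ to $(\pi_{x,y,R}(t),t)$ with $t\in(r,1)$ passes through $(\pi_{x,y,R}(r),r)$; inspecting IV, Va, Vb shows this is possible.
\begin{itemize}
\item If $y_n>y$, then \ref{P: ov-rm-opt} gives $r\in\Ov(\pi_{x,y,R},\pi_{x_n,y_n,R})$ for large $n$. Hence $\pi_{x_n,y_n,L}(r)<\pi_{x_n,y_n,R}(r)=\pi_{x,y,R}(r)$. Since $\pi_{x_n,y_n,L}$ is weakly right of $\pi_{x,y,R}$ at time $0$ and strictly right at time $1$, it meets $\pi_{x,y,R}$ at some $t\in(r,1)$ and at a last time $s\in[0,r)$. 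If $s>0$ this is a genuine interior bubble. If $s=0$ it contradicts the choice of $r$.
\item The case $x_n>x$ is the mirror image at the bottom. Choose $r>0$ so that all geodesics from $(\pi_{x,y,R}(t),t)$, $t\in(0,r)$, to $(y,1)$ pass through $(\pi_{x,y,R}(r),r)$. A meeting at $t=1$ then contradicts this choice.
\end{itemize}
This is the paper's argument. It needs no subsequential limits, only overlap convergence of rightmost geodesics.
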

\begin{proof}
    We will only prove this for $\left([x,x+\ep]\times [y,y+\ep]\right)$. If the statement were false, then we could find sequences $x_n \downarrow x, y_n \downarrow y$ with $(x_n, y_n) \in Z$. First consider the case where $y_n >y$ for all $y$ (the $x_n > x$ case is similar). Let $r \in (0, 1)$ be chosen large enough so that for $t \in (r, 1)$, all geodesics for $(x, 0, \pi_{x, y, R}(t), t)$ go through the point $(\pi_{x, y, R}(r), r)$. We can find such an $r$ by examining the three possible network types IV, Va, Vb.

    By \ref{P: ov-rm-opt}, for $y_n$ close enough to $y$ we have that $r \in \Ov(\pi_{x,y,R}, \pi_{x_n,y_n,R})$. Since $(x_n,y_n) \in Z$, $\pi_{x_n,y_n,L}(r) < \pi_{x_n,y_n,R}(r)$, which means that we can find times $s\in [0, r), t \in (r, 1)$ with $\pi_{x_n,y_n,L}(s) = \pi_{x,y,R}(s)$ and $\pi_{x_n,y_n,L}(t) = \pi_{x,y,R}(t)$. If $s = 0$, then this is contradiction to our definition of $r$. If $s > 0$, then this creates a geodesic bubble, contradicting \ref{P: no-bubbles}.
\end{proof}

\begin{proof}[Proof of Corollary \ref{Cor: network-density}]   \quad

    1. This follows since we can write $G_x(y) = \mathcal A_1(x) - \mathcal A_2(x)$, where $\mathcal A$ is an Airy line ensemble (see Theorem \ref{T: greenes-thm}. The pair of functions $(\mathcal A_1, \mathcal A_2) - (\mathcal A_1(0), \mathcal A_2(0)$, when restricted to any compact set is absolutely continuous with respect to a pair of independent Brownian motions, e.g. see \cite[Section 4.1]{corwin2014brownian}.

    2. This follows from Lemma \ref{L: bowtie}.
    
    3. This follows from Theorem \ref{T: fin-time-thm}.5, 6, 7.

    4. For $x,y\in \R$ and $\ep > 0$, define the set
     $$
     V_{x,y,\ep} :=[x-\ep,x]  \times [y-\ep,y+\ep].
    $$ 
    By Lemma \ref{L: bowtie} and Theorem \ref{T: fin-time-thm}.6, if $(x,0;y,1)$ has network type Va, there is an 
    $\ep_{x,y} > 0$ such that 
    $V_{x,y,\ep_{x, y}}$
    only intersects $Z$ at $(x,y)$. Now consider the collection $\{V_{x,y,\ep_{x, y}/2}: (x,y)\in Z\}$. Noting that this is a disjoint collection of sets of positive Lebesgue measure proves countability.
    
    Next, take $(x,0;y,1) \in \R^4_{\uparrow}$ with network type \textup{IV}, and for $\mu > 0$ consider the box $U :=[x-\mu, x+\mu] \times [y-\mu, y+\mu]$.

Let $p: \R^2 \to \R$ be the projection onto the first coordinate. Noting that $U\cap Z$ is a closed set (because it is the preimage of the closed set $\{0\}$ under the continuous function $G$), we see that $p(U\cap Z)\subset \R$ is compact. 

Now, we claim that $U_x :=p(U \cap Z)$ is nowhere dense. To see this, observe that $U_x$ does not contain rational points almost surely, since we can use that for any fixed $x$, the law of $G_x$ is given by the Airy gap $\mathcal A_1 - \mathcal A_2$ (Theorem \ref{T: greenes-thm}), which is strictly positive almost surely for all $y$. This and Corollary \ref{Cor: network-density}.3 tells us that we can find $\nu > 0, x_0 \in (x-\mu,x+\mu)$ such that 
$$
     \left((x_0 - \nu, x_0 + \nu)\times [y-\mu,y+\mu]\right) \cap (U\cap Z) = \emptyset,
$$
and 
$$
    \left(\{x_0 + \nu\} \times [y-\mu, y+\mu]\right) \cap (U\cap Z) \neq \emptyset.
$$
Take $(x',y')\in  \left(\{x_0 + \nu\} \times [y-\mu, y+\mu]\right) \cap (U \cap Z)$, then by Theorem \ref{T: fin-time-thm}.5, $(x',0;y',1)$ cannot have network type \textup{IV or Vb}, and hence must have type Va. Since $\mu > 0$ was arbitrary, this completes the proof.
\end{proof}

\begin{proof}[Proof of Corollary \ref{Cor: dec-fns}]   For every $n \in \N$, the set 
   $$
   A_n = \{\pi(1/2): \pi \text{ is a geodesic from time 0 to 1}, |\pi(0)| < n, |\pi(1)| < n \}
   $$
   is finite. This follows since the set of all geodesics above is compact in the overlap topology, \ref{P: ov-rm-opt}. This tells us that the collection of sets $\{Z_{a_1,a_2}: a_i \in A\}$ defined in Section \ref{sec: applications} is countable. Next, we show that each of these sets are closed. Take any converging sequence $u_n$ in $Z_{a_1,a_2}$ converging to $u \in \R^4_\uparrow$. By \ref{P: precomp}, we can find a subsequence such that $\pi_{u_{n_k},L}$ and $\pi_{u_{n_k},R}$ converge in overlap to disjoint geodesics for $u$. Noting the possible networks at a fixed time (Lemma \ref{L: fixed-time-possible-networks}), we must have that $\pi_{u_{n_k},L}$ and $\pi_{u_{n_k},R}$ are converging in overlap to $\pi_{u,L}$ and $\pi_{u,R}$ respectively (and these geodesics are disjoint). This gives that $\pi_{u,L}(1/2) = a_1$ and $\pi_{u,R}(1/2) = a_2$ and so $u \in Z_{a_1,a_2}$.   

Now, $Z_{a_1, a_2}\cap [-n, n]^2 = \emptyset$ if $\{a_1, a_2\} \not \subset A_n$, which implies the first bullet point since $A_n$ is finite.
For the second bullet point, observe that since the set $Z$ has no isolated points, and for any $n$, $Z_{a_1, a_2}\cap [-n, n]^2 = \emptyset$ for all but finitely many pairs $(a_1, a_2)$, that each of the sets $Z_{a_1, a_2}$ contains no isolated points. Therefore by the countability of networks of type $\textup{Va/b}$ (Corollary \ref{Cor: network-density}.4), we can write $Z_{a_1, a_2}$ as the closure of $Z_{a_1, a_2} \cap \textup{IV}$. Therefore to complete the proof, it suffices to show that if $u = (x_1, y_1), v = (x_2, y_2) \in Z_{a_1, a_2} \cap \textup{IV}$ and $x_1 \le x_2$, then in fact $x_1 < x_2$ and $y_1 > y_2$. We proceed by contradiction.

First suppose $x_1 = x_2$, and without loss of generality assume $y_1 < y_2$. By the nonexistence of $3$-stars (\ref{P: no-3-stars}), the leftmost and rightmost geodesics from $x_1$ to $y_i$ (denoted by $\pi_{y_i,L}, \pi_{y_i,R}$), there is an $s\in (0,1)$ such that 
    $$
        (\pi_{x_1,y_1,L}, \pi_{x_1, y_1,R})\big|_{[0, s]} = (\pi_{x_1, y_2,L}, \pi_{x_1, y_2,R})\big|_{[0, s]}.
    $$
     Using this and the fact that $\pi_{x_1, y_2,L}(1) > \pi_{x_1, y_1, R}(1)$, we must have an $s'\in (s, 1)$ such that $\pi_{x_1, y_2,L}(s') = \pi_{x_1, y_2, R}(s')$. Together this gives that $(x_1,0;y_1,1)$ must have network type \textup{Vb}, which is a contradiction. 
     
Now suppose $x_1 < x_2$. By a symmetric argument, we can rule out the case when $y_1 = y_2$, so we may assume $y_1 < y_2$. In this case, we can take rationals $w \in (x_1, x_2), z \in (y_1, y_2)$. There is a unique geodesic $\pi_{w, z}$ for $(w, 0; z, 1)$ by \ref{P: rmlmopt-mon}, so by geodesic ordering (again \ref{P: rmlmopt-mon}), we have
$$
\pi_{x_1, y_1, R}(1/2) \le \pi_{w, z}(1/2) \le \pi_{x_2, y_2, L}(1/2),
$$
contradicting that $\pi_{x_1, y_1, R}(1/2) = a_2> \pi_{x_2, y_2, L}(1/2) = a_1$.  
\end{proof}

    \section{Preliminaries in the semi-infinite setting}
\label{S:prelim-semi-infinite}
 In the remainder of the paper, we prove the results from Section \ref{SS:semi-infinite} on semi-infinite geodesics, semi-infinite optimizers, and the Busemann gap function. This section gives background on some these objects. 
    
    Recall that a path $\pi:[s,\infty) \to \R$ is a \textbf{semi-infinite geodesic} if, for any $t \geq s$, $\pi\big|_{[s,t]}$ is a geodesic. A semi-infinite geodesic has direction $\theta \in \R$ if
    $$
        \lim_{r\to\infty} \pi(r)/r = \theta.
    $$
    We say $\pi:[s, \infty) \to \R$ is a semi-infinite geodesic for $(x, s; \theta)$ if $\pi(s) = x$ and $\pi$ has direction $\theta$.
    Semi-infinite geodesics in the directed landscape were first studied in \cite{rahman2023infinitegeodesicscompetitioninterfaces}, with their behaviour being further clarified in \cite{busani2024stationary}, \cite{busani2024nonexistencenoncoalescinginfinitegeodesics}, \cite{bhatia2024dualitydirectedlandscapeapplications}, and \cite{busani2024nonexistencenoncoalescinginfinitegeodesics}. The next theorem gathers results on semi-infinite geodesics from these papers that we will need. For this theorem, we say that two semi-infinite geodesics $\pi:[s, \infty) \to \R, \tau:[t, \infty)\to \R$ \textbf{eventually coalesce} if there exists $r_0 \ge \max(s, t)$ such that $\pi(r) = \tau(r)$ for all $r \ge r_0$. If we choose $r_0$ minimally, we call $r_0$ the \textbf{coalescence time}.

    \begin{theorem} \label{T: semi-inf-geos}
        The following statements hold almost surely.
        \begin{itemize}
            \item [1.] (\cite{busani2024stationary} Theorem 6.5) For all $(x,s)\in \R^2, \theta \in \R,$ there are leftmost and rightmost semi-infinite geodesics for $(x,s;\theta)$, denoted by $\pi^{\theta}_{x,s,L}$ and $\pi_{x,s,R}^\theta$ respectively. For all other semi-infinite geodesics $\pi_{x,s}^\theta$ for $(x,s;\theta)$, we have
            $$
                \pi^\theta_{x,s,L}\leq \pi^\theta_{x,s} \leq \pi^\theta_{x,s,R}.
            $$
            \textup{We will usually only consider the geodesics with $s = 0$, in which case we simply drop the $s$ from the notation and denote them by $\pi_{x,L}^\theta$ and $\pi_{x,R}^\theta$.}
            \item[2.] (\cite{busani2024stationary} Theorem 7.1 (i)) For all $(x,s),(y,t)\in \R^2, \theta \in \R$, $\pi^\theta_{x,s,L}$ eventually coalesces with $\pi^\theta_{y,t,L}$ and $\pi^\theta_{x,s,R}$ eventually coalesces with $\pi^\theta_{y,t,R}$. 
            \item[3.] (\cite{busani2024stationary} Theorem 6.3 (i)) (Monotonicity) For all $s \in \R$, the functions $(\theta,x) \mapsto \pi^\theta_{x,s,L}$ and $(\theta,x)\mapsto \pi^\theta_{x,s,R}$ are non-decreasing in $\theta,s$. 
            \item[4.] (\cite{busani2024stationary} Theorem 7.3) There is a countably infinite dense subset of $\R$ denoted by $\Xi$ such that $\theta \notin \Xi$ if and only if, for all $(x, s) \in \R^2$, the geodesics $\pi^\theta_{(x,s),L}$ and $\pi^\theta_{(x,s),R}$ eventually coalesce. 
            \item[5.] (\cite{busani2024nonexistencenoncoalescinginfinitegeodesics} Theorem 1.5) For all $\theta \in \R$ and $(x, s) \in \R^2$, all semi-infinite geodesics in direction $\theta$ eventually coalesce with either $\pi_{x,s,L}^\theta$ or $\pi_{x,s,R}^\theta$.

            \item[6.] For $\theta \in \Xi$, we have $\pi^\theta_{(x,s),L}(r) < \pi^\theta_{(x,s),R}(r)$ for all large enough $r$.
        \end{itemize}
    \end{theorem}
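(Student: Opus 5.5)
The claim in item 6 is the only one without a citation. My plan is to derive it from items 1, 2 and 4 together with the no geodesic bubbles property (Lemma \ref{L:no-bubbles-geo}). The argument has two steps. First, show that for $\theta \in \Xi$ the leftmost and rightmost semi-infinite geodesics fail to eventually coalesce from \emph{every} starting point, not just from one. Second, show that two ordered semi-infinite geodesics which do not eventually coalesce can only meet on a bounded set of times.

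\textbf{Step 1: non-coalescence holds from every point.} Fix $\theta \in \Xi$. By item 4 there is some $(x_0,s_0)$ such that $\pi^\theta_{(x_0,s_0),L}$ and $\pi^\theta_{(x_0,s_0),R}$ do not eventually coalesce. Now take an arbitrary $(x,s)$ and suppose $\pi^\theta_{(x,s),L}$ and $\pi^\theta_{(x,s),R}$ did eventually coalesce. By item 2, $\pi^\theta_{(x,s),L}$ eventually coalesces with $\pi^\theta_{(x_0,s_0),L}$, and $\pi^\theta_{(x,s),R}$ eventually coalesces with $\pi^\theta_{(x_0,s_0),R}$. Chaining these three eventual equalities shows that $\pi^\theta_{(x_0,s_0),L}$ and $\pi^\theta_{(x_0,s_0),R}$ eventually coalesce, which is a contradiction. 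Hence for every $(x,s)$ the two extremal geodesics $\pi_L := \pi^\theta_{(x,s),L}$ and $\pi_R := \pi^\theta_{(x,s),R}$ do not eventually coalesce.

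\textbf{Step 2: the meeting set is bounded.} Let $M = \{r > s : \pi_L(r) = \pi_R(r)\}$, and suppose for contradiction that $M$ is unbounded. Pick $r_1 \in M$, and any $r_2 \in M$ with $r_2 > r_1$. Then pick $r_3 \in M$ with $r_3 > r_2$. The restriction $\pi_R|_{[s,r_3]}$ is a finite geodesic, and $[r_1,r_2] \subset (s,r_3)$. By Lemma \ref{L:no-bubbles-geo}, $\pi_R|_{[r_1,r_2]}$ is therefore the unique geodesic between its endpoints. But $\pi_L|_{[r_1,r_2]}$ is also a geodesic, because it is a restriction of a semi-infinite geodesic, and it has the same endpoints. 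Hence $\pi_L = \pi_R$ on $[r_1,r_2]$. Since $r_2 \in M$ was arbitrary and $M$ is unbounded, $\pi_L = \pi_R$ on all of $[r_1,\infty)$. This is eventual coalescence, contradicting Step 1. So $M$ is bounded.

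\textbf{Conclusion.} Since $M$ is bounded, $\pi_L(r) \ne \pi_R(r)$ for all large enough $r$. The ordering $\pi_L \le \pi_R$ from item 1 then upgrades this to the strict inequality $\pi_L(r) < \pi_R(r)$ for all large $r$.

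I do not expect a serious obstacle. The only points needing care are the following.
\begin{itemize}
\item Item 4 must be read correctly: it asserts coalescence for all points when $\theta \notin \Xi$, so for $\theta \in \Xi$ it only gives failure at \emph{some} point. This is why Step 1, the transfer via item 2, is needed.
\item When invoking Lemma \ref{L:no-bubbles-geo}, the interval $[r_1,r_2]$ must lie strictly inside the domain of a finite geodesic. This is why $r_1 > s$ is chosen from $M$ and the extra later meeting time $r_3$ is used.
\end{itemize}
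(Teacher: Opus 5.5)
Your proof is correct. It differs from the paper's in which cited input carries non-coalescence from one starting point to all the others.

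The paper takes the two non-coalescing geodesics $\gamma,\pi$ supplied by item 4 and applies the no-bubbles lemma once to make them eventually disjoint. It then invokes item 5: every $\theta$-directed semi-infinite geodesic eventually coalesces with $\pi^\theta_{(x,s),L}$ or $\pi^\theta_{(x,s),R}$. Since $\gamma$ and $\pi$ cannot coalesce with the same extremal geodesic, one is eventually $\pi^\theta_{(x,s),L}$ and the other $\pi^\theta_{(x,s),R}$.

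You instead transfer non-coalescence of the extremal pair directly, using item 2 (eventual coalescence of leftmost with leftmost and rightmost with rightmost). You then run the no-bubbles argument separately at each starting point.

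Your route is more economical in its inputs. It avoids item 5, the deepest of the cited facts (it comes from the separate paper \cite{busani2024nonexistencenoncoalescinginfinitegeodesics}), and relies only on the coalescence theorem of \cite{busani2024stationary}. The paper's route costs item 5, which it needs anyway, e.g.\ in Lemma \ref{L:semi7}. In return it gets the slightly stronger by-product that any two non-coalescing $\theta$-directed geodesics eventually agree with the leftmost and rightmost ones from any given point.

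A minor simplification: in Step 2 you do not need $r_3 \in M$. Any $r_3 > r_2$ works, since $\pi_R|_{[s,r_3]}$ is a geodesic for every $r_3$.
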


   \begin{proof}[Proof of 6.] This result is essentially contained in \cite{busani2024stationary}, but is not stated explicitly there. By part $4$, if $\theta \in \Xi$ then exist two geodesics $\gamma, \pi$ in direction $\theta$ which do not eventually coalesce. Since there are no geodesic bubbles in $\L$ (Lemma \ref{L:no-bubbles-geo}) this means that $\gamma(r) \ne \pi(r)$ for all large enough $r$. By part $5$, for any $(x, s) \in \R$, both $\gamma$ and $\pi$ either eventually coalesce with $\pi^\theta_{(x, s), L}$ or $\pi^\theta_{(x, s), R}$. This can only happen if $\pi^\theta_{(x, s), L}(r) < \pi^\theta_{(x, s), R}(r)$ for all large enough $r$.    
   \end{proof} 
Now, while we cannot directly define distances to $\infty$ in the directed landscape, we can define \textit{relative distances} as we move to infinity along a particular slope, known as Busemann functions. For $(x, s; y, t) \in \R^2$ and $\theta \in \R$, define
$$
B^\theta(x, s; y, t) = \lim_{r \to \infty} \L(x, s; r \theta, r) - \L(y, t; r \theta, r).
$$
Almost surely, for any $\theta \notin \Xi$ and $(x, s; y, t) \in \R^4$, the above limit exists, see \cite[Theorem 5.1]{busani2024stationary}. Moreover, rather than taking the limit to $\infty$ along the exact line $(r \theta, \theta)$, we can follow any path whose asymptotic slope is $\theta$. In particular, we can replace $(r \theta, r)$ with $\pi(r)$ for some geodesic $\pi$ in direction $\theta$. This allows us the following alternate definition.
    \begin{definition}
        \label{D: busemann}
       For $\square \in \{L, R\}$ and $(x, s), (y, t) \in \R^2$, let $T_\square = T_\square(x, s; y, t)$ be the coalescence time of $\pi_{x, s,\square}^\theta$ and $\pi_{y, t,\square}^\theta$. Define
       $$B_\square^\theta(x, s; y, t) = \|\pi_{x, s,\square}^\theta|_{[s, T_\square]}\|_\L - \|\pi_{y, t,\square}^\theta|_{[t, T_\square]}\|_\L = \lim_{r \to \infty} \|\pi_{x, s,\square}^\theta|_{[s, r]}\|_\L - \|\pi_{y, t,\square}^\theta|_{[t,r]}\|_\L.
       $$
    \end{definition}
    Almost surely, for all $\theta \notin \Xi$ and $u = (x, s; y, t) \in \R^4$, we have $B(u) = B^\theta_L(u) = B^\theta_R(u)$ (again, this is part of \cite[Theorem 5.1]{busani2024stationary}). The processes $B_L, B_R$ are the left-continuous and right-continuous (in $\theta$) extensions of $B$ from $\R \setminus \Xi$ to all of $\R$, and are defined in this way in \cite{busani2024stationary}. The equivalent definition above is part of \cite[Theorem 6.5(i)]{busani2024stationary}. Moving forward, we write $B^\theta_\square(x) = B^\theta_\square(x, 0; 0,0)$ for $\square \in \{L, R\}$ since these are the only Busemann functions we require.

    We note a change in notation from \cite{busani2024stationary}, where the process was originally defined. Instead of indexing the process with $\{-,+\}$, we use $\{L,R\}$ to emphasize the connection to the left and right semi-infinite geodesics. 
    
  The processes $(x, \theta) \mapsto B^\theta_\square(x)$ can be viewed as the semi-infinite analogue of the Airy sheet. It is known as the \textbf{stationary horizon}, introduced in \cite{busani2024diffusive, seppalainen2023global}, and connected to the directed landscape in \cite{busani2024stationary}, see also \cite{busani2024scaling}. While the law of the Airy sheet has a somewhat complicated description, the law of the stationary horizon is relatively simple. Its marginals when $\theta$ is restricted to a finite set $F$ of size $n$ can be elegantly described through a collection of $n$ reflected Brownian motions. In the next theorem, we recall the description of this law when $n = 1, 2$, and the quadrangle inequality for the stationary horizon.

\begin{theorem}
    \label{T:stat-horizon}
    \begin{enumerate}
        \item (\cite{rahman2023infinitegeodesicscompetitioninterfaces}, Corollary 3.22) For any fixed $\theta \in \R$ and $\square \in \{L, R\}$, the process $x \mapsto B^\theta_\square(x)$ is a Brownian motion with diffusivity $\sqrt{2}$ and drift $2 \theta$.
        \item (Theorem 5.1(iii), \cite{busani2024stationary}) Almost surely, for any $\theta_1 < \theta_2$ and $x_1 < x_2$, we have
        $$
B^{\theta_1}_\square(x_2) - B^{\theta_1}_\square(x_1) \le B^{\theta_2}_\square(x_2) - B^{\theta_2}_\square(x_1).
        $$
    \item (\cite{busani2024diffusive}, Theorem 1.2.5) Almost surely, for all $\theta \in \R$ and all $a > 0$, there exists an $\ep > 0$ such that for all $0 < \delta \le \epsilon$ and $x \in [-a, a]$ we have
    $$
    B^{\theta-\de}_L(x) = B^\theta_L(x), \qquad    B^{\theta+\de}_R(x) = B^\theta_R(x).
    $$
    \end{enumerate}
\end{theorem}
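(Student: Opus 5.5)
Theorem \ref{T:stat-horizon} collects results from the literature, so my plan is to reduce each part to the cited statement. The only real work is translating notation: the paper writes $\{L,R\}$ where the sources write $\{-,+\}$, uses the normalization $B^\theta_\square(x)=B^\theta_\square(x,0;0,0)$, and its scaling conventions for $\L$ may differ from those of the sources.

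For part 1, I would start from \cite[Corollary 3.22]{rahman2023infinitegeodesicscompetitioninterfaces}. It gives that for fixed $\theta$ the Busemann function $x\mapsto B^\theta(x,0;0,0)$ is a two-sided Brownian motion with diffusivity $\sqrt 2$ and drift $2\theta$. For fixed $\theta$ we have $\mathbb P(\theta\in\Xi)=0$, since $\Xi$ is countable and the law of $\L$ is invariant under the skew symmetry of Lemma \ref{L: EDL-symmetries}.4. Hence almost surely $B^\theta_L=B^\theta_R=B^\theta$, and the law is the same for both $\square\in\{L,R\}$. I would check that the drift sign matches the definition $B^\theta=\lim \L(x,0;r\theta,r)-\L(0,0;r\theta,r)$. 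Using the parabola $-(x-r\theta)^2/r$ from Lemma \ref{L: DL-bound}, the increment in $x$ is approximately $2\theta x$, which confirms drift $+2\theta$.

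For part 2, I would use \cite[Theorem 5.1(iii)]{busani2024stationary}. There the inequality is proven for $B^{\theta\pm}$ simultaneously for all $\theta_1<\theta_2$ and $x_1<x_2$. Heuristically it follows from monotonicity of semi-infinite geodesics in $\theta$ (Theorem \ref{T: semi-inf-geos}.3) together with planarity and a crossing argument. Translating $\pm$ to $R/L$ then gives the statement verbatim.

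For part 3, I would cite \cite[Theorem 1.2.5]{busani2024diffusive}, which describes the local structure of the stationary horizon: on compacts, $\theta\mapsto B^\theta$ is a pure jump process whose jumps are locally finite. Combined with the identification of $B_L,B_R$ as the left- and right-continuous versions in $\theta$ from \cite{busani2024stationary}, this shows that on $[-a,a]$ the function $B^{\theta-\de}_L$ is constant in $\de$ for $\de$ small, and likewise for $B^{\theta+\de}_R$.

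The main (minor) obstacle is making sure the coupling between the stationary horizon and $\L$ in \cite{busani2024stationary} matches the normalization and left/right convention used here, so that the cited results apply to $B_L,B_R$ as defined in Definition \ref{D: busemann}.
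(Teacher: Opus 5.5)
Your proposal is correct and matches the paper. The paper gives no proof of this theorem; it simply imports each part from the cited sources.

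Your supplementary checks are consistent with how the paper later uses these facts. These are: that a fixed $\theta$ lies outside $\Xi$ almost surely, so $B^\theta_L = B^\theta_R$; the sign of the drift $2\theta$; and obtaining part 3 from local finiteness of the jumps of the stationary horizon together with the left- and right-continuity of $B_L$ and $B_R$ in $\theta$.
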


We end this preliminary section with a simple anti-coalescence estimate.
    
\begin{lemma} \label{L: coal-semi-geo-bound}
    Almost surely, for any point $(p,s)$ in $\R\times \Q$, direction $\theta \in \R$ and $m \in \R$, there exists a $y < p$ such that $\pi^{\theta}_{p,s,L}(t) \neq \pi^{\theta}_{y,s,L}(t)$ for all $t \in [s, m]$. That is, $\pi^{\theta}_{p,s,L}, \pi^{\theta}_{y,s,L}$ coalesce after time $m$. A similar statement holds for rightmost semi-infinite geodesics.
\end{lemma}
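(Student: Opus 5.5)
We argue by contradiction, combining monotonicity of leftmost semi-infinite geodesics in the starting point (Theorem \ref{T: semi-inf-geos}.3) with compactness and the fact that $s$ is rational, so that \ref{P: no-3-stars} is available. Fix $(p,s) \in \R\times\Q$, $\theta$ and $m$; we may assume $m > s$. Suppose that for every $y < p$ the geodesics $\pi^\theta_{y,s,L}$ and $\pi^\theta_{p,s,L}$ meet at some time in $[s,m]$. By monotonicity, $\pi^\theta_{y,s,L} \le \pi^\theta_{p,s,L}$. Once two leftmost semi-infinite geodesics meet, they agree from that time on: both continuations are leftmost semi-infinite geodesics from the meeting point, and there are no bubbles (\ref{P: no-bubbles}). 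Hence every $\pi^\theta_{y,s,L}$ with $y<p$ coalesces with $\pi^\theta_{p,s,L}$ by time $m$. By monotonicity in $y$, the coalescence time $T(y)$ is non-increasing as $y$ increases to $p$. The claim is that this is incompatible with $\pi^\theta_{y,s,L}$ starting strictly to the left for all $y$ far from $p$.

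The first key step is to consider $y \to -\infty$. Restricted to $[s,m]$, each $\pi^\theta_{y,s,L}$ is a geodesic from $(y,s)$ to $(\pi^\theta_{p,s,L}(t_y),t_y)$ for some $t_y \le m$, followed by a piece of $\pi^\theta_{p,s,L}$. So for every $y<p$ there is a geodesic from $(y,s)$ to a point of the compact curve $\mathfrak g\pi^\theta_{p,s,L}|_{[s,m]}$. I would use Lemma \ref{L: DL-bound} to rule this out for very negative $y$. The geodesic from $(y,s)$ to a point $(z,t)$ with $|z|$ bounded and $t\in(s,m]$ has length roughly $-(y-z)^2/(t-s)$. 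Meanwhile $\pi^\theta_{y,s,L}$ is a semi-infinite geodesic with direction $\theta$, so comparing with Busemann increments (Theorem \ref{T:stat-horizon}.1, since Busemann functions grow linearly in $y$ along geodesic routes) should give a contradiction. This route is messy. A cleaner alternative is the following.

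\textbf{Cleaner alternative.} Take $y_n \to -\infty$. Any meeting point $(z_n,t_n)$ lies on the compact curve $\mathfrak g\pi^\theta_{p,s,L}|_{[s,m]}$, so along a subsequence $(z_n,t_n) \to (z,t)$. The geodesics from $(y_n,s)$ to $(z_n,t_n)$ must have slope of order $|y_n|/(t_n-s)$, so their spatial position at time $s+\eta$ tends to $-\infty$ for each fixed $\eta>0$, unless $t_n \to s$. If $t_n \to s$, then the geodesic from $(y_n,s)$ reaches a bounded point at a time close to $s$. Its length is then at most $-(y_n-z_n)^2/(t_n-s) + C\log$-error, which is far below the value obtained by following $\pi^\theta_{y_n,s,L}$ along any competitor. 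Making this rigorous requires the quantitative bound in Lemma \ref{L: DL-bound}.

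I expect this quantitative step to be the main obstacle. A more geometric alternative avoids it. The meeting times satisfy $T(y) \le m$ for all $y$. Taking three rational-start points, or passing to a limit, gives geodesics from $(y,s)$ for many $y$ all merging into $\pi^\theta_{p,s,L}$ before time $m$. Take $y_1<y_2<p$ and follow the backward geodesic tree from points of $\pi^\theta_{p,s,L}$ near time $m$ down to time $s$. Because every starting point at time $s$ to the left of $p$ feeds into the same curve before time $m$, the leftmost geodesic from $(\pi^\theta_{p,s,L}(m),m)$ back to time $s$ would have infinitely many distinct backward branches reaching arbitrarily far left at time $s$. By precompactness (\ref{P: precomp}) this is impossible on a bounded set, and the starting points are unbounded, so this again reduces to the parabolic bound.

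In summary: assume for contradiction that $T(y)\le m$ for all $y<p$; send $y\to-\infty$; use compactness of the target curve; and contradict Lemma \ref{L: DL-bound} via the parabolic cost $(y-z)^2/(t-s)$ compared with the linear Busemann growth along the semi-infinite geodesics. The rightmost case is symmetric, with $y>p$.
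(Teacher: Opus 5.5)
There is a genuine gap. Your final summary names the right mechanism: the quadratic parabolic cost set against the linear growth of Busemann functions, which is exactly how the paper argues. But you never carry it out, and the two routes you do develop do not work.

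In the ``cleaner alternative'' you compare the segment of $\pi^\theta_{y_n,s,L}$ from $(y_n,s)$ to $(z_n,t_n)$ with ``any competitor''. That segment is itself a geodesic between its endpoints, so no competitor with those endpoints beats it. In the case $t_n\not\to s$ you derive no contradiction at all.

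The ``geometric alternative'' fails for the same reason. There are geodesics from arbitrarily far-left points at time $s$ to any fixed point $(z,m)$, so precompactness excludes nothing. No argument that looks only at the paths up to time $m$ can succeed. You must use that they continue as semi-infinite geodesics in direction $\theta$.

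What is missing is the following identity. Under your contradiction hypothesis, every $\pi^\theta_{y,s,L}$ with $y<p$ passes through $(z,m)$, where $z=\pi^\theta_{p,s,L}(m)$, and follows $\pi^\theta_{p,s,L}$ afterwards. So by Definition \ref{D: busemann},
\[
B^\theta_L(y,s;p,s)=\L(y,s;z,m)-\L(p,s;z,m)\qquad\text{for all } y<p .
\]
By Lemma \ref{L: DL-bound} the right side is $-y^2/(m-s)+o(y^2)$ as $y\to-\infty$. The paper shows the left side is $2\theta y+o(y)$, using Theorem \ref{T:stat-horizon}.1.

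That theorem holds almost surely only for each fixed $\theta$. The paper therefore takes three further steps:
\begin{enumerate}
\item It upgrades the asymptotics to all $\theta$ at once via the quadrangle inequality (Theorem \ref{T:stat-horizon}.2).
\item It proves the claim at $(p,s)=(0,0)$ and transfers it to rational points by translation invariance.
\item It reaches arbitrary real $p$ by monotonicity: for rational $q<p$, one has $\pi^\theta_{y,s,L}<\pi^\theta_{q,s,L}\le\pi^\theta_{p,s,L}$ on $[s,m]$.
\end{enumerate}
Your proposal addresses none of these uniformity issues.

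Alternatively, you can avoid the Busemann law entirely, in the spirit of the first route you abandoned as messy. Write $\pi=\pi^\theta_{p,s,L}$. The left side of the identity also equals $\L(y,s;\pi(r),r)-\L(p,s;\pi(r),r)$ for every $r\ge m$. Apply Lemma \ref{L: DL-bound} at $r\approx|y|$, using $\pi(r)=O(r)$. This gives a lower bound of order $-|y|$, which contradicts the $-y^2/(m-s)$ behaviour of the right side. This version works simultaneously for all $\theta$ and $(p,s)$, with no separate reduction over countably many parameters.
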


\begin{proof}
We first prove the lemma when $(p, s) = (0,0)$. If the lemma failed for some $\theta \in \R, m > 0$, then for all $y < 0$,
   \begin{align*}
       B_L^\theta (y) = \L(y,0;\pi_{0,0,L}^\theta(m),m) - \L(0,0;\pi_{0,0,L}^\theta(m),m),
   \end{align*}
   since by assumption, $\pi_{y,L}^\theta$ coalesces with $\pi_{0,L}^\theta$ before time $m$. By Theorem \ref{T:stat-horizon}.1, for fixed $y$ the left-hand side above is a Brownian motion of drift $2 \theta$, so $B_L^\theta (y) = 2\theta y + o(y)$ as $y \to -\infty$. By the quadrangle inequality (Theorem \ref{T:stat-horizon}.2), these asymptotics holds almost surely for all $\theta \in \R$.
   
   On the other hand, by Lemma \ref{L: DL-bound}, for all $\theta \in \R$, the right-hand side above equals $-y^2/m + o(y^2)$ as $y \to -\infty$, which gives a contradiction. Therefore the lemma holds for $(p, s) = (0,0)$, for all $\theta, m$.

   We can extend this to all $(x,s)\in \Q^2$ by the translation invariance of $\L$. The general statement then follows by monotonicity of semi-infinite geodesics (Theorem \ref{T: semi-inf-geos}.3).
\end{proof}

\section{A correspondence for semi-infinite networks} \label{sec: semi-inf-cor}

\subsection{Possible semi-infinite network types}
\label{sec: poss-types}

In Section \ref{SS:geodesic-networks}, we described the network type for a point $(x,s;y,t)\in \R^4_\uparrow$. We extend this idea to semi-infinite geodesics in a direction $\theta$. We define the \textbf{semi-infinite geodesic network} for $(x,s;\theta)$ by
$$
    \Gamma^\infty(x,s;\theta) := \bigcup \{\mathfrak{g}\pi^\theta: \pi^\theta \text{ is a semi-infinite geodesic from $(x,s)$ in direction $\theta$}\}.
$$
After adding a vertex at $\infty$, we can define (semi-infinite) network types in the same way that we defined finite network types in Section \ref{SS:geodesic-networks}. More formally, if we let $f:\R^2 \to \R \times [-1, 1]$ be given by
$$
f(z, r) = \left(\frac{z}{\max(1, r)}, \frac{r}{|r| + 1}\right),
$$
we say that $\Gamma^\infty(x,s;\theta) \sim \Gamma^\infty(y, t;\theta')$ if the closures of  $f(\Gamma^\infty(x,s;\theta)), f(\Gamma^\infty(y, t;\theta'))$ are isomorphic as finite-time geodesic networks.

\begin{lemma}
\label{L:semi7}
    Almost surely, for all $(x,\theta) \in \R^2$, $(x, 0; \theta)$ has one of $7$ semi-infinite network types, pictured in Figure \ref{fig:semi-inf-networks}.
\end{lemma}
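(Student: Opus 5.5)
The plan is to reduce to the finite-time classification of Lemma \ref{L: fixed-time-possible-networks} by truncating the semi-infinite network at a large time. First I split on whether $\theta \in \Xi$. If $\theta \notin \Xi$, Theorem \ref{T: semi-inf-geos}.4 and 5 say that every semi-infinite geodesic from $(x,0)$ in direction $\theta$ eventually coalesces with $\pi^\theta_{x,L}$, and that this geodesic coalesces with $\pi^\theta_{x,R}$. If $\theta \in \Xi$, Theorem \ref{T: semi-inf-geos}.5 and 6 say that every such geodesic eventually coalesces with exactly one of $\pi^\theta_{x,L}, \pi^\theta_{x,R}$, and that these two are eventually disjoint.

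In the first case I choose a rational time $T$ after the coalescence of $\pi^\theta_{x,L}$ and $\pi^\theta_{x,R}$. By no bubbles (\ref{P: no-bubbles}) and the sandwich $\pi^\theta_{x,L}\le \pi \le \pi^\theta_{x,R}$, every semi-infinite geodesic $\pi$ satisfies $\pi(T)=z:=\pi^\theta_{x,L}(T)$ and agrees with the common path afterwards. Hence $\Gamma^\infty(x,0;\theta)$ is the finite network $\Gamma(x,0;z,T)$ with a single ray attached to the vertex $(z,T)$. Conversely, every finite geodesic from $(x,0)$ to $(z,T)$ extends to a semi-infinite one by concatenating with the common tail. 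Since $T$ is rational, Remark \ref{R:comments-for-semi-inf} lets me apply Lemma \ref{L: fixed-time-possible-networks} on $[0,T]$. The finite network therefore has one of the seven types, and the ray must enter at a vertex of degree one. Types IV and V would need a second end at infinity, so they cannot occur here; this leaves $\textup{I}^\infty$, $\textup{IIa}^\infty$, $\textup{IIb}^\infty$ and $\textup{III}^\infty$. (The type IIa picture, with the bubble closed at $(z,T)$, becomes $\textup{IIb}^\infty$-like after compactification and should be handled in the same enumeration.)

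In the second case I take a rational time $T$ large enough that every semi-infinite geodesic has merged into one of two disjoint tails by time $T$, with $\pi^\theta_{x,L}(r)<\pi^\theta_{x,R}(r)$ for $r\ge T$. The network is then the union of $\Gamma(x,0;z_L,T)$ and $\Gamma(x,0;z_R,T)$, where $z_\square=\pi^\theta_{x,\square}(T)$, with a ray attached to each endpoint. To control the shape I would try to use the two-point extended landscape $\L(x^2,0;(z_L,z_R),T)$, in the spirit of the argument ruling out Figure \ref{fig:back2back-bubble-network}. Each of the two finite networks has at most two edges at $(x,0)$, and there are no 3-stars at the rational time $0$ (\ref{P: no-3-stars}). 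The Euler-formula count from the proof of Lemma \ref{L: fixed-time-possible-networks} then goes through after the two rays are joined at the vertex at infinity: every face meets the source or infinity, both have degree at most $2$, and $|V|\le 4$. Enumerating these graphs gives $\textup{IIa}^\infty$, $\textup{III}^\infty$, $\textup{IV}^\infty$, $\textup{Va}^\infty$ and $\textup{Vb}^\infty$, together with spurious candidates analogous to Figure \ref{fig:back2back-bubble-network}.

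I expect the main obstacle to be ruling out those spurious graphs and making the truncation rigorous. The key spurious candidate is a bubble at the source followed by an interior vertex where the path splits into the two infinite ends. I believe this is in fact $\textup{III}^\infty$ and so is allowed. What must be ruled out are configurations in which the bubbles at the two ends are not nested as in Figure \ref{fig:semi-inf-networks}, for example an interior bubble, which no-bubbles forbids outright. A second subtlety is that each edge of the compactified graph must correspond to an actual semi-infinite geodesic. This holds because any finite geodesic to $(z_\square,T)$ concatenated with the tail of $\pi^\theta_{x,\square}$ is a semi-infinite geodesic: its restriction to $[0,t]$ is a geodesic by the metric composition law, since $\pi^\theta_{x,\square}$ itself is a geodesic through $(z_\square,T)$.
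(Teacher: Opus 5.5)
Your strategy matches the paper's: truncate at a rational time, invoke Lemma \ref{L: fixed-time-possible-networks}, and use no $3$-stars at time $0$ together with eventual coalescence with $\pi^\theta_{x,L}$ or $\pi^\theta_{x,R}$. However, the $\theta\in\Xi$ case has a genuine gap exactly where you flag the ``key spurious candidate''.

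With $\deg(p)=\deg(\infty)=2$ and $|V|=3$, your Euler count gives $|E|\le |V|+1=4$ and $2|E|=4+\deg v\ge 7$. This forces the interior vertex $v=(w,r)$ to have degree $4$. The only such graph has two edges from $(x,0)$ to $v$ and two edges from $v$ to $\infty$, which is the semi-infinite analogue of Figure \ref{fig:back2back-bubble-network}. It is \emph{not} $\textup{III}^\infty$. That network has four vertices and five edges, with a single geodesic segment between the point where the source bubble closes and the point where the two ends separate. Accepting this candidate as ``$\textup{III}^\infty$'' therefore leaves an eighth type, and it must be excluded.

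The tool you mention does exclude it, but you never carry it out. Take a rational $T>r$ and set $z_\square=\pi^\theta_{x,\square}(T)$. There are two disjoint geodesics from $(x,0)$ to $v$, and $\pi^\theta_{x,L},\pi^\theta_{x,R}$ are disjoint on $(r,T]$. The triangle inequality for the extended landscape (as in the exclusion of Figure \ref{fig:back2back-bubble-network}) then gives
\[
\begin{aligned}
\L(x^2,0;(z_L,z_R),T) &\ge \L(x^2,0;w^2,r)+\L(w^2,r;(z_L,z_R),T)\\
&= 2\L(x,0;w,r)+\L(w,r;z_L,T)+\L(w,r;z_R,T) = \L(x,0;z_L,T)+\L(x,0;z_R,T).
\end{aligned}
\]
The last equality holds because $v$ lies on both $\pi^\theta_{x,L}$ and $\pi^\theta_{x,R}$. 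The reverse inequality is trivial, so a disjoint $2$-optimizer to $((z_L,z_R),T)$ (Theorem \ref{T:dj-exist}) consists of two geodesics. By your concatenation remark, each extends to a semi-infinite geodesic from $(x,0)$ in direction $\theta$. Each therefore lies in the network and passes through $v$ at time $r\in(0,T)$, which contradicts disjointness.

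Two lesser points.

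First, the uniform time $T$ in the $\Xi$ case is not supplied by Theorem \ref{T: semi-inf-geos}.5, which only gives a time for each geodesic separately. You can get it from no bubbles plus \ref{P: no-3-stars}. A semi-infinite geodesic $\pi\neq\pi^\theta_{x,L}$ that coalesces with $\pi^\theta_{x,L}$ agrees with it exactly on $\{0\}\cup[c,\infty)$. Two such geodesics with coalescence times $c_1<c_2$ would then be pairwise disjoint with $\pi^\theta_{x,L}$ on $(0,c_1)$, giving a $3$-star at $(x,0)$.

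Second, your reasoning in the case $\theta\notin\Xi$ is muddled. Since $T$ is past the coalescence time, the sink $(z,T)$ of the truncated network has degree one. So only finite types I and IIb occur, giving $\textup{I}^\infty$ and $\textup{IIb}^\infty$. A finite IIa network with a ray attached would contain an interior bubble, not be ``$\textup{IIb}^\infty$-like''. Types IV and V are excluded by the sink degree, not by the number of ends. These errors only over-include types, so they do not break the lemma, but the stated reasons are wrong.
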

\begin{proof}
    For any $(y,q)\in \Gamma^\infty(x,0;\theta) \cap(\R\times \Q)$, $\Gamma(x,0;y,q) \subset \Gamma^\infty(x,0;\theta)$, and the network type of $(x,0;y,q)$ must be one of the seven networks pictured in Figure \ref{fig:fixed-time-networks} (Lemma \ref{L: fixed-time-possible-networks}). This, the nonexistence of 3-stars at fixed times (\ref{P: no-3-stars}), and the eventual coalescence of any geodesic from $(x, 0)$ with either the rightmost or leftmost geodesic (Theorem \ref{T: semi-inf-geos}.5) gives the result. 
\end{proof}
We ask a similar question to the one answered for the case of finite geodesic networks: How can we determine exactly when each of these networks occurs? In the finite case, Theorem \ref{T: fin-time-thm} gives an answer in terms of the gap sheet. In the semi-infinite case, the corresponding object to study is a \textbf{Busemann gap function}, to be defined in Section \ref{SS:generalized-busemann}. 

Certain features of the semi-infinite correspondence are essentially the same as in the finite correspondence. Because of this (and for brevity) we will not aim to give a complete classification of when each of the seven network types in Figure \ref{fig:semi-inf-networks} occurs. Instead, we will focus only on situations in the which the Busemann setting is qualitatively different from the finite setting. 

To set things up, first observe that by parts $4$ and $6$ of Theorem \ref{T: semi-inf-geos}, then $\theta \notin \Xi$ if and only if the network type of $(x, 0; \theta)$ is type $\text{I}^\infty$ or $\text{IIb}^\infty$. We are interested in studying the more interesting case when $\theta$ is in the exceptional set $\Xi$, where we will aim to differentiate between the cases $\text{IIa}^\infty,\text{III}^\infty, \text{IV}^\infty, \text{Va}^\infty,$ and $\text{Vb}^\infty$.

As in the finite setting, all argument will be deterministic on a set of probability $1$. Define $\Omega^\infty$ to be the probability one set where \ref{P: cont}-\ref{P: no-bubbles} hold, along with the following:
 \begin{enumerate}
[label=\textbf{P.\arabic*},ref=P.\arabic*]
    \setcounter{enumi}{6}
        \item \label{P: semi-inf-prop} \textit{Basic semi-infinite geodesic properties.} As in Theorem \ref{T: semi-inf-geos}. 
        \item \label{P: coal-semi-geo-bound}\textit{Coalescent scaling.}  As in Lemma \ref{L: coal-semi-geo-bound}.
    \end{enumerate}

For the remainder of Section \ref{sec: semi-inf-cor}, we work on $\Omega^\infty$.

\subsection{A generalized Busemann function}
\label{SS:generalized-busemann}

A \textbf{semi-infinite $2$-optimizer} is a function $\tau:[s, \infty) \to \R^2_\le$ such that for all $t \ge s$, $\tau|_{[s, t]}$ is a 2-optimizer. We say that $\tau:[s, \infty) \to \R$ is a $2$-optimizer for $(\x, s; \boldsymbol{\theta}) $ if $\tau(s) = \x$ and $\tau$ has direction $\boldsymbol{\theta} \in \R^2_\leq$. If $x_1 = x_2 =x$ and/or $\theta_1 = \theta_2 = \theta$ we say $\tau$ is a $2$-optimizer for $(x, s; \theta)$.

To show that the Busemann gap function from \eqref{E:Gtheta} is well-defined, we first show the existence of a specific kind of semi-infinite optimizer for directions in the exceptional set $\Xi$ where the optimizer eventually uses the leftmost rightmost semi-infinite geodesic.

\begin{prop} \label{P: opt-that-uses-disj-geos}
    On $\Omega^\infty$, for all $\theta \in \Xi, x \in \R$, there exists a semi-infinite 2-optimizer, $\tau_{x}^\theta$ for $(x, 0; \theta)$ and a time $t^* > 0$ such that $(\tau_{x,1}^\theta, \tau_{x,2}^\theta)\big|_{[t^*,\infty)} = (\pi^\theta_{x,L}, \pi_{x,R}^\theta)\big|_{[t^*,\infty)}$. Furthermore, for all semi-infinite 2-optimizers $\tau_{x}$ which satisfy $\tau_x\big|_{[t',\infty)} = (\pi^\theta_{x,L}, \pi_{x,R}^\theta)\big|_{[t',\infty)}$ for some $t' > 0$, we have that $\tau_x\big|_{[t^*,\infty)} = (\pi^\theta_{x,L}, \pi_{x,R}^\theta)\big|_{[t^*,\infty)}$. Finally, for any compact interval $[a, b]$ we can find a $t^*$ that works for all $x \in [a, b]$.
\end{prop}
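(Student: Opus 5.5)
The idea is to build $\tau_x^\theta$ as a limit of finite $2$-optimizers whose endpoints are anchored on the two semi-infinite geodesics. Fix $\theta\in\Xi$ and $x$. By Theorem \ref{T: semi-inf-geos}.6, there is a time $r_0$ such that $\pi^\theta_{x,L}(r)<\pi^\theta_{x,R}(r)$ for all $r\ge r_0$. For $t>r_0$, let $\tau^{(t)}$ be the rightmost $2$-optimizer from $(x^2,0)$ to $((\pi^\theta_{x,L}(t),\pi^\theta_{x,R}(t)),t)$; it exists by \ref{P: rmlmopt-mon}.

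The key claim is that $\tau^{(t)}$ agrees with $(\pi^\theta_{x,L},\pi^\theta_{x,R})$ on $[t^*,t]$, where $t^*$ does not depend on $t$. The argument runs like the proof of Lemma \ref{L:opt=geo}, with the endpoint $(y,1)$ replaced by the two ends of the pair of eventually disjoint semi-infinite geodesics.

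1. First show $\tau^{(t)}_1\le\pi^\theta_{x,L}$ and $\tau^{(t)}_2\ge \pi^\theta_{x,R}$ on $[r_0,t]$. Suppose $\tau^{(t)}_1$ crosses to the right of $\pi^\theta_{x,L}$. Swapping in the geodesic piece either strictly increases the length (Case I), or the two pieces have equal length. In the equal-length case, Lemma \ref{L:no-bubbles-geo} and the absence of $3$-stars at rational times (\ref{P: no-3-stars}) give a contradiction exactly as in Case II.
2. Once $\tau^{(t)}_1(r_0)\le \pi^\theta_{x,L}(r_0)<\pi^\theta_{x,R}(r_0)\le\tau^{(t)}_2(r_0)$, both components must be geodesics on $[r_0,t]$. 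The reason is that disjoint geodesics exist between the relevant endpoints, by geodesic ordering (\ref{P: rmlmopt-mon}).
3. Since they are geodesics, each component is caught between a semi-infinite geodesic and its own continuation. Take $t^*$ to be the last time at which $\pi^\theta_{x,L}$ and $\pi^\theta_{x,R}$ meet, which is at most $r_0$. Then use \ref{P: no-bubbles} to force $\tau^{(t)}_i=\pi^\theta_{x,\cdot}$ on $[t^*,t]$.

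I expect the main obstacle to be making $t^*$ uniform in $t$. The worry is that the point where $\tau^{(t)}_1$ joins $\pi^\theta_{x,L}$ could drift towards $\infty$ as $t$ grows. To rule this out, I would compare $\tau^{(t)}$ with $\tau^{(t')}$ for $t<t'$. Restricting $\tau^{(t')}$ to $[0,t]$ gives a $2$-optimizer from $(x^2,0)$ to $(\tau^{(t')}(t),t)$. If $\tau^{(t')}(t)=(\pi^\theta_{x,L}(t),\pi^\theta_{x,R}(t))$, then uniqueness of rightmost optimizers makes the join times consistent. The remaining issue is to show that the join time of $\tau^{(t')}$ is at most $r_0$ for every $t'$. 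For that, step 1 only used disjointness of the semi-infinite geodesics after $r_0$ together with planarity, and that input does not depend on $t'$. The coalescent scaling property (\ref{P: coal-semi-geo-bound}) may be needed to rule out exotic behaviour of $\tau_1^{(t')}$ far to the left, where it could otherwise avoid $\pi^\theta_{x,L}$.

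With $t^*$ uniform, the restrictions $\tau^{(t)}|_{[0,t^*]}$ form a precompact family by \ref{P: precomp}. Take a subsequential limit and concatenate it with $(\pi^\theta_{x,L},\pi^\theta_{x,R})|_{[t^*,\infty)}$. Every finite restriction of the result is a limit of optimizers, and optimality passes to the limit by continuity \ref{P: cont}, so this defines $\tau_x^\theta$.

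For the "furthermore" statement, let $\tau_x$ be any semi-infinite $2$-optimizer agreeing with the geodesic pair after some time $t'$. Then $\tau_x|_{[0,t]}$ is a $2$-optimizer with the same anchored endpoints for every $t\ge t'$, so step 1 applies to it directly and gives agreement from $t^*$ on.

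For uniformity over $x\in[a,b]$, use monotonicity of semi-infinite geodesics in $x$ (Theorem \ref{T: semi-inf-geos}.3) and the coalescence in Theorem \ref{T: semi-inf-geos}.2. All $\pi^\theta_{x,L}$ with $x\in[a,b]$ lie between $\pi^\theta_{a,L}$ and $\pi^\theta_{b,L}$, which coalesce by some finite time, and the same holds for the rightmost geodesics. So a single $r_0$ works for all $x\in[a,b]$, and with it a single $t^*$.
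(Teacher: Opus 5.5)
Steps 1 and 2 are essentially fine. Step 3 does not follow, however, and it carries the whole content of the proposition. After Step 2 you only know that $\tau^{(t)}_1|_{[r_0,t]}$ is a geodesic lying weakly left of $\pi^\theta_{x,L}$ and ending on it at time $t$. Nothing in Steps 1--2 or \ref{P: no-bubbles} stops it from staying strictly to the left until some late time $j\in(r_0,t]$ and merging only then. That is coalescence, not a bubble.

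In fact your choice of $t^*$ as the last meeting time $\hat t$ of $\pi^\theta_{x,L}$ and $\pi^\theta_{x,R}$ fails in exactly the nontrivial cases \textup{IIa}$^\infty$ and \textup{III}$^\infty$, where $\hat t>0$. (When $\hat t=0$ the geodesic pair is itself an optimizer and there is nothing to prove.) At time $\hat t$ the two geodesics coincide, whereas $\tau^{(t)}_1<\tau^{(t)}_2$ on $(0,t)$, so $\tau^{(t)}$ cannot equal the pair there.

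Your uniformity paragraph correctly isolates the real issue, namely a $t$-independent bound on the join time. It then settles it by appeal to Step 1, but Step 1 only gives the inequalities $\tau_1\le\pi^\theta_{x,L}$ and $\tau_2\ge\pi^\theta_{x,R}$, not a bounded join time. The ``furthermore'' and the uniformity over $[a,b]$ are both derived from Step 3, so they inherit the gap.

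What is missing is a barrier that traps each component of the optimizer by a $t$-independent time, and this is exactly where \ref{P: coal-semi-geo-bound} is needed. The paper chooses $y_1<x<y_2$ so that $\pi^\theta_{y_1,L}$ is disjoint from $\pi^\theta_{x,L}$, and $\pi^\theta_{y_2,R}$ from $\pi^\theta_{x,R}$, on $[0,\hat t]$. It lets $t^*$ be the later of the two coalescence times, so $t^*>\hat t$ and $t^*$ does not depend on $t$.

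The argument for $\tau_1$ then runs as follows:
\begin{enumerate}
\item Just below $\hat t$ the network is a single path. An interlacing argument as in Lemma \ref{L: geo-opt-ordering} sandwiches $\tau_1$ between $\pi^\theta_{y_1,L}$ and $\pi^\theta_{x,L}$ there.
\item These two geodesics have met by time $t^*$, so $\tau_1$ must touch one of them at some $r_1\in[\hat t,t^*]$.
\item Rerouting $\tau_1$ along that geodesic on $[r_1,t]$ gives an admissible pair that is no shorter. Optimality together with \ref{P: no-bubbles} then forces $\tau_1$ to coincide with that geodesic on $[r_1,t]$, hence with $\pi^\theta_{x,L}$ on $[t^*,t]$.
\end{enumerate}
The argument for $\tau_2$ is symmetric.

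This holds for every optimizer to the anchored endpoints at every $t>t^*$. It therefore gives existence directly: take the rightmost optimizer to the pair at time $t^*$ and concatenate it with the two geodesics, with no compactness argument needed. It gives the ``furthermore'' at the same time.
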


\begin{proof}

    Since $\theta \in \Xi$, there is a time $\hat t$ such that $\pi^\theta_{x,L}(t) \neq \pi^\theta_{x,R}(t)$ for all $t > \hat t$. We assume $\hat t$ is the smallest such time. If $\hat t = 0$, meaning we are looking at a network of type $\textup{IV}^\infty, \textup{Va}^\infty$ or $\textup{Vb}^\infty$, then $(\pi_{x,L}^\infty, \pi_{x,R}^\infty)$ is such a semi-infinite 2-optimizer. So, going forward, assume $\hat t > 0$. By \ref{P: coal-semi-geo-bound}, we have a $y_1 < x$ and a $y_2 > x$ such that
    \begin{align}
       \label{E: sem-inf-opt-spaced-out} \pi^{\theta}_{y_1,L}(t) \neq \pi^{\theta}_{x,L}(t) \text{ and }  \pi^{\theta}_{y_2,R}(t) \neq \pi^{\theta}_{x,R}(t) \text{ for all } t\in [0,\hat t].
    \end{align}
     Now define the following:
    \begin{align*}
        t^* := \max \Big(\inf \{t \in [0,\infty): \pi^\theta_{y_1,L}(t) = \pi^\theta_{x,L}(t)\}  , \inf \{t \in [0,\infty): \pi^\theta_{y_2,R}(t) = \pi^\theta_{x,R}(t)\}        \Big).
    \end{align*}

    Because leftmost and rightmost semi-infinite geodesics coalesce in every direction, $t^* < \infty$ (\ref{P: semi-inf-prop}). Further, by $\eqref{E: sem-inf-opt-spaced-out}$, $t^* > \hat t$. Now we consider the following pair of paths:
    \begin{align*}
       \tau^{\theta}_{x} =  \tau_{\left(x^2,0; (\pi_{x,L}^\theta(t^*),\pi_{x,R}^\theta(t^*)), t^*\right),R} \oplus (\pi^\theta_{x,L}, \pi^{\theta}_{x,R})\big|_{[t^*,\infty)}.
    \end{align*}
    In words, $\tau_x^\theta$ follows the rightmost 2-optimizer to $(\pi^\theta_{x,L}(t^*), \pi^{\theta}_{x,R}(t^*)),t^*)$, then follows the respective semi-infinite geodesics after. We claim that $\tau^{\theta}_{x}$ is a semi-infinite 2-optimizer. It is clear that $\tau^{\theta}_{x}\big|_{[s,t]}$ is a 2-optimizer for $t \in [0, t^*]$, it remains to be shown for $t > t^*$. For this, it is enough to show that for all $t > t^*$, there is a $2$-optimizer $\tau$ from $(x^2, 0)$ to $(\pi^\theta_{x,L}(t), \pi^{\theta}_{x,R}(t)),t)$ with $\tau|_{[t^*, t]} = \tau^\theta_x|_{[t^*, t]}$.
    
    \begin{figure}
    \centering

\begin{tikzpicture}[transform shape = false, scale = 3]

    \draw[line width = 0.5mm, dashed, green] plot [smooth, tension=1.1] coordinates {(0,0) (0.4, 0.5)(0.565,1.1)};
    \draw[line width = 0.5mm, dashed, green] plot [smooth, tension=1.1] coordinates {(0,0) (-0.1, 0.6)(-0.31,1.3)};

    \draw[->, line width = 0.5mm] plot [smooth, tension=1.1] coordinates {(0,1) (0.35, 1.5)(0.5,2)};
    \draw[->, line width = 0.5mm] plot [smooth, tension=1.1] coordinates {(0,1) (-0.35, 1.5)(-0.5,2)};
    \draw[line width = 0.5mm] plot [smooth, tension=1.1] coordinates {(0,0) (0.05, 0.5)(0,1)};
    \filldraw[black] (0,0) circle (1pt);
    \node at (0,-0.15) {$x$};

    \draw[dashed, line width =0.3mm] (-1.3,0) -- (1.3,0);
    \node at (1.35,0) {$0$};
    \draw[dashed, line width =0.3mm] (-1.3,1) -- (1.3,1);
    \node at (1.35,1) {$\hat t$};
    \draw[dashed, line width =0.3mm] (-1.3,1.5) -- (1.3,1.5);
    \node at (1.35,1.5) {$t^*$};

    \draw[line width = 0.5mm] plot [smooth, tension=1.1] coordinates {(-1,0) (-0.5, 0.8)(-0.3,1.4)};
    \filldraw[black] (-1,0) circle (1pt);
    \node at (-1,-0.15) {$y_1$};

    \draw[line width = 0.5mm] plot [smooth, tension=1.1] coordinates {(1,0) (0.7, 0.8)(0.35,1.5)};
    \filldraw[black] (1,0) circle (1pt);
    \node at (1,-0.15) {$y_2$};

    \begin{scope}
        \clip(0,1.497) rectangle (2,2);
        \draw[->, line width = 0.5mm, green] plot [smooth, tension=1.1] coordinates {(0,1) (0.35, 1.5)(0.5,2)};
    \end{scope}

      \begin{scope}
        \clip(0,1.07) rectangle (2,2);
        \draw[line width = 0.5mm, green, trim left = 0.5] plot [smooth, tension=1.1, domain = 0:0.8] coordinates {(1,0) (0.7, 0.8)(0.35,1.5)};
    \end{scope}

    \begin{scope}
        \clip(0,1.26) rectangle (-2,2);
        
      \draw[line width = 0.5mm, green] plot [smooth, tension=1.1] coordinates {(-1,0) (-0.5, 0.8)(-0.3,1.4)};
    \end{scope}

    \begin{scope}
        \clip(0,1.4) rectangle (-2,2);
        \draw[->, line width = 0.5mm, green] plot [smooth, tension=1.1] coordinates {(0,1) (-0.35, 1.5)(-0.5,2)};
    \end{scope}

\end{tikzpicture}

    \caption{Illustration of the proof of Proposition \ref{P: opt-that-uses-disj-geos}. The green dashed lines indicate a 2-optimizer which eventually uses the leftmost and rightmost semi-infinite geodesics.}
    \label{fig:opt-use-semi-inf-geo-in-Xi}
\end{figure}
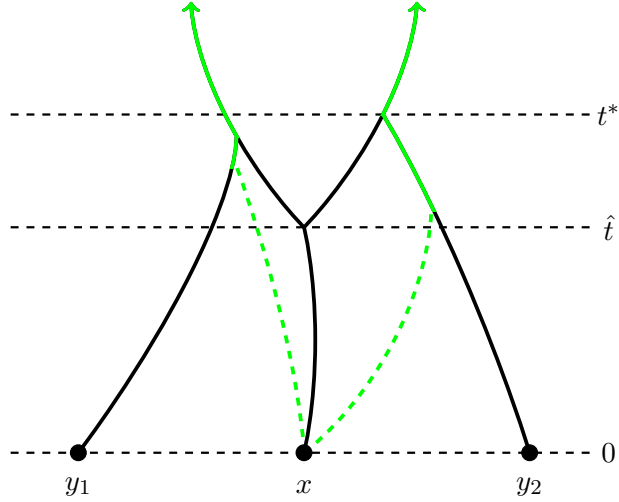

Let $\tau$ be any optimizer from $(x^2,0)$ to  $(\pi_{x,L}^\theta(t), \pi_{x,R}^\theta(t)),t)$. We start by showing there is a time $r_1 \in [\hat t,t^*]$ such that $\tau_1(r_1) \in \{ \pi_{y_1,L}^{\theta}(r_1),\pi_{x,L}^{\theta}(r_1)\}$. First, for $r < \hat t$ close enough to $\hat t$, $\pi^\theta_{x,L}(r) = \pi^{\theta}_{x,R}(r)$ (note that we cannot say this is true for all $r < \hat t$ because of network type $\textup{III}^\infty$). Now, using the same strategy as the proof of Lemma \ref{L: geo-opt-ordering}, we have that 
    \begin{align*}
       \pi^\theta_{y_1, L} (r) \leq \tau_1(r) \leq \pi^\theta_{x,L}(r).
    \end{align*}
    Using continuity, we get that
    \begin{align}
        \nonumber \pi^\theta_{y_1, L} (\hat t) &\leq \tau_1(\hat t) \leq \pi^\theta_{x,L}(\hat t).
    \end{align}
    Using this with the fact that $\pi^\theta_{y_1, L}(t^*) = \pi^\theta_{x,L}(t^*)$ gives the existence of such an $r_1$. 
    The same argument gives an $r_2 \in [\hat t,t^*]$ such that $\tau_2(r_2) \in \{\pi_{y_2,R}^{\theta}(r_2), \pi_{x,R}^{\theta}(r_2)\}$.
    \\\\
    Without loss of generality, assume that $\tau_1(r_1) = \pi_{y_1,L}^{\theta}(r_1)$ and $\tau_2(r_2) = \pi_{y_1,R}^{\theta}(r_2)$. This gives that:
    \begin{align}
    \label{E: cand-opt}\begin{split}
        \|\tau_1 \big|_{[0,r_1]} \oplus \pi^\theta_{y_1,L} \big|_{[r_1,t]}\|_\L &\geq \|\tau_1\|_\L, \\
         \|\tau_2 \big|_{[0,r_2]} \oplus \pi^\theta_{y_2,R} \big|_{[r_2,t]}\|_\L &\geq \|\tau_2\|_\L.
    \end{split}
    \end{align}
      We also see that $\tilde \tau =(\tau_1 \big|_{[0,r_1]} \oplus \pi^\theta_{y_1,L} \big|_{[r_1,t]}, \tau_2 \big|_{[0,r_2]} \oplus \pi^\theta_{y_2,R} \big|_{[r_2,t]})$ is a candidate 2-optimizer from $(x^2,0)$ to $\left((\pi^{\theta}_{x,L}(t^*),\pi^{\theta}_{x,R}(t^*)), t^*\right)$. Therefore $\tilde \tau$ is an optimizer and the inequalities \eqref{E: cand-opt} are actually equalities. Finally, the geodesics $\pi^\theta_{y_1,L} \big|_{[r_1,t]}, \pi^\theta_{y_1,R} \big|_{[r_1,t]}$ are the unique geodesics connecting up their endpoints by \ref{P: no-bubbles} (no geodesic bubbles), so the only way for the inequalities \eqref{E: cand-opt} to be equalities is if $\tau = \tilde \tau$. Observing that $\tau\big|_{[t^*, t]} = \tilde \tau\big|_{[t^*, t]} = \tau^\theta_x\big|_{[t^*, t]}$ then gives the result.
    
    For the `furthermore', observe that if $\tau'$ were another semi-infinite optimizer in direction $\theta$ satisfying $\tau'\big|_{[t',\infty)} = (\pi^\theta_{x,L}, \pi_{x,R}^\theta)\big|_{[t',\infty)}$, then for $t > t' \wedge t^*$, in the above proof we could take $\tau = \tau'\big|_{[0, t]}$. This gives that $\tau'\big|_{[t^*, t]} = \tau^\theta_x\big|_{[t^*, t]}$. Taking $t \to \infty$ gives the result.

    For the `finally', we can take $t^*_a$ to be the $t^*$ value for $x = a$, similarly for $t^*_b$. Then let $T$ be the earliest time such that 
    \begin{equation*}
         \pi_{a,\square}^\theta = \pi_{b,\square}^\theta,
    \end{equation*}
    for $\square \in \{L,R\}$. Taking 
    $$
        t^* = \max(t^*_a,t^*_b, T)
    $$
    finishes things by monotonicity and \ref{P: semi-inf-prop}. 
    
\end{proof}

The above proposition allows us to make a consistent choice of a semi-infinite optimizer for $\theta \in \Xi$.
Note that we could use the right-most optimizer, but a priori there is no guarantee that it eventually uses both sides of the non-unique geodesics (it should be the case, but we do not prove it). We can now show that the Busemann gap function is well-defined.

\begin{prop} \label{P: busemann-gap-form}
For $\theta \in \Xi$, recall from \eqref{E:Gtheta} that
\begin{align}
    \label{e: buseman-gap-function} G^{\theta}(x) &:= \lim_{t\to \infty} \L(x,0; \pi_{x,L}^\theta(t),t) +\L(x,0; \pi_{x,R}^\theta(t),t) - \L(x^2,0; (\pi_{x,L}^\theta(t), \pi_{x,R}^\theta(t)),t). 
\end{align}
Then on $\Omega^\infty$, for all $\theta\in \Xi, x\in\R$ the limit in \eqref{e: buseman-gap-function} exists, and there exists a $T_x > 0$ such that for any $t \ge T_x$:
    $$
        G^\theta(x) = \|\pi_{x,L}^\theta\big|_{[0,t]}\|_\L + \|\pi_{x,R}^\theta\big|_{[0,t]}\|_\L - \|\tau_{x}^\theta\big|_{[0,t]}\|_\L.
    $$
\end{prop}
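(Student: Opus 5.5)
The plan is to show that the expression under the limit in \eqref{e: buseman-gap-function} is eventually constant in $t$, and that the constant equals the stated formula. Fix $\theta \in \Xi$ and $x \in \R$. Let $\tau_x^\theta$ and $t^*$ be as in Proposition \ref{P: opt-that-uses-disj-geos}, and set $T_x := t^*$.

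For $t \ge T_x$, each of the three terms identifies as follows:
\begin{itemize}
\item Since $\pi^\theta_{x,L}|_{[0,t]}$ is a geodesic, $\L(x,0;\pi^\theta_{x,L}(t),t) = \|\pi^\theta_{x,L}|_{[0,t]}\|_\L$, and the same holds for the rightmost geodesic.
\item By Proposition \ref{P: opt-that-uses-disj-geos}, $\tau_x^\theta$ is a semi-infinite $2$-optimizer. Hence $\tau_x^\theta|_{[0,t]}$ is a $2$-optimizer from $(x^2,0)$ to its endpoint at time $t$.
\item Since $t \ge t^*$, we have $\tau_x^\theta(t) = (\pi^\theta_{x,L}(t), \pi^\theta_{x,R}(t))$.
\end{itemize}
Together these give
$$
\L(x^2,0;(\pi^\theta_{x,L}(t),\pi^\theta_{x,R}(t)),t) = \|\tau_x^\theta|_{[0,t]}\|_\L .
$$
So for every $t \ge T_x$ the expression under the limit is exactly $\|\pi_{x,L}^\theta|_{[0,t]}\|_\L + \|\pi_{x,R}^\theta|_{[0,t]}\|_\L - \|\tau_x^\theta|_{[0,t]}\|_\L$.

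Next we show this quantity does not depend on $t \ge T_x$. Take $T_x \le t < t'$. Path length is additive under concatenation, since the length is defined as an infimum over partitions and the reverse triangle inequality makes refinement decrease the sum. On $[t,t']$ the two components of $\tau_x^\theta$ coincide with $\pi^\theta_{x,L}$ and $\pi^\theta_{x,R}$. Therefore the increments $\|\pi^\theta_{x,L}|_{[t,t']}\|_\L + \|\pi^\theta_{x,R}|_{[t,t']}\|_\L$ added to the first two terms cancel exactly with the increment $\|\tau^\theta_{x,1}|_{[t,t']}\|_\L + \|\tau^\theta_{x,2}|_{[t,t']}\|_\L$ added to the subtracted term. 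The expression is therefore constant on $[T_x,\infty)$, so the limit exists and equals this constant, which proves both claims.

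The only step needing care is the second bullet: that a semi-infinite $2$-optimizer restricted to $[0,t]$ attains the extended landscape value. This holds by definition, since a semi-infinite $2$-optimizer has every restriction a $2$-optimizer. All finiteness is automatic, because lengths of geodesics and optimizers are finite landscape values. One might also want the result to be independent of the choice of $\tau_x^\theta$. The `furthermore' clause of Proposition \ref{P: opt-that-uses-disj-geos} shows any admissible optimizer agrees with $\pi^\theta_{x,L}, \pi^\theta_{x,R}$ after $t^*$. Independence is in any case immediate, because the left side of \eqref{e: buseman-gap-function} does not involve $\tau$.
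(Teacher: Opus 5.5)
Your proof is correct and is the paper's argument: the paper likewise cites Proposition~\ref{P: opt-that-uses-disj-geos} and takes $T_x = t^*$. You have written out the routine steps the paper leaves implicit, namely identifying the three terms for $t \ge t^*$ and checking that the increments on $[t,t']$ cancel by additivity of length.
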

\begin{proof}
    The limit existing follows by Proposition \ref{P: opt-that-uses-disj-geos}, and $T_x$ can be taken to be $t^*$ from that same proposition.
\end{proof}
The above form should be reminiscent of the our definition of the gap sheet earlier. Indeed, this form will allow us to apply Theorem \ref{T: fin-time-thm} to classify semi-infinite networks and conclude Theorem \ref{T: semi-inf-global-mins-close-by-intro}.

\subsection{The proof of Theorem \ref{T: semi-inf-global-mins-close-by-intro}}

We start with two easy lemmas, which essentially translate overlap properties of finite geodesics to the semi-infinite setting.

\begin{lemma} \label{L: rm-semi-inf-geo-coal}
    On $\Omega^\infty$, for all $(x,s)\in \R^2, \theta \in \R, \de > 0$, there exists an $\ep >0$ such that if $0<z - x < \ep$, $\Ov(\pi_{(z,s),R}^\theta, \pi_{(x,s),R}^\theta) \supset [s+\de,\infty)$. A similar statement holds for leftmost semi-infinite geodesics when $-\ep < z-x < 0$.
\end{lemma}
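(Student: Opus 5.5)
The plan is to reduce to finite-time overlap convergence (\ref{P: ov-rm-opt}) at a single large time and then propagate beyond that time using coalescence and monotonicity (\ref{P: semi-inf-prop}). We treat rightmost geodesics with $z \downarrow x$; the leftmost case is symmetric.

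First, fix $(x,s)$, $\theta$, $\de>0$. Pick some $z_0 > x$. By Theorem \ref{T: semi-inf-geos}.2, $\pi^\theta_{(z_0,s),R}$ and $\pi^\theta_{(x,s),R}$ eventually coalesce; let $T$ be a time after which they agree, with $T > s+\de$. By monotonicity (Theorem \ref{T: semi-inf-geos}.3), for every $z\in(x,z_0)$ we have
$$
\pi^\theta_{(x,s),R}\le \pi^\theta_{(z,s),R}\le \pi^\theta_{(z_0,s),R}.
$$
Hence all three paths agree on $[T,\infty)$. This already gives overlap on $[T,\infty)$ uniformly in $z\in(x,z_0)$.

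Second, we handle the compact window $[s+\de,T]$. Set $y_z := \pi^\theta_{(z,s),R}(T)$ and $y:=\pi^\theta_{(x,s),R}(T)$. By the first step, $y_z = y$ for all $z \in (x, z_0)$. Moreover, $\pi^\theta_{(z,s),R}|_{[s,T]}$ is the rightmost geodesic from $(z,s)$ to $(y,T)$. Indeed, if some geodesic $\pi'$ between these endpoints lay strictly to the right somewhere, then $\pi'\oplus\pi^\theta_{(z,s),R}|_{[T,\infty)}$ would be a semi-infinite geodesic in direction $\theta$, since concatenating a geodesic to $(y,T)$ with the tail preserves geodesicity by metric composition. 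This would contradict $\pi^\theta_{(z,s),R}$ being rightmost. The same argument applies to $x$.

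Now apply the last part of Lemma \ref{L: geo-opt-rm-coal} (\ref{P: ov-rm-opt}) with $k=1$. Fix the endpoint at $(y,T)$ and let the starting point $z\downarrow x$; this is the time-reversed version of the stated case in which the start is fixed, and it follows by the same argument or by flip symmetry. It gives
$$
\Ov\bigl(\pi_{(z,s;y,T),R},\pi_{(x,s;y,T),R}\bigr)=[r_z,T]\quad\text{with } r_z\to s.
$$
Choosing $\ep$ small so that $r_z\le s+\de$ yields overlap on $[s+\de,T]$. Combined with the first step, the overlap contains $[s+\de,\infty)$.

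The main obstacle is the identification in the second step that restrictions of rightmost semi-infinite geodesics are rightmost finite geodesics, together with invoking the overlap lemma with the fixed coordinate at the end rather than the start. If the time-reversed form needs justification, we would instead use the general first part of Lemma \ref{L: geo-opt-rm-coal}, which gives overlap on an interval with endpoints tending to $s$ and $T$, and then patch the top using the agreement on $[T,\infty)$.
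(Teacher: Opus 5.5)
Your proof is correct and follows the paper's argument. Eventual coalescence of $\pi^\theta_{(x,s),R}$ with $\pi^\theta_{(z_0,s),R}$, together with the monotone sandwich, gives agreement on $[T,\infty)$ uniformly in $z\in(x,z_0)$, and overlap convergence of rightmost geodesics (\ref{P: ov-rm-opt}) then handles the compact window. Your added checks are details the paper leaves implicit: that the restriction to $[s,T]$ of a rightmost semi-infinite geodesic is the rightmost finite geodesic to $(y,T)$, and that you need the fixed-endpoint, varying-start form of \ref{P: ov-rm-opt}, which follows from the first part of Lemma \ref{L: geo-opt-rm-coal} by the same monotone sandwich used in its proof.
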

\begin{proof}
    To simplify notation in the proof, we write $\pi_z := \pi_{z,s,R}^\theta$. By \ref{P: semi-inf-prop} we know that $\pi_x$ coalesces with $\pi_{z'}$ eventually for any $z' > x$. That is, there is some $T > 0$ such that
    $$
        \Ov(\pi_x, \pi_{z'}) \supset [T,\infty).
    $$
    Next, by monotonicity, for all $z\in (x,z')$, we have that $\pi_x\big|_{[T, \infty)} = \pi_z\big|_{[T, \infty)} = \pi_{z'}\big|_{[T, \infty)}$. Therefore the lemma follows from convergence in overlap of the rightmost geodesics $\pi_z|_{[0, T]}$ to $\pi_x|_{[0, T]}$ (\ref{P: ov-rm-opt}).
\end{proof}

\begin{lemma} \label{L: excp-dir-coal-geos}
    On $\Omega^\infty$, for any $[a, b] \subset \R, \theta \in \Xi$, there exists a $T>0$ such that for any $x \in [a, b]$, and any semi-infinite geodesic  $\pi_{x}^\theta$ for $(x, 0; \theta)$, we have
    \begin{align*}
        \pi_{x}^\theta \big|_{[T,\infty)} \in \left\{\pi_{0,L}^\theta\big|_{[T,\infty)}, \pi_{0,R}^\theta\big|_{[T,\infty)} \right\}.
    \end{align*}
    Additionally, for large enough $M > T$, there exists an $N_M > T$ such that for any $x \in [a, b]$, $y\in [ \pi_{0,L}^\theta(M), \pi_{0,R}^\theta(M)]$, and any geodesic $\pi_{x,(y,M)}$ for $(x,0;y,M)$, we have that
    \begin{align}
        \label{E: y_M equal-one-of-geos}\pi_{x, (y,M)} \big|_{[T,N_M]} \in \left\{\pi_{0,L}^\theta\big|_{[T,N_M]},\pi_{0,R}^\theta\big|_{[T,N_M]} \right\}.
    \end{align}
    Moreover, $N_M \to \infty$ as $M\to \infty$.
\end{lemma}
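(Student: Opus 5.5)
For the first part, I will sandwich every geodesic from $[a,b]$ between the extreme geodesics from $a$ and from $b$, which coalesce with those from $0$. By \ref{P: semi-inf-prop} (Theorem \ref{T: semi-inf-geos}.2), $\pi^\theta_{a,L}$ and $\pi^\theta_{b,L}$ each eventually coalesce with $\pi^\theta_{0,L}$, and similarly for the rightmost geodesics. Let $T$ be larger than all four coalescence times. Since $\theta\in\Xi$, Theorem \ref{T: semi-inf-geos}.6 lets me also take $T$ large enough that $\pi^\theta_{0,L}<\pi^\theta_{0,R}$ on $[T,\infty)$.

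For $x\in[a,b]$, monotonicity (Theorem \ref{T: semi-inf-geos}.3) gives $\pi^\theta_{a,L}\le\pi^\theta_{x,L}\le\pi^\theta_{b,L}$. Hence $\pi^\theta_{x,L}=\pi^\theta_{0,L}$ on $[T,\infty)$, and likewise $\pi^\theta_{x,R}=\pi^\theta_{0,R}$ there. An arbitrary semi-infinite geodesic $\pi^\theta_x$ satisfies $\pi^\theta_{x,L}\le\pi^\theta_x\le\pi^\theta_{x,R}$ by Theorem \ref{T: semi-inf-geos}.1. By Theorem \ref{T: semi-inf-geos}.5 it eventually coalesces with one of the two extremes, say $\pi^\theta_{0,L}$ after some time $T'$.

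It remains to show that $\pi^\theta_x$ already equals $\pi^\theta_{0,L}$ on $[T,T']$. If $\pi^\theta_x$ left $\pi^\theta_{0,L}$ somewhere in $(T,T')$, there would be a last time $r\le T$ at which it lies on $\pi^\theta_{0,L}$ or $\pi^\theta_{0,R}$. I expect such an $r$ to exist because $\pi^\theta_x(T)$ is squeezed between two coinciding values. This would create two distinct geodesics between $(\pi^\theta_x(r),r)$ and a later common point, that is, a geodesic bubble, which contradicts \ref{P: no-bubbles}. To get a closed interval I would apply the no-bubbles property after shifting $T$ up by one.

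For the second part, I will use compactness and overlap convergence. Fix $M>T+1$ and suppose the claim fails for every $N$. Then there are sequences $x_n\in[a,b]$, $y_n\in[\pi^\theta_{0,L}(M),\pi^\theta_{0,R}(M)]$, geodesics $\pi_n$ for $(x_n,0;y_n,M)$, and times $N_n\downarrow T$ such that $\pi_n|_{[T,N_n]}$ is neither extreme geodesic. By \ref{P: precomp} (Lemmas \ref{L:geo-precompact} and \ref{L: haus-iff-ov}), a subsequence converges in overlap to a geodesic $\pi$ for $(x,0;y,M)$, with $y$ in the same interval.

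I then need a finite-time analogue of the first part for $\pi$. I will use planarity and no bubbles to show that any geodesic from $[a,b]$ to such a $y$ is sandwiched on $[T,M-\eta]$ between two finite geodesics that coincide with $\pi^\theta_{0,L}$ or $\pi^\theta_{0,R}$. These comparison geodesics are the restrictions of $\pi^\theta_{a,L}$ and $\pi^\theta_{b,R}$ to $[0,M]$, which are themselves geodesics ending at the endpoints of the $y$-interval. Ordering via \ref{P: rmlmopt-mon} then forces the geodesic onto one of the two branches on $[T,t]$ for some $t>T$. Overlap convergence transfers this to $\pi_n$ for large $n$, which gives the contradiction. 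This produces some $N_M>T$.

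I expect the main obstacle to be the final claim, $N_M\to\infty$. My plan is to prove the stronger statement that one can take $N_M\ge M-c_M$ with $c_M\le M/2$, say. I will argue by contradiction along a sequence $M_k\to\infty$ with bounded $N_{M_k}\le N$. Taking overlap limits of the bad geodesics $\pi_{x_k,(y_k,M_k)}$ would produce, via Lemma \ref{L:geo-precompact} and a diagonal argument, a semi-infinite geodesic from some $x\in[a,b]$. Its direction should be $\theta$, because $y_k$ lies between $\pi^\theta_{0,L}(M_k)$ and $\pi^\theta_{0,R}(M_k)$. This limit geodesic would violate the first part on $[T,N]$. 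The delicate points are controlling the direction of the limit and ensuring that the limit is not a non-geodesic concatenation.
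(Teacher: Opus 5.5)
There are two genuine gaps.

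\textbf{The first part: bridge geodesics.} Your final step does not handle bridge geodesics. In a network of type $\textup{Va}^\infty$ or $\textup{Vb}^\infty$, the middle geodesic leaves one extreme geodesic and merges into the \emph{other}. Leaving $\pi^\theta_{0,L}$ and later joining $\pi^\theta_{0,R}$ produces no bubble. Your justification that $\pi^\theta_x(T)$ is ``squeezed between two coinciding values'' is false: you chose $T$ precisely so that $\pi^\theta_{0,L}(T)<\pi^\theta_{0,R}(T)$. If $T$ falls inside the bridge interval, $\pi^\theta_x(T)$ sits strictly between them and the conclusion fails. Your $T$ (coalescence times plus the separation time) gives no control over the bridges of $a$ and $b$ themselves. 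The paper handles a single point via the classification of Lemma \ref{L:semi7}, choosing $T(a)$, $T(b)$ past any bridge, and sets $T=\max(T(a),T(b))$. Intermediate $x$ still needs a planarity argument, not just ordering of extreme geodesics. For instance, suppose $x\in(a,b)$ has a bridge $\mu$ from $\pi^\theta_{x,L}$ merging into $\pi^\theta_{x,R}$ at time $r'$, and $\pi^\theta_{a,R}$ merged into $\pi^\theta_{x,R}$ at some earlier time $\rho<r'$. Then $\pi^\theta_{a,R}$ starts left of $\mu$ and is strictly right of it at time $\rho$, so it meets $\mu$ at some $\sigma\in(0,\rho)$. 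Then $\pi^\theta_{a,R}|_{[\sigma,r']}$ and $\mu|_{[\sigma,r']}$ would form a bubble, contradicting \ref{P: no-bubbles}. Hence every bridge of an interior $x$ ends before time $T$.

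\textbf{The second part: the fixed-$M$ step.} This step is not valid, and its conclusion is false for moderate $M$; that is why the lemma only claims it for large $M$. Sandwiching a geodesic between $\pi^\theta_{a,L}|_{[0,M]}$ and $\pi^\theta_{b,R}|_{[0,M]}$ only confines it to the region between the two branches. These branches are disjoint on $[T,M]$, so ordering cannot force the geodesic onto one of them. A geodesic to a point deep inside a wide gap $[\pi^\theta_{0,L}(M),\pi^\theta_{0,R}(M)]$ can leave its branch before time $T$. The only working mechanism is your last step: let $M\to\infty$ and extract a semi-infinite geodesic, whose direction is pinned to $\theta$ by the sandwich $\pi^\theta_{a,L}\le\pi\le\pi^\theta_{b,R}$. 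This is the paper's argument, but it should be run without presupposing that $N_M$ exists. Suppose that for some fixed $t\ge T$ there are geodesics, for a sequence $M_n\to\infty$, missing both branches at time $t$. Their overlap limit is then a semi-infinite geodesic in direction $\theta$ from some $x\in[a,b]$ that contradicts part one. This yields existence of $N_M$ for all large $M$ and $N_M\to\infty$ at once. To pass from the single time $N_M$ to the whole interval $[T,N_M]$, concatenate $\pi_{x,(y,M)}|_{[0,N_M]}$ with the branch it hits after $N_M$. This gives a semi-infinite geodesic in direction $\theta$, and part one applies again. Your proposed bound $N_M\ge M/2$ is not delivered by this argument and is not needed.
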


\begin{proof}
When $[a,b] = \{x\}$ is a single point, the first statement follows by examining the possible semi-infinite geodesic networks (Lemma \ref{L:semi7}) and using the eventual coalescence of the rightmost and leftmost semi-infinite geodesics $\pi_{0,L}^\theta, \pi_{0,R}^\theta$ with $\pi_{x,L}^\theta, \pi_{x,R}^\theta$ (Theorem \ref{T: semi-inf-geos}.2). Let $T(x)$ denote the time found in this way.
For the case of general $[a, b]$, setting $T = \max(T(a), T(b))$, the claim follows from monotonicity of geodesics.

For the second statement (together with the `Moreover'), we proceed by contradiction. If the claim is false, then we can find a fixed time $t \ge T$ and sequences $x_n \in [a, b], M_n \to \infty, y_n \in [\pi_{x,L}^\theta(M_n), \pi_{x,R}^\theta(M_n)]$ such that for all $n \in \N$,
    \begin{equation}
    \label{E:contra-bontra}
        \pi_{x_n , (y_n, M_n)}(t) \notin \{\pi^\theta_{0, L}(t), \pi^\theta_{0, R}(t)\}.
    \end{equation}
    By geodesic ordering, $\pi^\theta_{a, L}\big|_{[0, M_n]} \le \pi_{x_n, (y_n, M_n)} \le \pi^\theta_{b, R}\big|_{[0, M_n]}$, so by precompactness of geodesics  in overlap (\ref{P: precomp}) we can find a subsequence $I \subset \N$ such that $\{\pi_{x_n, (y_n, M_n)}, n \in I\}$ converges to a limiting semi-infinite geodesic $\pi$, in the sense of overlap convergence on every compact interval. Here a
    standard diagonalization argument allows us to pass from precompactness in overlap of finite geodesics to precompactness of semi-infinite geodesics in a convergence-on-compact topology. The limit $\pi$ is a semi-infinite geodesic for $(x, 0; \theta)$ for some $x \in [a, b]$, but by \eqref{E:contra-bontra} and overlap convergence, $\pi(t) \notin \{\pi^\theta_{x, L}(t), \pi^\theta_{x, R}(t)\}$. This contradicts the first part of the lemma. 
\end{proof}

We now begin the proof of Theorem \ref{T: semi-inf-global-mins-close-by-intro}. The next two propositions prove parts $4$, $5$, and $6$. We return to prove parts $1$, $2$, and $3$ after.

\begin{prop} \label{P: semi-inf-global-mins-close-by}
   On $\Omega^\infty$, for all $x\in \R,\theta \in \Xi$:
    \begin{itemize}
        \item [1.] If $(x,0;\theta)$ has network type $\textup{IV}^\infty$ or $\textup{Va}^\infty$, then $G^\theta(x) = 0$ (is a global minimum), and there exists a positive sequence $\ep_n \to 0^+$ such that $G^\theta(x+\ep_n) = 0$ for all $n$.
         \item [2.] If $(x,0;\theta)$ has network type $\textup{IV}^\infty$ or $\textup{Vb}^\infty$, then $G^\theta(x) = 0$, and there exists a negative sequence $-\ep_n \to 0^-$ such that $G^\theta(x-\ep_n) = 0$ for all $n$. 
    \end{itemize}

\end{prop}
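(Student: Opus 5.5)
The plan is to prove part 1 only, for types $\textup{IV}^\infty$ and $\textup{Va}^\infty$; part 2 is the mirror image, with left and right exchanged.

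\textbf{The value $G^\theta(x)=0$.} In both types the geodesics $\pi^\theta_{x,L}$ and $\pi^\theta_{x,R}$ are disjoint on $(0,\infty)$. Hence for every $t>0$ the pair $(\pi^\theta_{x,L},\pi^\theta_{x,R})|_{[0,t]}$ is an admissible disjoint pair, and the extended landscape term in \eqref{e: buseman-gap-function} equals the sum of the two geodesic lengths. So $G^\theta(x)=0$, and it is a global minimum since $G^\theta\ge 0$. The substance is producing $\ep_n\downarrow 0$ with $G^\theta(x+\ep_n)=0$. By Proposition \ref{P: busemann-gap-form} and the same reasoning, it suffices to find $z>x$ arbitrarily close to $x$ such that two semi-infinite geodesics from $(z,0)$ in direction $\theta$ are disjoint on $(0,\infty)$.

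\textbf{Reduction to a finite window.} I would mimic the proof of Proposition \ref{P:typeIV-has-close-zeroes}, in one spatial variable.
\begin{itemize}
\item Fix times $0<a<b$ and take $T$ from Lemma \ref{L: excp-dir-coal-geos} for a compact interval around $x$.
\item By precompactness (\ref{P: precomp}) and a diagonal argument, every sequence of semi-infinite geodesics from $z_n\downarrow x$ subconverges in overlap on compacts to a semi-infinite geodesic from $x$.
\item In types $\textup{IV}^\infty$ and $\textup{Va}^\infty$ this limit must be $\pi^\theta_{x,L}$ or $\pi^\theta_{x,R}$. The middle path of $\textup{Va}^\infty$ runs from the left geodesic to the right one. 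As in Lemma \ref{L: uses-l-or-r-bubble}, a limit along it from $z>x$ would force a crossing of $\pi^\theta_{x,R}$ before the branch time and hence a bubble, contradicting \ref{P: no-bubbles}.
\item Thus there is $\eta>0$ such that for $z\in[x,x+\eta]$ every semi-infinite geodesic from $z$ overlaps $\pi^\theta_{x,L}$ or $\pi^\theta_{x,R}$ on $[a,b]$.
\end{itemize}

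\textbf{The sets $\mathscr L^\infty,\mathscr R^\infty$.} Define $\mathscr L^\infty$ (respectively $\mathscr R^\infty$) as the set of $z\in[x,x+\eta]$ having some semi-infinite geodesic that overlaps $\pi^\theta_{x,L}$ (respectively $\pi^\theta_{x,R}$) on $[a,b]$. I expect the following four properties.
\begin{enumerate}
\item Both sets are closed, by precompactness.
\item They are monotone: $\mathscr L^\infty$ is a down-set and $\mathscr R^\infty$ an up-set, by geodesic ordering (\ref{P: semi-inf-prop}).
\item They cover $[x,x+\eta]$, and $x\in\mathscr L^\infty$.
\item Points $z>x$ close to $x$ lie in $\mathscr R^\infty$, by Lemma \ref{L: rm-semi-inf-geo-coal}.
\end{enumerate}
Points far enough right are outside $\mathscr L^\infty$ once $a$ is chosen late enough. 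This uses \ref{P: coal-semi-geo-bound}: leftmost geodesics from points to the right stay off $\pi^\theta_{x,L}$ up to a prescribed time.

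\textbf{The candidate point.} Set $z^*=\sup(\mathscr L^\infty\cap(x,x+\eta])$; if this set is empty, use the infimum of $\mathscr R^\infty\cap(x,x+\eta]$ instead. Closedness and covering give $z^*\in\mathscr L^\infty\cap\mathscr R^\infty$ with $z^*>x$. Shrinking $\eta$ (equivalently, letting $a\downarrow 0$ and $b\uparrow\infty$) should force $z^*\to x$.

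\textbf{The main obstacle.} The hard step is showing that the two geodesics witnessing $z^*\in\mathscr L^\infty\cap\mathscr R^\infty$ are disjoint, so that $G^\theta(z^*)=0$.
\begin{itemize}
\item \emph{Meeting after time $b$.} After $b$ the two paths follow $\pi^\theta_{x,L}$ and $\pi^\theta_{x,R}$, which are disjoint. By Lemma \ref{L: excp-dir-coal-geos} they stay on these, so a meeting after $b$ would create a bubble.
\item \emph{Meeting at some $r<a$.} Here the finite proof used a two-dimensional rectangle and uniqueness of rational geodesics. In one variable, my first attempt is as follows. By \ref{P: ov-rm-opt} and Lemma \ref{L: rm-semi-inf-geo-coal}, points $z$ slightly to one side of $z^*$ inherit both branches. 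That either contradicts the definition of $z^*$ or yields a whole interval of $z$ whose semi-infinite network has two branches meeting before $a$. Truncate at a large rational time $M$ from Lemma \ref{L: excp-dir-coal-geos} and restrict to rational $z$ in that interval. Both branches are then finite geodesics from a rational point whose behaviour on $[T,N_M]$ is pinned to $\pi^\theta_{0,L}$ or $\pi^\theta_{0,R}$.
\end{itemize}
I would derive a contradiction with \ref{P: rmlmopt-mon} or \ref{P: no-3-stars} at the rational time $0$, using Remark \ref{R:comments-for-semi-inf} to import the finite results at rational times. If this fails, the fallback is to apply Proposition \ref{P:typeIV-has-close-zeroes} directly to the finite networks $(x,0;y,M)$ with $y\in[\pi^\theta_{0,L}(M),\pi^\theta_{0,R}(M)]$, and pass $M\to\infty$.
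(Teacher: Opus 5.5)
There is a genuine gap. Your argument that $G^\theta(x)=0$ is fine, and so is the reduction to finding points $z\downarrow x$ with two disjoint semi-infinite geodesics. The main route, however, fails at the step you flag, and the failure is structural rather than technical.

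\textbf{Why the one-variable argument cannot close.} In Proposition \ref{P:typeIV-has-close-zeroes}, the case ``meeting before $1/4$'' is excluded by producing a two-dimensional rectangle of point pairs with non-unique geodesics. That contradicts uniqueness of geodesics between rational points (\ref{P: rmlmopt-mon}). In your one-variable semi-infinite setting there is nothing to contradict.
\begin{itemize}
\item Since $\theta\in\Xi$, \emph{every} $z$ has two eventually disjoint semi-infinite geodesics. Two branches that meet and later separate (types $\textup{IIa}^\infty$, $\textup{III}^\infty$) is an allowed configuration. So ``a whole interval of $z$ whose branches meet before $a$'' violates no property of $\Omega^\infty$.
\item After truncating at a rational time $M$, the two branches end at the distinct points $\pi^\theta_{0,L}(M)<\pi^\theta_{0,R}(M)$. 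Hence no finite geodesic is non-unique, and neither \ref{P: rmlmopt-mon} nor \ref{P: no-3-stars} is touched.
\item Your sets also degenerate. $\mathscr R^\infty$ is an up-set containing $(x,x+\ep)$, so it equals $[x,x+\eta]$; the ``infimum'' fallback therefore returns $x$. Consequently $\mathscr L^\infty\cap\mathscr R^\infty=\mathscr L^\infty=[x,z^*]$.
\item Membership in $[x,z^*]$ only says the two branches from $z$ lie on $\pi^\theta_{x,L}$ and $\pi^\theta_{x,R}$ during $[a,b]$. This does not single out zeros of $G^\theta$.
\item $z^*$ is merely where $\pi^\theta_{z,L}$ switches from following $\pi^\theta_{x,L}$ to following $\pi^\theta_{x,R}$ on $[a,b]$. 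Nothing forces its two witnesses to be disjoint, nor forces $z^*\to x$.
\end{itemize}
What is missing is a second free variable.

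\textbf{What the paper does instead.} The paper supplies that variable, essentially along the lines of your fallback, but your fallback omits the two steps that make it work.

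First, one needs a single rational $M$ and a specific endpoint at which the finite network has type IV or Va.
\begin{itemize}
\item Fix $z>x$ and let $T_L,T_R$ be the coalescence times of $\pi^\theta_{x,\square}$ with $\pi^\theta_{z,\square}$.
\item Take $M$ with $N_M>T^*=\max(T_L,T_R,T)$, using Lemma \ref{L: excp-dir-coal-geos}.
\item Let $y_M$ be the \emph{minimal} $y\in[\pi^\theta_{x,L}(M),\pi^\theta_{x,R}(M)]$ whose rightmost geodesic from $(x,0)$ follows $\pi^\theta_{x,R}$ on $[T^*,N_M]$.
\item Minimality, overlap convergence and \eqref{E: y_M equal-one-of-geos} show that the leftmost geodesic to $(y_M,M)$ follows $\pi^\theta_{x,L}$ and stays disjoint from the rightmost one before time $M$. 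So $(x,0;y_M,M)$ has type IV or Va.
\end{itemize}
For an arbitrary $y$ in that interval the finite network need not have this type, and then Proposition \ref{P:typeIV-has-close-zeroes} gives nothing.

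Second, apply Proposition \ref{P:typeIV-has-close-zeroes}.2 on $[0,M]$, with the overlap window chosen to contain $T_L,T_R$ (Remark \ref{R:comments-for-semi-inf}). This yields points $(x+\ep_n,0;y_M-\de_n,M)$ with disjoint leftmost and rightmost geodesics passing through $(\pi^\theta_{x,\square}(T_\square),T_\square)$. Since $x+\ep_n<z$, monotonicity shows that their restrictions to $[0,T_\square]$, spliced with $\pi^\theta_{x,\square}|_{[T_\square,\infty)}$, are exactly $\pi^\theta_{x+\ep_n,\square}$. These are therefore disjoint, giving $G^\theta(x+\ep_n)=0$.

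No limit $M\to\infty$ is needed, and taking one would lose control of the sequence $\ep_n$.
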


\begin{figure}
        \centering
        \begin{tikzpicture} [scale = 3]
                    \draw[->, line width = 0.5mm] plot [smooth, tension=1.1] coordinates {(0,0) (0.35, 0.8)(0.5,2)};
                    \draw[->, line width = 0.5mm] plot [smooth, tension=1.1] coordinates {(0,0) (-0.35, 0.8)(-0.5,2)};
                    \draw[line width=0.5mm, decoration={markings, mark=at position 0.5 with {\arrow{>}}}, postaction={decorate}] plot [smooth, tension=1.1] coordinates {(-0.29,0.6) (-0.1, 0.8) (0.2, 1) (0.43,1.2)};
                    \filldraw[black] (0,0) circle (1pt);
                    \node at (0,-0.15) {$x$};
                \draw[ line width = 0.5mm, blue] plot [smooth, tension=1.1] coordinates {(1,0) (0.6,0.25)(-0.39,1)};
                \draw[ line width = 0.5mm, blue] plot [smooth, tension=1.1] coordinates {(0.67,0.2) (0.5,0.6)(0.35,0.8)};

                \filldraw[black] (1,0) circle (1pt);
                \node at (1,-0.15) {$z$};

                \draw[dashed, line width = 0.3mm] plot [smooth, tension=1.1] coordinates {(-0.5,0.8) (0.5,0.8)};
                \node at (0.6,0.8) {$T_R$};

                \draw[dashed, line width = 0.3mm] plot [smooth, tension=1.1] coordinates {(-0.5,1) (0.5,1)};
                \node at (0.6,1) {$T_L$};
                
                \draw[dashed, line width = 0.3mm] plot [smooth, tension=1.1] coordinates {(-0.5,1.2) (0.5,1.2)};
                \node at (0.57,1.22) {$T^*$};

                \draw[ line width = 0.5mm, orange] plot [smooth, tension=1.1] coordinates {(-0.44,1.3) (-0.2, 1.7) (0,1.9)};
                \draw[line width = 0.5mm, orange] plot [smooth, tension=1.1] coordinates {(0.46,1.4) (0.2, 1.75) (0,1.9)};
                \node at (0,1.99) {$y_M$};
                \filldraw[black] (0,1.9) circle (1pt);


                \draw[ line width = 0.5mm, green] plot [smooth, tension=1.1] coordinates {(-0.2, 1.7) (-0.25,1.75) (-0.3,1.9)};
                 
                \draw[ line width = 0.5mm, green] plot [smooth, tension=1.1] coordinates {(0.23, 1.7) (0.1,1.77) (-0.3,1.9)};
                \node at (-0.3,2.12) {$y_M-\de_n$};
                \filldraw[black] (-0.3,1.9) circle (1pt);

                \draw[ line width = 0.5mm, green] plot [smooth, tension=1.1] coordinates {(0.2,0)(0,0.1)(-0.13,0.2)};
                \draw[ line width = 0.5mm, green] plot [smooth, tension=1.1] coordinates {(0.2,0)(0.18,0.1)(0.13,0.2)};

                \node at (0.3,-0.15) {$x+\ep_n$};
                \filldraw[black] (0.2,0) circle (1pt);

        \end{tikzpicture}   
        \caption{Illustrations of the proof of Proposition \ref{P: semi-inf-global-mins-close-by}. The leftmost and rightmost semi-infinite geodesics from $x+\ep_n$ follow the green lines until coalescing with the black semi-infinite geodesics from $x$.} 
        \label{fig:semi-inf-RL-typeIV}
    \end{figure}

\begin{proof}

    We prove the first case, as the argument for the second case is symmetric.  The fact that $G^\theta(x)= 0$ is clear, since using Proposition \ref{P: busemann-gap-form}, we have that 
    $$
        \|\pi_{x,L}^\theta \big|_{[0,T_x]}\| + \|\pi_{x,R}^\theta \big|_{[0,T_x]}\| - \|\tau_{x}^\theta \big|_{[0,T_x]}\| = 0.
    $$
    This is the case, because given that $(x,0;\theta)$ has network type $\textup{IV}^\infty$ or $\textup{Va}^\infty$, we have $\tau^\theta_x = (\pi_{x,L}^\theta, \pi_{x,R}^\theta)$. Now we take some $z > x$ and consider 
    \begin{align*}
        T_\square = \inf\{t > 0: \pi_{x,\square}^\theta(t) = \pi_{z,\square}^\theta(t)\},
    \end{align*}
    where $\square\in\{L,R\}$. This will be of use later when bounding leftmost and rightmost semi-infinite geodesics close to $x$. Using Lemma \ref{L: excp-dir-coal-geos}, take an $M\in \Q$ large enough to make $N_M > T^*:=\max(T_L,T_R,T)$ where $T$ is as in Lemma \ref{L: excp-dir-coal-geos}. Now consider
\begin{align}
\label{E:YM-def}
    y_M
        &= \inf\left\{y \in [\pi_{x,L}^\theta(M), \pi_{x,R}^\theta(M)]: \pi_{(x,0;y,M),R}\big|_{[T^*,N_M]} = \pi_{x,R}^\theta\big|_{[T^*, N_M]}\right\}.
\end{align}
First, note that the above infimum is actually a minimum, i.e.\ the rightmost geodesic from $(x, 0)$ to $(y_M, M)$ must use $\pi_{x,R}^\theta\big|_{[T^*, N_M]}$. If $y_M = \pi_{x,R}^\theta\big|_{[T^*, N_M]}$ then this is clear, and if not, then this follows from overlap convergence of rightmost geodesics. The fact that \eqref{E:YM-def} is a minimum also implies $y_M > \pi_{x,L}^\theta(M)$. Indeed, because $(x, 0; \theta)$ has network type $\textup{IV}^\infty$ or $\textup{Va}^\infty$, there is a unique geodesic for $(x, 0; \pi_{x,L}^\theta(M), M)$, and this geodesic simply follows $\pi^\theta_{x, L}$.

Next, by the construction of $y_M$ and \eqref{E: y_M equal-one-of-geos}, for any $y \in [\pi^\theta_{x, L}(M), y_M)$, any geodesic $\pi$ from $(x, 0)$ to $(y, M)$ must use $\pi^\theta_{0, L}|_{[T^*, N_M]}$. By overlap convergence of geodesics, this implies that:
\begin{itemize}[nosep]
    \item The leftmost geodesic $\pi_{(x,0;y_M,M),L}$ from $(x, 0)$ to $(y_M, M)$ uses $\pi^\theta_{0, L}|_{[T^*, N_M]}$.
    \item Any geodesic which equals $\pi_{(x,0;y_M,M),L}$ on an interval of the form $[M-\ep, M]$  for small $\ep > 0$ is disjoint from $\pi_{(x,0;y_M,M),R}$ on the interval $[T^*, M]$. Indeed, if not, then for some $y < y_M$, there is a geodesic $\pi$ from $(x, 0)$ to $(y, M)$ which equals $\pi_{(x,0;y_M,M),R}$ at some time $M - \de \in (N_M, M)$. The concatenation $\pi_{(x,0;y_M,M),R}|_{[0, M -\de]} \oplus \pi|_{[M - \de, M]}$ is then a geodesic, which equals $\pi^\theta_{x, R}$ on $[T^*, N_M]$ (since $y_M$ is a minimum). This in turn implies $y_M \le y$, which is a contradiction.
\end{itemize}
Combining these points with the fact that $(x, 0; \theta)$ has network type $\textup{IV}^\infty$ or $\textup{Va}^\infty$ and the construction of $T^*$ implies that $(x,0;y_M,M)$ must have type $\textup{IV}$ or $\textup{Va}$.

    Now, recall from Remark \ref{R:comments-for-semi-inf} that Proposition \ref{P:typeIV-has-close-zeroes}.2 also holds for the rational interval $[0, M]$, and an overlap interval $[a, b]$ used in place of $[1/4, 3/4]$ containing $T_L, T_R$. That is, we can find sequence $\ep_n \to 0^+, -\de_n\to 0^-$ such that $(x+\ep_n,0; y_M-\de_n,M)$ has network type $\textup{IV, Va, or Vb}$, and 
    \begin{align}
     \label{e: T_L,T_R-in-overlap}   \Ov(\pi_{(x+\ep_n,0;y_M-\de_n, M),\square}, \pi_{(x,0;y_M,M),\square}) \ni T_L,T_R,
    \end{align}
    where $\square \in \{L,R\}$.
    
    We now want to show that $G^\theta(x+\ep_n) = 0$ for $n$ large enough. It is sufficient to show that $(x+\ep_n,0;\theta)$ has two disjoint semi-infinite geodesics. By monotonicity of geodesics, if we take $n$ large enough so that $x+\ep_n < z$, we have that
    \begin{align*}
        \pi_{x+\ep_n,L}^\theta(T_L) &= \pi_{x,L}^\theta(T_L) \text{ and}\\
        \pi_{x+\ep_n,R}^\theta(T_R) &= \pi_{x,R}^\theta(T_R).
    \end{align*}
    On the other hand, by \eqref{e: T_L,T_R-in-overlap}, the leftmost geodesic from $(x+\ep_n, 0)$ to $(\pi_{x,L}^\theta(T_L) ,T_L)$ is equal to $\pi_{(x+\ep_n, 0;y_M-\de_n,M),L}$ restricted to the interval $[0, T_L]$. Combining this with the same assertion with $R$ in place of $L$, we have that 
    \begin{align*}
        \pi_{(x+\ep_n, 0;y_M-\de_n,M),L}\big|_{[0,T_L]} \oplus \pi_{x,L}^\theta\big|_{[T_L,\infty)} \text{ and } \pi_{(x+\ep_n, 0;y_M-\de_n,M),R}\big|_{[0,T_R]} \oplus \pi_{x,R}^\theta\big|_{[T_R,\infty)}
    \end{align*}
    are the leftmost and rightmost semi-infinite geodesics for $(x+\ep_n,0;\theta)$, and do not overlap. See figure \ref{fig:semi-inf-RL-typeIV} for a picture.
\end{proof}

\begin{prop} \label{P: no-close-mins-to-bridge}
    On $\Omega^\infty$, for all $x\in \R, \theta\in \Xi$:
    \begin{itemize}
        \item [1.] If $(x,0;\theta)$ has network type $\textup{Va}^\infty$, then there does not exist a negative sequence $-\ep_n \to 0^-$ such that $(x-\ep_n,0;\theta)$ has network type $\textup{IV}^\infty,\textup{Va}^\infty$, or $\textup{Vb}^\infty$.
        \item [2.] If $(x,0;\theta)$ has network type $\textup{Vb}^\infty$, then there does not exist a positive sequence $\ep_n \to 0^+$ such that $(x+\ep_n,0;\theta)$ has network type $\textup{IV}^\infty,\textup{Va}^\infty$, or $\textup{Vb}^\infty$.
    \end{itemize}
\end{prop}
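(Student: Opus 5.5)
We prove part 1; part 2 follows by the symmetric argument, interchanging left and right. The plan imitates the finite-time Lemma \ref{L: network-V-zero-restriction}: argue by contradiction, extract a limit of the disjoint semi-infinite geodesics from the nearby points, identify that limit with the extreme geodesics at $x$, and produce a geodesic bubble.

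Suppose $(x,0;\theta)$ has type $\textup{Va}^\infty$, and that there are $\ep_n \downarrow 0$ such that $(x-\ep_n,0;\theta)$ has type $\textup{IV}^\infty$, $\textup{Va}^\infty$ or $\textup{Vb}^\infty$. Then $\pi^\theta_{x-\ep_n,L}$ and $\pi^\theta_{x-\ep_n,R}$ are disjoint on $(0,\infty)$.

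\textbf{Step 1: the left geodesics converge.} By Lemma \ref{L: rm-semi-inf-geo-coal} for leftmost semi-infinite geodesics, for any $\de>0$ and $n$ large we have $\Ov(\pi^\theta_{x-\ep_n,L},\pi^\theta_{x,L}) \supset [\de,\infty)$.

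\textbf{Step 2: the right geodesics converge.} Rightmost geodesics do not converge from the left for free, so here I would argue as follows.
\begin{itemize}
\item By monotonicity (\ref{P: semi-inf-prop}), the paths $\pi^\theta_{x-\ep_n,R}$ increase in $n$ and are bounded above by $\pi^\theta_{x,R}$.
\item By the diagonal precompactness argument from the proof of Lemma \ref{L: excp-dir-coal-geos} (using \ref{P: precomp}), a subsequence converges in overlap on compacts to a semi-infinite geodesic $\pi$ from $(x,0)$ in direction $\theta$.
\item Overlap convergence preserves disjointness, so $\pi$ is disjoint from $\pi^\theta_{x,L}$ on $(0,\infty)$.
\item In network $\textup{Va}^\infty$, the geodesics from $x$ are $\pi^\theta_{x,L}$, $\pi^\theta_{x,R}$, and a third geodesic. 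The third geodesic leaves $\pi^\theta_{x,R}$ at some branch time $r$ and joins $\pi^\theta_{x,L}$ at a later time $r'$, then follows $\pi^\theta_{x,L}$ forever. It therefore meets $\pi^\theta_{x,L}$, so the only option disjoint from $\pi^\theta_{x,L}$ is $\pi = \pi^\theta_{x,R}$.
\end{itemize}
Hence for $n$ large, $\Ov(\pi^\theta_{x-\ep_n,R},\pi^\theta_{x,R}) \supset [r,r']$, and likewise the left geodesics overlap on $[\de,\infty)$ with $\de<r$.

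\textbf{Step 3: the bubble.} On $[0,r]$ the two paths from $x$ cross in the wrong direction.
\begin{itemize}
\item At time $0$: $\pi^\theta_{x-\ep_n,R}(0)=x-\ep_n < x = \pi^\theta_{x,L}(0)$.
\item At time $r$: $\pi^\theta_{x-\ep_n,R}(r) = \pi^\theta_{x,R}(r) > \pi^\theta_{x,L}(r)$, since the extreme geodesics of $\textup{Va}^\infty$ are disjoint on $(0,\infty)$.
\item By continuity there is a time $\tilde r\in(0,r)$ with $\pi^\theta_{x-\ep_n,R}(\tilde r)=\pi^\theta_{x,L}(\tilde r)$.
\end{itemize}
Now compare two paths from $(\pi^\theta_{x,L}(\tilde r),\tilde r)$ to the point at time $r'$ where the third geodesic rejoins $\pi^\theta_{x,L}$:
\[
\pi^\theta_{x,L}\big|_{[\tilde r,r']} \qquad\text{and}\qquad \pi^\theta_{x-\ep_n,R}\big|_{[\tilde r,r]}\oplus(\text{third geodesic})\big|_{[r,r']}.
\]
The first is a restriction of a geodesic. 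The second splices a segment of the geodesic $\pi^\theta_{x-\ep_n,R}$ to a segment of the geodesic $\pi^\theta_{x,R}\big|_{[0,r]}\oplus(\text{third geodesic})$ at a common point. It is a geodesic because both pieces have the same length as the corresponding restrictions of $\pi^\theta_{x,R}$ and the third geodesic, which are geodesics. The two paths differ, since at time $r$ one is at $\pi^\theta_{x,L}(r)$ and the other at $\pi^\theta_{x,R}(r)$. This is a geodesic bubble, which contradicts \ref{P: no-bubbles}.

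The main obstacle is Step 2. Identifying the limit of the right geodesics requires the semi-infinite precompactness argument and the structure of $\textup{Va}^\infty$ from Lemma \ref{L:semi7}; in particular one must rule out that the limit is the third geodesic.
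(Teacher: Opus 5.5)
There is a genuine gap, and it comes from the shape you assign to $\textup{Va}^\infty$. In the paper's convention (Figure \ref{fig:semi-inf-networks}, and the finite analogue in the proof of Lemma \ref{L: network-V-zero-restriction}), the third geodesic of a $\textup{Va}^\infty$ network leaves $\pi^\theta_{x,L}$ at time $r$ and joins $\pi^\theta_{x,R}$ at time $r'$. That is, $\pi^\theta_x=\pi^\theta_{x,L}|_{[0,r]}\oplus(\text{bridge})\oplus\pi^\theta_{x,R}|_{[r',\infty)}$. The network you describe, where the third geodesic leaves $\pi^\theta_{x,R}$ and joins $\pi^\theta_{x,L}$, is $\textup{Vb}^\infty$. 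For that network the conclusion you are proving is false: Proposition \ref{P: semi-inf-global-mins-close-by}.2 gives points $x-\ep_n\uparrow x$ with $G^\theta(x-\ep_n)=0$, i.e.\ with two disjoint semi-infinite geodesics. So your argument must fail somewhere, and it fails in Step 3.

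Nothing there shows that $\pi^\theta_{x-\ep_n,R}|_{[\tilde r,r]}\oplus(\text{third})|_{[r,r']}$ is a geodesic. You compare $\pi^\theta_{x-\ep_n,R}|_{[\tilde r,r]}$ with $\pi^\theta_{x,R}|_{[\tilde r,r]}$, but these start at different points ($\pi^\theta_{x,L}(\tilde r)\neq\pi^\theta_{x,R}(\tilde r)$), so their lengths need not agree. In any case, two geodesic pieces glued at a common point form a geodesic only if the junction $(\pi^\theta_{x,R}(r),r)$ lies on a geodesic between the outer endpoints. In fact \ref{P: no-bubbles}, applied to the interior segment $\pi^\theta_{x,L}|_{[\tilde r,r']}$, shows the spliced path is \emph{not} a geodesic, so your "bubble" yields no contradiction.

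With the correct geometry no splice is needed, and Step 2 can be dropped.
\begin{itemize}
\item By Lemma \ref{L: rm-semi-inf-geo-coal}, for small $\ep>0$ we have $\pi^\theta_{x-\ep,L}=\pi^\theta_{x,L}$ on $[r,\infty)$.
\item Suppose $(x-\ep,0;\theta)$ had two disjoint geodesics. Then $\pi^\theta_{x-\ep,R}(0)<x=\pi^\theta_x(0)$ and $\pi^\theta_{x-\ep,R}(r)>\pi^\theta_{x-\ep,L}(r)=\pi^\theta_x(r)$.
\item By monotonicity (\ref{P: semi-inf-prop}), $\pi^\theta_{x-\ep,R}(r')\le\pi^\theta_{x,R}(r')=\pi^\theta_x(r')$.
\item Hence $\pi^\theta_{x-\ep,R}$ meets $\pi^\theta_x$ at some $\tilde r\in(0,r)$ and at some $\hat r\in(r,r']$. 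The restrictions of these two genuine geodesics to $[\tilde r,\hat r]$ differ at time $r$, which is a bubble.
\end{itemize}
This is the paper's proof, written there for the mirror-image part 2.

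Your Step 2 also survives the correction. The third geodesic still shares $[0,r]$ with $\pi^\theta_{x,L}$, so the limit is again $\pi^\theta_{x,R}$. Then $\pi^\theta_{x-\ep_n,R}|_{[\tilde r,r']}$ and $\pi^\theta_x|_{[\tilde r,r']}$ give the bubble directly, exactly as in Lemma \ref{L: network-V-zero-restriction}. However, the compactness argument buys nothing over plain monotonicity.
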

\begin{proof}
    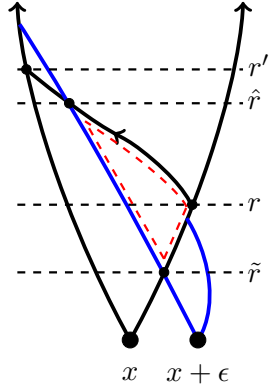
\begin{figure}
        \centering
        \begin{tikzpicture} [scale = 3]
                    \draw[->, line width = 0.5mm] plot [smooth, tension=1.1] coordinates {(0,0) (0.35, 0.8)(0.5,1.5)};
                    \draw[->, line width = 0.5mm] plot [smooth, tension=1.1] coordinates {(0,0) (-0.35, 0.8)(-0.5,1.5)};
                    \draw[line width=0.5mm, decoration={markings, mark=at position 0.5 with {\arrow{>}}}, postaction={decorate}] plot [smooth, tension=1.1] coordinates {(0.27,0.6) (0.1, 0.8) (-0.2, 1) (-0.46,1.2)};
                    \filldraw[black] (0,0) circle (1pt);
                    \node at (0,-0.15) {$x$};
                \draw[ line width = 0.5mm, blue] plot [smooth, tension=1.1] coordinates {(0.3,0) (0.35,0.25)(0.25,0.54)};
                \draw[ line width = 0.5mm, blue] plot [smooth, tension=1.1] coordinates {(0.3,0) (-0.1, 0.75)(-0.49,1.4)};

                \filldraw[black] (0.3,0) circle (1pt);
                \node at (0.3,-0.15) {$x+\ep$};

                \draw[dashed, line width = 0.3mm] plot [smooth, tension=1.1] coordinates {(-0.5,0.6) (0.5,0.6)};
                \node at (0.55,0.6) {$r$};
                \draw[dashed, line width = 0.3mm] plot [smooth, tension=1.1] coordinates {(-0.5,0.3) (0.5,0.3)};
                \node at (0.55,0.3) {$\tilde r$};
                \draw[dashed, line width = 0.3mm] plot [smooth, tension=1.1] coordinates {(-0.5,1.05) (0.5,1.05)};
                \node at (0.55,1.07) {$\hat r$};
                \draw[dashed, line width = 0.3mm] plot [smooth, tension=1.1] coordinates {(-0.5,1.2) (0.5,1.2)};
                \node at (0.57,1.22) {$r'$};

                \filldraw[black] (-0.27,1.05) circle (0.6pt);
                \filldraw[black] (-0.46,1.2) circle (0.6pt);
                \filldraw[black] (0.15,0.3) circle (0.6pt);
                \filldraw[black] (0.275,0.6) circle (0.6pt);

                \draw[dashed, line width = 0.3mm, red] plot [smooth, tension=1.1] coordinates {(0.15,0.36) (0.25, 0.59)};
                \draw[dashed, line width = 0.3mm, red] plot [smooth, tension=1.1] coordinates {(0.25, 0.59) (0.1,0.75)(-0.2,0.97)};
                \draw[dashed, line width = 0.3mm, red] plot [smooth, tension=1.1] coordinates {(0.15,0.36) (-0.2,0.97)};
        \end{tikzpicture}   
        \caption{Illustration of the proof of Proposition \ref{P: no-close-mins-to-bridge}. The bubble is outlined in red}
        \label{fig:97}
    \end{figure}
We only prove part $2$ as part $1$ is done in a similar way. See Figure \ref{fig:97} for a picture. Let $[r,r']$ be the time interval of the bridge between the leftmost and rightmost semi-infinite geodesics for $(x,0;\theta)$. That is, letting $\pi_{x}^\theta$ be the semi-infinite geodesic which uses this middle bridge, we have
    \begin{align*}
         \pi_{x}^\theta = \pi_{x,R}^\theta \big|_{[0,r]}&\oplus \pi_{(\pi_{x,R}^\theta(r),r;\pi_{x,L}^\theta(r'),r')}\oplus \pi_{x,L}^\theta \big|_{[r',\infty)} \text{, and } \\
         \pi_{x,L}^\theta \big|_{(r,r')} &< \pi_{x}^\theta \big|_{(r,r')} <\pi_{x,R}^\theta \big|_{(r,r')}.
    \end{align*}

    By Lemma \ref{L: rm-semi-inf-geo-coal}, take $\ep > 0$ small enough so that $\Ov(\pi_{x,R}, \pi_{x+\ep,R}) \supset [r,\infty)$. Now, if $(x+\ep,0;\theta)$ had network type $\textup{IV}^\infty,\textup{Va}^\infty$, or $\textup{Vb}^\infty$, it follows, along with monotonicity that
    \begin{align*}
        \pi_{x,L}^\theta(r) &\leq \pi_{x+\ep,L}^\theta(r) < \pi_{x+\ep,R}^\theta(r) = \pi_{x}^\theta (r), \text{ and}\\
         \pi_{x}^\theta (r') = \pi_{x,L}^\theta(r') &\leq \pi_{x+\ep,L}^\theta(r') < \pi_{x+\ep,R}^\theta(r').
    \end{align*}
    By continuity, we get an $\hat r \in (r,r']$ such that $\pi_{x+\ep,L}^\theta (\hat r) = \pi_{x}^\theta (\hat r)$. Furthermore, since $\pi_{x+\ep,L}^\theta (0) > \pi_{x,R}^\theta(0)$ and $\pi_{x+\ep,L}^\theta(r) < \pi_{x+\ep,R}^\theta(r) = \pi_{x,R}^\theta(r)$, we have an $\tilde r \in (0,r)$ such that $\pi_{x+\ep,L}^\theta (\tilde r) = \pi_{x,R}^\theta(\tilde r) = \pi_{x}^\theta(\tilde r)$. Putting these together, we have two geodesics $\pi_{x+\ep,L}^\theta \big|_{[\tilde r, \hat r]}$ and $\pi_{x}^\theta \big|_{[\tilde r, \hat r]}$ for $(\pi_{x+\ep}^\theta(\tilde r), r; \pi_{x+\ep}^\theta(\hat r),r)$ where $\pi_{x+\ep,L}^\theta (r) < \pi_{x}^\theta (r)$. This forms a bubble, which is a contradiction by \ref{P: no-bubbles}.
\end{proof}
We prove the remainder of Theorem \ref{T: semi-inf-global-mins-close-by-intro}, which ends up being a bit simpler than the later parts.

\begin{proof}[Proof Of Theorem \ref{T: semi-inf-global-mins-close-by-intro} Parts $1$-$3$.]
The first part is obvious. We will prove $2.$ and $3.$ together by reducing to the finite case in a similar way to the proof of Proposition \ref{P: semi-inf-global-mins-close-by}. Consider a small rational interval $(a,b) \ni x$ centered around $x$. Let $t^*$ be as in Proposition \ref{P: opt-that-uses-disj-geos} for the interval $(a,b)$. By \ref{P: coal-semi-geo-bound}, take rational $y_1 < x < y_2$ such that 
\begin{align}
    \nonumber \pi_{y_1,L}^\theta(t) < \pi_{a,L}^\theta (t)\text{ and }  \pi_{b,R}^\theta(t) < \pi_{y_2,R}^\theta(t) \text{ for all $t\in [0,t^*]$}.
\end{align}
Now define 
$$
    T^* = \max(\inf\{t > t^*: \pi_{y_1,L}^\theta\big|_{[t,\infty)} = \pi_{x,L}^\theta\big|_{[t,\infty)} \text{ and } \pi_{y_2,R}^\theta\big|_{[t,\infty)} = \pi_{x,R}^\theta\big|_{[t,\infty)}\}, T),
$$
where $T$ is as in Lemma \ref{L: excp-dir-coal-geos}.
Such a $T^*$ exists by \ref{P: semi-inf-prop}. Note also that for any $z\in (a,b)$, $\pi_{z, \square}^\theta$ has coalesced with $\pi_{x,\square}^\theta$ by time $T^*$ for $\square \in \{L,R\}$. Now, using Lemma \ref{L: excp-dir-coal-geos}, take $M \in \Q$ large enough so that $N_M > T^*$, and define $y_M$ exactly as in \eqref{E:YM-def}. By a similar argument as was done in the proof of Proposition \ref{P: semi-inf-global-mins-close-by}, we see that $(x,0;y_M,M)$ has network type IIa or III and both $(a, 0; y_M, M)$ and $(b, 0; y_M, M)$ have type IIa. Precisely, we have that 
\begin{align}
    \label{E: fin-semi-inf-corr}\begin{split}
        (x,0;y_M,M) \text{ has network type IIa $\iff$} (x,0;\theta)\text{ has network type $\textup{IIa}^\infty$}, \\
         (x,0;y_M,M) \text{ has network type III $\iff$} (x,0;\theta)\text{ has network type $\textup{III}^\infty$}.
    \end{split}
\end{align}
Next, for any $z \in (a, b)$ we claim that 
\begin{align}
    \tau := \tau_z^\theta\big|_{[0,T^*]} \oplus(\pi_{(x,0;y_M,M),L}\big|_{[T^*,M]}, \pi_{(x,0;y_M,M),R}\big|_{[T^*,M]}) \label{e: potential-opt}
\end{align}
is a 2-optimizer for $(z,0;y_M,M)$, where $\tau_z^\theta$ is the $2$-optimizer from $(z, 0)$ in direction $\theta$ identified by Proposition \ref{P: opt-that-uses-disj-geos}. Noting that $\tau$ is an optimizer when restricted to both of the intervals $[0, T^*]$ and $[T^*, M]$, it suffices to show that there is a $2$-optimizer for $(z,0;y_M,M)$ which goes through the point $\tau(T^*) = (\pi^\theta_{y_1, L}(T^*), \pi^\theta_{y_2, R}(T^*))$ at time $T^*$. For this, observe that by geodesic monotonicity, all leftmost geodesics from the set $[y_1, b] \times \{0\}$ to the point $(y_M, M)$ go through the point $\pi^\theta_{y_1, L}(T^*)$ at time $T^*$ and all rightmost geodesics from the set $[a, y_2] \times \{0\}$ to the point $(y_M, M)$ go through the point $\pi^\theta_{y_1, R}(T^*)$ at time $T^*$. In particular, both of the geodesic pairs
$$
\tau^1 := (\pi_{(y_1, 0; y_M, M), L}, \pi_{(a, 0; y_M, M), R}), \qquad \tau^2:=(\pi_{(b, 0; y_M, M), L}, \pi_{(y_2, 0; y_M, M), R})
$$
go through $\tau(T^*)$ at time $T^*$, and are disjoint by construction and hence optimizers. Moreover, $\tau^1$ must be a leftmost optimizer and $\tau^2$ a rightmost optimizer since the geodesics $\pi_{(a, 0; y_M, M), L}, \pi_{(y_1, 0; y_M, M), L}$ overlap, as do the geodesics $\pi_{(b, 0; y_M, M), R}, \pi_{(y_2, 0; y_M, M), R}$. Therefore we may apply monotonicity of optimizers to conclude that any optimizer for $(z,0;y_M,M)$ goes through the point $\tau(T^*)$ at time $T^*$.

Now, letting $G$ be the gap sheet between times $0$ and $M$, we have, by \eqref{e: potential-opt} that, for all $z\in (a,b)$
\begin{align*}
    G(z,y_M) &= \|\pi_{(z,0;y_M,M),L}\|_\L +  \|\pi_{(z,0;y_M,M),R}\|_\L  -\L(z^2,0;y_M^2,M) \\
    &=\|\pi_{(z,0;y_M,M),L}\big|_{[0,T^*]}\|_\L +  \|\pi_{(z,0;y_M,M),R}\big|_{[0,T^*]}\|_\L - \|\tau_z\big|_{[0,T^*]}\|_\L \\
    & = \|\pi_{z,L}^\theta\big|_{[0,T^*]}\|_\L +  \|\pi_{z,R}^\theta\big|_{[0,T^*]}\|_\L - \|\tau_z^\theta\big|_{[0,T^*]}\|_\L \\
    &= G^\theta(z).
\end{align*}
The third line follows by \eqref{E: y_M equal-one-of-geos} and the definition of $y_M$. Parts $2.$ and $3.$ then follow by the above, \eqref{E: fin-semi-inf-corr}, and Theorem \ref{T: fin-time-thm}.2/4
\end{proof}

\section{The Busemann gap function} \label{sec: buse-gap-prop}

This section will look more closely at the function $G^\theta$ defined in the previous section, and culminate in the proof of Theorem \ref{T: reflected-bm}. We first rewrite $G^\theta$ in terms of Busemann functions, justifying its name.

First, for $\theta \in \Xi$, define the \textit{two-path Busemann function} 
\begin{align*}
    \B^\theta(x) := \lim_{t\to \infty} \|\tau^\theta_x\big|_{[0,t]} \|_\L - \|\pi^\theta_{0, L}\big|_{[0,t]}\|_\L - \|\pi^\theta_{0, R}\big|_{[0,t]}\|_\L, 
\end{align*}
where $\tau_x^\theta$ is the any semi-infinite 2-optimizer in direction $\theta$ emanating from $(x,0)$ which eventually coalesces with the leftmost and rightmost geodesics in direction $\theta$. This function exists and is well defined by Proposition $\ref{P: opt-that-uses-disj-geos}$, and the fact that leftmost and rightmost semi-infinite geodesics coalesce for all directions (\ref{P: semi-inf-prop}). 

Next, define the following process for $\theta \in \Xi, x\in \R$:
\begin{align*}
    \mathfrak{G}^\theta(x) := B_L^\theta(x) + B_R^\theta(x) - \B^\theta(x).
\end{align*}

\begin{lemma} \label{l: buse-gap-length-formula}
    On $\Omega^\infty$, for all $x \in \R, \theta \in \Xi$, we have that
    $
    \mathfrak{G}^\theta(x) =  G^\theta(x).
    $
\end{lemma}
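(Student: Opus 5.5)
The plan is to express all three terms in $\mathfrak{G}^\theta(x)$ as limits over a common large time $t$, and then to cancel the reference terms coming from the base point $0$. By Definition \ref{D: busemann}, for $\square\in\{L,R\}$ we have
\[
B^\theta_\square(x)=\lim_{t\to\infty}\|\pi^\theta_{x,\square}|_{[0,t]}\|_\L-\|\pi^\theta_{0,\square}|_{[0,t]}\|_\L .
\]
Moreover, by Theorem \ref{T: semi-inf-geos}.2 the two differences are actually constant for $t$ beyond the coalescence times $T_L,T_R$ of $\pi^\theta_{x,\square}$ with $\pi^\theta_{0,\square}$. Similarly, by definition
\[
\B^\theta(x)=\lim_{t\to\infty}\|\tau^\theta_x|_{[0,t]}\|_\L-\|\pi^\theta_{0,L}|_{[0,t]}\|_\L-\|\pi^\theta_{0,R}|_{[0,t]}\|_\L .
\]
By Proposition \ref{P: opt-that-uses-disj-geos}, $\tau^\theta_x$ agrees with $(\pi^\theta_{x,L},\pi^\theta_{x,R})$ after some time $t^*$. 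Hence this difference is also eventually constant, namely for $t\ge \max(t^*,T_L,T_R)$. This constancy is exactly the well-definedness already noted in the paper.

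Next I would fix any $t\ge \max(t^*,T_L,T_R,T_x)$, with $T_x$ from Proposition \ref{P: busemann-gap-form}, and evaluate each term at this single $t$. Summing the two Busemann terms and subtracting the two-path term, the contributions $\|\pi^\theta_{0,L}|_{[0,t]}\|_\L$ and $\|\pi^\theta_{0,R}|_{[0,t]}\|_\L$ cancel exactly. What remains is
\[
\mathfrak G^\theta(x)=\|\pi^\theta_{x,L}|_{[0,t]}\|_\L+\|\pi^\theta_{x,R}|_{[0,t]}\|_\L-\|\tau^\theta_x|_{[0,t]}\|_\L .
\]
Proposition \ref{P: busemann-gap-form} states that this expression equals $G^\theta(x)$ for all $t\ge T_x$, which gives the claim.

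The main point needing care is that $\B^\theta$ is defined using \emph{any} semi-infinite $2$-optimizer that eventually follows $(\pi^\theta_{x,L},\pi^\theta_{x,R})$, whereas Proposition \ref{P: busemann-gap-form} uses the specific $\tau^\theta_x$. The `furthermore' clause of Proposition \ref{P: opt-that-uses-disj-geos} resolves this. Any such optimizer $\tau'$ coincides with $(\pi^\theta_{x,L},\pi^\theta_{x,R})$ from time $t^*$ onward, as $\tau^\theta_x$ does. Both restrictions to $[0,t]$ are $2$-optimizers between the same endpoints, so their lengths both equal $\L(x^2,0;(\pi^\theta_{x,L}(t),\pi^\theta_{x,R}(t)),t)$. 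The length term is therefore independent of the choice, and everything else is bookkeeping of finite path lengths.
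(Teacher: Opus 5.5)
Your proposal is correct and matches the paper's proof: evaluate $B^\theta_L$, $B^\theta_R$ and $\B^\theta$ at a single large $t$ (past the coalescence times and $t^*$), cancel the reference terms from the base point $0$, and invoke Proposition \ref{P: busemann-gap-form}. Your remark that $\B^\theta$ does not depend on the choice of optimizer is a harmless extra detail the paper leaves implicit (and it also follows just from taking $t$ large enough that both optimizers end at the same pair of points).
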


\begin{proof}
By the eventual coalescence of leftmost and rightmost semi-infinite geodesics (\ref{P: semi-inf-prop}), we have that for large enough $t$:
\begin{equation}
\label{E:eventual-coal}
\begin{split}
      B^\theta_\square(x) &= \|\pi^\theta_{x,\square}|_{[0, t]}\|_\L - \|\pi^\theta_{0,\square}|_{[0, t]}\|_\L, \;\square \in \{L, R\};\\
    \quad\mathcal{B}^\theta(x) &= \|\tau^\theta_x\big|_{[0,t]} \|_\L - \|\pi^\theta_{0, L}\big|_{[0,t]}\|_\L - \|\pi^\theta_{0, R}\big|_{[0,t]}\|_\L.
\end{split}
\end{equation}

The formula then follows from Proposition \ref{P: busemann-gap-form}.
\end{proof}

Our current Busemann function description for $G^\theta$ does not shed any light on its exact distribution. For this, we need to connect it to two-path Busemann functions in \textit{distinct} directions, whose structure was elucidated in \cite{dauvergne2024directedlandscapebrownianmotion}. 

\begin{lemma} \label{L: multi-theta-lem}
Let $\mathcal{I}_<^2 := \{(\theta_1,\theta_2) \in (\R \backslash \Xi)^2: \theta_1 < \theta_2\}$.
    \begin{enumerate}
        \item \textup{(\cite{dauvergne2024directedlandscapebrownianmotion},  Proposition 4.3.2/3)}.   Almost surely, for all $(\theta_1,\theta_2)\in \mathcal{I}^2_<$ and every $x \in \R$, there exists a rightmost semi-infinite $2$-optimizer, $\tau_{x,R}^{\theta_1,\theta_2},$ emanating from $(x,0)$ in direction $(\theta_1,\theta_2)$ which eventually coalesces with the (any) pair of semi-infinite geodesics in directions $\theta_1, \theta_2$.
        \item \textup{(\cite{dauvergne2024directedlandscapebrownianmotion}, Proposition 5.2.3)} Define
       $$
       \mathfrak{B}^{\theta_1,
        \theta_2}(x) := \lim_{t\to \infty} \L(x^2,0;(\theta_1,\theta_2)t,t) - \L(0,0;\theta_1t,t) - \L(0,0;\theta_2t,t).
    $$
    Almost surely, for all $(\theta_1,\theta_2)\in \mathcal{I}^2_<$ and every $x \in \R$, the limit above exists and equals
        $$
            \lim_{t\to \infty} \L(x^2,0;\tau_{x,R}^{\theta_1,\theta_2}(t),t) - \L(0,0; \pi_{x,R}^{\theta_1}(t),t) - \L(0,0; \pi_{x,R}^{\theta_2}(t),t).
        $$
        \item \textup{(\cite{dauvergne2024directedlandscapebrownianmotion}, Theorems 1.5 and 1.7.2)} Fix $\theta_1 < \theta_2 \in \R$ and $\square \in \{L, R\}$. Let $B_1, B_2$ be independent Brownian motions of drift $2\theta_1 < 2 \theta_2$ and diffusivity $\sqrt{2}$ and let $M(x) = \max_{y \le x} B_2(y) - B_1(y)$ be the running maximum of the difference. Then
    \begin{align*}
    (B^{\theta_1}_\square(x_1), B^{\theta_2}_\square(x_2), \mathfrak{B}^{\theta_1, \theta_2}(x_3)) \stackrel{d}{=}(B_1(x_1), B_1(x_2) + M(x_2) - M(0), B_1(x_3) + B_2(x_3) - M(0)).
    \end{align*}
Here the equality in law is as continuous functions of $(x_1, x_2, x_3) \in \R^3$.
    \end{enumerate}
\end{lemma}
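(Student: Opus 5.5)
Every part of this lemma is imported from \cite{dauvergne2024directedlandscapebrownianmotion}. The plan is therefore not to reprove anything. Instead I would check that the objects defined here coincide with the objects in that paper, so that the cited results apply verbatim. I would do this part by part, in the order stated, since part 2 needs the optimizer from part 1 and part 3 needs the limit from part 2.

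For part 1, I would first translate notation: that paper indexes left/right-continuous objects by $\pm$, while we use $L,R$. I would then note that for $\theta_i \notin \Xi$ all semi-infinite geodesics in direction $\theta_i$ eventually coalesce, by Theorem \ref{T: semi-inf-geos}.4 and \ref{T: semi-inf-geos}.5. Hence ``coalesces with the (any) pair of semi-infinite geodesics'' is unambiguous. The cited Propositions 4.3.2/3 give existence of a rightmost semi-infinite $2$-optimizer with this property; the fact that $\theta_1<\theta_2$ keeps the two components apart at large times, which is why no exceptional-direction issue arises. I would also check that their notion of $2$-optimizer is the disjoint one used here. If it is stated only for weak optimizers, the appendix result that weak optimizers are disjoint closes the gap.

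For part 2, the cited Proposition 5.2.3 gives existence of the limit along the rays $(\theta_1 t,\theta_2 t)$. The only thing to verify is the equality with the limit taken along the optimizer endpoints. I would argue as in Definition \ref{D: busemann}: since $\theta_i\notin\Xi$, $\pi^{\theta_i}_{x,R}$ eventually coalesces with $\pi^{\theta_i}_{0,R}$. By part 1, $\tau^{\theta_1,\theta_2}_{x,R}$ eventually follows these geodesics. Metric composition along the common tails then makes the difference in the second display eventually constant. It should match the ray-based limit because, for $\theta\notin\Xi$, Busemann functions may be computed along any path of asymptotic slope $\theta$, by \cite[Theorem 5.1]{busani2024stationary}. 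The expected obstacle here is bookkeeping rather than new mathematics: making sure the centering terms ($\pi^{\theta_i}_{x,R}$ versus $\pi^{\theta_i}_{0,R}$) agree with the reference's normalization, up to an additive Busemann correction that vanishes on the eventual coalescence interval.

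For part 3, I would quote Theorems 1.5 and 1.7.2 and then reconcile the scaling conventions: diffusivity $\sqrt2$ and drift $2\theta$. As a consistency check I would use Theorem \ref{T:stat-horizon}.1 on the first two marginals. $B_1$ is Brownian with drift $2\theta_1$. Likewise, $B_1+M-M(0)$ is the standard Pitman-type/queueing output of two Brownian motions, which is again Brownian with drift $2\theta_2$. The third coordinate is consistent with Theorem \ref{T: greenes-thm}.2 in the finite case. The statement holds for both $\square\in\{L,R\}$ because for fixed $\theta_1,\theta_2$ we have $\theta_i\notin\Xi$ almost surely, so $B_L^{\theta_i}=B_R^{\theta_i}$.
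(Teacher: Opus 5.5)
Your overall plan agrees with the paper's: the paper gives no proof of this lemma beyond the citations and two remarks, and you, too, treat it as imported. You miss one of those remarks. Propositions 4.3.2/3 of \cite{dauvergne2024directedlandscapebrownianmotion} are stated for Brownian last passage percolation, not for $\L$. So part 1 is not just a matter of translating $\pm$ into $L,R$ and checking that optimizers are disjoint. The paper relies on the observation, made in the proof of Proposition 5.4.4 there, that those proofs carry over verbatim to the directed landscape.

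The step that actually fails is in part 2. You read Proposition 5.2.3 as giving only existence of the limit along the rays $(\theta_1 t,\theta_2 t)$. You then propose to obtain the equality with the optimizer-based limit from the single-path fact that, for $\theta\notin\Xi$, Busemann functions can be computed along any path of slope $\theta$ \cite[Theorem 5.1]{busani2024stationary}. That fact cannot do this job, because $\L(x^2,0;\mathbf{y},t)$ is an extended-landscape value, not a sum of single-path distances. Moving the endpoint from $((\theta_1 t,\theta_2 t),t)$ to $(\tau^{\theta_1,\theta_2}_{x,R}(t),t)$ requires controlling how the finite $2$-optimizers from $(x^2,0)$ to $((\theta_1 t,\theta_2 t),t)$ behave near the start for large $t$. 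That is a two-path stabilization statement, and it does not follow from single-path path-independence.

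This is exactly what the cited Proposition 5.2.3 supplies. It asserts both that the ray limit exists and that it equals the optimizer-based expression, so you should cite it for the whole of part 2 rather than attempt this reduction. The rest of your proposal is fine. Your argument that the optimizer-based expression is eventually constant in $t$ is correct. So are your consistency checks for part 3: the Burke-type fact that $B_1+M-M(0)$ is Brownian with drift $2\theta_2$, and $B^{\theta}_L=B^{\theta}_R$ for fixed $\theta$ since $\theta\notin\Xi$ almost surely.
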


Note that Lemma \ref{L: multi-theta-lem}.1 is stated explicitly only for Brownian last passage percolation in \cite[Proposition 4.3.2/3]{dauvergne2024directedlandscapebrownianmotion}. The proofs pass verbatim to the directed landscape, as discussed in the proof of \cite[Proposition 5.4.4]{dauvergne2024directedlandscapebrownianmotion}. Lemma \ref{L: multi-theta-lem}.3 is an extension of the more well-known description of the joint law of (single-path) Busemann functions, e.g.\ see Theorem 5.1(iii) in \cite{busani2024stationary}. The next proposition connects the two-path Busemann functions in Lemma \ref{L: multi-theta-lem} to the two-path Busemann function $\mathcal B^\theta$ introduced at the beginning of the section. Its main ingredient is Theorem \ref{T:stat-horizon}.3, together with a multi-path analogue. Combining this proposition with Lemma \ref{L: multi-theta-lem}.3 will quickly yield Theorem \ref{T: reflected-bm}.

\begin{prop} \label{P: equiv-busemann-def}
   Almost surely, for all $\theta \in \Xi$ the following holds. For all $a > 0$, there exists an $\epsilon = \ep(\theta, a) > 0$ such that for all $(\theta_1, \theta_2) \in \mathcal I^2_<$ with $\theta - \ep < \theta_1 < \theta < \theta_2 < \theta + \ep$ and all $x \in [-a, a]$ we have: 
    $$
        \mathcal{B}^\theta(x) = \mathfrak{B}^{\theta_1,\theta_2}(x).
    $$
    Moreover,
    $$
    G^\theta(x) = B_L^{\theta_1}(x) + B_R^{\theta_2}(x) - \mathfrak{B}^{\theta_1,\theta_2}(x).
    $$
\end{prop}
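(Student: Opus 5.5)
The plan is to first prove a multi-path analogue of Theorem \ref{T:stat-horizon}.3 at the level of geodesics, and then read off both identities from the definitions. Fix $\theta \in \Xi$ and $a>0$, and let $t^*$ be the time from Proposition \ref{P: opt-that-uses-disj-geos}, chosen to work for all $x \in [-a,a]$. By Theorem \ref{T: semi-inf-geos}.2 we may also take a time $T \ge t^*$ after which $\pi^\theta_{x,\square}$ agrees with $\pi^\theta_{0,\square}$ for all $x \in [-a,a]$ and $\square \in \{L,R\}$.

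The first and main step is a geodesic stability statement. There should be $\ep>0$ such that for every $\theta_1 \in (\theta-\ep,\theta)\setminus \Xi$ and $x \in [-a,a]$, the semi-infinite geodesic $\pi^{\theta_1}_{x}$ agrees with $\pi^\theta_{x,L}$ on $[0,S]$ and coalesces with $\pi^\theta_{0,L}$ after some time. The symmetric statement should hold for $\theta_2 \in (\theta,\theta+\ep)$ with $R$.

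To get this, I would use monotonicity in $\theta$ (Theorem \ref{T: semi-inf-geos}.3) together with Theorem \ref{T:stat-horizon}.3. For $\delta \le \ep$, the latter gives $B^{\theta-\delta}_L = B^\theta_L$ on $[-a,a]$. I expect this Busemann equality to force $\pi^{\theta-\delta}_{x,L}$ to use $\pi^\theta_{x,L}$ on a long initial segment. The underlying fact is that the leftmost semi-infinite geodesic is the leftmost maximizer of $y \mapsto \L(x,0;y,t) + B^\theta_L(y,t;0,0)$, and the Busemann functions agree along the relevant time slices.

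Here is the concrete mechanism. Since $\theta \mapsto \pi^\theta_{x,L}$ is monotone and left-continuous in the overlap sense, for $\theta_1 \uparrow \theta$ the geodesics $\pi^{\theta_1}_{x,L}$ converge in overlap to $\pi^\theta_{x,L}$. Convergence of Busemann values on $[-a,a]$ then forces exact agreement up to the coalescence time. This step is the main obstacle. The difficulty is that Theorem \ref{T:stat-horizon}.3 is a statement about Busemann values at time $0$ only, while we need geodesic agreement up to arbitrarily late times. To bridge this gap I would try the coalescent-geometry characterization from \cite{busani2024stationary}, namely that $B^{\theta_1}_L(x) = B^\theta_L(x)$ for all $x$ in an interval iff the geodesics from that interval coalesce before branching. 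If that fails, I would fall back on an overlap-precompactness argument using \ref{P: precomp} and \ref{P: coal-semi-geo-bound}.

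Next, I would take $\theta_1<\theta<\theta_2$ within $\ep$, with both in $\R\setminus\Xi$, and let $\tau := \tau^{\theta_1,\theta_2}_{x,R}$ from Lemma \ref{L: multi-theta-lem}.1. The goal is to show that $\tau$ agrees with $\tau^\theta_x$ on $[0,t^*]$. The argument would mirror the proof of Proposition \ref{P: opt-that-uses-disj-geos}. The two components of $\tau$ are squeezed between $\pi^{\theta_1}_{y_1}$ and $\pi^{\theta}_{x,L}$, and between $\pi^\theta_{x,R}$ and $\pi^{\theta_2}_{y_2}$, via the ordering argument of Lemma \ref{L: geo-opt-ordering}. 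By the previous step, these bounding geodesics coincide with their direction-$\theta$ counterparts on a long interval. Combined with the no-bubbles property \ref{P: no-bubbles}, this forces $\tau$ to pass through $(\pi^\theta_{x,L}(t^*),\pi^\theta_{x,R}(t^*))$ at time $t^*$. After that time, $\tau$ follows geodesics that coincide with $\pi^\theta_{x,L}$ and $\pi^\theta_{x,R}$ until these split toward $\theta_1$ and $\theta_2$.

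Finally, I would compute $\mathfrak{B}^{\theta_1,\theta_2}(x)$ using the formula in Lemma \ref{L: multi-theta-lem}.2. Taking $t$ beyond all coalescence times, the portions of $\tau$ and of $\pi^{\theta_i}$ after the splitting cancel telescopically, exactly as in \eqref{E:eventual-coal}. What remains is $\|\tau^\theta_x|_{[0,T]}\|_\L - \|\pi^\theta_{0,L}|_{[0,T]}\|_\L - \|\pi^\theta_{0,R}|_{[0,T]}\|_\L = \mathcal B^\theta(x)$.

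The same stability argument gives $B^{\theta_1}_L = B^{\theta}_L$ and $B^{\theta_2}_R = B^\theta_R$ on $[-a,a]$; this is also directly Theorem \ref{T:stat-horizon}.3. Since $\theta_1 \notin \Xi$, we have $B^{\theta_1}_L = B^{\theta_1}_R$, so the choice of $L$ or $R$ label there is harmless. The "moreover" part then follows from Lemma \ref{l: buse-gap-length-formula}, which gives $G^\theta = B^\theta_L + B^\theta_R - \mathcal B^\theta$.
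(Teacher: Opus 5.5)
Your overall route matches the paper's. You first prove stability of the extreme semi-infinite geodesics under a small change of direction. You then use a $(\theta_1,\theta_2)$-optimizer that follows $\tau^\theta_x$ up to a late time and direction-$\theta_i$ geodesics afterwards; the paper also only sketches this step, by analogy with Proposition \ref{P: opt-that-uses-disj-geos}. Finally you telescope and apply Lemma \ref{l: buse-gap-length-formula}.

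The gap is the step you yourself flag as the main obstacle: it is never established, and the mechanism you propose for it is misdirected. The identity $B^{\theta-\delta}_L=B^\theta_L$ on $[-a,a]$ concerns time-$0$ increments only. It says nothing about where $\pi^{\theta-\delta}_{x,L}$ lies at later times, so it cannot ``force exact agreement''; Busemann values play no role in this step. Also, $\pi^{\theta_1}_x$ cannot eventually coalesce with $\pi^\theta_{0,L}$, since the directions differ. What your later steps actually need is agreement on a long finite window, in the form $\pi^{\theta_1}_{x,L}=\pi^\theta_{x,L}|_{[0,T']}\oplus\pi^{\theta_1}_{y_1,L}|_{[T',\infty)}$ for all $x\in[-a,a]$ and all admissible $\theta_1$.

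The ingredient you mention in passing, overlap left-continuity in $\theta$, already does the job once made precise; this is Lemma \ref{L: close-ang-coal}. The paths $\pi^{\theta-1/n}_{y,L}$ increase in $n$. By \ref{P: precomp} they converge on compacts to a semi-infinite geodesic of direction $\theta$ lying below $\pi^\theta_{y,L}$, hence equal to it. So $\Ov(\pi^\theta_{y,L},\pi^{\theta-\ep}_{y,L})=[0,T_\ep]$ with $T_\ep\to\infty$, which is already exact agreement.

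Uniformity in $x$ and $\theta_1$ comes from applying this at a single point:
\begin{itemize}
\item Choose a rational $y_1<-a$ via \ref{P: coal-semi-geo-bound} with $\pi^\theta_{y_1,L}<\pi^\theta_{-a,L}$ on $[0,T]$. Your squeeze in the optimizer step needs exactly such a $y_1$, though you leave it implicit.
\item Let $T'$ be a time by which $\pi^\theta_{y_1,L}$ has merged with $\pi^\theta_{a,L}$.
\item Pick $\ep$ with $[0,T']\subset\Ov(\pi^\theta_{y_1,L},\pi^{\theta-\ep}_{y_1,L})$.
\end{itemize}
Monotonicity in $\theta$ then gives $\pi^{\theta_1}_{y_1,L}=\pi^\theta_{y_1,L}$ on $[0,T']$. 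Monotonicity in $x$ and $\theta$ gives $\pi^\theta_{x,L}(T')=\pi^{\theta_1}_{y_1,L}(T')\le\pi^{\theta_1}_{x,L}(T')\le\pi^\theta_{x,L}(T')$. This yields the concatenation above, because leftmost semi-infinite geodesics that meet at two times agree in between (and agree on a whole ray if they have the same direction). With this, and its mirror image for $\theta_2$ and $y_2$, the rest of your proposal goes through as in the paper.
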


The proof of Proposition \ref{P: equiv-busemann-def} uses the following lemma.
\begin{lemma} \label{L: close-ang-coal}
    Almost surely, for all $x,\theta \in \R$, 
    $$
        \Ov(\pi_{x,R}^\theta, \pi_{x,R}^{\theta + \ep}) = [0,T_\ep],
    $$
    where $T_\ep \to \infty$ as $\ep \to 0^+$. A similar statement holds for leftmost semi-infinite geodesics by taking $\ep \to 0^-$.
\end{lemma}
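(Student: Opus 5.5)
The plan is to fix $x, \theta$ and study the rightmost semi-infinite geodesics $\pi^{\theta+\ep}_{x,R}$ as $\ep \downarrow 0$. By monotonicity in direction (Theorem \ref{T: semi-inf-geos}.3), $\ep \mapsto \pi^{\theta+\ep}_{x,R}$ is non-increasing as $\ep \downarrow 0$ and bounded below by $\pi^\theta_{x,R}$. We want to show that, for every fixed $T>0$, $\pi^{\theta+\ep}_{x,R}|_{[0,T]} = \pi^\theta_{x,R}|_{[0,T]}$ for all small $\ep>0$.

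The no-bubbles property then gives the overlap-interval form. Two geodesics from $(x,0)$ that agree on $[0,T]$ and later separate cannot meet again. So $\Ov(\pi^\theta_{x,R}, \pi^{\theta+\ep}_{x,R})$ is an interval $[0,T_\ep]$, which is all of $[0,\infty)$ if $\theta+\ep$ shares the rightmost geodesic. The statement then reduces to $T_\ep \to \infty$.

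The key step is an identification of the limit. Let $\pi := \lim_{\ep \downarrow 0} \pi^{\theta+\ep}_{x,R}$, which exists pointwise by monotonicity. We want to show $\pi = \pi^\theta_{x,R}$.

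\begin{itemize}
\item On each compact $[0,T]$, the restrictions $\pi^{\theta+\ep}_{x,R}|_{[0,T]}$ are geodesics between points $(x,0)$ and $(\pi^{\theta+\ep}_{x,R}(T),T)$. These endpoints converge, since the sequence is monotone and bounded.
\item By precompactness (\ref{P: precomp}) and diagonalization, as in the proof of Lemma \ref{L: excp-dir-coal-geos}, the limit $\pi$ is a semi-infinite geodesic from $(x,0)$, and convergence holds in overlap on every compact interval.
\item $\pi \ge \pi^\theta_{x,R}$, and $\pi \le \pi^{\theta+\ep}_{x,R}$ for all $\ep$. So $\pi$ has direction in $[\theta, \theta+\ep]$ for every $\ep$, hence direction exactly $\theta$.
\item Therefore $\pi \le \pi^\theta_{x,R}$ by the definition of the rightmost geodesic, which forces $\pi = \pi^\theta_{x,R}$.
\end{itemize}

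The main obstacle is the direction claim for the limit: a limit of geodesics need not inherit the asymptotic slope from a pointwise bound. What makes it work here is the sandwich $\pi^\theta_{x,R} \le \pi \le \pi^{\theta+\ep}_{x,R}$ on all of $[0,\infty)$, not just on compacts.

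Overlap convergence on $[0,T]$ then gives agreement on $[0,T]$ for small $\ep$. More precisely, the overlap interval of the finite restrictions has endpoints converging to $0$ and $T$. The left endpoint is $0$, since both paths start at $x$ and the overlap is an interval. Applying this on $[0,T+1]$ gives agreement on $[0,T]$, so $T_\ep \ge T$ eventually, as required.

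For leftmost geodesics the argument is symmetric, using $\ep \to 0^-$ and increasing limits.

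An alternative for the obstacle step is the time-$T$ coalescence of Busemann functions from Theorem \ref{T:stat-horizon}.3. However, the direct sandwich argument above seems cleanest.
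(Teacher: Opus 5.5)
Your proposal follows essentially the paper's proof: monotonicity in direction, precompactness to extract a limiting semi-infinite geodesic from $(x,0)$, and the rightmost property to identify that limit with $\pi^\theta_{x,R}$. Your sandwich $\pi^\theta_{x,R}\le \pi\le \pi^{\theta+\epsilon}_{x,R}$ also makes explicit why the limit has direction $\theta$, which the paper only asserts.

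One step to tighten, which the paper also glosses, is why the overlap begins at $0$. This follows neither from no-bubbles nor from the common starting point, since bubbles at the initial point are allowed. It follows from the fact that $\pi^\theta_{x,R}|_{[0,b]}$ is the rightmost geodesic to $(\pi^\theta_{x,R}(b),b)$; otherwise a concatenation would give a semi-infinite geodesic in direction $\theta$ to its right. So agreement at any time $b$, together with $\pi^\theta_{x,R}\le \pi^{\theta+\epsilon}_{x,R}$, forces agreement on all of $[0,b]$.
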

\begin{proof}
   By continuity and non-existence of bubbles (\ref{P: no-bubbles}), we have that $\Ov(\pi_{x,R}^\theta, \pi_{x,R}^{\theta + \ep})$ is a closed interval $[0,T_\ep]$.  Note by monotonicity (Theorem \ref{T: semi-inf-geos}.3) that $\pi_{x,R}^\theta \leq \pi_{x,R}^{\theta + \ep_1} \leq \pi_{x,R}^{\theta + \ep_2}$ if $0 < \ep_1 < \ep_2$, and so $T_\ep$ is an increasing sequence. Using this monotonicity and \ref{P: precomp}, we see that the sequence $\{\pi_{x,R}^{\theta + 1/n}\}_{n\in \N}$ must converge in the topology of overlap convergence on compact sets to a limiting semi-infinite geodesic $\pi$ for $(x, 0; \theta)$. Since $\pi \ge \pi_{x, R}^\theta$ and $\pi_{x, R}^\theta$ is rightmost we must have that $\pi = \pi_{x, R}^\theta$ and so $T_\ep \uparrow \infty$.
\end{proof}

\begin{proof}[Proof Of Proposition \ref{P: equiv-busemann-def}] Let $\theta \in \Xi, a > 0$. Let $T > 0$ be a time such that:
\begin{itemize}
    \item $\pi_{0,\square}^\theta\big|_{[T,\infty)} = \pi_{x,\square}^\theta\big|_{[T,\infty)}$ for $\square \in \{L,R\}$ and all $x \in [-a, a]$ (this exists by \ref{P: semi-inf-prop}),
    \item For all $x \ge T$, $\pi_{0,L}^\theta(x) < \pi_{0,R}^\theta(x)$. 
    \item $T > t^*$, where $t^* = t^*([a, b])$ from Proposition \ref{P: opt-that-uses-disj-geos}.
    \item $T$ is large enough so that the formulas \eqref{E:eventual-coal} hold with $t = T$.
\end{itemize}
Put simply, by time $T$, the leftmost and rightmost semi-infinite geodesics from $x$ in direction $\theta$ have coalesced with those from $0$, have separated permanently, and the semi-infinite 2-optimizer $\tau_{x}^\theta$, for $(x,0;\theta)$ stays on the leftmost and rightmost semi-infinite geodesics. 

     By \ref{P: coal-semi-geo-bound}, we can find rational points $y_1 < -a < a < y_2$ such that 
     \begin{equation*}
         \pi_{y_1,L}^\theta\big|_{[0,T]} < \pi_{-a,L}^\theta \big|_{[0,T]} \qquad \text{ and } \qquad \pi_{a,R}^\theta\big|_{[0,T]} < \pi_{y_2,R}^\theta \big|_{[0,T]}.
     \end{equation*}
     Let $T' > T$ be a time by which $\pi_{y_1,L}^\theta$ has coalesced with $\pi_{a,L}^\theta$ and $\pi_{y_2,R}^\theta$ with $\pi_{-a,R}^\theta$ (this exists by \ref{P: semi-inf-prop}).
     Take $\ep > 0$ small enough so that, by Lemma \ref{L: close-ang-coal}, 
     \begin{equation}
        \nonumber \Ov(\pi_{y_1,L}^\theta, \pi_{y_1,L}^{\theta - \ep}), \Ov(\pi_{y_2,R}^\theta, \pi_{y_2,R}^{\theta + \ep}) \supset [0,T'].
     \end{equation}
     Now, consider $x \in [-a, a]$ and let $(\theta_1, \theta_2) \in \mathcal I^2_<$ be as in the statement of Proposition \ref{P: equiv-busemann-def}. We will show that the two claims in the proposition hold for this $x, \theta_1, \theta_2$ by explicitly constructing geodesics and optimizers. We start with the construction of geodesics. We claim that
     \begin{equation}
         \label{e: theta+ep-geo-form}
         \pi_{x,L}^{\theta_1} = \pi_{x,L}^\theta\big|_{[0,T']} \oplus \pi_{y_1,L}^{\theta_1}\big|_{[T',\infty)}, \qquad \qquad \pi_{x,R}^{\theta_2} = \pi_{x,R}^\theta\big|_{[0,T']} \oplus \pi_{y_2,R}^{\theta_2}\big|_{[T',\infty)}.
     \end{equation} 
     We only justify the first equality above, as the second follows symmetrically. For this, observe that for any two leftmost semi-infinite geodesics $\pi, \pi'$, the set of all $x$ where $\pi(x) = \pi'(x)$ must be a closed interval, and if $\pi, \pi'$ are semi-infinite leftmost geodesics with the same direction, then this interval is a closed ray. 
Equation \eqref{e: theta+ep-geo-form} then follows from noting that 
$$
\pi^{\theta}_{x, L}(T') = \pi^{\theta}_{y_1, L}(T') = \pi^{\theta_1}_{y_1, L}(T') = \pi^{\theta_1}_{x, R}(T').
$$
The first and second equalities here follow from the construction of $T'$ and monotonicity (\ref{P: semi-inf-prop}), and the final equality follows from the first two equalities and monotonicity (\ref{P: semi-inf-prop}).
   
    A similar argument shows the same statement for $\pi_{x,L}^{\theta-\delta}$ and the analogous statements for $y_1$ and leftmost semi-infinite geodesics in direction $\theta - \ep$. 
  
   We also have that
    \begin{equation}
     \label{E: multi-ep-opt}   \tau_{x}^{\theta_1, \theta_2} = (\tau_{x,1}^\theta\big|_{[0,T']} \oplus \pi_{y_1,L}^{\theta_1}\big |_{[T',\infty)}, \tau_{x,2}^\theta\big|_{[0,T']} \oplus \pi_{y_2,R}^{\theta_2}\big |_{[T',\infty)}).
    \end{equation}
    is a semi-infinite optimizer. This follows from Lemma \ref{L: multi-theta-lem}.1 and its proof is analogous to that of Proposition \ref{P: opt-that-uses-disj-geos}, so we omit it. 

    Putting together \eqref{e: theta+ep-geo-form}, \eqref{E: multi-ep-opt} and Lemma \ref{L: multi-theta-lem}.2 (or Definition \ref{D: busemann} for single-path Busemann paths), we have:
    \begin{align*}
   \mathfrak{B}^{\theta_1, \theta_2}(x) &= \|\tau_{x}^\theta \big|_{[0,T']}\|- \|\pi_{0,R}^\theta\big|_{[0,T']}\| - \|\pi_{0,L}^\theta\big|_{[0,T']}\| \\
   B^{\theta_1}_L(x) &= \|\pi_{x, L}^\theta \big|_{[0,T']}\|  - \|\pi_{0, L}^\theta \big|_{[0,T']}\|, \qquad \qquad B^{\theta_2}_R(x) = \|\pi_{x, R}^\theta \big|_{[0,T']}\|  - \|\pi_{0, R}^\theta \big|_{[0,T']}\|.
    \end{align*}
The proposition then follows by \eqref{E:eventual-coal} and Lemma \ref{l: buse-gap-length-formula}.
\end{proof}

\begin{proof}[Proof of Theorem \ref{T: reflected-bm}]
By Proposition \ref{P: equiv-busemann-def}, it suffices to show that for any $\theta_1 < \theta_2$, the absolute continuity statement holds for the process
$
H(x) := B_L^{\theta_1}(x) + B_R^{\theta_2}(x) - \mathfrak{B}^{\theta_1,\theta_2}(x),
$
for any $\theta_1 < \theta_2$.
By Lemma \ref{L: multi-theta-lem}.3, we have that
$$
H(x) \stackrel{d}{=} B_1(x) - B_2(x) + M(x).
$$
where $B_1, B_2, M$ are as in that lemma. Letting $W = B_2 - B_1$ be a Brownian motion of drift $\theta_2 - \theta_1$ and diffusion coefficient $2$, we can rewrite this as
$$
H(x) \stackrel{d}{=} \max_{y \le x} W(y) - W(x).
$$
On any compact set $[a, b]$,
the above process equal in law to a two-sided Brownian motion of downward drift $\theta_1 -\theta_2$, started from the initial condition $H(a)$ and independent of $H(a)$, e.g.\ see \cite[Theorem 2.31]{morters2010brownian}. Removing the drift does not affect absolute continuity, yielding the result.
\end{proof}
\appendix
\section{Appendix} \label{Appen: no weak opt}

In this appendix, we show that any weak optimizer in the directed landscape is strictly disjoint. For a $k$-tuple of ordered paths $\gamma: [s,t] \to \R^k_{\leq}$, recall that the length of $\gamma$ in the extended landscape is 
\begin{equation}
\label{E:weak-ext-land-length}
  \notag  \|\gamma\|_\L = \inf_{k \in \N} \inf_{r_0 = s < r_1 < \cdots < r_k = t} \sum_{i=1}^k \L\left(\gamma(r_{i-1}),r_{i-1};\gamma(r_i),r_i\right).
\end{equation}
We say that $\gamma$ is a \textbf{weak optimizer} if
\begin{equation}
\label{E:optimizer}
\|\gamma\|_\L =\L\left(\gamma(s),s;\gamma(t),t\right).
\end{equation}
We say that $\gamma$ is a \textbf{disjoint optimizer} if \eqref{E:optimizer} holds, and 
$$
\gamma_1(r) < \gamma_2(r) < \cdots < \gamma_k(r)
$$
for all $r \in (s, t)$. In this appendix, we show that any weak optimizer is a disjoint optimizer. 

\begin{theorem}
    \label{T:weak-is-disjoint}
    Almost surely, the following holds in the directed landscape. Let $\gamma$ be any weak optimizer from $(\x, s)$ to $(\y, t)$. Then for any $r_1 < r_2 \in (s, t)$, the $\gamma|_{[r_1, r_2]}$ is the unique weak optimizer from $(\gamma(r_1), r_1)$ to $(\gamma(r_2), r_2)$. That is, there are no optimizer bubbles in the directed landscape.
    
As a consequence, any weak optimizer in the directed landscape is a disjoint optimizer. 
\end{theorem}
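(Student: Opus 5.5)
The plan has three steps: prove the no-bubbles statement for pairs of rational endpoints, extend it to all endpoints by an approximation argument, and then deduce disjointness. The tools available are the results of \cite{dauvergne2022disjointoptimizersdirectedlandscape} for weak optimizers. These are existence of leftmost and rightmost weak optimizers, monotonicity, uniqueness between rational points (Lemma \ref{L:rlm-opt-unique-no-bubble}), and overlap convergence of rightmost weak optimizers (Lemma \ref{L: geo-opt-rm-coal}). All of these are proved there for weak optimizers directly.

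The first step is the rational case. Almost surely, for every rational $r_1<r_2$ and every rational $\x,\y\in\Q^k_\le$, the weak optimizer from $(\x,r_1)$ to $(\y,r_2)$ is unique. This is Lemma \ref{L:rlm-opt-unique-no-bubble}.3 together with a countable union.

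The second step treats a general weak optimizer $\gamma$ from $(\x,s)$ to $(\y,t)$ and times $r_1<r_2$ in $(s,t)$. Suppose $\gamma'\neq\gamma|_{[r_1,r_2]}$ is another weak optimizer between the same endpoints. Choose rational times $s<q_1<r_1<r_2<q_2<t$. The concatenation $\gamma|_{[s,r_1]}\oplus\gamma'\oplus\gamma|_{[r_2,t]}$ is again a weak optimizer from $(\x,s)$ to $(\y,t)$, so we obtain two distinct weak optimizers $\gamma$ and $\tilde\gamma$ that agree on $[q_1,r_1]$ and on $[r_2,q_2]$. Consider the leftmost and rightmost weak optimizers $\gamma^L,\gamma^R$ between $(\gamma(q_1),q_1)$ and $(\gamma(q_2),q_2)$; these endpoints are generally irrational.

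To reduce to rationals, approximate from the right: take rational $\x^{(n)}\downarrow\gamma(q_1)$ and $\y^{(n)}\downarrow\gamma(q_2)$ with strict coordinatewise inequalities, so that the overlap convergence of Lemma \ref{L: geo-opt-rm-coal} applies. This yields unique optimizers $\sigma_n$ between rational points with $\sigma_n\to\gamma^R$ in overlap. The overlap sets contain intervals $[a_n,b_n]$ with $a_n\to q_1$ and $b_n\to q_2$, so for large $n$ the path $\sigma_n$ coincides with $\gamma^R$ on $[r_1,r_2]$. Doing the same from the left for $\gamma^L$ gives a second sequence of unique rational optimizers $\rho_n$.

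The remaining task is to contradict the fact that $\gamma^L\neq\gamma^R$ somewhere in $[r_1,r_2]$. The two rational optimizers $\rho_n$ and $\sigma_n$ agree with $\gamma^L$ and $\gamma^R$ on $[r_1,r_2]$ respectively, and differ there. Restrict both to the common time window $[a_n,b_n]\supset[r_1,r_2]$. At $a_n$ and $b_n$ they pass through points of $\gamma^L$ and $\gamma^R$ respectively. These points may differ, however, so we do not directly get two optimizers between the same rational points.

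This matching of endpoints is the main obstacle. My first attempt is a swapping argument: at time $r_1$ both $\gamma^L$ and $\gamma^R$ pass through $\gamma(r_1)$, and likewise at $r_2$. So $\rho_n|_{[q_1',r_1]}\oplus\gamma^R|_{[r_1,r_2]}\oplus\rho_n|_{[r_2,q_2']}$ is a weak optimizer between the same rational endpoints as $\rho_n$, where $q_1',q_2'$ are the rational endpoint times of $\rho_n$. This is exactly the concatenation principle used above, applied with endpoints fixed at the rational ones. It contradicts uniqueness of $\rho_n$ unless $\gamma^R=\gamma^L$ on $[r_1,r_2]$, which would give $\gamma'=\gamma|_{[r_1,r_2]}$. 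The point that needs care is whether $\rho_n$ really passes through $\gamma(r_1)$ and $\gamma(r_2)$ at times $r_1,r_2$, that is, whether the overlap of $\rho_n$ with $\gamma^L$ contains $r_1$ and $r_2$. This is what overlap convergence provides, since $[r_1,r_2]$ lies strictly inside $(q_1,q_2)$. If the monotone approximation requires distinct coordinates, I would first handle the case where some coordinates of $\gamma(q_1)$ coincide by perturbing within $\R^k_<$.

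The final step deduces disjointness. Suppose $\gamma_i(r)=\gamma_{i+1}(r)$ for some $r\in(s,t)$. Pick $r_1<r<r_2$ inside $(s,t)$. The optimal weak optimizer on $[r_1,r_2]$ is unique by the second step. Existence of a disjoint optimizer between every pair of points (Theorem \ref{T:dj-exist}) then forces $\gamma|_{[r_1,r_2]}$ to equal that disjoint optimizer, so $\gamma_i(r)<\gamma_{i+1}(r)$, a contradiction.
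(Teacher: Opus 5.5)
There is a genuine gap in your second step. Your first and third steps are fine.

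The failing claim is that ``at time $r_1$ both $\gamma^L$ and $\gamma^R$ pass through $\gamma(r_1)$, and likewise at $r_2$.'' Nothing justifies it. The paths $\gamma^L,\gamma^R$ are the extremal weak optimizers between $(\gamma(q_1),q_1)$ and $(\gamma(q_2),q_2)$, and all you know is $\gamma^L\le\gamma,\tilde\gamma\le\gamma^R$ on $[q_1,q_2]$. A priori, $\gamma^R$ could leave $\gamma$ immediately after $q_1$ and rejoin it only at $q_2$. Indeed, $\gamma^R(r_1)=\gamma(r_1)$ would follow from the very no-bubble statement you are proving, applied to $\gamma|_{[q_1,q_2]}$, so assuming it is circular.

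Overlap convergence only gives $\rho_n=\gamma^L$ and $\sigma_n=\gamma^R$ on $[r_1,r_2]$. It says nothing about $\gamma^L(r_1)$ versus $\gamma^R(r_1)$, so your concatenation $\rho_n|_{[q_1',r_1]}\oplus\gamma^R|_{[r_1,r_2]}\oplus\rho_n|_{[r_2,q_2']}$ need not even be continuous.

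Replacing $\gamma^L,\gamma^R$ by the extremal optimizers between $(\gamma(r_1),r_1)$ and $(\gamma(r_2),r_2)$ does not help. The endpoints then match, but overlap convergence only controls times in a window $[a_n,b_n]\subset(r_1,r_2)$. You would still need these optimizers to coincide with $\gamma$ somewhere in $(r_1,a_n)$ and $(b_n,r_2)$. In both versions, nothing you invoke (rational uniqueness, monotonicity, one-sided overlap convergence) excludes an extremal optimizer splitting off from $\gamma$ immediately at an endpoint. Such a split is exactly a $2$-star at $(\gamma_i(r_1),r_1)$.

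The missing ingredient is a genuinely probabilistic input controlling the endpoints. The paper proceeds as follows.
\begin{itemize}
\item It takes $r_1,r_2$ rational and uses Propositions \ref{P:one-time-dj} and \ref{P:locally-geodesic}: near $r_1$ and $r_2$, $\gamma$ consists of strictly ordered disjoint geodesics.
\item It then invokes Corollary \ref{C:rational-stars}, which comes from the dimension-$1/3$ bound of Proposition \ref{P:2-stars}: at a fixed rational time, no point on the interior of a geodesic is a $2$-star.
\item This forces the rightmost optimizer $\sigma$ between $(\gamma(r_1),r_1)$ and $(\gamma(r_2),r_2)$ to agree with $\gamma$ on $[r_1,r_1+\delta]\cup[r_2-\delta,r_2]$.
\item Only then do the unique rational optimizers $\sigma^{(n)}\to\sigma$ in overlap actually meet $\gamma$, at times $r_1+\alpha$ and $r_2-\alpha$. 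Swapping in $\gamma|_{[r_1+\alpha,r_2-\alpha]}$ then contradicts uniqueness of $\sigma^{(n)}$.
\end{itemize}
Arbitrary $r_1<r_2$ are then handled by enclosing $[r_1,r_2]$ in a rational interval inside $(s,t)$, as you already do.
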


The proof of Theorem \ref{T:weak-is-disjoint} is closely related to the proofs in  \cite{bhatia2024dualitydirectedlandscapeapplications, dauvergne202327geodesicnetworksdirected} which show that there are no geodesic bubbles in the directed landscape. The main difficulty in showing that there are no geodesic bubbles is showing that any interior segment along a geodesic can be approximated in overlap with rational geodesics. We will similarly show that any interior segment of a weak optimizer can be approximated in overlap by disjoint optimizers. The papers \cite{bhatia2024dualitydirectedlandscapeapplications, dauvergne202327geodesicnetworksdirected} resolve the rational approximation problem in different ways. In our setting, it is easier to adapt the method of \cite{bhatia2024dualitydirectedlandscapeapplications}, which uses the fact that $2$-star points along the interior of geodesics are rare as the key input.

\begin{lemma}[Lemma 3.3.1 \cite{dauvergne202327geodesicnetworksdirected}] \label{L: rat-overlap} Almost surely, for any geodesic $\pi$ in the directed landscape, there exists a sequence of geodesics $(\pi_n)_{n\in \N}$, where $\pi_n$ is the unique geodesic between its rational endpoints, such that 
$$
    \pi_n \overset{overlap}{\to} \pi.
$$
\end{lemma}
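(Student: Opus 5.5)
The statement to prove is Lemma~\ref{L: rat-overlap}: every geodesic $\pi:[s,t]\to\R$ is an overlap limit of geodesics that are unique between rational endpoints. My plan is to sandwich $\pi$ between rational geodesics that lie just to its left and just to its right. Planarity and monotonicity should then pin down their overlap with $\pi$.

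The first step is an interior reduction. Fix $\delta>0$ and rational times $s<s'<s+\delta$ and $t-\delta<t'<t$. By \ref{P: no-bubbles}, $\pi|_{[s',t']}$ is the unique geodesic between its endpoints $p'=(\pi(s'),s')$ and $q'=(\pi(t'),t')$. The second step is to approximate the endpoints. Choose rationals $x_n\downarrow\pi(s')$ and $y_n\downarrow\pi(t')$ with $x_n>\pi(s')$ and $y_n>\pi(t')$. By \ref{P: rmlmopt-mon}, the geodesic $\pi_n$ for $(x_n,s';y_n,t')$ is unique. By \ref{P: ov-rm-opt}, applied to the rightmost geodesic of $(p';q')$, which is $\pi|_{[s',t']}$ itself, we get $\pi_n\to\pi|_{[s',t']}$ in overlap. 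Concretely, $\Ov(\pi_n,\pi)\supset[a_n,b_n]$ with $a_n\to s'$ and $b_n\to t'$.

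This only gives overlap on $[s',t']$, whereas we need overlap endpoints converging to $s$ and $t$. I will therefore let $\delta\to0$ and run a diagonal argument, taking $s'_k\downarrow s$ and $t'_k\uparrow t$. The remaining task is to check the formal requirement that $\Ov(\pi_n,\pi)$ be an interval whose endpoints converge to $s$ and $t$, where $\pi_n$ is defined on $[s'_k,t'_k]$. Since both $\pi_n$ and $\pi$ are geodesics, \ref{P: no-bubbles} makes their overlap an interval. Its endpoints lie in $[a_n,b_n]$, which is within $2\delta_k$ of $[s,t]$. So the diagonal sequence converges in overlap.

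I expect the main obstacle to be subtle rather than technical. One must make sure that \ref{P: ov-rm-opt} genuinely applies at the possibly exceptional interior point $(p';q')$. It does, because that property holds simultaneously for all points on the event $\Omega$. One must also make sure that the approximating geodesics are unique as geodesics between their own endpoints, not merely among geodesics near $\pi$; uniqueness at rational points, from \ref{P: rmlmopt-mon}, supplies exactly this. If the overlap definition instead requires $\pi_n$ to have the same time domain $[s,t]$, I would replace the rational times $s'$ and $t'$ by $s$ and $t$ when those are rational. Otherwise I would note that the lemma's statement allows rational endpoints, so shrinking domains are permitted.
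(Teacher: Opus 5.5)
Your argument is correct as a deduction from the properties defining $\Omega$. For rational interior times $s'<t'$, \ref{P: no-bubbles} makes $\pi|_{[s',t']}$ the unique, hence rightmost, geodesic between its endpoints. So \ref{P: ov-rm-opt}, applied to rational points approaching from the right, together with uniqueness at rational points from \ref{P: rmlmopt-mon}, produces the approximants. The diagonal step is fine because overlap convergence as defined in this paper allows the domains $[s'_k,t'_k]$ to vary. One small slip: the endpoints of $\Ov(\pi_n,\pi)$ lie in $[s'_k,a_n]$ and $[b_n,t'_k]$, not in $[a_n,b_n]$. The conclusion is unaffected.

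The paper does not prove this lemma; it imports it. Its appendix indicates that in the cited works the logic runs the other way. Rational approximation is described there as the main difficulty in showing there are no geodesic bubbles, and no bubbles then follows by splicing. If $\gamma\neq\pi|_{[s',t']}$ were a competing geodesic, inserting $\gamma$ into a rational $\pi_n$ that agrees with $\pi$ on a neighbourhood of $[s',t']$ would contradict uniqueness of $\pi_n$.

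So, given \ref{P: rmlmopt-mon} and \ref{P: ov-rm-opt}, the two statements are equivalent, and your proof moves all of the substance into \ref{P: no-bubbles}. This is legitimate only because no bubbles has a proof independent of this lemma, namely \cite[Theorem 1]{bhatia2024dualitydirectedlandscapeapplications}. Its key input is the rarity of $2$-stars on geodesic interiors at fixed times, which is the scheme the appendix adapts to prove Theorem \ref{T:weak-is-disjoint}. If you instead took \ref{P: no-bubbles} from \cite[Lemma 3.3]{dauvergne202327geodesicnetworksdirected}, where rational approximation is the route to no bubbles, your argument would be circular. Your route buys a three-line derivation inside the axiomatic framework; the source route actually proves the hard step and obtains no bubbles as a corollary.

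Finally, your fallback for rational $s,t$ is unnecessary, and it would fail in general. \ref{P: no-bubbles} says nothing about uniqueness between the original endpoints, so approximating from the right there would converge to the rightmost geodesic rather than to $\pi$.
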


\begin{prop}[Theorem 4, \cite{bhatia2022atypical}]
\label{P:2-stars}
Let $\pi$ be the (almost surely unique) geodesic from $(0,0)$ to $(0, 1)$ in the directed landscape. Let
$$
T(\pi) = \{t \in (0, 1) : (t, \pi(t)) \text{ is a forwards or backwards $2$-star}\}.
$$
Then the Hausdorff dimension of $S$ is almost surely $1/3$.
\end{prop}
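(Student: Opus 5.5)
The plan is to prove matching upper and lower bounds, $\dim T(\pi) \le 1/3$ and $\dim T(\pi)\ge 1/3$, almost surely. Both bounds come from the same one-point estimate. Fix a time $t \in (\delta, 1-\delta)$ and a window $I_\ep = [t, t+\ep]$. I would aim to show
$$
\mathbb P\big(T(\pi) \cap I_\ep \ne \emptyset\big) \asymp \ep^{2/3},
$$
uniformly in $t$ away from the endpoints, with the constants allowed to depend on $\delta$.

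\textbf{Reformulation.} By the flip symmetry (Lemma \ref{L: EDL-symmetries}.2) it suffices to handle forward $2$-stars. A point $(\pi(r), r)$ is a forward $2$-star essentially when a second geodesic leaves $\pi$ at time $r$ and stays disjoint from it. Using coalescence and the rare-$3$-star input (Lemma \ref{L:3-star}), up to a constant-probability event I would express this as: some point of the box $B_\ep = [\pi(t) - C\ep^{2/3}, \pi(t)+C\ep^{2/3}] \times I_\ep$ emits two geodesics that are disjoint up to a macroscopic time. In the $1:2:3$ scaling (Lemma \ref{L: EDL-symmetries}.3) this is a two-arm, or disjointness, event from a box of spatial size $\ep^{2/3}$ to scale one. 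The natural object to estimate is the gap $\L(x^2, r; \cdot) - 2\L(x,r;\cdot)$, which is a marginal of the extended landscape. For fixed endpoints it is governed by the top two Airy lines, whose gap is locally Brownian (Theorem \ref{T: greenes-thm}). I expect the probability that this gap vanishes somewhere within spatial distance $\ep^{2/3}$ to be of order $\ep^{2/3}$.

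\textbf{Upper bound.} Cover $(\delta, 1-\delta)$ by $\ep^{-1}$ windows. The expected number of windows that meet $T(\pi)$ is then at most $C\ep^{-1}\ep^{2/3} = C\ep^{-1/3}$. The standard first-moment covering argument gives $\dim T(\pi)\le 1/3$ almost surely, and letting $\delta \to 0$ handles the whole interval.

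\textbf{Lower bound.} Here I would run a second-moment (Frostman energy) argument. I would build the natural measure on $T(\pi)$ as a weak limit of $\ep^{-2/3}\sum_I \mathbf 1\{T(\pi)\cap I\neq\emptyset\}\,\mathrm{Leb}|_I$. The needed input is a two-point bound: for windows at distance $d \ge \ep$,
$$
\mathbb P(\text{both windows hit}) \le C \ep^{4/3} d^{-2/3}.
$$
I would try to obtain this from independent increments (Definition \ref{D: the-DL}.2) after localizing $\pi$ and the escaping geodesics to disjoint time strips. This gives finite $s$-energy for every $s<1/3$ with positive probability. A $0$--$1$ argument, using scaling and stationarity to compare sub-segments of $\pi$, then upgrades this to $\dim T(\pi) \ge 1/3$ almost surely.

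\textbf{Main obstacle.} The hard step is the sharp one-point lower bound, together with the approximate independence needed for the two-point bound. The difficulty is that $\pi$ itself is random and correlated with the environment near it, so the escaping geodesic must be controlled conditionally on $\pi$. My first attempt would go through the geodesic--interface duality of \cite{bhatia2024dualitydirectedlandscapeapplications}. Under that duality, $2$-stars on $\pi$ correspond to intersections of $\pi$ with dual (competition-interface) geodesics. The problem then becomes estimating the dimension of the intersection of a geodesic with an independent-like family of dual geodesics, where Brownian comparison of the Airy gap gives the exponent $2/3$.
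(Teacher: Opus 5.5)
The paper does not prove this proposition; it quotes it from Bhatia's work, so your outline has to stand on its own. The moment-method skeleton (first moment for the upper bound, Frostman energy for the lower bound) is standard and fine. The problem is that every estimate carrying content is only announced, and the heuristic you give for the central one points to the wrong exponent.

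With the start point fixed, the gap is $\mathcal A_1-\mathcal A_2$, which never vanishes (Corollary \ref{Cor: network-density}.1). Once the start point is allowed to move, a locally Brownian gap at a \emph{deterministic} location vanishes within spatial distance $\ep^{2/3}$ with probability $\asymp(\ep^{2/3})^{1/2}=\ep^{1/3}$. Fed into your covering argument, this only gives $\dim T(\pi)\le 2/3$, heuristically the value along a fixed vertical line. The missing factor $\ep^{1/3}$ must come from the dependence between the location of $\pi$ and the $2$-stars near it, and your proposal never addresses this.

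A heuristic model computation (not a proof) shows the mechanism.
\begin{itemize}
\item By Lemma \ref{L: multi-theta-lem}.3, the semi-infinite geodesic from $(0,-1)$ in direction $\theta_1$ crosses time $0$ at the argmax $x^*$ of $A+B_1$, where $A=\L(0,-1;\cdot,0)$ is independent of $(B_1,B_2)$.
\item The points at time $0$ where the $\theta_1$- and $\theta_2$-geodesics split immediately correspond to record times of $W=B_2-B_1$.
\item $W$ is negatively correlated with $A+B_1$. Conditioning on the argmax therefore makes $W(x^*-u)-W(x^*)$ locally, up to a constant multiple, equal to $R(u)+\sqrt3\,\beta(u)$. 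Here $R$ is a Bessel-$3$ process and $\beta$ an independent Brownian motion.
\item A record within distance $\delta$ of $x^*$ then forces a four-dimensional Brownian motion to stay in a cone of half-angle $\pi/3$ from scale $\delta$ to scale $1$. The cap has first Dirichlet eigenvalue $8$, so the survival exponent is $1$.
\item This costs $\delta$ rather than $\delta^{1/2}$, and $\delta=\ep^{2/3}$ gives exactly your $\ep^{2/3}$.
\end{itemize}
So the upper bound is as hard as the lower bound, contrary to your ``main obstacle'' paragraph. The box reformulation is also at best an upper-bound device, since a $2$-star in $B_\ep$ need not lie on $\pi$.

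The remaining steps are also unsupported.
\begin{itemize}
\item The two-point bound $C\ep^{4/3}d^{-2/3}$ does not follow from independent increments alone, because $\pi$ is a function of the whole environment on $[0,1]$. You would need to control $\pi$ and the escaping arm conditionally on the environment outside a time strip, and the same repulsion effect reappears in each window.
\item The upgrade from positive probability to probability one needs an actual $0$--$1$ law, for instance a Blumenthal-type argument at the start point. Sub-segments of $\pi$ are not copies of $\pi$, so scaling and stationarity alone do not give it.
\item $T(\pi)$ contains stars whose second arm is short. The estimate must be proved at every arm scale and combined by countable stability, not reduced to macroscopic arms ``up to a constant-probability event''.
\item The duality route at the end is a reasonable program. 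As written it restates the problem rather than proving the exponent.
\end{itemize}
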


\begin{corollary}
\label{C:rational-stars}
Fix $t \in \R$. Then almost surely, there are no $2$-star points $(x, t) \in \R^2$ that lie on the interior of a geodesic.
\end{corollary}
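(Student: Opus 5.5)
The plan is to deduce Corollary~\ref{C:rational-stars} from Proposition~\ref{P:2-stars} by a symmetry and countability argument, in the same spirit as the proof of Lemma~\ref{L: rare-3-stars}.

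\textbf{Step 1: reduce to rational geodesics.} By Lemma~\ref{L: rat-overlap}, every geodesic is an overlap limit of geodesics $\pi_n$ that are the unique geodesics between their rational endpoints. Suppose $(x,t)$ lies on the interior of a geodesic $\pi$. Then for large $n$ the overlap of $\pi_n$ and $\pi$ contains a neighbourhood of $t$, so $(x,t)$ also lies on the interior of some rational geodesic $\pi_n$. It therefore suffices to show the following: for every rational pair $(p;q)=(x_0,s_0;y_0,t_0)$ with $s_0<t<t_0$, almost surely the unique geodesic $\pi_{p,q}$ does not pass through a $2$-star at time $t$. There are countably many such pairs, so we can take a union bound.

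\textbf{Step 2: transfer Proposition~\ref{P:2-stars} to $\pi_{p,q}$.} Using the symmetries of $\L$ (shift stationarity, skew symmetry and $1{:}2{:}3$ scaling, Lemma~\ref{L: EDL-symmetries}), the geodesic $\pi_{p,q}$ is, up to an affine space-time change, the geodesic from $(0,0)$ to $(0,1)$. These maps send $2$-stars to $2$-stars, since they map geodesics to geodesics. Moreover, the scaling multiplies Euclidean dimension in time by a constant factor only through a bi-Lipschitz map of the time coordinate. Hence the time set
\[
T(\pi_{p,q}) = \{r\in(s_0,t_0) : (\pi_{p,q}(r),r) \text{ is a $2$-star}\}
\]
has Hausdorff dimension $1/3<1$ almost surely, and in particular has Lebesgue measure zero.

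\textbf{Step 3: upgrade from measure zero to a fixed time.} This is the main obstacle: knowing that $T(\pi_{p,q})$ is Lebesgue-null does not immediately rule out a fixed $t$. I would argue by stationarity. Consider the event $E_t$ that some rational geodesic crossing time $t$ has a $2$-star at time $t$. By shift stationarity in time, $\mathbb P(E_t)$ does not depend on $t$. By Fubini,
\[
\int_{\R} \mathbb P(E_t)\,dt = \mathbb E\,\bigl|\{t : E_t \text{ holds}\}\bigr|,
\]
and the right-hand side is zero, because $\{t : E_t\}$ is contained in the countable union $\bigcup_{p,q\in\Q^4} T(\pi_{p,q})$ of Lebesgue-null sets. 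Therefore $\mathbb P(E_t)=0$ for almost every $t$, and hence for every $t$ by stationarity.

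The remaining technical point is measurability of $\{(t,\omega) : E_t\}$. I would handle this by noting that "there is a $2$-star at $(\pi_{p,q}(r),r)$" can be expressed via countable operations: it is equivalent to the existence of rational approximating geodesic pairs that stay disjoint on $(r,r+1/m)$ or $(r-1/m,r)$, using \ref{P: precomp} and \ref{P: ov-rm-opt}. This gives a Borel set in $(t,\omega)$, so Fubini applies.

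Combining Steps 1 to 3 gives the corollary.
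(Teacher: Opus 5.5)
Your proposal is correct and follows the paper's proof essentially step for step: reduce to the countable family of rational geodesics via Lemma \ref{L: rat-overlap}, transfer Proposition \ref{P:2-stars} to each of them by the symmetries to get a Lebesgue-null time set, and conclude for a fixed $t$ by time-stationarity (your Fubini computation just makes explicit the paper's one-line appeal to translation invariance in law). One small point: stationarity is cleanest if $E_t$ is phrased over all geodesics, as the paper does with $T=\bigcup_\pi T(\pi)$, because irrational time shifts do not preserve rational endpoints; by your Step 1 this event agrees almost surely with your rational version, so nothing breaks.
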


\begin{proof}
First, for a geodesic $\pi:[s, t] \to \R$, define the set $T(\pi)$ as in Proposition \ref{P:2-stars}, with $[0, 1]$ replaced by $[s, t]$.
To prove the corollary, it suffices to show that 
$$
t \notin T := \bigcup_\pi T(\pi) 
$$
almost surely, where the union is over all geodesics $\pi$. Next, since any geodesic can be approximated in overlap by rational geodesics (Lemma \ref{L: rat-overlap}), we have
$$
T = \bigcup_{\pi:(x, s) \to (y, t), x, s, y, t \in \Q} T(\pi),
$$
where the right-hand side is a countable union over all rational geodesics. Now, by Proposition \ref{P:2-stars} and the symmetries of the directed landscape (Lemma \ref{L: EDL-symmetries}), for any rational geodesic $\pi$, the set $T(\pi)$ has Hausdorff dimension $1/3$. Therefore $T$ also has Hausdorff dimension $1/3$, and hence it has Lebesgue measure $0$. Finally, $T$ is translation invariant in law by time-shift invariance of the directed landscape (Lemma \ref{L: EDL-symmetries}), and so for any fixed point $t$, $t \notin T$ almost surely.
\end{proof}

To use Corollary \ref{C:rational-stars} to establish Theorem \ref{T:weak-is-disjoint}, we will also need to use that at typical times along any fixed optimizer, that optimizer is disjoint and locally consists of geodesics. The next proposition gives the disjointness.

\begin{prop}[Lemma 8.2, \cite{dauvergne2022disjointoptimizersdirectedlandscape}]
    \label{P:one-time-dj}
    For any fixed $s$, the following holds almost surely for $\L$. For any weak optimizer $\gamma$ defined on an interval $[r, t]$ with $r < s < t$, we have that
    $$
    \gamma_1(s) < \gamma_2(s) < \cdots < \gamma_k(s).
    $$
\end{prop}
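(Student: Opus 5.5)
The plan is to reduce the statement to a fact about the metric composition law across the fixed time $s$, and then to use the independence of the landscape before and after time $s$.

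\textbf{Reduction to the diagonal.} Suppose $\gamma$ is a weak $k$-optimizer on $[r,t]\ni s$ with $\gamma_i(s)=\gamma_{i+1}(s)$ for some $i$. Restrict $\gamma$ to $[r',t']$, where $r'<s<t'$ are rational. Then $\gamma|_{[r',s]}$ and $\gamma|_{[s,t']}$ are weak optimizers, and $\z=\gamma(s)$ maximizes
$$
\z'\mapsto f(\z')+g(\z'),\qquad f(\z')=\L(\gamma(r'),r';\z',s),\quad g(\z')=\L(\z',s;\gamma(t'),t'),
$$
over $\R^k_\le$. So it suffices to show the following: almost surely, for all rational $r'<s<t'$ and all endpoints $\x,\y$, no maximizer of $f+g$ lies on the diagonal boundary $\{z_i=z_{i+1}\}$ of $\R^k_\le$.

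\textbf{Local gain off the diagonal.} For each fixed starting tuple, $f$ is a function of the extended Airy sheet across $[r',s]$. Its behaviour near the diagonal is governed by a Greene-type formula. For $k=2$ (and generalizing via Theorem \ref{T: greenes-thm} and shift/skew symmetry, Lemma \ref{L: EDL-symmetries}), take $\z'=(w,w+h)$. Then
$$
f(w,w+h)-f(w,w)=\bigl[A_1(w+h)-A_1(w)\bigr]+\Bigl[(A_1-A_2)(w)-\min_{[w,w+h]}(A_1-A_2)\Bigr],
$$
with $A_1,A_2$ locally Brownian. The same structure holds for $g$, reflected in time, with its own independent Airy lines. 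The gain from splitting a coincident pair is therefore a sum of two independent locally Brownian quantities, each containing a nonnegative running-minimum term. Since a two-sided Brownian path almost surely has $\limsup_{h\downarrow 0} h^{-1/2}\cdot(\text{increment}) = \infty$ in the relevant sense, one of the directions $h>0$ (or $h<0$) should give a strict increase of $f+g$. I would make this precise using Brownian absolute continuity of Airy lines on compacts, together with independent increments (Definition \ref{D: the-DL}.2), to rule out a diagonal maximum at any fixed endpoints.

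\textbf{Uniformity, the main obstacle.} The argument above is for fixed $\x,\y$, but the statement must hold simultaneously for uncountably many endpoints. My first attempt is to use the time-$s$ structure. The functions $f$ (resp.\ $g$) for all starting tuples are generated by the \emph{same} parabolic Airy line ensemble at time $s$, via last passage across the ensemble (the Airy sheet representation). A maximum of $f+g$ on the diagonal would then force an exceptional coincidence of two independent line ensembles: one from $[r',s]$, one from $[s,t']$.

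Concretely, I would express the right-derivative-type gain at every diagonal point $(w,w)$ as a lower bound that depends only on $w$ and not on $\x,\y$. The natural candidate is an increment of the top lines near $w$. I would then show that, almost surely, for \emph{every} $w$ the combined forward–backward increments fail to be $\le 0$ on both sides simultaneously. For independent Brownian-like paths $B,B'$, this amounts to showing that $B+B'$ has no point which is both a local maximum from the left and one from the right in the required joint sense. I would try to prove this by a countable-dyadic-box covering argument, which gives a probability estimate summable over scales.

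If this uniform step fails, the fallback is to sandwich arbitrary endpoints between rational ones using monotonicity of leftmost and rightmost optimizers (Lemma \ref{L:rlm-opt-unique-no-bubble}). The difficulty there is that coincidence at time $s$ does not obviously transfer under sandwiching. I expect this uniformity step to be the genuinely hard part of the proof.
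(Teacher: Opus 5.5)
The paper gives no proof of this proposition. It is imported from \cite[Lemma 8.2]{dauvergne2022disjointoptimizersdirectedlandscape}, so your argument has to stand on its own, and it has a genuine gap at exactly the step you flag as hard.

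\textbf{What is sound.} Your reduction is fine. With rational $r'<s<t'$, the point $\gamma(s)$ maximizes $\z'\mapsto f(\z')+g(\z')$ over $\R^k_\le$, and it suffices to rule out diagonal maximizers simultaneously for all $\x,\y$.

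\textbf{The local-gain formula does not cover the relevant $f$.} Theorem \ref{T: greenes-thm} describes $f$ only for a coincident starting tuple. Shift and skew symmetry are equalities in law, and they cannot turn $x_1<x_2$ into $x_1=x_2$. Yet $\x=\gamma(r')$ will generically have distinct coordinates, since that is what is being proved. To get a gain bound independent of $\x$, you need a representation, valid simultaneously for all starting tuples, of $\z\mapsto\L(\x,r';\z,s)$ as (a limit of) multi-path last passage across one line ensemble attached to time $s$. Lengthening the lower path near its endpoint would then give, for every $\x$,
\[
f(w,w+h)-f(w,w)\ge\max_{u\in[w,w+h]}\bigl[\A_2(u)-\A_2(w)+\A_1(w+h)-\A_1(u)\bigr].
\]
A mirror-image last passage picture would give a bound of the same form for $g$, in terms of the (reflected) ensemble generated by $\L|_{[s,t']}$. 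Nothing you invoke supplies this.

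\textbf{The fixed-endpoint heuristic is invalid.} The law of the iterated logarithm controls increments at a fixed time. Here $w$ is selected by maximizing a functional of those same paths; along the diagonal, every increment of $f+g$ at $w$ is $\le 0$ by construction. Whether the nonnegative running-minimum terms win is a same-scale ($\sqrt h$) competition, which is precisely what must be shown.

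\textbf{The uniform step, as set up, fails.} This is the decisive problem.
\begin{itemize}
\item If the $w$-dependent bound uses only top-line increments, it is $\le 0$ on both sides at every local maximum of the sum of the past and future top lines. Brownian motion has a dense countable set of local maxima. So your stated target, that $B+B'$ has no point which is a local maximum from both the left and the right, is false.
\item The second line is indispensable. In the two-line bound above, take $u=w$ and $u=w+h$ for both $f$ and $g$. Failure at $w$ then forces two independent, locally Brownian processes (roughly $\A_1$ plus the future top line, and $\A_2$ plus the future second line) to share a right-sided local maximum at $w$.
\item That event is polar, but it sits at the critical dimension. Each one-sided maximum set has dimension $1/2$, exactly as for the zero sets of two independent Brownian motions.
\item Consequently a first-moment dyadic covering gives about $2^n\cdot 2^{-n/2}\cdot 2^{-n/2}$, that is order one, bad intervals at scale $2^{-n}$. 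No summable estimate is available. You need a genuine polarity argument, for instance via planar Brownian motion not hitting points.
\end{itemize}

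Without the simultaneous-in-$\x$ representation and this polarity input, the proposal does not prove the statement. The sandwiching fallback does not help either, since, as you note yourself, coincidences at time $s$ do not transfer under sandwiching.
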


To upgrade Proposition \ref{P:one-time-dj} to find local geodesic segments of $\gamma$, we also need the following simple lemma.

\begin{lemma}
\label{L:opt-lemma}
Let $\x, \y \in \R^k_<$ and let $s < t$. Suppose that there exist disjoint geodesics $\pi_i, i = 1, \dots, k$ from $(x_i, s)$ to $(y_i, t)$. Then, if $\gamma$ is any weak optimizer from $(\x, s)$ to $(\y, t)$, the paths $\gamma_1, \dots, \gamma_k$ are all geodesics.
\end{lemma}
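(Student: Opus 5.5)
The plan is a direct comparison argument: the disjoint geodesics already achieve the maximal possible total length, so any weak optimizer must as well, and a componentwise comparison then forces each path to be a geodesic.

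First, I will compare the total length of $\gamma$ with the sum of geodesic lengths. For each $i$, the path $\gamma_i$ runs from $(x_i,s)$ to $(y_i,t)$. By the definition of path length and the triangle inequality, every path satisfies $\|\gamma_i\|_\L \le \L(x_i,s;y_i,t)$. Summing over $i$ gives
$$
\|\gamma\|_\L = \sum_{i=1}^k \|\gamma_i\|_\L \le \sum_{i=1}^k \L(x_i,s;y_i,t).
$$

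Second, I will show the reverse inequality using the disjoint geodesics $\pi_1,\dots,\pi_k$. Since $\x,\y\in\R^k_<$ and the $\pi_i$ are disjoint, they are ordered: a crossing would, by continuity, force an intersection. Hence $(\pi_1,\dots,\pi_k)$ is an admissible disjoint $k$-tuple in the supremum \eqref{eq: ext-landscape}. Because each $\pi_i$ is a geodesic, this gives
$$
\L(\x,s;\y,t) \ge \sum_i \|\pi_i\|_\L = \sum_i \L(x_i,s;y_i,t).
$$
Since $\gamma$ is a weak optimizer, $\|\gamma\|_\L = \L(\x,s;\y,t)$, and combining this with the first step yields equality throughout:
$$
\sum_i \|\gamma_i\|_\L = \sum_i \L(x_i,s;y_i,t).
$$

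Third, I will conclude termwise. Each summand satisfies $\|\gamma_i\|_\L \le \L(x_i,s;y_i,t)$ and the sums agree. All quantities are finite, since the geodesic lengths are finite real values of $\L$. Therefore $\|\gamma_i\|_\L = \L(x_i,s;y_i,t)$ for every $i$, which says exactly that each $\gamma_i$ is a geodesic.

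The only point needing care, which I expect to be the mild obstacle, is the identity $\|\gamma\|_\L = \sum_i \|\gamma_i\|_\L$. The weak-optimizer length is defined via the multi-path $\L$ over partitions rather than as a sum of single-path lengths. For the termwise argument I only need
$$
\|\gamma\|_\L \le \sum_i \|\gamma_i\|_\L.
$$
On each partition interval I will bound the multi-point value $\L(\gamma(r_{j-1}),r_{j-1};\gamma(r_j),r_j)$ above by $\sum_i \L(\gamma_i(r_{j-1}),r_{j-1};\gamma_i(r_j),r_j)$; this holds because the extended landscape is a supremum of sums of single-path lengths, each bounded by $\L$. Summing over $j$ and then taking the infimum over partitions, using that a common refinement can be chosen for all $i$, gives the desired bound. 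This inequality direction suffices for the chain above.
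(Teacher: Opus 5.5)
Your proposal is correct and matches the paper's proof: the chain $\sum_i \L(x_i,s;y_i,t) \le \L(\x,s;\y,t) = \|\gamma\|_\L \le \sum_i \|\gamma_i\|_\L \le \sum_i \L(x_i,s;y_i,t)$ forces termwise equality, with the middle inequality coming from $\L(\x',s';\y',t') \le \sum_i \L(x_i',s';y_i',t')$ applied on each partition interval. Your common-refinement step implicitly uses that refining a partition can only decrease each single-path sum, which follows from the reverse triangle inequality, so the argument is complete.
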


\begin{proof}
With notation as in the statement of the lemma, we have
$$
\sum_{i=1}^k \L(x_i, s; y_i, t) = \L(\x, s; \y, t) = \|\gamma\|_\L \le \sum_{i=1}^k \|\gamma_i\|_\L.
$$
Here the first equality is from the assumption that there exist disjoint geodesics from $(x_i, s)$ to $(y_i, t)$ and the definition of the extended landscape, the second equality is because $\gamma$ is a weak optimizer, and the inequality is by the definition \eqref{E:optimizer}, and the fact that, by \eqref{eq: ext-landscape}, for any $\x', \y' \in \R^k_\le$ and $s' < t'$, we have that 
$$
\L(\x', s'; \y', t') \le \sum_{i=1}^k \L(x_i', s'; y_i', t').
$$
On the other hand, $\L(x_i, s; y_i, t) \ge \|\gamma_i\|_\L$ for all $i$, and so $\L(x_i, s; y_i, t) = \|\gamma_i\|_\L$ for all $i$. That is, all the paths $\gamma_i$ are geodesics.
\end{proof}
\begin{prop}
\label{P:locally-geodesic}
The following holds almost surely for $\L$. Consider any weak optimizer $\gamma$ defined on an interval $[r, t]$, and any time $s \in (r, t)$ with 
\begin{equation}
\label{E:gammais}
  \notag  \gamma_i(s) < \gamma_{i+1}(s) \text{ for all } i = 1, \dots, k.
\end{equation}
Then there exists $\ep > 0$ depending on $\gamma, s$ such that the paths
$$
\gamma_1|_{[s-\ep, s+ \ep]}, \dots,  \gamma_k|_{[s-\ep, s+ \ep]}
$$
are disjoint geodesics. The same conclusion holds when $r = s$ or $r = t$, except we must replace $[s-\ep, s+ \ep]$ by $[r, r + \ep]$ or $[t-\ep, t]$, respectively.
\end{prop}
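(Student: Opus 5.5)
We aim to show that near an interior time $s$ where $\gamma$ is strictly separated, $\gamma$ is locally a $k$-tuple of disjoint geodesics. The natural tool is Lemma \ref{L:opt-lemma}: it suffices to find $\ep>0$ such that there exist disjoint geodesics from $(\gamma_i(s-\ep), s-\ep)$ to $(\gamma_i(s+\ep), s+\ep)$ for each $i$. Indeed, $\gamma|_{[s-\ep,s+\ep]}$ is itself a weak optimizer between its endpoints, since restrictions of weak optimizers are weak optimizers by the reverse triangle inequality for the extended landscape. So the plan is to produce such disjoint geodesics and then apply Lemma \ref{L:opt-lemma}. This also gives disjointness of the $\gamma_i$ on $[s-\ep,s+\ep]$, provided we choose $\ep$ so small that the $\gamma_i$ stay separated there, which continuity allows.

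First fix $\ep_0>0$ and $\delta>0$ by continuity so that $\gamma_{i+1}(u)-\gamma_i(u) > 3\delta$ for all $u\in[s-\ep_0,s+\ep_0]$ and all $i$. Also require $|\gamma_i(u)-\gamma_i(s)|<\delta/2$ on this window. Next, use a deterministic modulus of continuity for geodesics on the almost sure event. On compacts, geodesics between points with time separation $2\ep$ and spatial displacement at most $\delta$ stay within spatial distance $o(1)$, as $\ep\to 0$, of the straight segment, uniformly. This follows from Lemma \ref{L: DL-bound}: a geodesic that wanders far pays a parabolic cost $\gtrsim (\text{deviation})^2/\ep$, which dominates the error term $C\ep^{1/3}\log^{4/3}(\cdot)$. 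Choose $\ep\le\ep_0$ so small that every geodesic from $(\gamma_i(s-\ep),s-\ep)$ to $(\gamma_i(s+\ep),s+\ep)$ stays within $\delta$ of $\gamma_i(s)$. Then any choice of geodesics $\pi_i$ for these endpoint pairs lies in disjoint strips $[\gamma_i(s)-\delta,\gamma_i(s)+\delta]$, and is therefore pairwise disjoint. Lemma \ref{L:opt-lemma} now shows each $\gamma_i|_{[s-\ep,s+\ep]}$ is a geodesic. Disjointness of these restrictions comes from the choice of $\ep_0$.

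The boundary cases $s=r$ or $s=t$ are handled identically on the one-sided window $[r,r+\ep]$ or $[t-\ep,t]$. The hypothesis then means the starting (or ending) points are strictly ordered.

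The main obstacle is the uniform localization estimate. We need it simultaneously for all endpoint pairs in a compact set, with only a short time gap. We would derive it from Lemma \ref{L: DL-bound} as follows. If a geodesic $\pi$ from $(a,s-\ep)$ to $(b,s+\ep)$ reaches $(z,u)$ with $|z-a|\ge\delta$, then
\[
\L(a,s-\ep;z,u)+\L(z,u;b,s+\ep)\le -\frac{c\,\delta^2}{\ep}+O\!\left(\ep^{1/3}\log^{2}\ep^{-1}\right),
\]
whereas $\L(a,s-\ep;b,s+\ep)\ge -\frac{\delta^2}{8\ep}-O(\ep^{1/3}\log^2\ep^{-1})$ when $|a-b|\le\delta/2$. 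Comparing these two bounds gives a contradiction for small $\ep$. If the constants are awkward, we can alternatively use precompactness (\ref{P: precomp}) in a contradiction argument: degenerating geodesics over shrinking time intervals would converge to a vertical segment.
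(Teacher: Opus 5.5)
Your proposal is correct and follows essentially the same route as the paper, which also derives from Lemma \ref{L: DL-bound} a uniform localization bound for geodesics over short time windows, in the form $\max|\pi - x| \le |y-x| + C|t'-s'|^{1/3}$ on a compact set. The paper then chooses $\ep$ so that the gaps of $\gamma$ dominate this bound, concludes that any $k$-tuple of geodesics between the endpoints at times $s\pm\ep$ is disjoint, and applies Lemma \ref{L:opt-lemma}. The only slip is cosmetic: your window choice gives $|a-b|<\delta$ rather than $|a-b|\le\delta/2$, which you fix by shrinking $\ep$ further using continuity of $\gamma$.
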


\begin{proof}
We only consider the case when $s \in (r, t)$, as the cases when $s = r$ or $s = t$ are the same up to notational changes.

Let $K \subset \R^2$ be a compact set containing all of the points $(\gamma_i(s'), s'), s' \in [r, t]$. The landscape shape theorem (Lemma \ref{L: DL-bound}) implies that there exists a $K$-dependent random constant $C > 0$ such that for any geodesic $\pi$ between any two points $(x, s'), (y, t')$ in $K$, almost surely the geodesic $\pi$ satisfies
\begin{equation}
\label{E:weak-geo-bd}
\max_{r \in [s', t']} |\pi(r) - x| \le |y - x| + C|t' - s'|^{1/3}.
\end{equation}
On the other hand, by continuity of $\gamma$, for all small enough $\ep > 0$ we have that
$$
\min_{s' \in [s-\ep, s+ \ep], i \in \{1, \dots, k-1\}} \gamma_{i+1}(s') - \gamma_i(s') > 10(C\ep^{1/3} + \max_{i \in \{1, \dots, k\}} |\gamma_i(s - \ep) - \gamma_i(s + \ep)|).
$$
Therefore by \eqref{E:weak-geo-bd}, for such $\ep$, any $k$-tuple of geodesics between $(\gamma(s - \ep), s-\ep)$ and $(\gamma(s+\ep), s + \ep)$ is disjoint. Appealing to Lemma \ref{L:opt-lemma} then completes the proof.
\end{proof}

\begin{proof}[Proof of Theorem \ref{T:weak-is-disjoint}]
Throughout the proof, we work on the almost sure set where Corollary \ref{C:rational-stars} holds for all rational $t$, Proposition \ref{P:one-time-dj} holds for all rational $s$, and Proposition \ref{P:locally-geodesic} holds.

Consider any weak optimizer $\gamma:[s, t] \to \R^k_\le$. Fix rational times $r_1 < r_2 \in (s, t)$. Then by Propositions \ref{P:one-time-dj} and \ref{P:locally-geodesic}, we can find $\ep > 0$ such that
\begin{equation}
\label{E:dj-pieces}
    \gamma_i|_{[r_1 - \ep, r_1 +
\ep]}, \quad i = 1, \dots, k, \qquad \gamma_i|_{[r_2-\ep, r_2 + \ep]}, \quad i = 1, \dots, k, \qquad \text{ are disjoint geodesics.}
\end{equation}

Now, consider decreasing sequences $\mathbf{p}_n, \mathbf{q}_n \in \Q^k_<$ with $\mathbf{p}_n \to \gamma(r_1), \mathbf{q}_n \to \gamma(r_2)$, and let $\sigma^{(n)}$ be the unique weak optimizers from $(\mathbf{p}_n, r_1)$ to $(\mathbf{q}_n, r_1)$. These optimizers are unique by Lemma \ref{L:rlm-opt-unique-no-bubble}.1 and therefore disjoint by Theorem \ref{T:dj-exist}. By Lemma \ref{L: geo-opt-rm-coal}, the optimizers $\sigma^{(n)}:[r_1, r_2] \to \R^k_\le$ converge in overlap to the rightmost weak optimizer from $(\gamma(r_1), r_1)$ to $(\gamma(r_2), r_2)$. Call this weak optimizer $\sigma$. This is a \textit{disjoint} optimizer since disjointness is a closed property in the overlap topology. By applying Proposition \ref{P:locally-geodesic} again we can guarantee that for some $\ep' > 0$, 
\begin{equation}
\label{E:dj-pieces-2}
    \sigma_i|_{[r_1, r_1 +
\ep']}, \quad i = 1, \dots, k, \qquad \sigma_i|_{[r_2-\ep', r_2]}, \quad i = 1, \dots, k, \qquad \text{ are disjoint geodesics.}
\end{equation}
Since $r_1, r_2$ are rational, and the points $(\gamma_i(r_1), r_1), (\gamma_i(r_2), r_2)$ all lie on the interior of geodesics by \eqref{E:dj-pieces}, by Corollary \ref{C:rational-stars} none of these points are geodesic $2$-stars (forwards or backwards). Therefore combining \eqref{E:dj-pieces}, \eqref{E:dj-pieces-2}, there is some $\delta > 0$ such that 
\begin{equation}
\label{E:pi-gamma}
\sigma|_{[r_1, r_1 + \de] \cup [r_2 - \de, r_2]} = \gamma|_{[r_1, r_1 + \de] \cup [r_2 - \de, r_2]}.
\end{equation}
Now fix $\alpha \in (0, \delta]$ and consider $n$ large enough so that $\sigma^{(n)}|_{[r_1 + \alpha, r_2 - \alpha]} = \sigma|_{[r_1 + \alpha, r_2 - \alpha]}.$ By \eqref{E:pi-gamma}, both the $k$-tuple $\sigma^{(n)}$ and the concatenation
$$
\sigma^{(n)}|_{[r_1, r_1 + \alpha]} \oplus\gamma|_{[r_1 + \alpha, r_2 - \alpha]} \oplus \sigma^{(n)}|_{[r_2-\alpha, r_2]} 
$$
are weak optimizers from $(\mathbf{p}_n, r_1)$ to $(\mathbf{q}_n, r_2)$. By uniqueness of $\sigma^{(n)}$, this implies that $\sigma|_{[r_1 + \alpha, r_2 - \alpha]} = \gamma|_{[r_1 + \alpha, r_2 - \alpha]}$. Taking $\alpha \to 0$ gives that $\sigma$ is the unique optimizer from $(\gamma(r_1), r_1)$ to $(\gamma(r_2), r_2)$, and that $\gamma$ is is a disjoint optimizer when restricted to $[r_1, r_2]$. Since $r_1, r_2$ are arbitrary, $\gamma$ is a disjoint optimizer. 
\end{proof}

\newpage

\bibliographystyle{alpha}
\bibliography{sources}

\end{document}